\newcommand{\ud}{\mathrm{d}}
\newcommand{\supp}{\mathrm{supp}}
\newtheorem{theorem}{Theorem}[section]
\newtheorem{lemma}[theorem]{Lemma}
\newtheorem{proposition}[theorem]{Proposition}
\theoremstyle{definition}
\newtheorem{defn}{Definition}[section]
\newtheorem{remark}{Remark}[section]
\numberwithin{equation}{section}
\DeclareMathSymbol{\C}{\mathalpha}{AMSb}{"43}
\newcommand{\bsub}{\begin{subequations}}
	\newcommand{\esub}{\end{subequations}$\!$}
\begin{document}
\title{Adams-Trudinger-Moser inequalities of Adimurthi-Druet type regulated by the vanishing phenomenon and its extremals}
	\author{
		Abiel Costa Macedo
		\\{\small Instituto de Matem\'{a}tica e Estat\'istica}
		\\{\small Universidade Federal de Goi\'as}\\
		{\small 74001-970 Goi\^ania, GO, Brasil}\\
		{\small abielcosta@ufg.br} 
		\and
		Jos\'{e} Francisco de Oliveira
		\\{\small Departamento de Matem\'{a}tica}
		\\{\small Universidade Federal do Piau\'{i}}\\
		{\small 64049-550 Teresina, PI, Brasil}\\
		{\small jfoliveira@ufpi.edu.br}
		\and 
		F\'abio Sodr\'e Rocha
		\\{\small Instituto de Matem\'{a}tica e Estat\'istica}
		\\{\small Universidade Federal de Goi\'as}\\
		{\small 74001-970 Goi\^ania, GO, Brasil}\\
		{\small fabiosodremat@gmail.com} 
	}

	\date{}
	
	\maketitle
	\begin{abstract}
Let  $W^{m,\frac{n}{m}}(\mathbb{R}^n)$ with $1\le m < n$ be the standard higher order derivative Sobolev space in the critical exponential growth threshold. We  investigate a new Adams-Adimurthi-Druet type inequality on the whole space $\mathbb{R}^n$ which is strongly influenced by the vanishing phenomenon. Specifically, we prove
	\begin{equation}\nonumber
		\sup_{\underset{\|\nabla^{m} u\|_{\frac{n}{m}}^{^{\frac{n}{m}}}+\|u\|_{\frac{n}{m}}^{\frac{n}{m}} \leq 1}{u\in W^{m,\frac{n}{m}}(\mathbb{R}^n)}} \int_{\mathbb{R}^n}\Phi\left(\beta \left(\frac{1+\alpha\|u\|_{\frac{n}{m}}^{\frac{n}{m}}}{1-\gamma\alpha\|u\|_{\frac{n}{m}}^{\frac{n}{m}}}\right)^{\frac{m}{n-m}}|u|^{\frac{n}{n-m}}\right) \mathrm{d}x<+\infty.
	\end{equation}
	where  $0\le \alpha<1$,  $0<\gamma<\frac{1}{\alpha}-1$ for $\alpha>0$,  $\nabla^{m} u$ is the  $m$-th order gradient for $u$, $0\le\beta\le \beta_0$, with $\beta_0$ being the  Adams critical constant, and	$\Phi(t) = \operatorname{e}^{t}-\sum_{j=0}^{j_{m,n}-2}\frac{t^{j}}{j!}$ with $j_{m,n}=\min\{j\in\mathbb{N}\;:\: j\ge n/m\}$. In addition, we prove that the constant $\beta_0$ is sharp. 
 In the subcritical case $\beta<\beta_0$,  the existence  and non-existence of  extremal function  are investigated for  $n=2m$  and attainability is proven for  $n=4$ and $m=2$ in the critical case $\beta=\beta_0$. Our method to analyze the  extremal problem is based on  blow-up analysis, a truncation argument recently introduced by  DelaTorre-Mancini \cite{DelaTorre} and some ideas by Chen-Lu-Zhu \cite{luluzhu20}, who studied the critical Adams inequality in $\mathbb{R}^4$.
	\end{abstract}
	\vskip 0.2truein

	\noindent 2000 Mathematics Subject Classification: 35J60, 35B33, 35J91, 35J30, 31A30, 26D10.
	
	\noindent Keywords: Trudinger-Moser inequalities, Adams' inequalities, Polyharmonic equation, Hardy inequalities.

	%%%%%%%%%%%%%%%%%%%%%%%%%%%%%%%%%%%%%%%%%%%%%%%%%%%%%%%%%%%%%%%%%%%%%%%%%%%%%%%%%%%%%%%%%%%%%%%%%%%%%%%%%%%%%%%%%%%%%%%%%%%%%%%%%%%%%%%%%%%%%%%%%%%%%%%%%%%%%%%%
	%
	%                                               INTRODUCTION
	%
	%%%%%%%%%%%%%%%%%%%%%%%%%%%%%%%%%%%%%%%%%%%%%%%%%%%%%%%%%%%%%%%%%%%%%%%%%%%%%%%%%%%%%%%%%%%%%%%%%%%%%%%%%%%%%%%%%%%%%%%%%%%%%%%%%%%%%%%%%%%%%%%%%%%%%%%%%%%%%%%%
	\tableofcontents
	\section{Introduction}
	Let $\Omega\subset \mathbb{R}^n (n\geq 2)$ be a smooth bounded  domain and let $W^{m,p}_0(\Omega), p\ge 1$ be the $m$-\textit{th} order derivative classical Sobolev space on $\Omega$. Under the strict condition $n>mp$,  it is well known the optimal continuous  embedding of $W^{m,p}_0(\Omega)$ into Orlicz space $L_{\varphi_{*}}(\Omega)$ defined  by the Young  function $\varphi_{*}(t)=t^{p^*}$ $t\ge 0$, where $p^*=np/(n-mp)$ is  critical  Sobolev exponent. The optimality here means that the embedding is no longer holds if we replace the function $\varphi_*$ with any other  Young function $\psi$ that grows strictly more rapidly than $\varphi_{*}$. This type of scenario leads to a breakdown of compactness, giving rise to interesting questions that have been the subject of investigation by several authors over time, see for instance \cite{BN} and the  subsequent citations thereof. For the limiting case $n=mp$,  the threshold growth is not achieved by any power-type function 
$\varphi_{p}(t)=t^{p}$, $t\ge 0$; instead, it is determined by exponential growth.  The pioneering works  are due to  V.~Yudovich \cite{YUDO61}, S.~Pohozaev \cite{P0H065},  J. Peetre \cite{Peetre}, N. Trudinger \cite{Trudinger67} the proved the admissibility of the exponential growth in the first-order derivative case $m=1$.  The optmality of exponential growth was obtained by Hempel-Morris-Trudinger \cite{HMT} in which they also observe the influence of this growth in the loss of compactness.  Finally, Moser \cite{Moser1970/71} proved an improved version of these results, which is now known as the Trudinger-Moser inequality, and the extension of Moser's result for higher order derivatives $m\ge 2$ is due to Adams \cite{Adams1988}. Adams-Moser-Trudinger inequality and its related extremal problem has a broad range of applications in partial differential equations and geometric analysis, see for instance \cite{Baird1,Baird2}, and there are a lot of extensions and generalizations, among which we point out the works  \cite{AdimurthiDruet2004, Tinta, Yang} for bounded domains and \cite{adachitanaka2000,Cao,DoO,Ruf2005, RufLi2008,RufSani2013,lamlu2012} for  unbounded domains in the Euclidean space.

This work aims to present extensions of the Adams-Trudinger-Moser inequality of the Adimurthi-Druet type \cite{AdimurthiDruet2004} for either bounded or unbounded domains and to investigate their extremal problem. To precisely situate our developments, we will describe below some related advances on this topic.

\subsection{Trudinger-Moser type inequalities: The case first-order deri\-vatives}
 For a smooth  bounded domain $\Omega\subset \mathbb{R}^n (n\geq 2)$ Moser obtained the following sharp estimate
	\begin{equation}\label{PTM1}
		\sup_{u\in  W^{1,n}_0(\Omega),\; \|\nabla u\|_n=1} \int_{\Omega}\operatorname{e}^{\alpha
			|u|^{\frac{n}{n-1}}}\ \ud x 
			\le C_n|\Omega|  \quad \mbox{for} \quad \alpha \leq \alpha_n,
	\end{equation}
	where $\alpha_n:=n \omega_{n-1}^{1/(n-1)}$ and $\omega_{n-1}$ is the area of the surface of the unit $n$-ball in $\mathbb{R}^n$, $|\Omega|$ denotes the $n$-dimensional Lebesgue measure of $\Omega$, and   $\|\cdot\|_n$  is the standard norm in the Lebesgue space $L^n(\Omega)$. Moser also showed that $\alpha_n$ is sharp, i.e., the supremum \eqref{PTM1} is $+\infty$ if $\alpha>\alpha_n$. Nevertheless, P.-L.~Lions~\cite{Lions1984I} (see also \cite{CernyCianchiHencl13}) was able to prove that the exponent $\alpha_n$ can be improved along certain sequences. In fact, if $(u_i)\subset  W^{1,n}_0(\Omega)$ with $\|\nabla u_i\|_n=1$ and $u_i\rightharpoonup u_0$ in $ W^{1,n}_0(\Omega)$, then
	$$
	\sup_{i} \int_{\Omega} \operatorname{e}^{\gamma \alpha_n |u_i|^{\frac{n}{n-1}}}\ \ud x <\infty, \quad \mbox{for any} \quad \gamma<P:= (1-\|\nabla u_0\|_n^n)^{-\frac{1}{n-1}}.
	$$
 Note that, unless $u_0=0$, we have  $P>1$. Motivated by Lions' result, Adimurthi-Druet \cite{AdimurthiDruet2004} for $n=2$ and Yang  \cite{Yang} for $n\ge 3$  proposed an improved version of \eqref{PTM1} that provides additional information even in the case where $u_0=0$. Precisely, 
 \begin{equation}\label{ADTM}
		\sup_{u\in  W^{1,n}_0(\Omega),\; \|\nabla u\|_n=1} \int_{\Omega} \operatorname{e}^{\alpha_n(1+\alpha\|u\|^{n}_n)^{\frac{1}{n-1}}
			|u|^{\frac{n}{n-1}}}\ \ud x 
	\end{equation}
is finite for any  $0\le \alpha<\lambda_1(\Omega)$, and the supremum is infinity for any $\alpha\ge \lambda_1(\Omega)$, where $\lambda_1(\Omega)$ represents the eigenvalue associated with $n$-Laplacian. For extensions of \eqref{ADTM}, still within the context of first-order derivatives and without any claim of completeness, we recommend \cite{JJ2015, LuYang, Tinta,YangS, YangZhu, LiLuZhu2018}  and the references therein.
 
 When $\Omega=\infty$, the inequality \eqref{PTM1} is meaningless. Nevertheless, in works Cao \cite{Cao},  Do \'{O} \cite{DoO}, Panda \cite{Panda}, and  Adachi-Tanaka \cite{adachitanaka2000}, the following Trudinger-Moser type estimate in the scaling invariant form was obtained: For $\alpha\in (0, \alpha_n)$ there exists a constant $ C_{\alpha,n}$ depending only on $\alpha$ and $n$ such that
	\begin{equation}\label{adtaine}
		\sup_{u\in  W^{1,n}(\mathbb{R}^n),\; \|\nabla u\|_n\le 1}\frac{1}{\|u\|^n_{n}}\int_{\mathbb{R}^n} \phi_{n }\left(\alpha |u|^{\frac{n}{n-1}}\right)\ \ud x \le  C_{\alpha,n}
	\end{equation}
 where 
 $$
	\phi_{n}(t) =\operatorname{e}^{t}-\sum_{j=0}^{n-2}\frac{t^{j}}{j!}.
	$$
 In addition, different from the usual Trudinger-Moser inequality \eqref{PTM1} in bounded domains,  \eqref{adtaine} with $\alpha\ge \alpha_n$ is false which excludes $\alpha=\alpha_n$.  For this reason,  the inequality \eqref{adtaine} is currently known
  as subcritical Trudinger-Moser inequality in $\mathbb{R}^n$. To achieve
the critical case, Li-Ruf \cite{RufLi2008} and Ruf \cite{Ruf2005} replace the Dirichlet gradient norm by the full Sobolev norm  $\|u\|_{W^{1, n}\left(\mathbb{R}^n\right)}=\left(\|\nabla u\|_n^n+\|u\|_n^n\right)^{1 / n}$ to obtain the following sharp critical Trudinger-Moser inequality in $\mathbb{R}^n$
\begin{equation}\label{Li-RufTM}
\sup _{u \in W^{1, n}\left(\mathbb{R}^n\right),\;\|u\|_{W^{1, n}\left(\mathbb{R}^n\right)} \leq 1} \int_{\mathbb{R}^N} \phi_n\left(\alpha|u|^{\frac{n}{n-1}}\right) \mathrm{d} x<\infty,\;\;\mbox{for all}\;\; \alpha\le \alpha_n.
\end{equation}
Moreover,  if $\alpha>\alpha_n$ then the supremum in \eqref{Li-RufTM} is infinite. Inequalities of types \eqref{adtaine} and \eqref{Li-RufTM}  have a long history of investigation by various authors. We recommend \cite{Ibra, LLZ2} for a more in-depth discussion, and particularly the recent works \cite{CST,LLZ}, which show that these inequalities are indeed equivalent.

In the context of first-order derivatives, Trudinger-Moser inequalities of the Adimurthi-Druet type on $\mathbb{R}^n$ were developed by Do \'{O}-de Souza \cite{DoOMana1, DoOMana2} (see also \cite{NguyenVanHoang2019}), who established the following version of \eqref{ADTM} for the whole space
\begin{equation}\label{ADTfull}
	\mathrm{MT}(n, \beta, \alpha)=\sup _{u \in W^{1, n}\left(\mathbb{R}^n\right),\;\|u\|_{W^{1, n}\left(\mathbb{R}^n\right)} \leq 1} \int_{\mathbb{R}^n} \phi_n\left(\beta\left(1+\alpha\|u\|_n^n\right)^{\frac{1}{n-1}}|u|^{\frac{n}{n-1}}\right) d x,
\end{equation}
which is finite for any $\beta \leq \alpha_n$ and $0 \leq \alpha < 1$.

In this paper, we provide extensions of inequalities \eqref{ADTM} and \eqref{ADTfull} for higher-order derivatives, which are detailed  in Theorem~\ref{ThmAdiDruetUnbound}  below.

	\subsection{Adams-Trudinger-Moser type inequalities: The case higher-order derivatives}
For any positive integer  $m<n$ and  $u\in C^\infty_0(\Omega)$, we set  
	\begin{equation}\nonumber
		\nabla^m u=\left\{
		\begin{aligned}
			&\Delta^{m/2}u, & \mbox{if} &\;\; m\;\; \mbox{is even}\\
			&\nabla\Delta^{(m-1)/2}u, &\mbox{if}& \;\; m\;\;\mbox{is odd.}
		\end{aligned}\right.
	\end{equation}
For $|\Omega|<\infty$,	Adams 1988 in \cite{Adams1988} proved that
	\begin{equation}\label{adams}
		\sup_{{u\in W_0^{m,\frac{n}{m}}(\Omega),\; \left\|\nabla^m u\right\|_\frac{n}{m}\leq 1}}\int_{\Omega}\operatorname{e}^{\beta |u|^{\frac{n}{n-m}}} \ud x < \infty,\;\; \mbox{if and only if}\;\; \beta \leq \beta_0,
	\end{equation}
	where
	\begin{equation}\label{Adams-best-constant}
		\beta_0=\beta_0(m,n)=
		\begin{cases}
			\frac{n}{\omega_{n-1}}\left[ \frac{\pi^{\frac{n}{2}}2^m\Gamma\left(\frac{m+1}{2}\right)}{\Gamma\left(\frac{n-m+1}{2}\right)}\right]^{\frac{n}{n-m}}, & \mbox{if}\;\; m \mbox{ is odd, } \\
			\frac{n}{\omega_{n-1}}\left[ \frac{\pi^{\frac{n}{2}}2^m\Gamma\left(\frac{m}{2}\right)}{\Gamma\left(\frac{n-m}{2}\right)}\right]^{\frac{n}{n-m}}, &\mbox{if}\;\; m \mbox{ is even, }
		\end{cases}
	\end{equation}
	in which  $\Gamma(x)=\int_0^{1} (-\ln{t})^{x-1} \, \ud t,\, x>0$
	is the gamma Euler function.  Inequality \eqref{adams} is the extension for higher order derivatives of the Moser  inequality \eqref{PTM1} and it is currently known as Adams inequality or  Adams-Moser-Trudinger inequality.	By considering was the more large space given by
	$$
	W^{m,p}_\mathcal{N} (\Omega):=\{ u \in W^{m,p}(\Omega): u_{|_{\partial \Omega}}=\Delta^j u_{|_{\partial \Omega}}=0 \mbox{ in the sense of trace}, 1\leq j< m/2 \},
	$$
	  C.~Tarsi \cite[Theorem 4]{Tarsi2012} was able to extend \eqref{adams} to the following 
	  \begin{equation}\label{TarsiThm}
	  \sup_{{u\in W_\mathcal{N}^{m,\frac{n}{m}}(\Omega),\; \|\nabla^m u\|_{\frac{n}{m}}\leq 1}} \int_{\Omega}\operatorname{e}^{\beta |u|^{\frac{n}{n-m}}} \ \ud x\leq C_{m,n}|\Omega|, \quad \forall \; 0 \leq \beta \leq\beta_0,
	  \end{equation}
		for some constant $C_{m,n}>0$. In addition, $\beta_0$ is also sharp, i.e., the supremum above is $+\infty$ if $\beta>\beta_0$.
		
	Extensions of Adams inequality for the entire space $\mathbb{R}^n$, where first considered by Ozawa \cite{Ozawa1995} who obtained an extension for $W^{m, \frac{n}{m}}\left(\mathbb{R}^n\right)$  by using the restriction $\left\|\Delta^{\frac{m}{2}} u\right\|_{\frac{m}{m}} \leq 1$. However, with the argument in \cite{Ozawa1995}, one cannot obtain the best possible exponent $\beta$ for this type of inequality. Recently Ruf-Sani \cite{RufSani2013} when $m$ is an even  integer number and Lam-Lu   \cite{lamlu2012, LamLuMAA2012}  for $m$ odd integer number have obtained a version of the Adams inequality \eqref{adams} for  any  domain  $\Omega\subset\mathbb{R}^n$ not necessarily bounded. In fact, there hold
	\begin{equation}\label{ruf-sani}
			\sup_{u\in W_0^{m,{\frac{n}{m}}}(\Omega),\; \|(-\Delta+I)^k u\|_{\frac{n}{m}}\leq 1} \int_{\Omega} \Phi({\beta_0
				|u|^{\frac{n}{n-m}}})\ \ud x\le C_{m,n},\;\;\mbox{if}\;\; m=2k
		\end{equation}
		and 
		\begin{equation}\label{lam-lu}
			\sup_{u\in W_0^{m,{\frac{n}{m}}}(\Omega),\; \|\nabla (-\Delta+I)^k u\|_{\frac{n}{m}}^{\frac{n}{m}}+\|(-\Delta+I)^k u\|_{\frac{n}{m}}^{\frac{n}{m}}\leq 1} \int_{\Omega} \Phi({\beta_0
				|u|^{\frac{n}{n-m}}})\ \ud x\le C_{m,n},\;\;\mbox{if}\;\; m=2k+1
		\end{equation}
where 
	\begin{equation}\label{phi}
		\Phi(t)=\operatorname{e}^t - \sum_{j=0}^{j_{m,n}-2} \frac{t^j}{j!}, \qquad j_{m,n}:=\min\{j\in \mathbb{N}\, : \, j\geq \frac{n}{m}\}.
	\end{equation}
	Moreover, the constant $\beta_0$ is sharp for \eqref{ruf-sani} and \eqref{lam-lu}.

	 On Sobolev spaces $W^{\gamma,\frac{n}{\gamma}}\left(\mathbb{R}^n\right)$ of arbitrary positive fractional order $\gamma<n$,  by  using a rearrangement-free argument  Lam-Lu \cite{lamlufree}  established  the following Adams inequality 
	$$
			\sup_{{u\in 	W^{\gamma,p}(\mathbb{R}^{n})},\; \|\tau(I-\Delta^\frac{\gamma}{2}) u \|_{p} \leq 1}\int_{\mathbb{R}^{n}}\Phi(\beta_{0}(n,\gamma) |u(x)|^{\frac{p}{p-1}}) \ud x < \infty,\;\; \beta_0(n, \gamma) =\frac{n}{\omega_{n-1}}\left[\frac{\pi^{\frac{n}{2}} 2^\gamma \Gamma(\frac{\gamma}{2})}{\Gamma\left(\frac{n-\gamma}{2}\right)}\right]^{\frac{p}{p-1}}
		$$
		where $p=\frac{n}{\gamma}$,  $\tau>0$ and $\Phi$ is the same one in \eqref{phi} with $ j_{p}=\min\{j\in \mathbb{N}\, : \, j\geq p\}$ instead of  $j_{m,n}$. Further, if $\beta_0(n, \gamma)$ is replaced by any $\beta>\beta_0(n, \gamma)$, then the supremum is infinite.	Another extension of \eqref{ADTM} for whole space $\mathbb{R}^n$, is the subcritical Adams inequality established by  Lam-Lu \cite{lamlufree} for  $m=2$, Fontana-Morpurgo \cite{FontanaMorpurgo2018} and Lam-Lu-Zhang \cite{LLZ} to all $m>2$ is the following
	\begin{align*}
		\sup_{u\in 	W^{m,\frac{n}{m}}(\mathbb{R}^{n}),\; \|\nabla^{m} u \|_\frac{n}{m}^{\frac{n}{m}} + \|u\|_{\frac{n}{m}}^{\frac{n}{m}}\leq 1 }\int_{\mathbb{R}^{n}}\Phi(\beta |u(x)|^{\frac{n}{n-m}}) \ud x 
		\begin{cases}
			\leq C_{m,n}, ~~ &\text{ if } \beta \leq \beta_{0}(m,n) \\
			= \infty, ~~&\text{ if } \beta > \beta_0(m,n).
		\end{cases} 
	\end{align*}
Finally, we would like to  mention the following sharpened Trudinger-Moser inequality  in $\mathbb{R}^2$ with exact growth condition due to Ibrahim-Masmoudi-Nakanishi \cite{Ibra}: 
there exists a constant $C > 0$ such that
\begin{equation}\label{IbraIne}
\int_{\mathbb{R}^2} \frac{e^{4\pi u^2} - 1}{(1 + |u|)^2} \mathrm{d}x \leq C \|u\|_{L^2(\mathbb{R}^2)}^2, \quad \forall u \in W^{1,2}(\mathbb{R}^2) \text{ with } \|\nabla u\|_{L^2(\mathbb{R}^2)} \leq 1.
\tag{1.4}
\end{equation}
Moreover, this fails if the power $2$ in the denominator is replaced with any $p < 2$.  In \cite{MasmoudiSani2014} Masmound-Sani  extended \eqref{IbraIne}  to  the  corresponding second-order Adams' inequality  in $W^{2,2}(\mathbb{R}^4)$ and then for arbitrary dimensions $n\ge 2$ in  \cite{MasmoudiSani2015}. After,  Lu-Tang in \cite{LuTang} were able to provide an extension to the framework of hyperbolic space. For a more in-depth discussion and improvements of \eqref{IbraIne} we recommend \cite{LamLuChapter,LLZ,LLZ2} and the references therein.

Finally, we would like to emphasize that, despite the large number of extensions of \eqref{adams}, except for recent work in \cite{DelaTorre,LuYangEP} for $n=2m$ and bounded domains, to our knowledge, no other Adimurthi-Druet type extension \eqref{ADTM} has been established for $m \ge 2$, for either bounded or unbounded domains $\Omega \subset \mathbb{R}^n$. In this article, we aim to fill this gap by presenting such extensions, which differ from the usual ones (even for $m=1$), as our proposed inequality is strongly governed by the \textit{vanishing} phenomenon.

\subsection{Maximizers: the extremal problem}
Since the establishment of the Adams-Trudinger-Moser inequality \eqref{PTM1}-\eqref{adams}, a natural question arises:  \textit{does there exist} $u\in W_0^{m,\frac{n}{m}}(\Omega)$ \textit{with} $\left\|\nabla^m u\right\|_\frac{n}{m}=1$  \textit{such that }
\begin{equation}\label{EP}
 \int_{\Omega} e^{\beta_0
			|u|^{\frac{n}{n-m}}}\ \ud x = \sup_{{u\in W_0^{m,\frac{n}{m}}(\Omega),\; \left\|\nabla^m u\right\|_\frac{n}{m}\leq 1}}\int_{\Omega}e^{\beta |u|^{\frac{n}{n-m}}} \ud x \,?
\end{equation}
This is the famous extremal problem for the Adams-Trudinger-Moser inequality under the critical condition $\beta = \beta_0$, which has been studied by various authors over the years. For the first order derivative case $m=1$ it was completely solved through the works \cite{Carleson-Chang,Struwe1988,Flucher1992,Lin1996,DelaTorre}. In fact,    Carleson-Chang  \cite{Carleson-Chang} solved it when $\Omega=B_1(0)\subset\mathbb{R}^n, n\ge 2$ is the unit ball,  M. Struwe \cite{Struwe1988} used blow-up analysis to ensure  extremal functions for a class of nonsymmetric domains in $\mathbb{R}^2$,  Flucher \cite{Flucher1992} applied  conformal rearrangement to derived an isoperimetric inequality which implies the existence of extremal functions to any smooth bounded domain in $\mathbb{R}^2$.  Finally, Lin \cite{Lin1996} ensures extremal function to any smooth bounded domain in $\mathbb{R}^n,n\geq 2$.  In contrast to the case  $m=1$,  extremal problem \eqref{EP} for $m\ge 2$ has been solved only for some particular cases.  We can only  mention Lu-Yang \cite{LuYangEP}, which proved the existence of extremals in the case $m = 2$ with $\Omega \subset \mathbb{R}^4$  and more recently DelaTorre-Mancini \cite{DelaTorre}, where the existence of extremals for the case \( H_0^m(\Omega) \) with $ \Omega \subset \mathbb{R}^{2m}$ is proved. We also draw attention to the work \cite{deOliveiraMacedo2022}, where a sharp estimate for the concentration levels of the Adams-Trudinger-Moser functional is obtained.

The existence of extremal function for the Adimurthi-Druet inequality \eqref{ADTM} it was obtained by Yang in \cite{Yang2006} for $n=2$ and \cite{Yang} for $n\ge 3$.  On the other hand, the unbounded situation $\Omega=\mathbb{R}^n$ for the either subcritical \eqref{adtaine}  or critical \eqref{Li-RufTM} the corresponding  extremal problems were  investigated in \cite{Ishiwata2011, RufLi2008, Ruf2005} and in the recent work \cite{LLZ2}. Further, the extremal for the Adimurthi-Druet type supremum \eqref{ADTfull} was recently studied in \cite{NguyenVanHoang2019}.  It is worth mentioning that, in the unbounded situation,   even for $\alpha<\alpha_n$ attainability is nontrivial due to the loss of compactness,  as observed by Ishiwata \cite{Ishiwata2011} (see also \cite{Lions1984I, Lions1984II}).  To illustrate the situation, let $(u_j)$ with $\|u_j\|_{W^{1,n}(\mathbb{R}^n)}=1$  and $u_j\rightharpoonup u$ weakly in $W^{1,n}(\mathbb{R}^n)$ be a maximizing sequence for the supremum 
$\mathrm{MT}(n, \beta,\alpha)$ in \eqref{ADTfull}. Then the compactness of $(u_j) $ follows if we can exclude both the \textit{concentration}  phenomenon
\begin{itemize}
\item [$(a)$] $u=0$ and $\displaystyle\lim_{j\rightarrow\infty}\int_{B^{c}_{\rho}}(|\nabla u_j|^{n}+|u_j|^n)dx=0$ for any $\rho>0$
\end{itemize}
and the   \textit{vanishing} phenomenon
\begin{itemize}
\item [$(b)$] $\displaystyle\lim_{j\to\infty}\|\nabla u_j\|_{n}=0$ and $\displaystyle\limsup_{j\to\infty}\| u_j\|_{n}>0$.
\end{itemize}
In most works on the existence of extremals a lower bound for the supremum is  established which, after sharp estimates for both  concentrating  and vanishing levels, ensures the compactness of the any maximizing sequence,  see for instance 
\cite{Carleson-Chang, Ishiwata2011, Lin1996,RufLi2008, NguyenVanHoang2019, Ruf2005}.

	\subsection{Mains results: Adams-Adimurthi-Druet type inequalities and extremals}
Our first result is the following  sharp Adams-Adimurthi-Druet type inequalities on $\mathbb{R}^n$.
	\begin{theorem}\label{ThmAdiDruetUnbound}
		Let $n \geq 2$,  $0\leq\alpha <1$,  $0< \gamma <\frac{1}{\alpha}-1$ for $\alpha>0$, and $1\leq m<n$. Then
		\begin{equation}\label{supUnbounded}
			\sup_{\underset{\|\nabla^m u\|_{\frac{n}{m}}^{\frac{n}{m}} + \|u\|_{\frac{n}{m}}^{\frac{n}{m}} \leq 1}{u\in W^{m,\frac{n}{m}}(\mathbb{R}^n)}} \int_{\mathbb{R}^n}\Phi\left(\beta_0 \left(\frac{1+\alpha\|u\|_{\frac{n}{m}}^{\frac{n}{m}}}{1-\gamma\alpha\|u\|_{\frac{n}{m}}^{\frac{n}{m}}}\right)^{\frac{m}{n-m}}|u|^{\frac{n}{n-m}}\right) \mathrm{d}x   < \infty. 
		\end{equation}	
In addition,	 the constant $\beta_0$ is sharp. 
	\end{theorem}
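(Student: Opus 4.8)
\textbf{Proof strategy for Theorem~\ref{ThmAdiDruetUnbound}.}

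\emph{Set‑up and the decisive algebraic identity.} The plan is to reduce \eqref{supUnbounded} to the critical Adams inequality on $\mathbb{R}^n$ \cite{RufSani2013,lamlu2012,FontanaMorpurgo2018,LLZ} together with its \emph{sharp} subcritical companion, after one elementary but crucial observation. Fix $u\in W^{m,\frac nm}(\mathbb R^n)$ with $\|\nabla^m u\|_{\frac nm}^{\frac nm}+\|u\|_{\frac nm}^{\frac nm}\le 1$; we may assume $u\not\equiv0$, so no density step is needed. Write $\tau:=\|u\|_{\frac nm}^{\frac nm}\in[0,1)$, $a:=\|\nabla^m u\|_{\frac nm}^{\frac nm}\le 1-\tau$, and $\theta(\tau):=\frac{1+\alpha\tau}{1-\gamma\alpha\tau}\ge 1$ (well defined since $\gamma\alpha\tau\le\gamma\alpha<1$). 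The hypothesis $0<\gamma<\frac1\alpha-1$ is used only through $c_0:=1-\alpha(1+\gamma)>0$: a direct computation gives $1-\theta(\tau)(1-\tau)=\tau\big(c_0+\alpha\tau\big)/(1-\gamma\alpha\tau)$, hence
\[
\theta(\tau)(1-\tau)\ \le\ 1-c_0\tau,\qquad \theta(\tau)\ \le\ (1-\tau)^{-1},\qquad \theta(\tau)\ \le\ \theta_0:=\tfrac{1+\alpha}{1-\gamma\alpha},\qquad \tau\in[0,1].
\]
Besides the critical Adams inequality, I will invoke the following scaling‑invariant subcritical estimate with explicit rate: there is $C_{m,n}>0$ with
\[
\int_{\mathbb R^n}\Phi\big(\beta|v|^{\frac n{n-m}}\big)\,\mathrm dx\ \le\ \frac{C_{m,n}}{\beta_0-\beta}\,\|v\|_{\frac nm}^{\frac nm}\qquad\text{for all }\|\nabla^m v\|_{\frac nm}\le 1,\ \ \beta\in(0,\beta_0).
\]
Finiteness of such a constant is classical \cite{lamlufree,FontanaMorpurgo2018,CST,LLZ}; the rate $O((\beta_0-\beta)^{-1})$ is what will carry the argument, and it follows from the exact‑growth form of the Adams inequality $\int\Phi(\beta_0|v|^{\frac n{n-m}})(1+|v|)^{-\frac n{n-m}}\,\mathrm dx\le C_{m,n}\|v\|_{\frac nm}^{\frac nm}$ \cite{MasmoudiSani2014,MasmoudiSani2015,LLZ,LLZ2}.

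\emph{Case $\tau<\tau_*$ (the hard regime).} Fix $\tau_*:=\tfrac12\big(1+\tfrac{\alpha(1+\gamma)}{1+\alpha}\big)\in(0,1)$, chosen so that $\theta_0(1-\tau_*)<1$; set $\delta_0:=1-\theta_0(1-\tau_*)>0$. If $\tau<\tau_*$ then $1-\tau>1-\tau_*>0$, so $v:=(1-\tau)^{-m/n}u$ is well defined with $\|\nabla^m v\|_{\frac nm}^{\frac nm}=a/(1-\tau)\le1$ and $\|v\|_{\frac nm}^{\frac nm}=\tau/(1-\tau)$. Since $\beta_0\theta(\tau)^{\frac m{n-m}}|u|^{\frac n{n-m}}=\beta_0\big(\theta(\tau)(1-\tau)\big)^{\frac m{n-m}}|v|^{\frac n{n-m}}\le\beta_0(1-c_0\tau)^{\frac m{n-m}}|v|^{\frac n{n-m}}$ and $\Phi$ is increasing, monotonicity and the sharp subcritical estimate (legitimate because $\beta(\tau):=\beta_0(1-c_0\tau)^{\frac m{n-m}}\in(0,\beta_0)$ for $0<\tau<\tau_*$) give
\[
\int_{\mathbb R^n}\Phi\!\Big(\beta_0\theta(\tau)^{\frac m{n-m}}|u|^{\frac n{n-m}}\Big)\,\mathrm dx\ \le\ \int_{\mathbb R^n}\Phi\big(\beta(\tau)|v|^{\frac n{n-m}}\big)\,\mathrm dx\ \le\ \frac{C_{m,n}}{\beta_0-\beta(\tau)}\cdot\frac{\tau}{1-\tau}.
\]
Now $\beta_0-\beta(\tau)=\beta_0\big(1-(1-c_0\tau)^{\frac m{n-m}}\big)$ vanishes only to first order at $\tau=0$, so by continuity there is $c_1>0$ with $\beta_0-\beta(\tau)\ge c_1\tau$ on $[0,\tau_*]$; hence the right‑hand side is $\le C_{m,n}/\big(c_1(1-\tau_*)\big)$, a bound independent of $u$ (the case $\tau=0$, i.e.\ $u\equiv0$, being trivial).

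\emph{Case $\tau\ge\tau_*$, and sharpness.} Here $a\le1-\tau\le1-\tau_*$. Put $w:=\theta_0^{m/n}u$, so $\beta_0\theta(\tau)^{\frac m{n-m}}|u|^{\frac n{n-m}}\le\beta_0|w|^{\frac n{n-m}}$, while $\|\nabla^m w\|_{\frac nm}^{\frac nm}=\theta_0 a\le\theta_0(1-\tau_*)=1-\delta_0<1$ and $\|w\|_{\frac nm}^{\frac nm}=\theta_0\tau\le\theta_0$. Rescaling once more, $\bar w:=(1-\delta_0)^{-m/n}w$ satisfies $\|\nabla^m\bar w\|_{\frac nm}\le1$, $\|\bar w\|_{\frac nm}^{\frac nm}\le\theta_0/(1-\delta_0)$ and $\beta_0|w|^{\frac n{n-m}}=\beta_0(1-\delta_0)^{\frac m{n-m}}|\bar w|^{\frac n{n-m}}$ with $\beta_0(1-\delta_0)^{\frac m{n-m}}<\beta_0$ \emph{fixed}; the subcritical estimate then bounds $\int\Phi(\beta_0\theta(\tau)^{\frac m{n-m}}|u|^{\frac n{n-m}})\,\mathrm dx$ by a fixed finite constant. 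Combining the two cases proves \eqref{supUnbounded}. For sharpness, note $\theta(\tau)\ge1$ for every admissible $u$, so for any $\beta>\beta_0$ the supremum in \eqref{supUnbounded} with $\beta_0$ replaced by $\beta$ dominates $\sup\{\int_{\mathbb R^n}\Phi(\beta|u|^{\frac n{n-m}})\,\mathrm dx:\ \|\nabla^m u\|_{\frac nm}^{\frac nm}+\|u\|_{\frac nm}^{\frac nm}\le1\}$, which is $+\infty$ because $\beta_0$ is sharp in the Adams inequality on $\mathbb R^n$ \cite{RufSani2013,lamlu2012,FontanaMorpurgo2018,LLZ}; hence $\beta_0$ is the best constant in \eqref{supUnbounded}.

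\emph{Where the difficulty lies.} Everything delicate sits in the regime $\tau\to0^+$ with $a$ close to $1-\tau\approx1$: there the exponent constant $\theta(\tau)^{\frac m{n-m}}$ exceeds $1$, so no single use of the critical Adams inequality with the constraint $\|\cdot\|_{W^{m,n/m}}\le 1$ applies, and the estimate is saved only by the matching $\big(\beta_0-\beta(\tau)\big)^{-1}\sim\tau^{-1}$ versus $\|v\|_{\frac nm}^{\frac nm}=\tau/(1-\tau)\sim\tau$. Establishing the sharp subcritical rate is therefore the technical heart; I would obtain it from the exact‑growth inequality by splitting at a fixed level $K$: on $\{|v|\le K\}$ one bounds $\Phi(\beta|v|^{\frac n{n-m}})\le(1+K)^{\frac n{n-m}}\Phi(\beta_0|v|^{\frac n{n-m}})(1+|v|)^{-\frac n{n-m}}$, and on $\{|v|>K\}$ one uses $\Phi(\beta t)\le e^{\beta t}\le 2\Phi(\beta_0 t)e^{-(\beta_0-\beta)t}$ together with $\sup_{r\ge0}(1+r)^{\frac n{n-m}}e^{-(\beta_0-\beta)r^{\frac n{n-m}}}\le C_{m,n}(\beta_0-\beta)^{-1}$, absorbing both pieces into the exact‑growth bound.
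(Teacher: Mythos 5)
Your proof is correct, and it takes a genuinely different route from the paper's. The paper follows the scaling trick of Nguyen Van Hoang: it first shows, by a dilation $u_\tau(x)=u(\tau^{1/n}x)$, that the supremum $K_\tau$ under the weighted constraint $\|\nabla^m u\|_{n/m}^{n/m}+\tau\|u\|_{n/m}^{n/m}\le 1$ equals $K_1/\tau<\infty$; then it normalizes twice, setting $v:=u/(\|\nabla^m u\|_{n/m}^{n/m}+(1-\alpha)\|u\|_{n/m}^{n/m})^{m/n}$ and $w:=v/(\|\nabla^m v\|_{n/m}^{n/m}+(1-\alpha-\gamma\alpha)\|v\|_{n/m}^{n/m})^{m/n}$, and exploits the identity $\|v\|_{n/m}^{n/m}=\|u\|_{n/m}^{n/m}$ to make the Adimurthi–Druet factor collapse: the pointwise chain \eqref{AdiDruetEst} shows $\beta_0\big(\tfrac{1+\alpha\|u\|^{n/m}_{n/m}}{1-\gamma\alpha\|u\|^{n/m}_{n/m}}\big)^{\frac{m}{n-m}}|u|^{\frac{n}{n-m}}\le\beta_0|w|^{\frac{n}{n-m}}$ with $w$ admissible for $K_\mu$. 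This is short, requires only the critical full-norm Adams inequality, and never invokes any subcritical rate. Your argument instead splits on $\tau=\|u\|_{n/m}^{n/m}$: in the hard regime $\tau\to 0^+$ you compensate the factor $\theta(\tau)^{m/(n-m)}>1$ by a quantitatively subcritical Adams inequality with rate $O((\beta_0-\beta)^{-1})$, matched against $\|v\|_{n/m}^{n/m}\sim\tau$ via the first-order vanishing $\beta_0-\beta(\tau)\gtrsim\tau$. This buys a transparent explanation of why the borderline regime ($\|\nabla^m u\|\approx 1$, $\|u\|\to 0$) is controlled, and it makes the role of the exact-growth inequality explicit, but it requires you to supply (or cite) the sharp rate constant, whereas the paper needs nothing beyond \cite{FontanaLuigi2015SAaM}. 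Both routes use the hypothesis $\gamma<\tfrac1\alpha-1$ only to get a positive constant ($c_0>0$ for you; $\mu>0$ for the paper). Your sharpness argument (the factor $\theta(\tau)\ge1$ plus sharpness of the ordinary Adams constant on $\mathbb{R}^n$) is also correct and is essentially the argument the paper has in mind, though the paper's written proof in Section 3.1 does not spell it out.
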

In order to emphasize  the nature of the inequality \eqref{supUnbounded}, let us consider $(u_j)$ be a normalized vanishing type sequence in  $W^{m,\frac{n}{m}}(\mathbb{R}^n)$, that is, $$ \|\nabla^m u_j\|_{\frac{n}{m}}^{\frac{n}{m}} + \|u_j\|_{\frac{n}{m}}^{\frac{n}{m}} =1\;\;\mbox{and}\;\;  \lim_{j\to\infty}  \|\nabla^m u_j\|_{\frac{n}{m}} =0.$$   Hence, along this type of sequence, we observe that the exponent 
$$
\beta \left(\frac{1+\alpha\|u_j\|_{\frac{n}{m}}^{\frac{n}{m}}}{1-\gamma\alpha\|u_j\|_{\frac{n}{m}}^{\frac{n}{m}}}\right)^{\frac{m}{n-m}}
$$
converges to the constant $\beta(1+\alpha)/(1-\gamma\alpha)$ as $j\to \infty$, which can be arbitrarily large  if   $\alpha>0$ is close to $0$ and $\gamma$ is close to the upper limit $1/\alpha -1$.  It proves to be a challenge to the maximizing problem, since that in \eqref{supUnbounded}  the vanishing phenomenon implies in the lack of compactness for the associated functional even in the subcritical case $\beta<\beta_0$. From this reason, we  say that  our inequality  is strongly governed by the vanishing phenomenon.

	Let us denote 
	\begin{equation}\label{AD-notation}
		AD(n, m, \beta, \alpha, \gamma) := \sup_{\underset{\|\nabla^{m} u\|_{\frac{n}{m}}^{^{\frac{n}{m}}}+\|u\|_{\frac{n}{m}}^{\frac{n}{m}} \leq 1}{u\in W^{m,\frac{n}{m}}(\mathbb{R}^n)}} \int_{\mathbb{R}^n}\Phi\left(\beta \left(\frac{1+\alpha\|u\|_{\frac{n}{m}}^{\frac{n}{m}}}{1-\gamma\alpha\|u\|_{\frac{n}{m}}^{\frac{n}{m}}}\right)^{\frac{m}{n-m}}|u|^{\frac{n}{n-m}}\right) \mathrm{d}x.
	\end{equation}
Initially, we consider the existence and non-existence of extremal function  to $AD(n, m, \beta, \alpha,
	 \gamma)$ in the subcritical and critical case.  In this case, we  consider 
	\begin{equation}\label{ConstantGN}
		\mathcal{B}_{GN} := \underset{u \in W^{m,2}_{rad}(\mathbb{R}^{2m})}{\sup}\frac{\|u\|_{4}^{4}}{\|\nabla^{m}u\|_{2}^{2}\|u\|^{2}_{2}},
	\end{equation}
	which is the Gagliardo-Nirenberg constant in $W^{m,2}(\mathbb{R}^{2m})$.  We are able to prove the  following  existence results.
	\begin{theorem}\label{ThmAttain}
		Suppose $0 < \alpha < 1$, $n=2m$ and $0\leq \gamma <\min\{\frac{1}{\alpha}-1, \frac{m(1+\alpha)-1-2\alpha}{m\alpha^2+\alpha m - \alpha^2}\}$. Then   $AD(2m, m, \beta, \gamma, \alpha) $ is attained for any $\beta \in \left(\frac{ 1 +2\alpha  - \gamma\alpha^2}{1 + \alpha(1 - \gamma) - \gamma\alpha^2}\frac{2}{\mathcal{B}_{GN}}, \beta_{0}\right)$.
	\end{theorem}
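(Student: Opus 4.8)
Since $n=2m$ we have $n/m=2$, $j_{m,n}=2$, so $\Phi(t)=e^{t}-1$, $|u|^{n/(n-m)}=|u|^{2}$ and $m/(n-m)=1$; thus $AD(2m,m,\beta,\gamma,\alpha)$ is the supremum over $\mathcal{E}:=\{u\in W^{m,2}(\mathbb{R}^{2m}):\|\nabla^{m}u\|_{2}^{2}+\|u\|_{2}^{2}\le1\}$ of
\[
J(u)=\int_{\mathbb{R}^{2m}}\Big(\exp\big(\beta\,c(u)\,|u|^{2}\big)-1\Big)\,\ud x,\qquad c(u):=\frac{1+\alpha\|u\|_{2}^{2}}{1-\gamma\alpha\|u\|_{2}^{2}}.
\]
I would fix $\beta$ in the stated interval, take a maximizing sequence $(u_{j})\subset\mathcal{E}$ with $\|\nabla^{m}u_{j}\|_{2}^{2}+\|u_{j}\|_{2}^{2}=1$, and, using that $J$ is translation invariant, pass (after translating and extracting) to $u_{j}\rightharpoonup u_{0}$ in $W^{m,2}(\mathbb{R}^{2m})$ with $u_{j}\to u_{0}$ a.e. The plan is the usual three-step programme for extremals: (i) identify the two levels at which a maximizing sequence can fail to be compact, a \emph{vanishing} level $V$ and a \emph{concentration} level; (ii) produce, from the Gagliardo-Nirenberg extremal, a test function whose $J$-value strictly overtakes both; (iii) deduce $u_{0}\neq0$ and upgrade to strong convergence, so that $u_{0}$ is the extremal.

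For step (i): if $u_{0}=0$ and $(u_{j})$ does not concentrate, then after extracting $\|u_{j}\|_{2}^{2}\to\sigma^{2}\le1$ and the Lions vanishing alternative forces $\|u_{j}\|_{q}\to0$ for every $q>2$; expanding $e^{t}-1=t+(e^{t}-1-t)$, estimating $e^{t}-1-t\le t^{2}e^{t}$, and controlling the tail by H\"older together with Theorem~\ref{ThmAdiDruetUnbound} (note that $\beta<\beta_{0}$ lets one absorb a small power $p\in(1,\beta_{0}/\beta)$ of $e^{\beta c(u_{j})|u_{j}|^{2}}-1$ into $e^{\beta_{0}c(u_{j})|u_{j}|^{2}}-1$, which is $L^{1}$-bounded, since $(e^{s}-1)^{p}\le e^{ps}-1$ for $s\ge0$), one finds $J(u_{j})=\beta c(u_{j})\|u_{j}\|_{2}^{2}+o(1)$, hence
\[
\limsup_{j\to\infty}J(u_{j})\le\sup_{0\le s\le1}\beta\,\frac{1+\alpha s}{1-\gamma\alpha s}\,s=\beta\,\frac{1+\alpha}{1-\gamma\alpha}=:V,
\]
the supremum being attained at $s=1$ since $\gamma<\tfrac1\alpha-1$. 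If instead $(u_{j})$ concentrates, the $L^{2}$-mass of the bubble vanishes, so the effective coupling on it tends to $\beta<\beta_{0}$; the exponent is then strictly subcritical there, and a Carleson-Chang/Lions computation gives no bubble contribution, i.e.\ $J(u_{j})\to J(u_{0})$, which together with $AD\ge V>0$ rules out a concentrating maximizing sequence with $u_{0}=0$. Finally, a dichotomy into two or more escaping bumps is excluded because $t\mapsto e^{t}-1$ is superadditive and $c(\cdot)$ is increasing, so merging the bumps does not decrease $J$.

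For step (ii): let $\phi\in W^{m,2}_{rad}(\mathbb{R}^{2m})$ be a (near-)extremal for $\mathcal{B}_{GN}$. Since $\|\nabla^{m}\cdot\|_{2}$ is dilation invariant on $W^{m,2}(\mathbb{R}^{2m})$ while $\|\cdot\|_{2}^{2}$ scales by $\lambda^{2m}$, for each $b\in(0,1)$ there is a rescaling $v_{b}(x)=A\phi(x/\lambda)\in\mathcal{E}$ with $\|\nabla^{m}v_{b}\|_{2}^{2}=1-b$, $\|v_{b}\|_{2}^{2}=b$, hence $\|v_{b}\|_{4}^{4}=\mathcal{B}_{GN}(1-b)b$ (up to an arbitrarily small $\mathcal{B}_{GN}$-defect). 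Keeping the first two terms of $e^{t}-1$ gives
\[
AD\ \ge\ J(v_{b})\ \ge\ \beta\,\widetilde c(b)\,b+\tfrac12\big(\beta\,\widetilde c(b)\big)^{2}\mathcal{B}_{GN}\,(1-b)\,b,\qquad\widetilde c(b)=\frac{1+\alpha b}{1-\gamma\alpha b},
\]
and a direct computation, optimizing the right-hand side near $b=1$ where it equals $V$, shows that $\sup_{0<b<1}$ of this quantity is $>V$ precisely when $\beta$ lies above the explicit constant $\beta_{*}=\tfrac{1+2\alpha-\gamma\alpha^{2}}{1+\alpha(1-\gamma)-\gamma\alpha^{2}}\cdot\tfrac{2}{\mathcal{B}_{GN}}$ of the statement; the two upper bounds on $\gamma$ are exactly what make this comparison work and keep the interval $(\beta_{*},\beta_{0})$ non-empty. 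Thus $AD>V$ for $\beta\in(\beta_{*},\beta_{0})$, so by step (i) the maximizing sequence neither vanishes nor concentrates to zero nor splits; hence $u_{0}\neq0$. Since $\beta<\beta_{0}$ and $(u_{j})$ does not concentrate, the family $\{e^{\beta c(u_{j})|u_{j}|^{2}}-1\}$ is equi-integrable (again via Theorem~\ref{ThmAdiDruetUnbound}), so by Vitali's convergence theorem $J(u_{j})\to J(u_{0})=AD$; and if $\|\nabla^{m}u_{0}\|_{2}^{2}+\|u_{0}\|_{2}^{2}<1$ one could renormalize $u_{0}$ and strictly increase $J$, a contradiction. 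Therefore $u_{0}\in\mathcal{E}$ attains $AD(2m,m,\beta,\gamma,\alpha)$.

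The main obstacle I anticipate is the quantitative matching in steps (i)--(ii): the novelty of \eqref{supUnbounded} is that vanishing remains a genuine threat to compactness even in the subcritical regime $\beta<\beta_{0}$, because the limiting coupling coefficient $\tfrac{1+\alpha}{1-\gamma\alpha}$ can exceed $\beta_{0}/\beta$, so one must pin down the vanishing level $V$ exactly and then build a test function whose energy provably overtakes it, which is what forces $\beta_{*}$ and the admissible range of $\gamma$. The technically most delicate sub-case is the intermediate regime where $\beta\,c(u_{j})$ is close to $\beta_{0}$ while $(u_{j})$ neither fully vanishes nor concentrates; there the precise interplay between $\alpha$, $\gamma$, $\mathcal{B}_{GN}$ and the Adams constant $\beta_{0}$ must be exploited, and this is presumably where the truncation argument of DelaTorre-Mancini \cite{DelaTorre} and the ideas of Chen-Lu-Zhu \cite{luluzhu20} come into play.
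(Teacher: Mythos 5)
Your overall strategy matches the paper's: exclude vanishing by comparing the limiting level $V=\beta\tfrac{1+\alpha}{1-\gamma\alpha}$ with a lower bound for $AD$ built from the Gagliardo--Nirenberg (near-)extremal (this is precisely the paper's Proposition~\ref{vanishlevel}), conclude $u_0\neq 0$, and then upgrade to strong convergence. Your step (ii) recovers the threshold $\beta_*$ and the admissible range of $\gamma$ in essentially the same way, via the MacLaurin expansion of $e^t-1$ along the one-parameter dilation family.

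However, there is a genuine gap in the final compactness step. You claim that equi-integrability alone gives $J(u_j)\to J(u_0)$ via Vitali. For $n/m=2$ the Young function is $\Phi(t)=e^{t}-1$, whose leading term is linear, so $\int\Phi(\beta c(u_j)u_j^2)$ contains the contribution $\beta c(u_j)\|u_j\|_2^2$. For radial sequences in $W^{m,2}(\mathbb{R}^{2m})$ the embedding into $L^q$ is compact only for $q>2$, not for $q=2$; the Radial Lemma decay $|u_j(x)|\lesssim |x|^{-(m-\frac12)}$ is exactly at the borderline, so $|u_j|^2$ is \emph{not} tight and $L^2$-mass can escape to infinity. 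If $\|u_j\|_2^2\to\theta_0>\|u_0\|_2^2$, then $J(u_j)\not\to J(u_0)$: the correct limit, recorded in Lemma~\ref{compacidade}~\eqref{resLemma2}, carries the extra term $\beta\varrho(\theta_0)\bigl(\theta_0-\|u_0\|_2^2\bigr)>0$. Your concluding remark ``if $\|u_0\|_{W^{m,2}}^2<1$ renormalize $u_0$ and strictly increase $J$'' does not repair this, because it presupposes $J(u_0)=AD$, which is exactly what fails when mass leaks. The paper closes the gap with a dilation: set $\tau^{2m}=\theta_0/\|u_0\|_2^2$ and $v(x)=u_0(x/\tau)$, which has $\|v\|_2^2=\theta_0$, $\|\nabla^m v\|_2^2=\|\nabla^m u_0\|_2^2$, hence stays admissible; a substitution shows $J(v)$ equals the limiting value of $J(u_j)$ plus $(\tau^{2m}-1)\int\bigl[\exp(\beta\varrho(\theta_0)u_0^2)-1-\beta\varrho(\theta_0)u_0^2\bigr]\,\mathrm{d}x$, which is strictly positive unless $\tau=1$. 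Only then does $\theta_0=\|u_0\|_2^2$ and $J(u_j)\to J(u_0)=AD$. You need this scaling step (or an equivalent argument controlling the leaked $L^2$-mass) to complete the proof. A secondary, minor point: the paper does not in fact invoke the concentration dichotomy in the subcritical proof; the limit formula of Lemma~\ref{compacidade} applies directly because the hypothesis $\theta<1$ keeps the effective coupling strictly below $\beta_0$ after the truncation used in its proof, so that machinery replaces your Carleson--Chang/Lions digression.
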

	The next result shows that the attainability of Theorem~\ref{ThmAttain} is lost for small $\beta$, highlighting that the corresponding extremal problem in \eqref{supUnbounded} remains challenging even in the subcritical case.
	\begin{theorem}\label{ThmNotAttain}
		Let $0 < \alpha < 1$, $0\le\gamma<\frac{1}{\alpha}-1$  and $n = 2m$ for $\beta  \ll \beta_{0}$, then $AD(2m, m, \beta, \alpha, \gamma) $ is not attained.
	\end{theorem}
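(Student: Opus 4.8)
The plan is to show that for $\beta$ sufficiently small the supremum $AD(2m,m,\beta,\alpha,\gamma)$ coincides with the corresponding limit along vanishing sequences, and that this limiting value is \emph{not} attained by any admissible $u$. Concretely, let $(u_j)$ be a sequence with $\|\nabla^m u_j\|_{n/m}\to 0$ and $\|u_j\|_{n/m}^{n/m}\to 1$ (for instance, fixed profile dilated so the gradient norm vanishes while the $L^{n/m}$-norm stays normalized after rescaling). Along such a sequence the weight $\big((1+\alpha\|u_j\|^{n/m})/(1-\gamma\alpha\|u_j\|^{n/m})\big)^{m/(n-m)}\to\big((1+\alpha)/(1-\gamma\alpha)\big)^{m/(n-m)}=:c(\alpha,\gamma)$, and since $\Phi(t)=O(t^{j_{m,n}})$ near $0$ with $j_{m,n}=2$ when $n=2m$, one gets
\begin{equation}\nonumber
\liminf_{j\to\infty}\int_{\mathbb{R}^{2m}}\Phi\!\left(\beta\, c(\alpha,\gamma)\,|u_j|^{\frac{n}{n-m}}\right)\,\mathrm{d}x \;\ge\; \tfrac{1}{2}\,\beta^2 c(\alpha,\gamma)^2 \sup\|u\|_4^4,
\end{equation}
where the last supremum is over $u$ with $\|u\|_{n/m}^{n/m}=1$ and $\|\nabla^m u\|_{n/m}$ small; a scaling argument identifies the optimal vanishing level with $\tfrac12\beta^2 c(\alpha,\gamma)^2\,\mathcal{B}_{GN}$ (for $n=2m$, $n/m=2$ and $n/(n-m)=2$, so the leading term is genuinely quartic and controlled by the Gagliardo--Nirenberg constant $\mathcal{B}_{GN}$ of \eqref{ConstantGN}). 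Hence $AD(2m,m,\beta,\alpha,\gamma)\ge \tfrac12\beta^2 c(\alpha,\gamma)^2\mathcal{B}_{GN}$.

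Next I would prove the matching \emph{upper} bound for small $\beta$, namely that the whole functional is dominated by its quadratic part plus a controllable remainder. Using $\Phi(t)\le \tfrac12 t^2 + C t^{2+\delta}\Phi_0(t)$ type estimates together with the subcritical Adams inequality (the $\beta<\beta_0$ case recalled in the introduction, which gives a uniform $L^{q}$-type control of $\exp$-moments on the admissible set), one shows that for any admissible $u$,
\begin{equation}\nonumber
\int_{\mathbb{R}^{2m}}\Phi\!\left(\beta\Big(\tfrac{1+\alpha\|u\|_2^2}{1-\gamma\alpha\|u\|_2^2}\Big)|u|^2\right)\mathrm{d}x \;\le\; \tfrac12\beta^2 c(\alpha,\gamma)^2\|u\|_4^4 + o(\beta^2)\quad(\beta\to 0),
\end{equation}
uniformly in $u$, where the error absorbs both the higher-order terms of $\Phi$ and the difference between the $\|u\|_2$-dependent weight and its maximal value $c(\alpha,\gamma)$. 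Since $\|u\|_4^4\le \mathcal{B}_{GN}\|\nabla^m u\|_2^2\|u\|_2^2\le \mathcal{B}_{GN}\cdot\tfrac14$ with equality in the Gagliardo--Nirenberg inequality approached only in the vanishing regime $\|\nabla^m u\|_2\to 0$, $\|u\|_2\to 1$ (where the Adams-type weight degenerates and forces $\|u\|_4\to 0$ after rescaling — here one must be careful: genuine maximizers of $\mathcal{B}_{GN}$ exist among radial functions, but they are \emph{not} admissible after imposing $\|\nabla^m u\|_2^2+\|u\|_2^2\le 1$ unless rescaled to vanish), the supremum is approached only along vanishing sequences and therefore $AD(2m,m,\beta,\alpha,\gamma)=\tfrac12\beta^2 c(\alpha,\gamma)^2\mathcal{B}_{GN}$ for $\beta\ll\beta_0$.

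Finally, non-attainment follows because the identified optimal value is a \emph{strict} supremum: for any fixed admissible $u\neq 0$ one has $\|\nabla^m u\|_2>0$, hence the weight $\big((1+\alpha\|u\|_2^2)/(1-\gamma\alpha\|u\|_2^2)\big)<c(\alpha,\gamma)$ strictly (since $\|u\|_2^2<1$ strictly), and moreover $\|u\|_4^4<\mathcal{B}_{GN}\|\nabla^m u\|_2^2\|u\|_2^2$ unless $u$ is a Gagliardo--Nirenberg extremal, which would force $\|\nabla^m u\|_2^2\|u\|_2^2=\tfrac14$, i.e.\ $\|\nabla^m u\|_2^2=\|u\|_2^2=\tfrac12$, but such a $u$ is not a Gagliardo--Nirenberg extremal (those have a fixed non-degenerate profile incompatible with the constraint). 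Combining these two strict inequalities with the quadratic upper bound shows that the functional evaluated at any admissible $u$ is strictly below $\tfrac12\beta^2 c(\alpha,\gamma)^2\mathcal{B}_{GN}=AD(2m,m,\beta,\alpha,\gamma)$, so the supremum is not attained. The main obstacle is the uniform-in-$u$ upper bound in the second step: one must quantify precisely how much the higher-order terms of $\Phi$ and the $\|u\|_2$-dependent weight contribute relative to $\beta^2$, and in particular rule out the possibility that some $u$ with \emph{moderate} (not small) gradient norm exploits the exponential tail of $\Phi$ to beat the vanishing level — this requires the sharp constant $\beta_0$ to enter, forcing the restriction $\beta\ll\beta_0$ to be genuinely quantitative.
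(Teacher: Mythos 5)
Your plan contains a fatal computational error at the very first step. For $n=2m$ you have $n/m=2$, so $j_{m,n}=2$ and the truncated exponential in \eqref{phi} is
\[
\Phi(t)=\operatorname{e}^{t}-\sum_{j=0}^{j_{m,n}-2}\frac{t^j}{j!}=\operatorname{e}^{t}-1=t+\frac{t^2}{2}+\cdots,
\]
which is $O(t)$ near $0$, \emph{not} $O(t^{j_{m,n}})=O(t^2)$ as you assert. Consequently the leading contribution to the functional along a vanishing sequence is the \emph{linear} term $\beta\,\varrho(\|u_j\|_2^2)\,\|u_j\|_2^2\to\beta\,\tfrac{1+\alpha}{1-\gamma\alpha}$, not the quartic term $\tfrac12\beta^2 c^2\|u\|_4^4$. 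This is exactly what Proposition~\ref{vanishlevel} records for $n/m=2$: the vanishing level is at least $\beta\,\tfrac{1+\alpha}{1-\gamma\alpha}$, which is $O(\beta)$ as $\beta\to 0$. Your proposed identity $AD(2m,m,\beta,\alpha,\gamma)=\tfrac12\beta^2 c(\alpha,\gamma)^2\mathcal{B}_{GN}$ therefore cannot be right (it is $o(\beta)$), and the matching two-sided bound you sketch — the entire second step — collapses, since the functional is not ``dominated by its quadratic part.''

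Beyond this, even the structural plan differs substantially from the paper's and would be much harder to execute. You would need a sharp \emph{closed-form} identification of $AD$ for small $\beta$ (your second and third steps), and then a strict-inequality argument ruling out every admissible $u$ — neither of which you prove, only assert. Your final strict-inequality step is also confused: it is not true that equality in Gagliardo--Nirenberg ``forces $\|\nabla^m u\|_2^2=\|u\|_2^2=\tfrac14$,'' nor that GN extremals are incompatible with the constraint (they can be rescaled onto the sphere $\|\nabla^m u\|_2^2+\|u\|_2^2=1$). The paper avoids all of this by an Ishiwata/Nguyen-type variational obstruction: it takes the one-parameter dilation family $w_t=H_t(v)/\eta_v(t)$ through any hypothetical maximizer $v$, notes that $\tfrac{d}{dt}F(w_t)\big|_{t=1}=0$ is a \emph{necessary} condition, and then uses the Adachi--Tanaka type inequality \eqref{adachitanakaine} to show that for $\beta$ small this derivative is strictly negative for \emph{every} $v$ on the sphere. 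That argument requires no identification of the value of $AD$, no uniform upper bound, and no case analysis of GN extremals; it only uses the explicit expansion of $\Phi$ into its Taylor series and the estimate $\|v\|_{2k}^{2k}\lesssim k(\beta/\tilde\beta)^{k-1}\|\nabla^m v\|_2^{2k-4}\|v\|_2^2\|\nabla^m v\|_2^2$ for all $k\ge 2$. I would strongly recommend adopting that approach: your route, even after fixing the $\Phi$-expansion, seems to require significantly more than the theorem asks for.
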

For the critical range $\beta = \beta_0$, we also establish the following result.
	\begin{theorem}\label{thm-attainn=4}
		Let $0 < \alpha < 1$ and $0\leq \gamma <\min\{\frac{1}{\alpha}-1, \frac{1}{\alpha^2+2\alpha}, \gamma_0\}$ for some $\gamma_0>0$, then there exists $\alpha_0\in (0,1)$ such that $AD(4, 2, \beta_0, \alpha, \gamma) $ is attained for any $0\le \alpha<\alpha_0$.
	\end{theorem}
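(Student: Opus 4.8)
The plan is to obtain $AD(4,2,\beta_0,\alpha,\gamma)$ as the increasing limit of the subcritical suprema $AD(4,2,\beta_k,\alpha,\gamma)$ with $\beta_k\uparrow\beta_0$, to take maximizers $u_k$ of the latter (available by Theorem~\ref{ThmAttain}, whose hypotheses for $n=2m=4$ are contained in those assumed here), and to carry out a blow-up analysis on $(u_k)$, excluding both the vanishing and the concentration scenarios by means of a lower bound for the supremum. Throughout one uses that for $n=4$, $m=2$ one has $\nabla^2u=\Delta u$, $\Phi(t)=e^t-1$, $\frac{n}{n-m}=2$ and $\frac{m}{n-m}=1$; put $g(s)=\frac{1+\alpha s}{1-\gamma\alpha s}$, $c(u)=\beta_0 g(\|u\|_2^2)$ and $J^\beta(u)=\int_{\mathbb{R}^4}\big(e^{\beta g(\|u\|_2^2)|u|^2}-1\big)\,\ud x$. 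First I would fix $\beta_k\uparrow\beta_0$. The left endpoint of the admissible $\beta$-interval in Theorem~\ref{ThmAttain} equals $\frac{1+2\alpha-\gamma\alpha^2}{(1+\alpha)(1-\gamma\alpha)}\cdot\frac{2}{\mathcal{B}_{GN}}$, which tends to $2/\mathcal{B}_{GN}<\beta_0$ as $\alpha\to0$; hence for $\alpha$ small and $k$ large Theorem~\ref{ThmAttain} yields a maximizer $u_k\in W^{2,2}(\mathbb{R}^4)$ of $J^{\beta_k}$ with $\|\Delta u_k\|_2^2+\|u_k\|_2^2=1$ (the constraint is active by monotonicity of $J^{\beta_k}$ under dilation). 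By the truncation and rearrangement arguments of \cite{DelaTorre,luluzhu20} one may take $u_k\ge0$ radially nonincreasing, and the Euler--Lagrange equation reads
\[
\Delta^2u_k+u_k=\frac1{\lambda_k}\big(c_k u_k e^{c_ku_k^2}+\theta_k u_k\big),\qquad c_k=\beta_kg(\|u_k\|_2^2),\quad \theta_k=\beta_kg'(\|u_k\|_2^2)\!\int_{\mathbb{R}^4}\!u_k^2e^{c_ku_k^2}\,\ud x\ \ge 0,
\]
the bounded nonlocal term $\theta_ku_k$ coming from the dependence of the exponential coefficient on $\|u\|_2^2$. Along a subsequence $u_k\rightharpoonup u_0$ in $W^{2,2}$, $\|u_k\|_2^2\to\tau$, $c_k\to c_\infty:=\beta_0 g(\tau)$, and $\lambda_k$ stays bounded away from $0$ and $\infty$.

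Next I would set up the trichotomy. By monotone convergence $AD(4,2,\beta_k,\alpha,\gamma)\to AD(4,2,\beta_0,\alpha,\gamma)$, and along a further subsequence exactly one of the following holds: (i) $u_k\to u_0$ strongly in $W^{2,2}(\mathbb{R}^4)$; (ii) \emph{vanishing}: $\|\Delta u_k\|_2\to0$; (iii) \emph{concentration}: $|\Delta u_k|^2\,\ud x$ charges the origin. In case (i) the limit $u_0$ is admissible with the constraint active, $c(u_0)=c_\infty$, and a Vitali-type argument based on the Adams inequality \eqref{ruf-sani} gives $AD(4,2,\beta_0,\alpha,\gamma)=\lim_kJ^{\beta_k}(u_k)=\int_{\mathbb{R}^4}(e^{c_\infty u_0^2}-1)\,\ud x$; since the supremum is positive (see the lower bound below), $u_0\not\equiv0$, and it is the sought extremal. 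It remains to exclude (ii) and (iii), and for that I would first record a lower bound: dilating a radial Gagliardo--Nirenberg extremal $w$ to $u=a\,w(b\,\cdot)$ so that $\|\Delta u\|_2^2=1-s$, $\|u\|_2^2=s$, $\|u\|_4^4=\mathcal{B}_{GN}(1-s)s$, and using $\Phi(t)\ge t+\tfrac12 t^2$, one gets for every $s\in(0,1)$
\[
AD(4,2,\beta_0,\alpha,\gamma)\ \ge\ \beta_0\,g(s)\,s+\tfrac12\,\beta_0^2\,\mathcal{B}_{GN}\,g(s)^2\,(1-s)\,s .
\]

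To rule out vanishing I would argue as follows. If (ii) holds then $\|u_k\|_2^2\to1$, $c_k\to\beta_0\frac{1+\alpha}{1-\gamma\alpha}$, and $\|u_k\|_q\to0$ for every $q>2$ by Gagliardo--Nirenberg; expanding $J^{\beta_k}(u_k)=\sum_{j\ge1}\frac{c_k^{\,j}}{j!}\|u_k\|_{2j}^{2j}$ and using the radial-monotone structure to control the tail of the series uniformly, one obtains $\limsup_k J^{\beta_k}(u_k)\le\beta_0\frac{1+\alpha}{1-\gamma\alpha}$. On the other hand, differentiating the right-hand side of the displayed lower bound at $s=1$ shows it strictly exceeds its value $\beta_0\frac{1+\alpha}{1-\gamma\alpha}$ there for $s$ slightly below $1$ precisely when $\beta_0\mathcal{B}_{GN}(1+\alpha)^2>2(1+2\alpha-\gamma\alpha^2)$; at $\alpha=0$ this is $\beta_0\mathcal{B}_{GN}>2$, which holds in $\mathbb{R}^4$ (with $\beta_0=32\pi^2$; this is the constant comparison underlying \cite{luluzhu20}), so after shrinking $\alpha_0$ the strict inequality persists for all $\alpha<\alpha_0$ --- here the hypotheses $\gamma<\frac1\alpha-1$ and $\gamma<\frac1{\alpha^2+2\alpha}$ keep all the relevant quantities positive. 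Since $\lim_k J^{\beta_k}(u_k)=AD(4,2,\beta_0,\alpha,\gamma)$, this forces $AD(4,2,\beta_0,\alpha,\gamma)\le\beta_0\frac{1+\alpha}{1-\gamma\alpha}$, contradicting the strict lower bound, so (ii) is impossible.

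Finally, to exclude concentration --- the main obstacle --- I would run the blow-up scheme of Chen--Lu--Zhu for the critical Adams inequality in $\mathbb{R}^4$ \cite{luluzhu20}, combined with the DelaTorre--Mancini truncation \cite{DelaTorre} that restores the fourth-order comparison and regularity tools unavailable for $H^2$. Concretely, one proves $\lambda_k\to\lambda_\infty>0$, identifies the blow-up scale $\epsilon_k\to0$ via $\epsilon_k^4=\lambda_k c_k^{-2}u_k(0)^{-2}e^{-c_ku_k(0)^2}$, shows that the normalized profiles $w_k(y)=2c_ku_k(0)\big(u_k(\epsilon_ky)-u_k(0)\big)$ converge in $C^3_{\mathrm{loc}}(\mathbb{R}^4)$ to the standard entire solution of a Liouville-type fourth-order equation with $\int_{\mathbb{R}^4}e^{w}<\infty$ (Lin's classification), checks that $\theta_ku_k$ contributes only to lower order after rescaling, and controls the neck region through the truncated comparison functions; a Pohozaev-type identity then produces the sharp concentration level
\[
AD(4,2,\beta_0,\alpha,\gamma)\ \le\ \int_{\mathbb{R}^4}\big(e^{c_\infty u_0^2}-1\big)\,\ud x+\Lambda(c_\infty,u_0)=:\mathcal{C}_\infty ,
\]
where $\Lambda$ is an explicit Carleson--Chang-type quantity which, when $\alpha=0$ and $u_0\equiv0$, reduces to the Lu--Yang/Chen--Lu--Zhu level $\frac{\pi^2}{6}e^{5/3}$ for critical Adams in $\mathbb{R}^4$. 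Sharpening the previous lower bound by a Moser-type \emph{truncated} test function centred at the origin, one then verifies $AD(4,2,\beta_0,\alpha,\gamma)>\mathcal{C}_\infty$ for $\gamma<\gamma_0$ and, after a last shrinking of $\alpha_0$, for all $\alpha<\alpha_0$ --- a contradiction. Hence (iii) is impossible, (i) prevails, and $AD(4,2,\beta_0,\alpha,\gamma)$ is attained. The genuinely hard part is this last step: carrying the fourth-order blow-up analysis through the nonlocal perturbation $\theta_ku_k$, making the truncation compatible with the self-referential Adimurthi--Druet coefficient $g(\|u\|_2^2)$ (which is magnified precisely in the vanishing regime), and pinning down $\mathcal{C}_\infty$ sharply enough that the truncated bubble strictly beats it --- which is exactly why $\alpha$ and $\gamma$ must be taken small.
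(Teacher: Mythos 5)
The proposal follows essentially the same route as the paper: approximate $AD(4,2,\beta_0,\alpha,\gamma)$ by subcritical maximizers $u_j$ of $AD(4,2,\beta_j,\alpha,\gamma)$ (available from Theorem~\ref{ThmAttain} once $\alpha$ is small enough that the left endpoint of the admissible $\beta$-interval is below $\beta_0$), write the Euler--Lagrange equation, split on whether $c_j=\max u_j$ stays bounded, run a Chen--Lu--Zhu-type blow-up with DelaTorre--Mancini biharmonic truncations in the unbounded case, rule out vanishing by a Gagliardo--Nirenberg lower bound (your expansion at $s=1$, giving the condition $\beta_0\mathcal B_{GN}(1+\alpha)^2>2(1+2\alpha-\gamma\alpha^2)$, is exactly the computation behind Proposition~\ref{vanishlevel}), and finally beat the sharp concentration level with a truncated Moser bubble for small $\alpha$. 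Two points deserve a correction. First, the statement that the nonlocal term $\theta_ku_k$ (the paper's $\mu_ju_j$) ``contributes only to lower order after rescaling'' is true inside the blow-up region, where it disappears from the limit Liouville equation for $z$, but it is \emph{not} negligible globally: $\mu_j\to\alpha(1+\gamma)$, and the rescaled sequence $c_ju_j$ converges to a Green function $G$ of $\Delta^2+\kappa_0$ with $\kappa_0=1-\alpha(1+\gamma)$, not of $\Delta^2$. It is the regular part $K_0$ of \emph{this} Green function that enters the sharp concentration level $\frac{\pi^2}{6}e^{5/3+32\pi^2K_0}$, and $\|G\|_2^2$ enters the test-function normalization; treating the term as fully lower-order would produce the wrong level and the test-function comparison would not close. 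Second, the paper does not invoke a Pohozaev-type identity for the concentration upper bound: Lemma~\ref{cAD<e} is a Ruf--Li/Carleson--Chang-type estimate, applied to the biharmonically truncated and renormalized sequence $\tilde u_j$, combined with the Lu--Yang bound on $H^2_0(B_R)$; the DelaTorre--Mancini truncation is precisely what makes this comparison available. With those two corrections your outline matches the paper's argument.
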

The techniques employed in this paper, inspired by the work of Adimurthi-Druet \cite{AdimurthiDruet2004}, combine blow-up analysis with sharp functional inequalities, allowing us to capture the delicate balance between concentration and vanishing phenomena.  The technique of blow-up analysis used in this paper follows a similar approach to the one developed by Chen-Lu-Zhu \cite{luluzhu20}, who studied the critical Adams inequality in $\mathbb{R}^4$ and the recent work by DelaTorre-Mancini \cite{DelaTorre}. These results contribute to the growing body of work on the qualitative behavior of solutions to PDEs in the critical regime and open up new directions for further research in the analysis of nonlinear equations on unbounded domains.
	\section{Preliminary results}
	Let us denote
	\begin{equation}
		F_{n, m, \beta, \alpha, \gamma}(u) = \int_{\mathbb{R}^{n}} \Phi\left(\beta \left(\frac{1+\alpha \|u\|_{\frac{n}{m}}^{\frac{n}{m}}}{1-\gamma\alpha\|u\|_{\frac{n}{m}}^{\frac{n}{m}}}\right)|u|^{\frac{n}{n-m}}\right)\mathrm{d}x,\;\;\mbox{for any}\;\; u \in W^{m,2}(\mathbb{R}^{n}).
	\end{equation}
	Set $u^{\#} = \mathcal{F}^{-1}\{(\mathcal{F}(u))^{*}\}$,  where $\mathcal{F}$ is the Fourier transform on $\mathbb{R}^{2m}$ and  $\mathcal{F}^{-1}$ its inverse,   and $u^{*}$ denotes the Schwarz symmetrization of $u$. Using the properties of Fourier rearrangement \cite{Lenzmann} we can write
	\begin{align*}
		\begin{cases}
			\|\nabla^{m} u^{\#}\|_2 \leq \|\nabla^{m} u \|_2; \\
			\|u^{\#}\|_{2} = \|u\|_{2}, ~~ \|u^{\#}\|_{q} \geq \|u\|_{q}, (q>2).
		\end{cases}
	\end{align*}
	Denoting $\mathcal{H} := \{u \in W^{m,2}(\mathbb{R}^{2m}) \;:\;   \|u\|_{2}^{2}+ \|\nabla^{m} u\|_{2}^{2} = 1\}$. It follows that  
	\begin{equation}\label{Radializar}
	\underset{u \in \mathcal{H}}{\sup }F_{2m, m, \beta, \alpha, \gamma}(u) =  \underset{u \in \mathcal{H} \cap W^{m, 2}_{rad}(\mathbb{R}^{2m})}{\sup }F_{2m, m, \beta, \alpha, \gamma}(u).
	\end{equation}
	We also mention the following  Adams type inequality, which is analogous to the inequality proved by Adachi-Tanaka in \cite{adachitanaka2000}, in a scaling invariant form which will be useful for us to proof Theorem \ref{ThmNotAttain}. By Fontana-Morpurgo \cite{FontanaMorpurgo2018} we have the following Adachi-Tanaka type inequality.

	Let $n>m\geq 2$  be integers. Then given $\beta\in (0,\beta_0)$  there exists $C_{\beta,m,n}=C(\beta,m,n)$ depending only on $\beta, \ m$ and $n$ such that
	\begin{equation}\label{adachitanakaine}
		\int_{\mathbb{R}^n} \Phi\left({\beta  \left(\frac{|u|}{\|\nabla^m u\|_{\frac{n}{m}}}\right)^{\frac{n}{n-m}}}\right)\ \ud x \leq C_{\beta,m,n} \frac{\|u\|_{\frac{n}{m}}^{\frac{n}{m}}}{\|\nabla^m u\|_{\frac{n}{m}}^{\frac{n}{m}}}, \quad \forall\ u\in W^{m,{\frac{n}{m}}}(\mathbb{R}^n)\setminus \{0\},
	\end{equation}
	where  $\Phi$ was defined in \eqref{phi}, $\beta_0$ was given  in \eqref{Adams-best-constant}. Moreover, for $\beta\in [\beta_0,\infty)$  inequality \eqref{adachitanakaine} fail, i.e., there exists $(u_i)\subset W^{m,{n/m}}_{rad}(\mathbb{R}^n)$ such that
	\begin{equation}\label{toinfinit}
		\frac{\|\nabla^m u_i\|_{\frac{n}{m}}^{\frac{n}{m}}}{\|u_i\|_{\frac{n}{m}}^{\frac{n}{m}}}\int_{\mathbb{R}^n} \Phi\left({\beta  \left(\frac{|u_i|}{\|\nabla^m u_i\|_{\frac{n}{m}}}\right)^{\frac{n}{n-m}}}\right)\ \ud x\rightarrow \infty.
	\end{equation}
	
Now we state the following Radial Lemma that can be easily extended from \cite{kavian}, Lemma~1.1, Chapter~6, which will be useful in our analysis
	\begin{lemma}\label{radiallemma}
		If $u\in W^{1,\frac{n}{m}}_{rad}(\mathbb{R}^n)$, then
		$$
		|u(x)|\leq \left(\frac{1}{m\sigma_n}\right)^{\frac{m}{n}}\frac{1}{|x|^{(\frac{n-1}{n})m}} \|u\|_{W^{1,\frac{n}{m}}} \quad  \mbox{ a.e } x \in  \ \mathbb{R}^n, 
		$$
		where $\sigma_n$ is the volume of unit ball in $\mathbb{R}^{n}$.
	\end{lemma}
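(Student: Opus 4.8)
The plan is to derive a Strauss-type pointwise bound via the fundamental theorem of calculus followed by Hölder's inequality, and then to check that the geometric constant comes out exactly as stated. Write $p=n/m$; note $p>1$ since $m<n$, and for a radial $u(x)=u(r)$, $r=|x|$, recall that $|\nabla u(x)|=|u'(r)|$ a.e. I would first prove the estimate for radial $u\in C_0^\infty(\mathbb{R}^n)$ and then pass to a general $u\in W^{1,p}_{rad}(\mathbb{R}^n)$ by density: if radial $u_j\in C_0^\infty$ converge to $u$ in $W^{1,p}$, then along a subsequence $u_j\to u$ a.e. and $\|u_j\|_{W^{1,p}}\to\|u\|_{W^{1,p}}$, so the pointwise inequality survives in the limit for a.e. $x$. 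For radial $u\in C_0^\infty(\mathbb{R}^n)$ and $r>0$, the fundamental theorem of calculus gives
\[
|u(r)|^p=-\int_r^\infty\frac{d}{ds}|u(s)|^p\,ds=-p\int_r^\infty|u(s)|^{p-2}u(s)\,u'(s)\,ds\le p\int_r^\infty|u(s)|^{p-1}|u'(s)|\,ds.
\]

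Since $s\ge r$ gives $(s/r)^{n-1}\ge1$, I would insert this factor and then apply Hölder's inequality with exponents $p$ and $p'=p/(p-1)$, splitting $s^{n-1}|u|^{p-1}|u'|=(s^{n-1}|u|^p)^{1/p'}(s^{n-1}|u'|^p)^{1/p}$, to obtain
\[
|u(r)|^p\le\frac{p}{r^{n-1}}\left(\int_r^\infty s^{n-1}|u(s)|^p\,ds\right)^{1/p'}\left(\int_r^\infty s^{n-1}|u'(s)|^p\,ds\right)^{1/p}.
\]
Passing to polar coordinates, $\omega_{n-1}\int_r^\infty s^{n-1}|u(s)|^p\,ds\le\|u\|_p^p$ and $\omega_{n-1}\int_r^\infty s^{n-1}|u'(s)|^p\,ds\le\|\nabla u\|_p^p$, where $\omega_{n-1}$ is the surface area of the unit sphere, hence $|u(r)|^p\le\frac{p}{\omega_{n-1}r^{n-1}}\|u\|_p^{p-1}\|\nabla u\|_p$.

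To finish, I would use the identity $\omega_{n-1}=n\sigma_n$ together with $p=n/m$, so that $p/\omega_{n-1}=1/(m\sigma_n)$, and then apply Young's inequality $ab\le a^{p'}/p'+b^p/p$ with $a=\|u\|_p^{p-1}$, $b=\|\nabla u\|_p$, which yields $\|u\|_p^{p-1}\|\nabla u\|_p\le\frac{1}{p'}\|u\|_p^p+\frac1p\|\nabla u\|_p^p\le\|u\|_p^p+\|\nabla u\|_p^p=\|u\|_{W^{1,n/m}}^{n/m}$, using $1/p,1/p'\le1$. This gives $|u(r)|^p\le\frac{1}{m\sigma_n r^{n-1}}\|u\|_{W^{1,n/m}}^{n/m}$, and taking the $(m/n)$-th power produces the stated inequality. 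There is no deep obstacle here; the only points needing care are the justification of the fundamental theorem of calculus and the decay $s^{n-1}|u(s)|^p\to0$ as $s\to\infty$ (immediate for compactly supported $u$, then handled by the density step), together with keeping precise track of $\omega_{n-1}=n\sigma_n$ so that the constant matches exactly.
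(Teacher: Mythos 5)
Your argument is correct and complete. The paper does not actually prove this lemma; it simply cites Kavian (Lemma~1.1, Chapter~6) and asserts the statement ``can be easily extended'' from there. Your derivation — write $|u(r)|^p$ as $-\int_r^\infty \frac{d}{ds}|u(s)|^p\,ds$, insert the factor $(s/r)^{n-1}\geq 1$, apply H\"older with exponents $p$ and $p'$, translate via polar coordinates and $\omega_{n-1}=n\sigma_n$, and finish with Young's inequality together with $1/p,1/p'\leq 1$ — is precisely the standard Strauss-type proof that Kavian's argument carries out, and the density step from smooth radial compactly supported functions to general $u\in W^{1,n/m}_{rad}(\mathbb{R}^n)$ is handled properly (a.e.\ pointwise convergence along a subsequence together with norm convergence). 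The constant computation $p/\omega_{n-1}=1/(m\sigma_n)$ and the exponent $(n-1)/p=\tfrac{n-1}{n}m$ match the statement exactly. One trivial slip: in your closing remark you mention needing $s^{n-1}|u(s)|^p\to 0$, but the FTC step only requires $|u(s)|^p\to 0$ as $s\to\infty$, which is immediate for compactly supported $u$; this has no effect on the proof.
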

	\begin{remark}
		It is important  to mention that from the proof of \eqref{adachitanakaine} we can see that the constant  $C_{\beta,m,n}$ tend exponentially to infinite when $\beta$ tends to $\beta_0$.
	\end{remark}
	We'll  give an estimate to the constant $\mathcal{B}_{GN}$ in \eqref{GNestimates} for the even case, which will be important in our existence of extremals result. Indeed, let us consider the same function introduced by D. Adams in \cite{Adams1988}.
	Let $\varphi(t)\in C^{\infty}[0,1]$ such that
	\begin{align*}
		&\varphi(0)=\varphi'(0)=\cdots=\varphi^{(m-1)}(0)=0,\\
		&\varphi(1)=\varphi'(1)=1, \qquad \varphi''(1)=\varphi'''(1)=\cdots=\varphi^{(m-1)}(1)=0.
	\end{align*}
	For $0<\varepsilon<\tfrac{1}{2}$, we define
	$$
	H(t)=
	\begin{cases}
		\varepsilon\varphi(\frac{1}{\varepsilon} t), & \mbox{ if } \quad t\leq \varepsilon\\
		t, & \mbox{ if } \quad \varepsilon \leq t \leq 1-\varepsilon\\
		1-\varepsilon \varphi(\frac{1}{\varepsilon}(1-t)), & \mbox{ if } \quad 1-\varepsilon \leq t\leq 1\\
		1, & \mbox{ if } \quad 1\leq t,\\
	\end{cases}
	$$
	and 
	$$
	\psi_\lambda(r)=\left(\ln(\lambda)\right)^{1/2} H\left((\ln(\lambda))^{-{1}}\ln \frac{1}{r}\right).
	$$ 
	For all $\lambda>1$, $\psi_\lambda(|x|)$ is defined on $B_{1}(0)$ and can be extended for whole space $ W_{0,rad}^{m,2}(\mathbb{R}^{2m})$. More than that, $\psi_\lambda(|x|)=\left(\ln\lambda\right)^{1/2}$ for $|x|\leq 1/\lambda$ and, as proved by D. Adams in  \cite{Adams1988}, we have
	$$
	\|\nabla^m \psi_\lambda\|_{2}^{2}=(2m)^{-1}\beta_0 A_{\lambda,\varepsilon},
	$$
	where
	$$
	A_{\lambda,\varepsilon}\leq\left[ 1+ 2\varepsilon\left(\|\Phi'\|_\infty +O\left((\ln\lambda)^{-1}\right)\right)^{2}  \right].
	$$
	Now, computing explicitly $\|\psi_\lambda\|_{2}^{2}$, we obtain
	\begin{align*}
		\|\psi_\lambda\|_{2}^{2} &\leq \int_{B_{1}(0)}|\psi_\lambda(x)|^{2} \mathrm{d}x \\
		&\leq \int_{B_{1}(0)}|(\ln\lambda)^{1/2}|^{2}\mathrm{d}x	\\
		&\leq \int_{B_{1}(0)} \ln\lambda\mathrm{d}x \\
		&= \frac{\omega_{2m-1}}{2m} \ln\lambda
	\end{align*}
	Now, with the aim to  majorate the Gagliardo-Nirenberg constant in $W^{m, 2}(\mathbb{R}^{4}) $, we compute
	\begin{align*}
		\|\psi_\lambda\|_{4}^{4} &\geq\int_{B_{1/\lambda}}|\psi_\lambda(|x|)|^{4}\mathrm{d}x \\
		&=  2\ln_\lambda\frac{\omega_{2m-1}}{2m} \frac{1}{\lambda^{2m}}
	\end{align*}
	where, $\frac{\pi^{m}}{m!}\left(\frac{1}{\lambda}\right)^{m}$ is the volume of the $2m$-ball of radius $1/\lambda$. Moreover this, notice that $\|\psi_\lambda\|_{2}^{2} = \pi^{m}(m!)^{-1}\ln\lambda$
	Therefore
	\begin{align*}
		\mathcal{B}_{GN} &\geq \frac{\|\psi_\lambda\|^{4}_{4}}{\|\nabla^{m}\psi_\lambda\|_{2}^{2}\|\psi_\lambda\|_{2}^{2}} \\
		&=\frac{2\ln\lambda\frac{\omega_{2m-1}}{2m} \frac{1}{\lambda^{2m}}}{\left(\pi^{m}(m!)^{-1}\ln\lambda\right)\left((2m)^{-1}\beta_0 A_{\lambda,\varepsilon}\right)}  
	\end{align*}
	and $\omega_{2m-1} = \frac{2\pi^{m}}{(m-1)!}$, then 
	\begin{align*}
		\mathcal{B}_{GN} &\geq \frac{2m}{\lambda^{2m}\beta_{0}A_{\lambda,\varepsilon}}
	\end{align*}
	if we take limit as $\varepsilon \rightarrow 0$ , we got $\limsup A_{\lambda,\varepsilon} \leq 
	1$ and taking $\lambda \rightarrow 1^+$, we obtain
	\begin{align}\label{GNestimates}
		\mathcal{B}_{GN} \geq \frac{2m}{\beta_{0}}.
	\end{align}
	\section{Adams-Adimurthi-Druet type  inequality for entire space}
	In this section we will prove the Adams inequality of Adimurthi-Druet type  for whole space $\mathbb{R}^n$  stated in Theorem~\ref{ThmAdiDruetUnbound}. 
\subsection{Proof of Theorem \ref{ThmAdiDruetUnbound}}
 We will proceed following a scaling argument  in  \cite{NguyenVanHoang2019}.  For $\tau > 0$ and $u\in W^{m,\frac{n}{m}}(\mathbb{R}^n)$, by setting $u_{\tau}(x)=u(\tau^{\frac{1}{n}}x)$, we have 
 $$\|\nabla^{m}u_{\tau}(x)\|_{\frac{n}{m}}^{\frac{n}{m}} = \|\nabla^{m}u(x)\|_{\frac{n}{m}}^{\frac{n}{m}}\;\; \mbox{and} \;\; \|u_{\tau}(x)\|_{\frac{n}{m}}^{\frac{n}{m}} = \tau^{-1}\|u(x)\|_{\frac{n}{m}}^{\frac{n}{m}}.$$ 
 Hence
 \begin{equation}\label{CompareSup}
	\begin{aligned}
		K_{\tau} &:=  \sup_{\underset{\|\nabla^m u\|_{\frac{n}{m}}^{\frac{n}{m}} + \tau \|u\|_{\frac{n}{m}}^{\frac{n}{m}} \leq 1}{u\in W^{m,\frac{n}{m}}(\mathbb{R}^n)}} \int_{\mathbb{R}^n}\Phi\left(\beta_0 |u|^{\frac{n}{n-m}}\right) \mathrm{d}x \\
		&= \frac{1}{\tau} \sup_{\underset{\|\nabla^{m}u\|_{\frac{n}{m}}^{\frac{n}{m}}+\|u\|_{\frac{n}{m}}^{\frac{n}{m}} \leq 1}{u\in W^{m,\frac{n}{m}}(\mathbb{R}^n)}} \int_{\mathbb{R}^n}\Phi\left(\beta_0 |u|^{\frac{n}{n-m}}\right) \mathrm{d}x \\
		&= \frac{K_1}{\tau} < \infty,
	\end{aligned}	
	\end{equation}
which is finite by a result that can be found in \cite[Theorem~1-(b)]{FontanaLuigi2015SAaM}.  Since $\alpha<1$ and $\gamma<\frac{1}{\alpha}-1$, we can chose $0<\tau = 1 -\alpha<1$, $0<\mu = \tau -\gamma\alpha<1$. Thus, for $u \in W^{m,\frac{n}{m}}(\mathbb{R}^n)$ with $\|\nabla^m u\|_{\frac{n}{m}}^{\frac{n}{m}} + \|u\|_{\frac{n}{m}}^{\frac{n}{m}} \leq 1$, defining
	$$
	v := \frac{u}{(\|\nabla^m u\|_{\frac{n}{m}}^{\frac{n}{m}} + \tau \|u\|_{\frac{n}{m}}^{\frac{n}{m}})^{\frac{m}{n}}} \quad \mbox{and} \quad w := \frac{v}{(\|\nabla^m v\|_{\frac{n}{m}}^{\frac{n}{m}} + \mu \|v\|_{\frac{n}{m}}^{\frac{n}{m}})^{\frac{m}{n}}}
	$$
	 we have $\|\nabla^m v\|_{\frac{n}{m}}^{\frac{n}{m}} + \tau \|v\|_{\frac{n}{m}}^{\frac{n}{m}} = 1$ and $\|\nabla^m w\|_{\frac{n}{m}}^{\frac{n}{m}} + \mu \|w\|_{\frac{n}{m}}^{\frac{n}{m}} = 1$.
	Hence,  \eqref{CompareSup} yields
	\begin{align}\label{EstIntByConst}
		\int_{\mathbb{R}^n}\Phi\left(\beta_0 |w|^{\frac{n}{n-m}}\right) \mathrm{d}x\leq \frac{K_1}{\mu}.
	\end{align}
	Notice that  by the choice of $\tau$ and $\mu$, we get 
\begin{equation*}
		\begin{cases}
		|u|^{\frac{n}{n-m}} \leq \left(1-\alpha\|u\|_{\frac{n}{m}}^{\frac{n}{m}}\right)^{\frac{m}{n-m}}|v|^{\frac{n}{n-m}} \\
		\\
		|v|^{\frac{n}{n-m}} \leq \left(1-\gamma\alpha\|v\|_{\frac{n}{m}}^{\frac{n}{m}}\right)^{\frac{m}{n-m}}|w|^{\frac{n}{n-m}}. 
		\end{cases}
\end{equation*}
	Thus
	\begin{equation}\label{AdiDruetEst}
		\begin{cases}
			\left(\frac{1+\alpha\|u\|_{\frac{n}{m}}^{\frac{n}{m}}}{
				1-\gamma\alpha\|u\|_{\frac{n}{m}}^{\frac{n}{m}}}\right)^{\frac{m}{n-m}}|u|^{\frac{n}{n-m}} &\leq \left(\frac{1-\alpha^2\|u\|_{\frac{n}{m}}^{2\frac{n}{m}}}{
				1-\gamma\alpha\|u\|_{\frac{n}{m}}^{\frac{n}{m}}}\right)^{\frac{m}{n-m}} |v|^{\frac{n}{n-m}} \\
			\\
			\left(\frac{1-\alpha^2\|u\|_{\frac{n}{m}}^{2\frac{n}{m}}}{
				1-\gamma\alpha\|u\|_{\frac{n}{m}}^{\frac{n}{m}}}\right)^{\frac{m}{n-m}} |v|^{\frac{n}{n-m}} &\leq\left(\frac{(1-\alpha^2\|u\|_{\frac{n}{m}}^{2\frac{n}{m}}) (1-\gamma\alpha\|v\|_{\frac{n}{m}}^{\frac{n}{m}})}{
				1-\gamma\alpha\|u\|_{\frac{n}{m}}^{\frac{n}{m}}}\right)^{\frac{m}{n-m}}  |w|^{\frac{n}{n-m}} 
			\\
			& \leq \left(\frac{(1-\alpha^2\|u\|_{\frac{n}{m}}^{2\frac{n}{m}}) (1-{\gamma\alpha}\|u\|_{\frac{n}{m}}^{\frac{n}{m}})}{
				1-\gamma\alpha\|u\|_{\frac{n}{m}}^{\frac{n}{m}}}\right)^{\frac{m}{n-m}}  |w|^{\frac{n}{n-m}}
			\\
			& \leq|w|^{\frac{n}{n-m}},
		\end{cases}
	\end{equation}
	where we have used that
	$$
	\|v\|_{\frac{n}{m}}^{\frac{n}{m}}=\frac{1-\|\nabla^m v\|_{\frac{n}{m}}^{\frac{n}{m}}}{\tau}=\frac{1-\frac{\|\nabla^m u\|_{\frac{n}{m}}^{\frac{n}{m}}}{\|\nabla^m u\|_{\frac{n}{m}}^{\frac{n}{m}} + \tau \|u\|_{\frac{n}{m}}^{\frac{n}{m}}}}{\tau}=\|u\|_{\frac{n}{m}}^{\frac{n}{m}}.
	$$
Thus,  \eqref{AdiDruetEst} with \eqref{EstIntByConst} give us the desired result.

\section{Concentration-compactness-vanishing alternative: subcritical case}
	In this section, unless otherwise mentioned, we assume $ n \geq 2m $, $ 0 \leq \alpha < 1 $, $ 0 < \gamma < \frac{1}{\alpha} - 1 $ for $ \alpha > 0 $, and the subcritical condition $ 0<\beta<\beta_0$. By simplicity, for $(u_j)$ sequence in $ W^{m,\frac{n}{m}}(\mathbb{R}^n)$ with $\|u_j\|_{\frac{n}{m}}\le 1$,  we also will denote 
	\begin{equation}\label{zetaAdiDruet-Def}
		\zeta_{j} =\zeta_{j}(u_j,m,n,\alpha,\gamma):= \left(\frac{1+\alpha\|u_j\|_{\frac{n}{m}}^{\frac{n}{m}}}{1-\gamma\alpha\|u_j\|_{\frac{n}{m}}^{\frac{n}{m}}}\right)^{\frac{m}{n-m}}.
	\end{equation}
\subsection{Concentrating sequences}
	\begin{lemma}
	Let $(u_j) \subset W_{rad}^{m,\frac{n}{m}}(\mathbb{R}^n)$ with $ \|\nabla^{m}u_j\|_{\frac{n}{m}}^{\frac{n}{m}}+\|u_j\|_{\frac{n}{m}}^{\frac{n}{m}}\leq 1$ be  a concentrated sequence at the origin, \textit{i.e.}  
		\begin{equation}\label{hipConcent}
			\lim\limits_{j\rightarrow \infty} \int_{B_R} |\nabla^{m}u_{j}|^{\frac{n}{m}}\ud x = 1 \quad \forall\, R>0.
		\end{equation}	
		Then 
		\begin{equation}\label{resLemma}
			\limsup_{j \rightarrow \infty} \int_{\mathbb{R}^n}\Phi\left(\beta\zeta_{j}|u_j|^{\frac{n}{n-m}}\right) \ud x =0.
		\end{equation}
	\end{lemma}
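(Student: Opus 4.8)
The plan is to show that a sequence concentrating at the origin carries essentially all its $L^{n/m}$-mass into a shrinking neighborhood of the origin, so that away from the origin $u_j$ is small in $W^{1,n/m}_{rad}$, while near the origin the smallness of the zeroth-order term forces the exponential integrand to stay integrable with vanishing contribution. First I would record the immediate consequences of \eqref{hipConcent}: since $\|\nabla^m u_j\|_{n/m}^{n/m}+\|u_j\|_{n/m}^{n/m}\le 1$ and $\int_{B_R}|\nabla^m u_j|^{n/m}\,\ud x\to 1$ for every $R>0$, we get $\|u_j\|_{n/m}\to 0$ and hence $\zeta_j\to 1$; moreover $\|\nabla^m u_j\|_{n/m}\to 1$ and $\int_{B_R^c}|\nabla^m u_j|^{n/m}\,\ud x\to 0$ for every $R>0$. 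In particular $u_j\rightharpoonup 0$ in $W^{m,n/m}(\mathbb R^n)$, and since $\zeta_j$ is bounded it suffices to bound $\int_{\mathbb R^n}\Phi(\beta'|u_j|^{n/(n-m)})\,\ud x$ for some fixed $\beta'<\beta_0$ and then show the integral goes to zero.

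Next I would split the integral into a region near the origin and its complement. On $B_R^c$, the Radial Lemma~\ref{radiallemma} (applied to the first-order content of $u_j$, using that $\|u_j\|_{W^{1,n/m}}$ is bounded) gives the pointwise decay $|u_j(x)|\le C|x|^{-\frac{(n-1)m}{n}}$, which is small for $|x|\ge R$ with $R$ large, uniformly in $j$; combined with the elementary inequality $\Phi(t)\le C t^{j_{m,n}-1}\mathrm e^{t}\le C' t^{j_{m,n}-1}$ for $t$ small and the integrability of $|x|^{-(n-1)m/(n-m)\cdot j_{m,n}'}$ at infinity (here one uses $j_{m,n}\ge n/m$ so the power is more negative than $-n$... this needs the radial decay exponent check), we make $\int_{B_R^c}\Phi(\beta\zeta_j|u_j|^{n/(n-m)})\,\ud x$ as small as we like by choosing $R$ large, uniformly in $j$. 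On $B_R$, I would exploit that $\|\nabla^m u_j\|_{n/m}\le 1$ together with the fact that, for $j$ large, $\beta\zeta_j<\beta_0$, so a scaled Adams inequality on $B_R$ — or better, the Adachi–Tanaka type inequality \eqref{adachitanakaine} applied to $u_j$ — yields
\[
\int_{B_R}\Phi\!\left(\beta\zeta_j|u_j|^{\frac{n}{n-m}}\right)\ud x \le \int_{\mathbb R^n}\Phi\!\left(\beta\zeta_j\Big(\tfrac{|u_j|}{\|\nabla^m u_j\|_{n/m}}\Big)^{\frac{n}{n-m}}\right)\ud x \le C_{\beta,m,n}\,\frac{\|u_j\|_{n/m}^{n/m}}{\|\nabla^m u_j\|_{n/m}^{n/m}},
\]
and the right-hand side tends to $0$ because $\|u_j\|_{n/m}\to 0$ while $\|\nabla^m u_j\|_{n/m}\to 1$. (One has to be slightly careful that $\beta\zeta_j/\|\nabla^m u_j\|_{n/m}^{n/(n-m)}$ stays below $\beta_0$ for large $j$; since $\beta<\beta_0$, $\zeta_j\to 1$, and $\|\nabla^m u_j\|_{n/m}\to 1$, this holds.) Here I would need the case $n=2m$ separately if $j_{m,n}=2$ and the Adachi–Tanaka statement as quoted requires $m\ge2$ — but that is exactly the standing hypothesis $n\ge 2m$, $m\ge 1$, and the $m=1$, $n=2$ case is covered by the classical Adachi–Tanaka inequality.

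Combining the two pieces: given $\eps>0$, first fix $R$ so that the $B_R^c$ contribution is below $\eps/2$ uniformly in $j$; then for this fixed $R$, let $j\to\infty$ so the $B_R$ contribution drops below $\eps/2$. Hence $\limsup_j\int_{\mathbb R^n}\Phi(\beta\zeta_j|u_j|^{n/(n-m)})\,\ud x\le\eps$, and since $\eps>0$ is arbitrary we conclude \eqref{resLemma}. The main obstacle I anticipate is the tail estimate on $B_R^c$: one must verify that the radial decay rate $|u_j(x)|\lesssim |x|^{-(n-1)m/n}$ raised to the power $\frac{n}{n-m}$ and then to the power $j_{m,n}-1$ (the leading term of $\Phi$) produces a function that is integrable near infinity in $\mathbb R^n$, i.e. that $(j_{m,n}-1)\cdot\frac{(n-1)m}{n-m}>n$. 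This uses $j_{m,n}\ge n/m$, giving $(j_{m,n}-1)\frac{(n-1)m}{n-m}\ge (n/m-1)\frac{(n-1)m}{n-m}=\frac{(n-m)(n-1)}{n-m}=n-1$, which is \emph{not} quite $>n$; so the crude bound is borderline and one instead keeps a factor $\mathrm e^{\beta\zeta_j|u_j|^{n/(n-m)}}$ which on $B_R^c$ is bounded (since $|u_j|\to0$ there uniformly) and, more importantly, uses the \emph{smallness} of $|u_j|$ on $B_R^c$ rather than mere integrability: on $B_R^c$ one has $\Phi(\beta\zeta_j|u_j|^{n/(n-m)})\le C|u_j|^{\frac{n}{n-m}(j_{m,n}-1)}$ and $\int_{B_R^c}|u_j|^{n/m}\,\ud x\le\|u_j\|_{n/m}^{n/m}\to0$, so interpolating the higher power against the uniform pointwise smallness of $|u_j|$ on $B_R^c$ closes the estimate. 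Handling this interpolation cleanly is the technical heart of the argument.
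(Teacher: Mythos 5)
Your argument is correct, and your treatment of the inner region is genuinely different from — and considerably leaner than — the paper's. The paper subtracts from $u_j$ a boundary polynomial à la Ruf--Sani so as to land in $W^{m,n/m}_{\mathcal N}(B_R)$, then invokes a sub-additive inequality, Tarsi's bounded-domain Adams inequality \eqref{TarsiThm}, and Vitali's theorem; you instead note that $\|\nabla^m u_j\|_{n/m}\le 1$ and $\Phi$ is increasing, so $\Phi(\beta\zeta_j|u_j|^{n/(n-m)})\le\Phi(\beta'(|u_j|/\|\nabla^m u_j\|_{n/m})^{n/(n-m)})$ pointwise for any fixed $\beta'\in(\beta,\beta_0)$ once $j$ is large enough that $\beta\zeta_j\le\beta'$ (which holds because $\zeta_j\to 1$), and a single application of \eqref{adachitanakaine} with the fixed constant $C_{\beta',m,n}$ gives $\int_{\mathbb R^n}\Phi\le C_{\beta',m,n}\|u_j\|_{n/m}^{n/m}/\|\nabla^m u_j\|_{n/m}^{n/m}\to 0$, since $\|u_j\|_{n/m}\to 0$ and $\|\nabla^m u_j\|_{n/m}\to 1$. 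Two remarks: first, this bound is over all of $\mathbb R^n$, so the $B_R$ vs.\ $B_R^c$ split you set up is not actually needed; the three-line estimate via Adachi--Tanaka closes the whole lemma on its own. Second, your worry at the end about the tail is misplaced — no interpolation is required. Since $j_{m,n}\ge n/m$, one has $\frac{n}{n-m}(j_{m,n}-1)\ge\frac{n}{m}$, so once $R$ is large enough that $|u_j|<1$ on $B_R^c$ (Radial Lemma~\ref{radiallemma}), the pointwise bound $\Phi(\beta\zeta_j|u_j|^{n/(n-m)})\le e^{\beta\zeta_j}|u_j|^{\frac{n}{n-m}(j_{m,n}-1)}\le e^{\beta\zeta_j}|u_j|^{n/m}$ follows directly, and $\int_{B_R^c}|u_j|^{n/m}\,\ud x\to 0$ finishes the tail — which is precisely how the paper handles $I_{2_j}$. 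Your approach buys a noticeably shorter argument (no boundary-correction polynomial, no Tarsi, no Vitali) at the modest cost of importing the rearrangement-free Fontana--Morpurgo / Lam--Lu--Zhang bound; both are valid.
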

	\begin{proof}
		First, observe that we have 
		$
		\lim_{j\rightarrow \infty}\|u_{j}\|_{\frac{n}{m}}  \rightarrow 0.
		$
		Thus,
		$\zeta_{j} \rightarrow 1 ~~ \text{ as } j \rightarrow \infty$. By Lemma \ref{radiallemma} we can chose $R$ such that $u_j(x)<1$ for all $x \in\mathbb{R}^n\setminus B_R$ and any $j\in\mathbb{N}$. For $j\geq 1$,  we set
		\begin{align*}
			I_{1_{j}} := \int_{B_{R}}\Phi\left(\beta \zeta_{j}|u_j|^{\frac{n}{n-m}}\right) \ud x 
		\end{align*}
		and
		\begin{align*}
			I_{2_{j}} := \int_{\mathbb{R}^n \setminus B_{R}}\Phi\left(\beta \zeta_{j}|u_j|^{\frac{n}{n-m}}\right) \ud x.
		\end{align*}
		Then,
		\begin{align*}
			I_{2_{j}} =
			\int_{\mathbb{R}^n \setminus B_{R}}\Phi\left(\beta \zeta_{j}|u_j|^{\frac{n}{n-m}}\right) \ud x &\leq \operatorname{e}^{\beta\zeta_j} \int_{\mathbb{R}^{n}\setminus B_{R}} |u_j|^{\frac{n}{n-m}\left(j_{\frac{n}{m} - 1}\right)} \ud x \\
			&\leq \operatorname{e}^{\beta\zeta_j} \int_{\mathbb{R}^{n}\setminus B_{R}} |u_j|^{\frac{n}{m}}\ \ud x ~ \rightarrow  0,   
		\end{align*}
		\begin{comment}
			recalling the elementary inequality
			\begin{equation}\label{ElementaryIneq}
				\Phi(t) \leq \frac{t^{j_{n/m} - 1}}{(j_{n/m} - 1)}e^{t}, ~~ \forall t \geq 0.
			\end{equation}
			and the Radial Lemma \ref{radiallemma}, we may estimate:
			\begin{align*}
				I_{2} &\leq \frac{\beta ^{j_{\frac{n}{2}} - 1}}{(j_{\frac{n}{2}} - 1)!}\int_{\mathbb{R}^{N}\setminus B_{R}} |u_{j}|^{\frac{n}{n-2}(j_{\frac{n}{2}}-1)}\ \ud x + \frac{n^{2}}{n-2}\omega_{n}R^{n}\sum_{k = j_{\frac{n}{2}}}^{\infty}\frac{1}{k!}\left(\frac{\beta \|u_{j}\|_{W^{1, \frac{n}{2}}}^{\frac{n}{n-2}}}{(2\omega_{n})^{\frac{2}{n-2}}R^{\frac{n-1}{n-2}2}} \right)^k \\
				& \leq \frac{\beta ^{j_{\frac{n}{2}} - 1}}{(j_{\frac{n}{2}} - 1)!}\int_{\mathbb{R}^{N}\setminus B_{R}} |u_{j}|^{\frac{n}{n-2}(j_{\frac{n}{2}}-1)}\ \ud x + C(n, R).
			\end{align*}
		\end{comment} 
		as $j \rightarrow +\infty$. Now, to estimate $I_{1_{j}}$, we'll proceed as Sani, Ruf in \cite{RufSani2013} and Lam, Lu in \cite{lamlu2012}. Let us define
		\begin{align*}
			g_{i}(|x|) :=
			\left\{
			\begin{array}{ll}
				|x|^{m-2i}, ~~  &m = 2k, k \in \mathbb{N} \\
				|x|^{m-1-2i}, ~~ &m = 2k+1, k \in \mathbb{N}
			\end{array}
			\right.
			~~  \ \ \forall x \in B_{R},
		\end{align*}
		such that $g_{i} \in W_{rad}^{m,\frac{n}{m}}(B_{R})$ and 
		\begin{equation} \nonumber
			\Delta^{j}g_{i}(|x|) = \left\{	
			\begin{array}{ll}
				c_{i}^{j}|x|^{m-2(i+j)},  &\text{ for } j \in \{1,2,..., k-i \}, \ \ \text{and $m$ even} \\
				c_{i}^{j}|x|^{m-1-2(i+j)},  &\text{ for } j \in \{1,2,..., k-i \}, \ \ \text{and $m$ odd} \\
				0  &\text{ for } j \in \{k-i+1,..., k\},
			\end{array}
			\right.,
			\ \ \forall x \in B_{R},
		\end{equation}
		where, for $ j \in \{1,2,...,k-i\}$
		\begin{align*}
			c_{i}^{j} :=  \left\{	
			\begin{array}{ll}
				\prod\limits_{h=1}^{j} [n+2k-2(h+i)][2h-2(i+h-1)], \ \ \text{ when $m$ is even} \\
				\prod\limits_{h=1}^{j} [n+2k-2-2(h+i-1)][2k-2(i+h-1)], \ \ \text{ when $m$ is odd}.
			\end{array}	
			\right. 
		\end{align*}			
		Now, we also define
		\begin{equation}\label{SaniCut}
			v_{j}(|x|) := u_{j}(|x|) - \sum\limits_{i=1}^{k-1} a_{i}g_{i}(|x|) - a_{k}, \ \ \ \ \forall x \in B_{R},
		\end{equation}
		where 
		\begin{align*}
			a_{0} &:= \frac{\Delta u_{j}({R})}{\Delta^{k-i}g_{0}(R)}, \\
			a_{i} &:= \frac{\Delta^{k-i}u_{j}(R) - \sum_{l=1}^{i-1}a_{l}\Delta^{k-i}g_{l}(R)}{\Delta^{k-i}g_{i}(R)},  \ \ \ \ \forall i \in \{1,2,...,k-1\}, \\
			a_{k} &:= u_{j}(R) - \sum\limits_{i=1}^{k-1} a_{i}g_{i}(R).
		\end{align*}
		Notice that by construction $v_{j} \in W_{\mathcal{N}}^{m, \frac{n}{m}}(B_{R}) \cap W_{rad}^{m, \frac{n}{m}}(B_{R})$ and $\Delta^{k}v_{j} = \Delta^{k}u_{j}$ in $B_{R}$ or equivalently $\nabla^{m}v_{j} = \nabla^{m}u_{j}$ in $B_{R}$.
		To simplify notation, we'll write 
		$$
		\Tilde{u}_{j}(|x|) := \sum_{i=1}^{k-1}a_ig_i(|x|) + a_{k}, \  \forall x \in B_{R}
		$$ and $p' = \frac{n}{n-m}$. So, we can notice that $u_{j}(|x|) = v_{j}(|x|) + \Tilde{u}_{j}(|x|)$. For sake of simplicity, let us consider $\Tilde{u}_{j}(|x|) := \Tilde{u}_{j}(R)$.
		We also know that 
		$$
		I_{1_{j}} := \int_{B_{R}} \Phi({{\beta \zeta_{j}
				|u_j|^{\frac{n}{n-m}}}})\ \ud x \leq \int_{B_{R}}(\operatorname{e}^{\beta\zeta_{j}|u_j|^{\frac{n}{n-m}}}-1)\ \ud x. 
		$$
		Then, the Adams Functional along $u_{j}$ can be rewritten as
		\begin{equation}\label{adamsseq1}
			\int_{B_R}(\operatorname{e}^{{\beta \zeta_{j}|u_j|^{p'} }}-1)\ \ud x= \int_{B_R}(\operatorname{e}^{{\beta  \zeta_{j} |v_{j} + \Tilde{u}_{j}(R)|^{p'}}}-1 )\ \ud x.
		\end{equation}
		By construction $v_{j} \in W_{0}^{1,\frac{n}{m}}(B_{R})$  and $\|\Delta^{k} v_j\| = \|\Delta^{k} u_j\|$.
		Now, by the elementary inequality
	\begin{equation}
		(a+b)^{p} \leq (1+\delta)^{p}a^{p}+  \left(1+ \dfrac{1}{\delta}\right)^{p} b^{p},\; \;\;\text{for $p \geq 1$, $a,b>0$ and $\delta > 0$}
		\end{equation}
		and \eqref{adamsseq1}, it follows that
		\begin{align*}
			\int_{B_R}(\operatorname{e}^{\beta\zeta_{j} |v_{j} + \Tilde{u}_{j}(R)|^{p'}} -1 )\ \ud x &\leq \int_{B_R}\Big(\operatorname{e}^{\beta\zeta_{j} \left|(1+\delta)^{p'}(v_{j})^{p'}+\left(1+\frac{1}{\delta}\right)^{p'}(\Tilde{u}_{j}(R))^{p'}\right|} -1\Big) \ \ud x \\
			&\leq \int_{B_R}\Big(\operatorname{e}^{\beta\zeta_{j}\left( \left|(1+\delta)^{p'}(v_{j})^{p'}\right|+\left|\left(1+\frac{1}{\delta}\right)^{p'}(\Tilde{u}_{j}(R))^{p'}\right|\right)}-1\Big) \  \ud x\\
			&\leq \int_{B_R}\operatorname{e}^{\beta\zeta_{j} \left|(1+\delta)^{p'}(v_{j})^{p'}\right|}\operatorname{e}^{\beta\zeta_{j} \left|\left(1+\frac{1}{\delta}\right)^{p'}(\Tilde{u}_{j}(R))^{p'}\right|} \  \ud x \\
			&\leq \int_{B_R}\operatorname{e}^{\beta\zeta_{j} \left|(1+\delta)^{p'}(v_{j})^{p'}\right|} \operatorname{e}^{C(n,m, R)\frac{1}{R}\|u_j\|_{n/2}}  \  \ud x \\
			&\leq \operatorname{e}^{C(n,m, R)\frac{1}{R}\zeta_{j}\|u_j\|_{n/m}}\int_{B_R}\operatorname{e}^{\beta\zeta_{j} \left|(1+\delta)^{p'}(v_{j})^{p'}\right|}  \  \ud x. 		
		\end{align*}
		Then
		\begin{align*}
			\|\Delta^{k} v_j \|_{L^{p'}(B_{R})}^{p'}\leq  \|\Delta^{k} u_j \|_{L^{p'}(B_{\bar{R}})}^{p'}.
		\end{align*}
	Noticing that $ \operatorname{e}^{C(n,m, R)\frac{1}{R}\zeta_{j}\|u_j\|_{n/m}}\rightarrow 1$, and by Vitali's Convergence Theorem and by Tarsi \cite{Tarsi2012} we got the result (\ref{resLemma}) taking limit as $j \rightarrow \infty$.
	\end{proof}
	
\subsection{Compactness and vanish level estimates }
	\begin{lemma}\label{compacidade}
		Let $(u_j) \subset W_{rad}^{m,\frac{n}{m}}(\mathbb{R}^n)$ be  a  sequence satisfying $\|u_j\|_{\frac{n}{m}}^{\frac{n}{m}}+ \|\nabla^{m}u_j\|_{\frac{n}{m}}^{\frac{n}{m}}\leq 1$  and
		$\|u_{j}\|_{\frac{n}{m}}^{\frac{n}{m}} \rightarrow \theta$ for some $\theta \in [0,1)$. Assume, up to a subsequence, $u_j \rightharpoonup u$ in $W_{rad}^{m,\frac{n}{m}}(\mathbb{R}^n)$.
		Then, 
		\begin{enumerate}
		\item [$1)$] if $\frac{n}{m}$ is not integer, 
		\begin{equation}\label{resLemma1}
			\lim_{j\rightarrow +\infty} \int_{\mathbb{R}^{n}}\Phi\Big(\beta\zeta_{j}|u_j|^{\frac{m}{n-m}}\Big) \ud x = \int_{\mathbb{R}^{n}} \Phi\left({{\beta\left(\frac{1+\theta\alpha}{1-\theta\gamma\alpha}\right)^{\frac{m}{n-m}}
					|u|^{\frac{n}{n-m}}}}\right)\ \ud x 
		\end{equation}
		\item [$2)$] if $\frac{n}{m}$ is integer 
		\begin{align}\label{resLemma2}
			\lim_{j\rightarrow +\infty} \int_{\mathbb{R}^{n}}\Phi\Big(\beta\zeta_{j}|u_j|^{\frac{m}{n-m}}\Big) \ud x = &\int_{\mathbb{R}^{n}} \Phi\left({{\beta \left(\frac{1+\theta\alpha}{1-\theta\gamma\alpha}\right)^{\frac{m}{n-m}}
					|u|^{\frac{n}{n-m}}}}\right)\ \ud x \\ &+\frac{ \beta^\frac{n-m}{m}}{\left(\frac{n}{m}-1\right)!}\left(\frac{1+\theta\alpha}{1-\theta\gamma\alpha}\right)\left(\theta-\|u\|_{\frac{n}{m}}^{\frac{n}{m}}\right). \nonumber
		\end{align}
		\end{enumerate} 
	\end{lemma}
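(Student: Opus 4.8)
Throughout write $p'=\tfrac{n}{n-m}$, $q=j_{m,n}$, and $\zeta_{\infty}:=\big(\tfrac{1+\theta\alpha}{1-\theta\gamma\alpha}\big)^{\frac{m}{n-m}}$; since $\|u_{j}\|_{\frac{n}{m}}^{\frac{n}{m}}\to\theta$ we have $\zeta_{j}\to\zeta_{\infty}$, and we may assume $u_{j}\not\equiv0$. The plan is to use the Taylor splitting $\Phi(t)=\frac{t^{q-1}}{(q-1)!}+\Psi(t)$ with $\Psi(t):=\sum_{\ell\ge q}\frac{t^{\ell}}{\ell!}$, which writes the left-hand side of \eqref{resLemma1}--\eqref{resLemma2} as $A_{j}+B_{j}$, where $A_{j}:=\frac{(\beta\zeta_{j})^{q-1}}{(q-1)!}\int_{\mathbb{R}^{n}}|u_{j}|^{p'(q-1)}\,\ud x$ and $B_{j}:=\int_{\mathbb{R}^{n}}\Psi(\beta\zeta_{j}|u_{j}|^{p'})\,\ud x$. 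I will show $B_{j}\to\int_{\mathbb{R}^{n}}\Psi(\beta\zeta_{\infty}|u|^{p'})\,\ud x$ by a Vitali argument valid in both cases, and treat the single monomial $A_{j}$ by hand --- it is here that the dichotomy enters, because $p'(q-1)\ge\frac{n}{m}$ with equality precisely when $\frac{n}{m}\in\mathbb{N}$.

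First, the compactness input. By Gagliardo--Nirenberg interpolation the constraint $\|\nabla^{m}u_{j}\|_{\frac{n}{m}}^{\frac{n}{m}}+\|u_{j}\|_{\frac{n}{m}}^{\frac{n}{m}}\le1$ bounds $(u_{j})$ in $W^{1,\frac{n}{m}}_{rad}(\mathbb{R}^{n})$, so Lemma~\ref{radiallemma} gives the uniform decay $|u_{j}(x)|\le C|x|^{-(n-1)m/n}$. Passing to a subsequence, $u_{j}\to u$ a.e.\ and, by Rellich--Kondrachov, in $L^{r}_{loc}(\mathbb{R}^{n})$ for all finite $r$; combining this with the decay and $\int|u_{j}|^{\frac{n}{m}}\le1$ (so that $\int_{|x|>R}|u_{j}|^{r}\le CR^{-\frac{(n-1)m}{n}(r-\frac{n}{m})}$) upgrades the convergence to strong $L^{r}(\mathbb{R}^{n})$ for every $r>\frac{n}{m}$. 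Since the right-hand sides of \eqref{resLemma1}--\eqref{resLemma2} depend only on $u$ and $\theta$, working along this subsequence is harmless.

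For the tail $B_{j}$ the key point is equi-integrability, for which I would reuse the construction in the proof of Theorem~\ref{ThmAdiDruetUnbound}: it associates to each $u_{j}$ a function $w_{j}$ with $\|\nabla^{m}w_{j}\|_{\frac{n}{m}}^{\frac{n}{m}}+\mu\|w_{j}\|_{\frac{n}{m}}^{\frac{n}{m}}=1$, where $\mu=1-\alpha-\gamma\alpha>0$ is fixed (positivity of $\mu$ is exactly the hypothesis $\gamma<\frac1\alpha-1$), such that $\beta\zeta_{j}|u_{j}|^{p'}\le\beta|w_{j}|^{p'}$ (cf.\ \eqref{AdiDruetEst}) and $\int_{\mathbb{R}^{n}}\Phi(\beta_{0}|w_{j}|^{p'})\,\ud x\le K_{1}/\mu=:M$ (cf.\ \eqref{EstIntByConst}). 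Since $\Phi$ is nondecreasing and $\Psi\le\Phi$, this gives $\Psi(\beta\zeta_{j}|u_{j}|^{p'})\le\Phi(\beta|w_{j}|^{p'})=\Phi\big(\tfrac{\beta}{\beta_{0}}(\beta_{0}|w_{j}|^{p'})\big)$ with $\tfrac{\beta}{\beta_{0}}<1$; as $\Phi(\lambda s)/\Phi(s)\to0$ when $s\to\infty$ for every $\lambda<1$, splitting any measurable set $E$ along $\{\beta_{0}|w_{j}|^{p'}\le S\}$ yields $\int_{E}\Psi(\beta\zeta_{j}|u_{j}|^{p'})\,\ud x\le\Phi(\tfrac{\beta}{\beta_{0}}S)|E|+\varepsilon M$, so choosing $S$ large and then $|E|$ small proves equi-integrability. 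Tightness is cheaper: on $\{|x|>R\}$ one has $|u_{j}|\le CR^{-(n-1)m/n}$, hence $\Psi(\beta\zeta_{j}|u_{j}|^{p'})\le C(\beta\zeta_{j})^{q}|u_{j}|^{p'q}$ and $\int_{|x|>R}\Psi(\beta\zeta_{j}|u_{j}|^{p'})\,\ud x\le CR^{-\frac{(n-1)m}{n}(p'q-\frac{n}{m})}\to0$ uniformly in $j$, because $p'q>\frac{n}{m}$. Since $\beta\zeta_{j}|u_{j}|^{p'}\to\beta\zeta_{\infty}|u|^{p'}$ a.e., Vitali's convergence theorem gives $B_{j}\to\int_{\mathbb{R}^{n}}\Psi(\beta\zeta_{\infty}|u|^{p'})\,\ud x$.

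Finally the monomial $A_{j}$. If $\frac{n}{m}\notin\mathbb{N}$ then $p'(q-1)>\frac{n}{m}$, so strong $L^{p'(q-1)}(\mathbb{R}^{n})$-convergence gives $A_{j}\to\frac{(\beta\zeta_{\infty})^{q-1}}{(q-1)!}\int|u|^{p'(q-1)}\,\ud x$; adding $B_{j}$ and folding the monomial back into $\Phi$ yields \eqref{resLemma1}. If $\frac{n}{m}\in\mathbb{N}$ then $q=\frac{n}{m}$, $p'(q-1)=\frac{n}{m}$, there is no compactness (the mass of $|u_{j}|^{n/m}$ can escape to infinity), and only $\int|u_{j}|^{\frac{n}{m}}\to\theta$ is available; using $\zeta_{\infty}^{q-1}=\frac{1+\theta\alpha}{1-\theta\gamma\alpha}$ one gets $A_{j}\to\frac{\beta^{(n-m)/m}}{(\frac{n}{m}-1)!}\frac{1+\theta\alpha}{1-\theta\gamma\alpha}\,\theta$. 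Writing $\theta=\|u\|_{\frac{n}{m}}^{\frac{n}{m}}+\big(\theta-\|u\|_{\frac{n}{m}}^{\frac{n}{m}}\big)$ and recombining the $\|u\|_{\frac{n}{m}}^{\frac{n}{m}}$-part with $B_{j}$ into $\int_{\mathbb{R}^{n}}\Phi(\beta\zeta_{\infty}|u|^{p'})\,\ud x$ produces exactly \eqref{resLemma2}. The main obstacle is the equi-integrability step: one must transport the problem, via the auxiliary functions $w_{j}$, into the regime of Theorem~\ref{ThmAdiDruetUnbound} and exploit the strict gap $\beta<\beta_{0}$, since a bare uniform $L^{1}$-bound on $\Phi(\beta\zeta_{j}|u_{j}|^{p'})$ would not suffice; the extra term in the integer case is then simply the price paid by the non-tight lowest-order monomial.
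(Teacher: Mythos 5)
Your argument is correct and follows the same overall strategy as the paper's: Taylor-split the integrand into the lowest-order monomial plus a remainder, use the Radial Lemma for tightness, identify the lowest-order term's limit via $\|u_j\|_{n/m}^{n/m}\to\theta$ (integer case) or strong $L^{p'(q-1)}$-convergence (non-integer case), and conclude by Vitali. What you add is a concrete justification of the equi-integrability step that the paper merely asserts (``using additionally the fact of the sequence be a uniformly integrable sequence''): transporting $u_j$ to the normalized auxiliary function $w_j$ built in the proof of Theorem~\ref{ThmAdiDruetUnbound}, bounding $\Psi(\beta\zeta_j|u_j|^{p'})\le\Phi\big(\tfrac{\beta}{\beta_0}\,\beta_0|w_j|^{p'}\big)$ via \eqref{AdiDruetEst}, invoking \eqref{EstIntByConst} for the uniform $L^1$-bound on $\Phi(\beta_0|w_j|^{p'})$, and exploiting the strict gap $\beta<\beta_0$ through the decay $\Phi(\lambda s)/\Phi(s)\to 0$ for $\lambda<1$. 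This is a genuine and worthwhile tightening, since the paper's Vitali step does rest on the strict subcriticality in exactly this way without saying so. Two cosmetic differences: you apply the $A_j+B_j$ split in both cases, whereas the paper only splits in the integer case; and the paper's tail estimate is phrased with a slightly garbled exponent (the bound should read $|u_j|^{p'(j_{m,n}-1)}$ with $p'(j_{m,n}-1)>\tfrac{n}{m}$ in the non-integer case), while your decay computation is the clean version. Neither difference alters the substance; both routes give \eqref{resLemma1} and \eqref{resLemma2}.
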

	
	\begin{proof} We divide the proof into two cases.
	\paragraph*{ Case $\frac{n}{m} \not\in \mathbb{Z}:$} Note that
		\begin{equation}\label{zetaAdiDruet}
			\zeta_{j} = \left(\frac{1+\alpha\|u_{j}\|_{\frac{n}{m}}^{\frac{n}{m}}}{1-\gamma\alpha\|u_{j}\|_{\frac{n}{m}}^{\frac{n}{m}}}\right)^{\frac{m}{n-m}} \rightarrow  \zeta:=\left(\frac{1+\alpha\theta}{1-\gamma\alpha\theta}\right)^{\frac{m}{n-m}}.
		\end{equation}
		By Lemma~\ref{radiallemma} we can estimate as follows
		\begin{align*}
			\Phi(\beta\zeta_{j}|u_{j}|^{\frac{n}{n-m}}) &\leq 	\operatorname{e}^{\beta\zeta_{j}}|u_{j}|^{(\frac{n}{m})j_{\frac{n}{m}}}\\
			&\leq \operatorname{e}^{\beta \zeta_{j}}\left(\left(\frac{1}{m\sigma_n}\right)^{\frac{m}{n}}\frac{1}{|x|^{(\frac{n-1}{n})m}}\|u_{j}\|_{W^{1, \frac{n}{m}}}\right)^{(\frac{n}{m})j_{\frac{n}{m}}}.
		\end{align*}
		Since that $j_{\frac{n}{m}} \geq \frac{n}{m}$, 
		\begin{align*}
			\Phi(\beta\zeta_j|u_{j}|^{\frac{n}{n-m}}) \leq \operatorname{e}^{\beta\zeta_j}\left(\frac{\left(\frac{1}{m\sigma_n}\right)^{\frac{n}{m}} \|u_{j}\|_{W^{1, \frac{n}{m}}}^{\frac{n^2}{m^2}}}{|x|^{\frac{n(n-1)}{m}}}\right).
		\end{align*}
		Now, by integrating both sides outside the ball centered at origin with radius $R$ and since $\zeta_j \geq 2$ as $j\rightarrow \infty$, we got
		\begin{align*}
			\int_{\mathbb{R}^{n}\backslash B_{R}}\Phi(\beta\zeta_j|u_{j}|^{\frac{n}{n-m}}) \mathrm{d}x\leq C(n, m, \beta)\int_{\mathbb{R}^{n}\backslash B_{R}}\frac{1}{|x|^{\frac{n(n-1)}{m}}}\mathrm{d}x,
		\end{align*}
		using polar coordinates, there exists $\epsilon > 0$ such that, for a sufficiently large $R$ and $n>m$ the integral in right side of the above inequality becomes
		\begin{align*}
			\int_{\mathbb{R}^{n}\backslash B_{R}}\frac{1}{|x|^{\frac{n(n-1)}{m}}}\mathrm{d}x &= \omega_{n-1}\int_{R}^{+\infty}\frac{r^{n-1}}{r^{\frac{n(n-1)}{m}}}\mathrm{d}r \\
			&= \int_{R}^{+\infty} r^{\frac{-n^2+n-m+mn}{m}}\mathrm{d}r \\
			&= \left[\frac{m}{-n^2+nm+1}\right]  \frac{1}{R^{\frac{n^2-nm-1}{m}}} \\
			&< \epsilon 
		\end{align*}
		where $\omega_{n-1}$ denotes the surface area of the $(n-1)$-dimensional unit ball.
		Thus, we just proved that, 
		$$
		\int_{\mathbb{R}^{n}\backslash B_{R}}\Phi(\beta\zeta_{j}|u_{j}|^{\frac{n}{n-m}}) \mathrm{d}x  \rightarrow 0
		$$
		In other words, the sequence $(u_j)$ is tight on whole $\mathbb{R}^n$. Thus, using additionally the fact of the sequence be a uniformly integrable sequence over $\mathbb{R}^n$, applying again the Vitali's Convergence Theorem, we have proven the result. 
		\paragraph*{ Case $\frac{n}{m} \in \mathbb{Z}:$}  Notice that we need to separate in two cases (integer and non-integer), because the Radial Lemma can't be efficently applied by the influence of the first term on the summation in integer case. We observe that 
		\begin{align*}
			\Phi(\beta\zeta_{j}|u_j|^{\frac{n}{n-m}}) &= \exp{(\beta\zeta_{j}|u_j|^{\frac{n}{n-m}})} - \sum_{i=0}^{j_{n,m}-2}\frac{(\beta\zeta_{j}|u_j|^{\frac{n}{n-m}})^{i}}{i!} \\
			&= \frac{(\beta\zeta_{j}|u_j|^{\frac{n}{n-m}})^{j_{n,m}-1}}{(j_{n,m}-1)!}+ \sum_{i=j_{n,m}}^{\infty}\frac{(\beta\zeta_{j}|u_j|^{\frac{n}{n-m}})^{i}}{i!}.
		\end{align*}
		Since $\frac{n}{m} \in \mathbb{Z}$, this implies $j_{n,m} = \frac{n}{m}$. So we obtain
		\begin{equation}\nonumber
			\Phi(\beta\zeta_{j}|u_j|^{\frac{n}{n-m}}) =  \frac{\beta^{\frac{n}{n-m}}\zeta_{j}^{\frac{n}{n-m}}|u_j|^{\frac{n}{m}}}{(\frac{n}{m}-1)!}+ \sum_{i=\frac{n}{m}}^{\infty}\frac{(\beta\zeta_{j}|u_j|^{\frac{n}{n-m}})^{i}}{i!}.
		\end{equation}
		Integrating both sides
		\begin{align*}
			\int_{\mathbb{R}^{n}}\Phi(\beta\zeta_{j}|u_j|^{\frac{n}{n-m}})\mathrm{d}x &=  \int_{\mathbb{R}^{n}}\frac{\beta^{\frac{n}{n-m}}\zeta_{j}^{\frac{n}{n-m}}|u_j|^{\frac{n}{m}}}{(\frac{n}{m}-1)!}\mathrm{d}x+ \int_{\mathbb{R}^{n}}\sum_{i=\frac{n}{m}}^{\infty}\frac{(\beta\zeta_{j}|u_j|^{\frac{n}{n-m}})^{i}}{i!}\mathrm{d}x \\
			&= \frac{\beta^{\frac{n}{n-m}}\zeta_{j}^{\frac{n}{n-m}}}{(\frac{n}{m}-1)!}\int_{\mathbb{R}^{n}}|u_{j}|^{\frac{n}{m}}\mathrm{d}x + \int_{\mathbb{R}^{n}}\sum_{i=\frac{n}{m}}^{\infty}\frac{(\beta\zeta_{j}|u_j|^{\frac{n}{n-m}})^{i}}{i!}\mathrm{d}x \\
			& = \frac{\beta^{\frac{n}{n-m}}\zeta_{j}^{\frac{n}{n-m}}}{(\frac{n}{m}-1)!}\|u_{j}\|_{\frac{n}{m}}^{\frac{n}{m}} + \int_{\mathbb{R}^{n}}\sum_{i=\frac{n}{m}}^{\infty}\frac{(\beta\zeta_{j}|u_j|^{\frac{n}{n-m}})^{i}}{i!}\mathrm{d}x \\
		\end{align*}
		Since $\|u_{j}\|_{\frac{n}{m}}^{\frac{n}{m}} \rightarrow \theta$ and by \eqref{zetaAdiDruet}, taking limit as $j \rightarrow \infty$. 
		\begin{align*}
			& = \frac{\beta^{\frac{n}{n-m}}}{(\frac{n}{m}-1)!}\left(\frac{1+\alpha\theta}{1-\gamma\alpha\theta}\right)\theta + \lim\limits_{j \rightarrow \infty} \int_{\mathbb{R}^{n}}\sum_{i=\frac{n}{m}}^{\infty}\frac{(\beta\zeta_{j}|u_j|^{\frac{n}{n-m}})^{i}}{i!}\mathrm{d}x. 
		\end{align*}
		Using the same argument as in non-integer case, we can prove that the sequence $(u_{j}) \subset W_{rad}^{n, \frac{n}{m}}(\mathbb{R}^{n})$ is uniformly integrable and tight on whole $\mathbb{R}^{n}$, by the Vitali's convergence Theorem, we can change the sign of the limit with the integral on the right side, and one can get
		\begin{align*}
			& = \frac{\beta^{\frac{n}{n-m}}}{(\frac{n}{m}-1)!}\left(\frac{1+\alpha\theta}{1-\alpha\gamma\theta}\right)\theta  + \int_{\mathbb{R}^{n}}\sum_{i=\frac{n}{m}}^{\infty}\frac{(\beta\zeta|u|^{\frac{n}{n-m}})^{i}}{i!}\mathrm{d}x \\
			& = \frac{\beta^{\frac{n}{n-m}}}{(\frac{n}{m}-1)!}\left(\frac{1+\alpha\theta}{1-\alpha\gamma\theta}\right)\theta  + \int_{\mathbb{R}^{n}}\left[\sum_{i=\frac{n}{m}}^{\infty}\frac{(\beta\zeta|u|^{\frac{n}{n-m}})^{i}}{i!}+\frac{\beta^{\frac{n}{n-m}}\zeta^{\frac{n}{n-m}}|u|^{\frac{n}{m}}}{(\frac{n}{m}-1)!} - \frac{\beta^{\frac{n}{n-m}}\zeta^{\frac{n}{n-m}}|u|^{\frac{n}{m}}}{(\frac{n}{m}-1)!}\right]\mathrm{d}x \\
			& = \frac{\beta^{\frac{n}{n-m}}}{(\frac{n}{m}-1)!}\left(\frac{1+\alpha\theta}{1-\alpha\gamma\theta}\right)\theta  + \int_{\mathbb{R}^{n}}\left[\sum_{i=\frac{n}{m}-1}^{\infty}\frac{(\beta\zeta|u|^{\frac{n}{n-m}})^{i}}{i!} - \frac{\beta^{\frac{n}{n-m}}\zeta^{\frac{n}{n-m}}|u|^{\frac{n}{m}}}{(\frac{n}{m}-1)!}\right]\mathrm{d}x \\
			& = \frac{\beta^{\frac{n}{n-m}}}{(\frac{n}{m}-1)!}\left(\frac{1+\alpha\theta}{1-\alpha\gamma\theta}\right)\theta + \int_{\mathbb{R}^{n}}\Phi(\beta\zeta|u_{j}|^{\frac{n}{n-m}})\mathrm{d}x - \int_{\mathbb{R}^{n}}\frac{\beta^{\frac{n}{n-m}}\zeta^{\frac{n}{n-m}}|u|^{\frac{n}{m}}}{(\frac{n}{m}-1)!}\mathrm{d}x	\\
			&= 
			\frac{\beta^{\frac{n}{n-m}}}{(\frac{n}{m}-1)!}\left(\frac{1+\alpha\theta}{1-\alpha\gamma\theta}\right)\theta + \int_{\mathbb{R}^{n}}\Phi(\beta\zeta|u|^{\frac{n}{n-m}})\mathrm{d}x - \frac{\beta^{\frac{n}{n-m}}}{(\frac{n}{m}-1)!}\left(\frac{1+\alpha\theta}{1-\alpha\gamma\theta}\right)\int_{\mathbb{R}^{n}}|u|^{\frac{n}{m}}\mathrm{d}x \\
			&= 		\frac{\beta^{\frac{n}{n-m}}}{(\frac{n}{m}-1)!}\left(\frac{1+\alpha\theta}{1-\alpha\gamma\theta}\right)(\theta-\|u\|_{\frac{n}{m}}^{\frac{n}{m}}) + \int_{\mathbb{R}^{n}}\Phi(\beta\zeta|u|^{\frac{n}{n-m}})\mathrm{d}x 
		\end{align*}
		we got the desired result.
	\end{proof}
	\begin{remark}\label{compacidade-critica}
		The  Lemma ~\ref{compacidade} works for $\beta=\beta_{0}$ if $\alpha, \theta>0$.
	\end{remark}
	\begin{proposition}\label{vanishlevel} Let $0 < \alpha < 1$,  $0 \leq \gamma < \frac{1}{\alpha}-1$ and  $AD(n, m, \beta, \alpha, \gamma)$  as in \eqref{AD-notation}. For $n>m$, then 
		\begin{equation}
			AD(n, m, \beta, \alpha, \gamma)    > \frac{\beta^{\frac{n}{m}-1}}{\left(\frac{n}{m}-1\right)!}\left(\frac{1+\alpha}{1-\gamma\alpha}\right)
		\end{equation}
		for $\beta \in (0, \beta_{0}]$ when $\frac{n}{m} \geq 3$ and for $\beta \in \left(\frac{1 +2\alpha  - \gamma\alpha^2}{ 1 + \alpha(1 - \gamma) - \gamma\alpha^2}\frac{2}{\mathcal{B}_{GN}}, \beta_{0}\right]$ with $0\leq \gamma < \frac{m(1+\alpha)-1-2\alpha}{m\alpha^2+\alpha m - \alpha^2}$ when $\frac{n}{m} = 2$.
	\end{proposition}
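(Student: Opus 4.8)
Set $\mu_\theta:=\tfrac{1+\alpha\theta}{1-\gamma\alpha\theta}$ and $\Lambda:=\tfrac{\beta^{n/m-1}}{(n/m-1)!}\,\mu_1=\tfrac{\beta^{n/m-1}}{(n/m-1)!}\,\tfrac{1+\alpha}{1-\gamma\alpha}$. By Lemma~\ref{compacidade} applied with $\theta=1$ and $u\equiv0$ (and Remark~\ref{compacidade-critica} when $\beta=\beta_0$), $\Lambda$ is precisely the value to which $F_{n,m,\beta,\alpha,\gamma}$ converges along every $L^{n/m}$-normalised vanishing sequence, so it is enough to produce a single $u$ with $\|\nabla^m u\|_{n/m}^{n/m}+\|u\|_{n/m}^{n/m}\le1$ and $F_{n,m,\beta,\alpha,\gamma}(u)>\Lambda$. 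In both cases of the statement $n/m$ is an integer, so $j_{m,n}=n/m$ and $\Phi(t)=\tfrac{t^{n/m-1}}{(n/m-1)!}+\tfrac{t^{n/m}}{(n/m)!}+\sum_{i\ge n/m+1}\tfrac{t^i}{i!}$.

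\textbf{A scaled Gagliardo--Nirenberg test family and a two-term lower bound.} Put $q:=\tfrac{n^2}{m(n-m)}$ (thus $q=4$ when $n=2m$) and let $C_{GN}$ be the optimal constant of the Gagliardo--Nirenberg inequality $\|v\|_q^q\le C_{GN}\|\nabla^m v\|_{n/m}^{n/(n-m)}\|v\|_{n/m}^{n/m}$ on $W^{m,n/m}(\mathbb R^n)$; for $n=2m$ this is exactly $\mathcal B_{GN}$. For $\eps>0$ I would fix a radial $v_\eps$ almost realising this quotient and use the rescaling $u=s\,v_\eps(\lambda\,\cdot)$: since $\|\nabla^m u\|_{n/m}^{n/m}=s^{n/m}\|\nabla^m v_\eps\|_{n/m}^{n/m}$ is dilation-invariant while $\|u\|_{n/m}^{n/m}$ and $\|u\|_q^q$ pick up the factor $\lambda^{-n}$, the pair $(s,\lambda)$ can be chosen so that $\|\nabla^m u\|_{n/m}^{n/m}=1-\theta$, $\|u\|_{n/m}^{n/m}=\theta$ for any prescribed $\theta\in(0,1)$, whence $\|u\|_q^q\ge(C_{GN}-\eps)(1-\theta)^{m/(n-m)}\theta$. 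Since $\tfrac{n}{n-m}(\tfrac nm-1)=\tfrac nm$, $\tfrac{n}{n-m}\cdot\tfrac nm=q$, $\zeta_\theta^{n/m-1}=\mu_\theta$ (notation of \eqref{zetaAdiDruet-Def}) and $\zeta_\theta^{n/m}\ge\mu_\theta$, discarding the tail of $\Phi$ yields
\[
F_{n,m,\beta,\alpha,\gamma}(u)\;\ge\; g_\eps(\theta):=\frac{\beta^{n/m-1}}{(n/m-1)!}\,\mu_\theta\,\theta\;+\;\frac{\beta^{n/m}}{(n/m)!}\,\mu_\theta\,(C_{GN}-\eps)\,(1-\theta)^{\frac{m}{n-m}}\theta .
\]

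\textbf{Optimising $g_\eps$ near $\theta=1$.} One has $g_\eps(1)=\Lambda$ for every $\eps$, so I only need $\sup_{0<\theta<1}g_\eps(\theta)>\Lambda$ for small $\eps$. If $n/m\ge3$ then $\tfrac{m}{n-m}<1$: near $\theta=1$ the second summand is a positive constant times $(1-\theta)^{m/(n-m)}$, which overwhelms the $O(1-\theta)$ amount by which the first summand falls below $\Lambda$, so $g_\eps(\theta)>\Lambda$ for $\theta$ close to $1$, for every $\beta\in(0,\beta_0]$. If $n/m=2$ then $\tfrac{m}{n-m}=1$ and $g_\eps(\theta)=\beta h(\theta)+\tfrac{\beta^2}{2}(C_{GN}-\eps)h(\theta)(1-\theta)$ with $h(\theta)=\mu_\theta\theta$; here I check $g_\eps'(1)<0$, i.e. $\beta>\tfrac{2h'(1)}{(C_{GN}-\eps)h(1)}$, and let $\eps\downarrow0$. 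With $h(1)=\tfrac{1+\alpha}{1-\gamma\alpha}$, $h'(1)=\tfrac{1+2\alpha-\gamma\alpha^2}{(1-\gamma\alpha)^2}$ and $C_{GN}=\mathcal B_{GN}$ this becomes exactly $\beta>\tfrac{1+2\alpha-\gamma\alpha^2}{1+\alpha(1-\gamma)-\gamma\alpha^2}\cdot\tfrac{2}{\mathcal B_{GN}}$, the asserted range; the endpoint $\beta=\beta_0$ is treated identically via Remark~\ref{compacidade-critica}, and strictness of the final inequality $F>\Lambda$ is inherited from the positive tail of $\Phi$ dropped in the previous step.

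\textbf{Expected main difficulty.} The only genuinely delicate point is the bookkeeping in the case $n/m=2$: besides singling out the crossover value of $\beta$, one must verify that it is strictly below $\beta_0$, so that $(\,\cdot\,,\beta_0]$ is nonempty, and this is exactly where the estimate $\mathcal B_{GN}\ge 2m/\beta_0$ from \eqref{GNestimates} enters — inserting it into the threshold and simplifying produces precisely the standing hypothesis $\gamma<\tfrac{m(1+\alpha)-1-2\alpha}{m\alpha^2+\alpha m-\alpha^2}$. The construction of the rescaled test family and the termwise lower bound for $\Phi$ are routine.
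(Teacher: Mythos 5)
Your proof is correct and follows essentially the same route as the paper's: both arguments construct a one-parameter scaled test family interpolating between the concentration and vanishing regimes (your $\theta$ is, up to reparametrisation, the paper's $\|u\|_{n/m}^{n/m}/\eta_u(t)$ along the dilation $H_t$), retain the first two terms of $\Phi$, observe that the lowest-order term gives exactly the vanishing level $\Lambda$, and show the next Gagliardo--Nirenberg term pushes the functional strictly above — via the sublinear exponent $m/(n-m)<1$ when $n/m\ge3$, and via the first-derivative sign condition reproducing the stated threshold when $n/m=2$. A minor bookkeeping remark: your opening appeal to Lemma~\ref{compacidade} with $\theta=1$ lies outside its stated hypothesis $\theta\in[0,1)$, and your reliance on Remark~\ref{compacidade-critica} for the endpoint $\beta=\beta_0$ is unnecessary, but both are purely motivational and the construction of the test family does not depend on them; also, the paper's own computation in \eqref{VPhi-pontual} already tacitly uses $j_{m,n}=n/m$, so your standing assumption that $n/m\in\mathbb Z$ is faithful to the argument actually carried out there.
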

	\begin{proof} 	First, we introduce the following operator in $W^{m, n/m}(\mathbb{R}^n)$. Let $t>0$, we define $H_{t}: W^{m, n/m}(\mathbb{R}^n) \rightarrow W^{m, n/m}(\mathbb{R}^n)$ by
		\begin{equation}\label{HT}
		H_{t}(u)(x) := t^{\frac{m}{n}}u(t^{\frac{1}{n}}x).
		\end{equation}
		So, we can compute 
		\begin{align*}
			\|H_{t}(u)\|^{\frac{n}{m}}_{\frac{n}{m}} +\left\|\nabla^m H_{t}(u)\right\|^\frac{n}{m}_{\frac{n}{m}}&= \left\|t^{\frac{m}{n}} u\left(t^\frac{1}{n}x\right)\right\|^\frac{n}{m}_{\frac{n}{m}}+\left\|t^{\frac{m}{n}}\nabla^m\left(u\left(t^\frac{1}{n}x\right)\right)\right\|^\frac{n}{m}_{\frac{n}{m}}\\
			& = \left\| u\left(x\right)\right\|^\frac{n}{m}_{\frac{n}{m}}+t\left\|\nabla^m\left(u\left(x\right)\right)\right\|^\frac{n}{m}_{\frac{n}{m}},
		\end{align*}
		and
		$$
		\frac{\|H_{t}(u)\|_{\frac{n}{m}}^{\frac{n}{m}}}{\|H_{t}(u)\|^{\frac{n}{m}}_{\frac{n}{m}} +\left\|\nabla^m H_{t}(u)\right\|^\frac{n}{m}_{\frac{n}{m}}} = \frac{\|u\|_{\frac{n}{m}}^{\frac{n}{m}}}{\left\| u\left(x\right)\right\|^\frac{n}{m}_{\frac{n}{m}}+t\left\|\nabla^m\left(u\left(x\right)\right)\right\|^\frac{n}{m}_{\frac{n}{m}}}.
		$$
		By simplicity, we denote
		\begin{equation}\label{ETA}
		\eta_{u}(t)=\left\| u\right\|^\frac{n}{m}_{\frac{n}{m}}+t\left\|\nabla^m u\right\|^\frac{n}{m}_{\frac{n}{m}}
		\end{equation}
		 and
		 \begin{equation}\label{RHO}
		\varrho(t):=\varrho(t,\alpha,\gamma)=\frac{1+\alpha t}{1-\gamma\alpha t}, \;\; \text{ for all   }  t\in[0,1].  
		\end{equation}
Note that $\varrho(t)$ is an increasing function with $1\le \varrho(t)\le (1+\alpha)/(1-\gamma\alpha)$ for $t\in [0,1]$. 

Now, since $$\Phi(\beta t) \geq \frac{\beta^{j_{\frac{n}{m}} - 1 }}{\left(j_{\frac{n}{m}} - 1\right)!}t^{j_{\frac{n}{m}} - 1} + \frac{\beta^{j_{\frac{n}{m}} }}{\left(j_{\frac{n}{m}}\right)!}t^{j_{\frac{n}{m}}} \;\;\mbox{and}\;\; j_{\frac{n}{m}}\geq \frac{n}{m} $$
 we can obtain
		\begin{equation}\label{VPhi-pontual}
		\begin{aligned}
			& \Phi\left(\beta \left(\frac{1+\alpha\frac{\|H_{t}(u)\|_{\frac{n}{m}}^{\frac{n}{m}}}{\eta_{u}(t)}}{1-\gamma\alpha\frac{\|H_{t}(u)\|_{\frac{n}{m}}^{\frac{n}{m}}}{\eta_{u}(t)}}\right)^{\frac{m}{n-m}}\frac{|H_{t}(u)|^{\frac{n}{n-m}}}{(\eta_{u}(t))^{\frac{m}{n-m}}}\right) =\Phi\left(\beta\left(\varrho\left(\frac{\|H_{t}(u)\|_{\frac{n}{m}}^{\frac{n}{m}}}{\eta_{u}(t)}\right)\right)^{\frac{m}{n-m}}\frac{|H_{t}(u)|^{\frac{n}{n-m}}}{(\eta_{u}(t))^{\frac{m}{n-m}}}\right)\\
&\geq \frac{\beta^{\frac{n}{m}-1}}{(\frac{n}{m}-1)!}\left[\left(\varrho\left(\frac{\|H_{t}(u)\|_{\frac{n}{m}}^{\frac{n}{m}}}{\eta_{u}(t)}\right)\right)^{\frac{m}{n-m}}\frac{|H_{t}(u)|^{\frac{n}{n-m}}}{(\eta_{u}(t))^{\frac{m}{n-m}}}\right]^{\frac{n-m}{m}} \\
& + \frac{\beta^{\frac{n}{m}}}{(\frac{n}{m})!}\left[\left(\varrho\left(\frac{\|H_{t}(u)\|_{\frac{n}{m}}^{\frac{n}{m}}}{\eta_{u}(t)}\right)\right)^{\frac{m}{n-m}}\frac{|H_{t}(u)|^{\frac{n}{n-m}}}{(\eta_{u}(t))^{\frac{m}{n-m}}}\right]^{\frac{n}{m}}.
		\end{aligned}
		\end{equation}
Since $\|H_t(u)\|_{q}=t^{\frac{qm-n}{qn}}\|u\|_{q}$, we have 
$$
\varrho\left(\frac{\|H_{t}(u)\|_{\frac{n}{m}}^{\frac{n}{m}}}{\eta_{u}(t)}\right)=\varrho\left(\frac{\|u\|_{\frac{n}{m}}^{\frac{n}{m}}}{\eta_{u}(t)}\right).
$$
Hence, by integrating in \eqref{VPhi-pontual} and taking the supremum
		\begin{align*}
			&AD(n,m,\beta, \alpha, \gamma) \geq 
			\frac{\beta^{\frac{n}{m}-1}}{(\frac{n}{m}-1)!}\varrho\left(\frac{\|u\|_{\frac{n}{m}}^{\frac{n}{m}}}{\eta_{u}(t)}\right)\frac{
				\|u\|^{\frac{n}{m}}_{\frac{n}{m}}}{\eta(t)} + \frac{\beta^{\frac{n}{m}}}{(\frac{n}{m})!}\left[\varrho\left(\frac{\|u\|_{\frac{n}{m}}^{\frac{n}{m}}}{\eta_{u}(t)}\right)\right]^{\frac{m}{n-m}}\frac{t^{\frac{m}{n-m}}\|u\|_{\frac{n^2}{(n-m)m}}^{\frac{n^2}{(n-m)m}}}{\eta(t)^{\frac{n}{n-m}}} \\
			&\quad \quad \geq \frac{\beta^{\frac{n}{m}-1}}{(\frac{n}{m}-1)!}\left[\varrho\left(\frac{\|u\|_{\frac{n}{m}}^{\frac{n}{m}}}{\eta_{u}(t)}\right)\frac{\|u\|^{\frac{n}{m}}_{\frac{n}{m}}}{\eta_u(t)} + \frac{\beta}{\frac{n}{m}}\left[\varrho\left(\frac{\|u\|_{\frac{n}{m}}^{\frac{n}{m}}}{\eta_{u}(t)}\right)\right]^{\frac{m}{n-m}}\frac{t^{\frac{m}{n-m}}\|u\|_{\frac{n^2}{(n-m)m}}^{\frac{n^2}{(n-m)m}}}{\eta_u(t)^{\frac{n}{n-m}}}\right].
		\end{align*}
		Let us define
		\begin{align*}
		h(t) :&=\varrho\left(\frac{\|u\|_{\frac{n}{m}}^{\frac{n}{m}}}{\eta_{u}(t)}\right)\frac{\|u\|^{\frac{n}{m}}_{\frac{n}{m}}}{\eta_{u}(t)} + \frac{\beta}{\frac{n}{m}}\left[\varrho\left(\frac{\|u\|_{\frac{n}{m}}^{\frac{n}{m}}}{\eta_{u}(t)}\right)\right]^{\frac{m}{n-m}}\frac{t^{\frac{m}{n-m}}\|u\|_{\frac{n^2}{(n-m)m}}^{\frac{n^2}{(n-m)m}}}{\eta_{u}(t)^{\frac{n}{n-m}}},
		\end{align*}
which we will also denote in terms of its components as
		$$
		h(t) = f(t) +\frac{\beta}{\frac{n}{m}}t^{\frac{m}{n-m}}g(t),
		$$ 
		where
		$$
		g(t) := \left[\varrho\left(\frac{\|u\|_{\frac{n}{m}}^{\frac{n}{m}}}{\eta_{u}(t)}\right)\right]^{\frac{m}{n-m}}\frac{\|u\|_{\frac{n^2}{(n-m)m}}^{\frac{n^2}{(n-m)m}}}{\eta_{u}(t)^{\frac{n}{n-m}}}.
		$$
		Thus, we write 
		\begin{align}\label{Adineqh}
			AD(n,m, \beta, \alpha, \gamma) \ge \frac{\beta^{\frac{n}{m}-1}}{(\frac{n}{m}-1)!}h(t).
		\end{align}
		Note that 
		\begin{equation}\label{eta0rho0}
		\eta_{u}(0)=\|u\|^{\frac{n}{m}}_{\frac{n}{m}},\;\; \varrho(1)=\frac{1+\alpha}{1-\gamma\alpha} \; \text{ and }\; h(0)=\varrho(1).
		\end{equation}
 Thus, by taking into account \eqref{Adineqh}, for $\frac{n}{m}\ge 3$ we need to prove that $h(t)$ is a increasing function for $t$ near to $0$. Indeed, we have
$$
		h'(t) = f'(t)+\frac{\beta}{\frac{n}{m}}\left(\frac{m}{n-m}\right)t^{\frac{2m-n}{n-m}}g(t)+\frac{\beta}{\frac{n}{m}}t^{\frac{m}{n-m}}g'(t).
$$
	A direct calculation shows that both $f^{\prime}$ and $g^{\prime}$ are bounded for 
$t$ near $0$. Thus, since $g(0)>0$  and $t^{\frac{2m-n}{n-m}}\to+\infty$ as $t\to 0$,   we obtain
 	$h'(t)>0$ for $t$ near $0$ which gives the desired result.

Now, suppose $\frac{n}{m}=2$.  In this case, we have 
\begin{equation}\nonumber
\begin{aligned}
&\varrho\left(\frac{\|u\|_{2}^{2}}{\eta_{u}(t)}\right)=\frac{1+\frac{\alpha\|u\|^{2}_{2}}{\eta_u(t)}}{1-\frac{\gamma\alpha\|u\|^{2}_{2}}{\eta_{u}(t)}}\\
&f(t)=\varrho\left(\frac{\|u\|_{2}^{2}}{\eta_{u}(t)}\right)\frac{\|u\|_{2}^{2}}{\eta_{u}(t)}\\
&g(t)=\varrho\left(\frac{\|u\|_{2}^{2}}{\eta_{u}(t)}\right)\frac{\|u\|_{4}^{4}}{\eta_{u}(t)^{2}}.
\end{aligned}
\end{equation}
By noticing that 
\begin{equation}
\begin{aligned}
\varrho_0:=\frac{d}{dt}\left[\varrho\left(\frac{\|u\|_{2}^{2}}{\eta_{u}(t)}\right)\right]\Bigg|_{t=0}=-\frac{\alpha+\gamma\alpha}{(1-\gamma\alpha)^2}\frac{\|\nabla^{m}u\|_{2}^{2}}{\|u\|^{2}_{2}}
\end{aligned}
\end{equation}
and taking into account \eqref{eta0rho0}, we obtain
\begin{equation}\label{0-identidades}
\begin{aligned}
&f(0)=\frac{1+\alpha}{1-\gamma\alpha}\\
&f^{\prime}(0)=\varrho_0-\frac{1+\alpha}{1-\gamma\alpha}\frac{\|\nabla^{m}u\|_{2}^{2}}{\|u\|^{2}_{2}}=-\frac{1+2\alpha-\gamma\alpha^2}{(1-\gamma\alpha)^2}\frac{\|\nabla^{m}u\|_{2}^{2}}{\|u\|^{2}_{2}}\\
&g(0)=\frac{1+\alpha}{1-\gamma\alpha}\frac{\|u\|_{4}^{4}}{\|u\|^{4}_{2}}=\frac{1-\gamma\alpha+\alpha-\gamma\alpha^2}{(1-\gamma\alpha)^{2}}\frac{\|u\|_{4}^{4}}{\|u\|^{4}_{2}}\\
&g^{\prime}(0)=\varrho_0\frac{\|u\|^{4}_4}{\|u\|^{4}_2}-\frac{1+\alpha}{1-\gamma\alpha}\frac{\|u\|_{4}^{4}}{\|u\|^{4}_2}\frac{\|\nabla^{m}u\|^{2}_{2}}{\|u\|^{2}_{2}}=-\frac{1+2\alpha-\gamma\alpha^2}{(1-\gamma\alpha)^2}\frac{\|u\|_{4}^{4}}{\|u\|^{4}_2}\frac{\|\nabla^{m}u\|^{2}_{2}}{\|u\|^{2}_{2}}.
\end{aligned}
\end{equation}
If $n=2m$, we have $h(t)=f(t)+\frac{\beta}{2}tg(t)$ and, thus  $ h'(0) = f'(0)+\frac{\beta}{2}g(0)$ and $h(0)=f(0)$. So, the Taylor expansion  and \eqref{0-identidades} yield 
\begin{align*}
			h(t) &= f(0) + (f'(0)+ \frac{\beta}{2}g(0))t + O(t^2) \\ 
			&= \frac{1+\alpha}{1-\gamma\alpha} +O(t^2) \\
			&+\frac{1}{(1-\gamma\alpha)^2}\frac{\|u\|_{4}^{4}}{\|u\|^{4}_{2}}\left[-(1+2\alpha-\gamma\alpha^{2})\frac{\|\nabla^{m}u\|_{2}^{2}\|u\|^{2}_{2}}{\|u\|^{4}_{4}}+\frac{\beta}{2}(1-\gamma\alpha+\alpha-\gamma\alpha^2)\right]t\\
			&=\frac{1+\alpha}{1-\gamma\alpha}+O(t^2) \\
			& +\frac{(1 + \alpha)(1-\gamma\alpha)}{2(1-\gamma\alpha)^2}\frac{\|u\|_{4}^{4} }{\|u\|_{2}^{4}}\left(\beta -\frac{\left\|\nabla^m u\right\|^2_{2}\|u\|_{2}^{2} }{\|u\|_{4}^{4}}\frac{ 2  (1 +2\alpha  - \gamma\alpha^2 )}{ 1 + \alpha - \gamma\alpha - \gamma\alpha^2}\right)t.
		\end{align*}
Therefore, by estimate \eqref{Adineqh} and expanding $h$ in MacLaurin series, we obtain
		\begin{align*}
			& AD(2m,m, \beta, \alpha, \gamma) \ge \beta\left(\frac{1+\alpha}{1-\gamma\alpha}\right) \\
			& \quad +\frac{\beta^2}{2}\frac{1+\alpha}{1-\gamma\alpha}\frac{\|u\|_{4}^{4} }{\|u\|_{2}^{4}}\left(1-\frac{2}{\beta}\frac{\left\|\nabla^m u\right\|^2_{2}\|u\|_{2}^{2} }{\|u\|_{4}^{4}}\frac{1 +2\alpha  - \gamma\alpha^2 }{ 1 + \alpha - \gamma\alpha - \gamma\alpha^2}\right)t+ O(t^2).
		\end{align*}
By the arbitrariness of $u$,
	taking into account  \eqref{ConstantGN}  and \eqref{GNestimates},   we conclude the result.
	\end{proof}	
	\section{Existence and non-existence of extremals for the subcritical Adams functional of Adimurthi-Druet type in even dimension}
In this section we concerned to prove both Theorem~\ref{ThmAttain} and Theorem~\ref{ThmNotAttain}. In fact, we will study the attainability   and the non-attainability of $AD(2m,m, \beta, \alpha, \gamma)$ in the subcritical case $\beta<\beta_0$. Let us recall the Adams functional
	\begin{equation}\label{AdamsFunctional}
		F_{2m, m, \beta, \alpha, \gamma}(u) = \int_{\mathbb{R}^{2m}}\left(\operatorname{e}^{\beta\varrho(\|u\|_{2}^{2})u^2}-1\right)\mathrm{d}x.
	\end{equation}
where $\varrho(\|u\|_{2}^{2})=\varrho(\|u\|_{2}^{2}, \alpha,\gamma)$ is given by \eqref{RHO}.
	\subsection{Proof of Theorem \ref{ThmAttain}: Attainability in \texorpdfstring{\(n=2m\)}{n=2m}  case}
	Taking into account \eqref{Radializar}, we can choose  $(u_{j} ) \in W^{m,2}_{rad}(\mathbb{R}^{2m})$ with $\|u_{j}\|_{W^{m,2}(\mathbb{R}^{2m})}=1$ radially symmetric maximizing sequence for $AD(2m,m, \beta, \alpha, \gamma)$, that is, 
	\begin{equation}\label{Maxn=2m}
	\|\nabla^{m} u_j\|_{2}^{2}+\|u_j\|_{2}^{2} =1, \quad \lim\limits_{j \rightarrow \infty} F_{2m, m, \beta, \alpha, \gamma}(u_j) = \underset{\underset{\|\nabla^{m} u\|_{2}^{2}+\|u\|_{2}^{2} \leq 1}{u\in W^{m,2}(\mathbb{R}^{2m})}}{\sup }F_{2m, m, \beta, \alpha, \gamma}(u).
	\end{equation}
Up to a subsequence, we can assume that $u_j \rightharpoonup u_0$ weakly in $W_{rad}^{m,2}(\mathbb{R}^{2m})$, 	 $\|u_{j}\|_{2}^{2} \rightarrow \theta_0$ and $\|\nabla^{m} u_{j}\|_{2}^{2}\rightarrow \theta_1$ for some $\theta_0, \theta_1 \in [0,1]$ such that $\theta_0 + \theta_1 := \overline{\theta} \leq 1$. From Lemma~\ref{compacidade}-\eqref{resLemma2}, we derive 
\begin{align}\label{resLemma2n=2m}
			& \lim_{j\rightarrow +\infty}  \int_{\mathbb{R}^{2m}}\left(\operatorname{e}^{\beta\varrho(\|u_j\|_{2}^{2})u^2_j}-1\right)\mathrm{d}x =\int_{\mathbb{R}^{2m}}\left(\operatorname{e}^{\beta\varrho(\theta_0)u^2_0}-1\right)\mathrm{d}x+\beta\varrho(\theta_0)\left(\theta_0-\|u_0\|_{2}^{2}\right).
		\end{align}
If $u_{0}\equiv 0$, combing \eqref{Maxn=2m} with \eqref{resLemma2n=2m}, 
\begin{align*}		
		AD(2m,m,\beta, \alpha,\gamma) &= \beta\varrho(\theta_0)\theta_0\leq \beta\left(\frac{1+\alpha}{1- \gamma\alpha}\right),
	\end{align*}
which is impossible due to Proposition \ref{vanishlevel}. Then, we can suppose that $u_0\not=0$.  By the lower semicontinuity of the norm $\|u_0\|^{2}_2\le \liminf_{j\to\infty}\|u_j\|^{2}_{2}=\theta_0$.  So, let us define $\tau\ge 1$ and $v$  given by
\begin{equation}\label{magia=1}
\tau^{2m}=\frac{\theta_0}{\|u_0\|_{2}^2}\; \text{ and }\; v(x)=u_0\big(\frac{x}{\tau}\big).
\end{equation}	
It follows that 
	\begin{align}\label{magic}
	 \|v\|_{2}^{2} = \tau^{2m}\|u_{0}\|_{2}^{2} = \theta_{0}\;\text{ and }\; \|\nabla^{m}  v\|_{2}^{2} = \|\nabla^{m}u_{0}\|_{2}^{2}.
	\end{align}
Then, by  lower semicontinuity again
	$$
	\|\nabla^{m}  v\|_{2}^{2} +	\|v\|_{2}^{2}=\|\nabla^{m}  u_0\|_{2}^{2} +	\tau^{2m}\|u_0\|_{2}^{2}  \leq  \liminf_{j\rightarrow +\infty}\|\nabla^{m} u_{j}\|_{2}^{2}\ +\theta_0= \theta_1+\theta_0\le 1.
	$$
	Hence, from \eqref{magic}
	\begin{align*}
		 AD(2m, m, \beta, \alpha, \gamma) & \geq \int_{\mathbb{R}^{2m}}\Phi\left(\beta\varrho(\|v\|_{2}^{2})v^2\right)\mathrm{d} x = \tau^{2m}\int_{\mathbb{R}^{2m}}\left(\operatorname{e}^{\beta\varrho(\theta_0)u^2_0}-1\right)\mathrm{d}x  \\
	& =\int_{\mathbb{R}^{2m}}\left(\operatorname{e}^{\beta\varrho(\theta_0)u^2_0}-1\right)\mathrm{d}x + (\tau^{2m} -1)\beta\varrho(\theta_0)\|u_{0}\|^{2}_{2} \\
	&+(\tau^{2m}-1)\int_{\mathbb{R}^{2m}} \left[\operatorname{e}^{\beta\varrho(\theta_0)u^2_0}-1-\beta\varrho(\theta_0)u^2_0\right] \mathrm{d}x \\
	& =\int_{\mathbb{R}^{2m}}\left(\operatorname{e}^{\beta\varrho(\theta_0)u^2_0}-1\right)\mathrm{d}x + \beta\varrho(\theta_0)(\theta_0-\|u_{0}\|^{2}_{2}) \\
	&+(\tau^{2m}-1)\int_{\mathbb{R}^{2m}} \left[\operatorname{e}^{\beta\varrho(\theta_0)u^2_0}-1-\beta\varrho(\theta_0)u^2_0\right] \mathrm{d}x.
	\end{align*}
By \eqref{Maxn=2m} and \eqref{resLemma2n=2m}, we obtain
	\begin{align*}
		 AD(2m, m, \beta, \alpha, \gamma) & \geq 	AD(2m,m, \beta, \alpha, \gamma)
		+(\tau^{2m}-1)\int_{\mathbb{R}^{2m}} \left[\operatorname{e}^{\beta\varrho(\theta_0)u^2_0}-1-\beta\varrho(\theta_0)u^2_0\right] \mathrm{d}x.
	\end{align*}
	Since $u_{0} \neq 0$ we need to have $\tau^{2m} = 1$ or equivalently $\theta_{0} = \|u_{0}\|_{2}^{2}$. Therefore, by \eqref{Maxn=2m} and  \eqref{resLemma2n=2m} again we have 
	$$
	AD(2m, m, \beta, \alpha, \gamma) = \int_{\mathbb{R}^{2m}}\Big(\operatorname{e}^{\beta\varrho(\|u_0\|_{2}^{2})u^2_0}-1\Big)\mathrm{d}x.
	$$
	Since $\|\nabla^{m}u_{0}\|_{2}^{2}+\|u_{0}\|_{2}^{2}\le  \liminf_{j\to\infty}(\|\nabla^{m}u_{j}\|_{2}^{2}+\|u_{j}\|_{2}^{2})\le1$, we get that $u_{0}$ maximizes $AD(2m,m, \beta, \alpha, \gamma)$.
	\subsection{Proof of Theorem \ref{ThmNotAttain}: Non-attainability in \texorpdfstring{$\frac{n}{m}=2$}{frac} case}
	This section is devoted to prove that $AD(2m, m, \beta, \alpha, \gamma)$ is not attained for $\beta$ sufficiently small. We will proceed analogously  Ishiwata and Nguyen in \cite{NguyenVanHoang2019}, \cite{Ishiwata2011}. Let $\beta < (4\pi)^{m}m!\left(\frac{1-\alpha}{2+2\alpha}\right)$. Firstly, by \eqref{adachitanakaine} with $n=2m$ we derive
	\begin{equation}
		\int_{\mathbb{R}^{2m}} \left[\exp\left({\beta \frac{|u|^2}{\|\nabla^m 	u\|^{2}_{2}}}\right)\ -1\right] \ud x \leq C_{\beta,m,2m} \frac{\|u\|_{2}^{2}}{\|\nabla^m u\|_{2}^{2}}, \quad \forall\ u\in W^{m,{2}}_{rad}(\mathbb{R}^{2m})\setminus \{0\}.
	\end{equation}
	From this we can observe that for any fixed $\tilde{\beta} < (4\pi)^{m}m!$,
	\begin{align}\label{ineqnorms}
		\frac{\tilde{\beta}^{j}}{j!}\frac{\|u\|_{2j}^{2j}}{\|\nabla ^{m}u\|_{2}^{2j}} \leq C_{2m,m, \tilde{\beta}} \frac{\|u\|^{2}_{2}}{\|\nabla^{m}u\|_{2}^{2}}, ~~ \text{for any } j\ge 1.	
	\end{align}
Let $\mathcal{H}= \{u \in W^{m,2}(\mathbb{R}^{2m}) \;:\;   \|u\|_{2}^{2}+ \|\nabla^{m} u\|_{2}^{2} = 1\}$ and let $v\in\mathcal{H}$. By using the same notation in \eqref{HT} and  \eqref{ETA},  for $t>0$  we define the family of functions
	$$
	v_{t} :=H_{t}(v)(x)= t^{\frac{1}{2}}v(t^{\frac{1}{2m}}x) ~~ \text{ and } w_{t} = \frac{v_t}{\|v_{t}\|_{2}^{2} +  \|\nabla^{m} v_{t}\|_{2}^{2}}=\frac{v_t}{\eta_{v}(t)}
	$$
where we have used that $\|v_t\|_{2}^{2} = \|v\|_{2}^{2}$ and $\|\nabla^{m} v_t\|_{2}^{2} = t\|\nabla^{m} v\|_{2}^{2}$.
If $v\in  W^{m,2}_{rad}(\mathbb{R}^{2m})$ is a maximizer for $AD(2m, m, \beta, \alpha, \gamma)$, then  $v\in\mathcal{H}\cap W^{m,2}_{rad}(\mathbb{R}^{2m})$ and each $w_t$ is a curve in $W^{m,2}_{rad}(\mathbb{R}^{2m})\cap\mathcal{H}$ such that $w_1 = v$, and
	\begin{equation}\label{nullderivative}
		\frac{d}{dt}F_{2m, m, \beta, \alpha, \gamma}(w_t) \bigg|_{t=1} = 0.
	\end{equation}
The idea is to show that the identity \eqref{nullderivative} does not occur for any $v \in W^{m, 2}_{rad}(\mathbb{R}^{2m})\cap\mathcal{H}$,  if  $\beta$ is sufficiently small. This  leads to the non-existence of a maximizer  $v$. 
	Using the relations $\|v_t\|_{2k}^{2k} = t^{k-1}\|v\|_{2k}^{2k}$ and $\|\nabla^{m} v_t\|_{2}^{2} = t\|\nabla^{m} v\|_{2}^{2}$, we obtain
	$$
	F_{2m, m, \beta, \alpha, \gamma}(w_t) = \sum_{k=1}^{\infty}\frac{\beta^k}{k!}\|v\|_{2k}^{2k}\left[\varrho\Big(\frac{\|v\|_{2}^{2}}{\eta_v(t)}\Big)\right]^{k}\frac{t^{k-1}}{\eta^{k}_{v}(t)}
	$$
	where $\varrho$ is given by \eqref{RHO}. Now, we note that 
	\begin{equation}\label{key-Id}
\begin{aligned}
&\eta_v(1)=\|\nabla^{m}v\|^{2}_{2}+\|v\|^{2}_{2}=1\\
&\varrho\Big(\frac{\|v\|_{2}^{2}}{\eta_v(1)}\Big)=\varrho(\|v\|^{2}_2)=\frac{1+\alpha\|v\|^{2}_{2}}{1-\gamma\alpha\|v\|^{2}_{2}}\\
&\varrho_1:=\frac{d}{dt}\left[\varrho\Big(\frac{\|v\|_{2}^{2}}{\eta_v(t)}\Big)\right]\Bigg|_{t=1}= -\frac{\|v\|^{2}_{2}\|\nabla^m v\|^{2}_{2}}{(1-\gamma\alpha\|v\|^{2}_{2})^2}\left[\alpha(1-\gamma\alpha\|v\|^{2}_{2})+\gamma\alpha(1+\alpha\|v\|^{2}_{2})\right]\\
& \nu_k:=\frac{d}{dt}\left[\frac{t^{k-1}}{\eta^{k}_{v}(t)}\right]\Bigg|_{t=1}=(k-1)-k\|\nabla^{m}v\|^{2}_{2}.
\end{aligned}
\end{equation}
Hence,  the identities in \eqref{key-Id} yield
\begin{equation}\nonumber
\begin{aligned}
\frac{d}{dt}\left[F_{2m, m, \beta, \alpha, \gamma}(w_t)\right] \big|_{t=1}&=\sum_{k=1}^{\infty}\frac{\beta^k}{k!}\|v\|_{2k}^{2k}\left[\varrho(\|v\|^{2}_2)\right]^{k}\left[\frac{k\varrho_1}{\varrho(\|v\|^{2}_2)}+\nu_k\right]\\
&\le\sum_{k=1}^{\infty}\frac{\beta^k}{k!}\|v\|_{2k}^{2k}\left[\varrho(\|v\|^{2}_2)\right]^{k}\nu_k,
\end{aligned}
\end{equation}
where we have used  that $k\varrho_1/\varrho(\|v\|^{2}_2)<0$, which is also a consequence of \eqref{key-Id}. Now, since $\nu_1=-\|\nabla^{m}v\|^{2}_{2}$ and $\nu_k\le k$ we can derive
\begin{align*}
&\sum_{k=1}^{\infty}\frac{\beta^k}{k!}\|v\|_{2k}^{2k}\left[\varrho(\|v\|^{2}_2)\right]^{k}\nu_k=-\beta\|v\|^{2}_{2}\|\nabla^{m}v\|^{2}_{2}\varrho(\|v\|^{2}_2)+\sum_{k=2}^{\infty}\frac{\beta^k}{k!}\|v\|_{2k}^{2k}\left[\varrho(\|v\|^{2}_2)\right]^{k}\nu_k\\
&\le -\beta\|v\|^{2}_{2}\|\nabla^{m}v\|^{2}_{2}\varrho(\|v\|^{2}_2)+\sum_{k=2}^{\infty}\frac{\beta^k}{k!}\|v\|_{2k}^{2k}\left[\varrho(\|v\|^{2}_2)\right]^{k}k\\
&=\beta\|v\|^{2}_{2}\|\nabla^{m}v\|^{2}_{2}\varrho(\|v\|^{2}_2)\left[\sum_{k=2}^{\infty}\frac{\beta^{k-1}}{(k-1)!}\left[\varrho(\|v\|^{2}_2)\right]^{k-1}\frac{\|v\|_{2k}^{2k}}{\|v\|^{2}_{2}\|\nabla^{m}v\|^{2}_{2}}-1\right].
\end{align*}
By using $\varrho(\|v\|^{2}_2)\le (1+\alpha)/(1-\gamma\alpha)$ we obtain
	$$
	\frac{d}{dt}\left[F_{2m, m, \beta, \alpha, \gamma}(w_t)\right] \big|_{t=1}\le \beta\|v\|^{2}_{2}\|\nabla^{m}v\|^{2}_{2}\varrho(\|v\|^{2}_2)\left[\sum_{k=2}^{\infty}\frac{\beta^{k-1}}{(k-1)!}\left(\frac{1+\alpha}{1-\gamma\alpha}\right)^{k-1}\frac{\|v\|_{2k}^{2k}}{\|v\|^{2}_{2}\|\nabla^{m}v\|^{2}_{2}}-1\right].
	$$
From inequality \eqref{ineqnorms} and since  $\|\nabla^{m}v\|_{2}^{2}\le 1$,   for some fixed $\tilde{\beta} < (4\pi)^m m!$ such that $\beta/\tilde{\beta} < \frac{1-\alpha}{2+2\alpha}$ we obtain
	\begin{align*}
	\frac{\beta^{k-1}}{(k-1)!}\frac{\|v\|_{2k}^{2k}}{\|v\|^{2}_{2}\|\nabla^{m}v\|^{2}_{2}}&=\frac{k}{\tilde{\beta}}\left(\frac{\beta}{\tilde{\beta}}\right)^{k-1}\left[\frac{\tilde{\beta}^{k}}{k!}\frac{\|v\|_{2k}^{2k}}{\|\nabla ^{m}v\|_{2}^{2k}}\right]\frac{\|\nabla ^{m}v\|_{2}^{2k-2}}{\|v\|^{2}_{2}}\\
	& \leq \frac{k}{\tilde{\beta}}\left(\frac{\beta}{\tilde{\beta}}\right)^{k-1}\|\nabla ^{m}v\|_{2}^{2k-4}C_{2m,m, \tilde{\beta}}	\\
	& \leq \frac{k}{\tilde{\beta}}\left(\frac{\beta}{\tilde{\beta}}\right)^{k-1}C_{2m,m, \tilde{\beta}},	
	\end{align*}
	for all $k\ge 2$. Consequently, we derive
	\begin{align*}
		\frac{d}{dt}\left[F_{2m, m, \beta, \alpha, \gamma}(w_t)\right] \big|_{t=1} &\leq\beta\|v\|^{2}_{2}\|\nabla^{m}v\|^{2}_{2}\varrho(\|v\|^{2}_2)\left[\sum_{k=2}^{\infty}\frac{k}{\tilde{\beta}}\left(\left(\frac{1+\alpha}{1-\gamma\alpha}\right)\frac{\beta}{\tilde{\beta}}\right)^{k-1}C_{2m,m, \tilde{\beta}}-1\right] \\
		&\leq \beta\|v\|^{2}_{2}\|\nabla^{m}v\|^{2}_{2}\varrho(\|v\|^{2}_2)\left[\beta\left(\frac{1+\alpha}{1-\gamma\alpha}\right)\frac{1}{\tilde{\beta}^2}\sum_{k=2}^{\infty}k\left(\frac{1}{2}\right)^{k-2}C_{2m,m, \tilde{\beta}}-1\right].
	\end{align*}
	Thus, for $\beta< \min\left\{\left(\left(\frac{1+\alpha}{1-\gamma\alpha}\right)\frac{1}{\tilde{\beta}^2}\sum_{k=2}^{\infty}k\left(\frac{1}{2}\right)^{k-2}C_{2m,m, \tilde{\beta}}\right)^{-1}, \tilde{\beta}/2\right\}$, we have 
	$$
	\frac{d}{dt}\left[F_{2m, m, \beta, \alpha, \gamma}(w_t)\right] \big|_{t=1}<0, \;\text{ for any }\; v \in W^{m, 2}_{rad}(\mathbb{R}^{2m})\cap\mathcal{H}.
	$$
	From \eqref{nullderivative}, we conclude that $AD(2m, m, \beta, \alpha, \gamma)$ does not admit maximizers if $ \beta$ is chosen as above.

	\section{Critical Case}
In this section, we apply blow-up analysis together with  a truncation argument by DelaTorre-Mancini \cite{DelaTorre} and some ideas by Chen-Lu-Zhu \cite{luluzhu20}, to obtain the attainability stated in Theorem~\ref{thm-attainn=4} under the critical regime $\beta=32\pi^{2}$. Let $(u_j)$ be a sequence in $W^{2,2}(\mathbb{R}^4)$ formed by maximizers for the subcritical supremum of $AD(4,2, \beta_j, \alpha, \gamma)$ with $\beta_j=32\pi^{2} -1/j$, ensured by Theorem~\ref{ThmAttain}, i.e., $\|u_j\|_{W^{2,2}(\mathbb{R}^4)}=1$ and  

\begin{equation}\label{maxi-sub}  
	AD(4,2, \beta_j, \alpha, \gamma)=\int_{\mathbb{R}^4}\left(\operatorname{e}^{\beta_{j}\varrho(\|u_j\|^{2}_2)u_{j}^{2}}-1\right)\mathrm{d}x=\sup_{\underset{\|\Delta u \|_{2}^{2} + \|u\|_{2}^{2} \leq 1}{u\in W^{2,2}(\mathbb{R}^4)}} \int_{\mathbb{R}^4}\left(\operatorname{e}^{\beta_{j}\varrho(\|u\|^{2}_2)u^{2}}-1\right)\mathrm{d}x,  
\end{equation}  
with $\varrho$ as in \eqref{RHO} and $\frac{ 1 +2\alpha  - \gamma\alpha^2}{1 + \alpha(1 - \gamma) - \gamma\alpha^2}\frac{2}{\mathcal{B}_{GN}}<\beta_j< 32\pi^{2}$. From \eqref{Radializar}, we can assume that each $u_j$ is a positive radially symmetric function and also suppose that $u_{j} \rightharpoonup u$ in $W^{2,2}_{rad}(\mathbb{R}^4)$.  
\begin{lemma}\label{ADlimit=AD}
We have
\begin{equation}\nonumber
AD(4,2, 32\pi^2,  \alpha, \gamma)=\lim_{j\to\infty}AD(4,2, \beta_j,  \alpha, \gamma)=\lim_{j\to\infty}\int_{\mathbb{R}^4}\left(\operatorname{e}^{\beta_{j}\varrho(\|u_j\|^{2}_2)u_{j}^{2}}-1\right)\mathrm{d}x.
\end{equation}
\end{lemma}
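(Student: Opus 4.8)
The plan is to reduce the lemma to soft arguments, since the only substantial ingredient---finiteness of the supremum at the critical exponent---is already supplied by Theorem~\ref{ThmAdiDruetUnbound}. First I would dispose of the second equality. By construction in \eqref{maxi-sub}, each $u_j$ attains $AD(4,2,\beta_j,\alpha,\gamma)$, and since $n/m=2$ forces $j_{m,n}=2$ in \eqref{phi}, the Orlicz function is simply $\Phi(t)=\operatorname{e}^{t}-1$; hence
\[
AD(4,2,\beta_j,\alpha,\gamma)=\int_{\mathbb{R}^4}\bigl(\operatorname{e}^{\beta_j\varrho(\|u_j\|_2^2)u_j^2}-1\bigr)\,\mathrm{d}x\qquad\text{for every }j,
\]
with $\varrho$ as in \eqref{RHO}. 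Thus the two limits in the statement are literally the same sequence, and it suffices to prove the first equality, which in particular will show that this common limit exists.

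Next I would record the monotonicity of $\beta\mapsto AD(4,2,\beta,\alpha,\gamma)$: for $\beta\le\beta'$ one has $\beta\,\varrho(\|u\|_2^2)u^2\le\beta'\,\varrho(\|u\|_2^2)u^2$ pointwise, and since $\Phi$ is nondecreasing the integrand, and therefore the supremum, does not decrease. Because $\beta_j\uparrow 32\pi^2=\beta_0(2,4)$, the sequence $AD(4,2,\beta_j,\alpha,\gamma)$ is nondecreasing and bounded above by $AD(4,2,32\pi^2,\alpha,\gamma)$, which is finite by Theorem~\ref{ThmAdiDruetUnbound}. Hence $\lim_{j\to\infty}AD(4,2,\beta_j,\alpha,\gamma)$ exists and is $\le AD(4,2,32\pi^2,\alpha,\gamma)$.

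For the reverse inequality I would combine the variational lower bound with monotone convergence. Fix any admissible $u$, i.e. $u\in W^{2,2}(\mathbb{R}^4)$ with $\|\Delta u\|_2^2+\|u\|_2^2\le 1$; note $1-\gamma\alpha\|u\|_2^2>0$ by the hypotheses on $\alpha,\gamma$, so $\varrho(\|u\|_2^2)$ is a positive constant. Since $\Phi$ is increasing and nonnegative, the functions $x\mapsto\Phi\bigl(\beta_j\varrho(\|u\|_2^2)u(x)^2\bigr)$ are nonnegative and increase pointwise to $\Phi\bigl(32\pi^2\varrho(\|u\|_2^2)u(x)^2\bigr)$, whose integral over $\mathbb{R}^4$ is finite precisely by the sharp inequality \eqref{supUnbounded}. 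Monotone convergence then yields
\[
\int_{\mathbb{R}^4}\Phi\bigl(\beta_j\varrho(\|u\|_2^2)u^2\bigr)\,\mathrm{d}x\longrightarrow\int_{\mathbb{R}^4}\Phi\bigl(32\pi^2\varrho(\|u\|_2^2)u^2\bigr)\,\mathrm{d}x\qquad(j\to\infty).
\]
Since $u$ is admissible in the supremum defining $AD(4,2,\beta_j,\alpha,\gamma)$, the left-hand integral is $\le AD(4,2,\beta_j,\alpha,\gamma)$; passing to the limit gives $\lim_{j}AD(4,2,\beta_j,\alpha,\gamma)\ge\int_{\mathbb{R}^4}\Phi(32\pi^2\varrho(\|u\|_2^2)u^2)\,\mathrm{d}x$, and taking the supremum over all admissible $u$ gives $\lim_{j}AD(4,2,\beta_j,\alpha,\gamma)\ge AD(4,2,32\pi^2,\alpha,\gamma)$. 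Together with the previous paragraph this proves the first, hence the whole, equality. I do not anticipate a genuine obstacle here: the only nontrivial step is the finiteness of $AD(4,2,32\pi^2,\alpha,\gamma)$, which furnishes the integrable majorant for the passage to the limit, and this is exactly the content of Theorem~\ref{ThmAdiDruetUnbound}.
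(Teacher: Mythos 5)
Your proof is correct and follows essentially the same route as the paper: monotonicity in $\beta$ gives $\limsup_j AD(4,2,\beta_j,\alpha,\gamma)\le AD(4,2,32\pi^2,\alpha,\gamma)$, and passing to the limit inside the integral for each fixed admissible $u$, then taking the supremum, gives the reverse bound on the $\liminf$. The only cosmetic difference is that you invoke the monotone convergence theorem (using the finiteness from Theorem~\ref{ThmAdiDruetUnbound} as a majorant) where the paper cites Fatou's lemma; since the integrands increase pointwise in $j$, either tool yields the same conclusion.
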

\begin{proof}
We first note that 
\begin{equation}\label{AD>}
\limsup_{j\to\infty}AD(4,2, \beta_j,  \alpha, \gamma)\le AD(4,2, 32\pi^2,  \alpha, \gamma).
\end{equation}
The  reverse inequality is a consequence of the  Fatou Lemma. Indeed, for any $u\in W^{2,2}(\mathbb{R}^4)$ with $\|u\|_{W^{2,2}(\mathbb{R}^4)}\le 1$ we have 
$$
\liminf_j\int_{\mathbb{R}^4}\left(\operatorname{e}^{\beta_{j}\varrho(\|u\|^{2}_2)u^{2}}-1\right)\mathrm{d}x\ge \int_{\mathbb{R}^4}\left(\operatorname{e}^{32\pi^2\varrho(\|u\|^{2}_2)u^{2}}-1\right)\mathrm{d}x
$$
which implies that 
$$
\liminf_{j\to\infty}AD(4,2, \beta_j,  \alpha, \gamma)\ge \int_{\mathbb{R}^4}\left(\operatorname{e}^{32\pi^2\varrho(\|u\|^{2}_2)u^{2}}-1\right)\mathrm{d}x.
$$
Taking the supremum over  $u\in W^{2,2}(\mathbb{R}^4)$ with $\|u\|_{W^{2,2}(\mathbb{R}^4)}\le 1$ we obtain 
\begin{equation}\label{AD<}
\liminf_{j\to\infty}AD(4,2, \beta_j,  \alpha, \gamma)\ge AD(4,2, 32\pi^2,  \alpha, \gamma).
\end{equation}
From \eqref{AD>} and \eqref{AD<} we conclude the result.
\end{proof}

A straightforward computation shows that the
Euler-Lagrange equation of $u_j$
is given by
	 \begin{equation}\label{EulerLagrange}
		\begin{cases}
			\Delta ^2 u_{j} + u_{j}= \frac{u_{j}}{\lambda_{j}}\tilde{\zeta}_{j}\operatorname{e}^{\beta_{j}\tilde{\zeta}_{j} u_{j}^2} +\mu_{j}u_j ~~ \mbox{ in } \mathbb{R}^{4}, \\
			\|\Delta u_{j}\|^{2}_{2} +  \|u_{j}\|^{2}_{2} = 1\\
			\beta_{j} = 32\pi^2-\frac{1}{j},\\
			\tilde{\zeta}_{j} =\varrho(\|u_j\|^{2}_2)=\frac{1+\alpha\|u_{j}\|^{2}_{2}}{1-\gamma\alpha\|u_{j}\|^{2}_{2}}, \\
			\mu_{j} = \frac{\alpha(1+\gamma)}{(1-\gamma\alpha\|u_{j}\|_{2}^{2})^2},\\
			\lambda_{j} =\frac{\tilde{\zeta}_{j} }{1-\mu_j\|u_j\|^{2}_{2}} \int_{\mathbb{R}^4}u^{2}_{j}\operatorname{e}^{\beta_{j}\tilde{\zeta}_{j}u^2_{j}} \mathrm{d}x.
		\end{cases}
	\end{equation}
By using Lemma~\ref{RegularityGazzola2} together with \eqref{EulerLagrange}, we can see that $u_j\in C^{\infty}(\mathbb{R}^4)$.	 From Lemma~\ref{radiallemma}, we can always take a point $x_j\in \mathbb{R}^4$ such that
\begin{equation}\label{cj-point}
c_{j} = u_{j}(x_{j}) = \max_{\mathbb{R}^{4}}|u_{j}| .
\end{equation}
We divide our argument into two cases: 
	\begin{enumerate}
		\item[(a)]  $\displaystyle\sup_{j}c_{j} < \infty$,
		\item[(b)]$c_{j} \rightarrow +\infty$, as $j\to\infty$.
	\end{enumerate}
\begin{defn}\label{NVSdef} Let $(u_j)$ be a sequence in $W^{2,2}(\mathbb{R}^4)$  such that $u_{j} \rightharpoonup u$   in $W^{2,2}(\mathbb{R}^4)$. We  say that $(u_j)$ is a normalized vanishing sequence [(NVS) in short] if   $\|u_j\|_{W^{2,2}(\mathbb{R}^4)}=1$, $u=0$ and  
$$
\lim_{\rho\to\infty}\lim_{j\to\infty}\int_{B_{\rho}}(\operatorname{e}^{\beta_j\varrho(\|u_j\|^{2}_{2})u^2_j}-1)\mathrm{d}x=0.
$$
\end{defn}
Firstly, we deal with the bounded case. 
	\begin{lemma}
    If $\sup_{j} c_j < +\infty$, then only one of the following alternatives is satisfied:
    \begin{enumerate}
        \item[$(i)$] $u \neq 0$ and $AD(4,2, 32\pi^2, \alpha, \gamma)$ is achieved by some function in $W^{2, 2}_{rad}(\mathbb{R}^{4})$;
        \item[$(ii)$] $(u_j)$ is a NVS  and  $AD(4,2, 32\pi^2, \alpha, \gamma)\leq 32\pi^2\varrho(1)$.
    \end{enumerate}
\end{lemma}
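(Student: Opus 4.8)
\; The hypothesis $\sup_j c_j=:M<\infty$ linearizes the functional. Since $\beta_j\le 32\pi^2$, $\tilde{\zeta}_j=\varrho(\|u_j\|_2^2)\le\varrho(1)$ (because $\|u_j\|_2^2\le1$ and $\varrho$ is increasing), and $u_j^2\le M^2$, we have $\beta_j\tilde{\zeta}_j u_j^2\le K:=32\pi^2\varrho(1)M^2$ pointwise, hence by convexity of $\operatorname{e}^t$ on $[0,K]$,
\begin{equation}\nonumber
0\le \operatorname{e}^{\beta_j\tilde{\zeta}_j u_j^2}-1\le C\,u_j^2 ,\qquad C:=\frac{\operatorname{e}^K-1}{K}\,32\pi^2\varrho(1).
\end{equation}
Moreover $\|\nabla u_j\|_2^2\le\|u_j\|_2\|\Delta u_j\|_2\le1$, so $(u_j)$ is bounded in $W^{1,2}(\mathbb{R}^4)$ and the Radial Lemma~\ref{radiallemma} ($n=4$, $m=2$) gives the uniform decay $|u_j(x)|\le C_0|x|^{-3/2}$. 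By Rellich's theorem $u_j\to u$ in $L^2_{loc}(\mathbb{R}^4)$ and a.e.; passing to a subsequence, $\|u_j\|_2^2\to\theta_0$, $\|\Delta u_j\|_2^2\to\theta_1$ with $\theta_0+\theta_1=1$, and by weak lower semicontinuity $\|u\|_2^2\le\theta_0$, $\|\Delta u\|_2^2\le\theta_1$. Finally, by Lemma~\ref{ADlimit=AD}, $L:=AD(4,2,32\pi^2,\alpha,\gamma)=\lim_j\int_{\mathbb{R}^4}(\operatorname{e}^{\beta_j\tilde{\zeta}_j u_j^2}-1)\,\mathrm{d}x$.

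\emph{Step 1: a mass--splitting identity.}\; I would split the integral over $B_\rho$ and $B_\rho^c$. On $B_\rho$, the generalized dominated convergence theorem (dominating functions $Cu_j^2\to Cu^2$ in $L^1(B_\rho)$, together with $\beta_j\to32\pi^2$ and $\tilde{\zeta}_j\to\varrho(\theta_0)$) gives $\int_{B_\rho}(\operatorname{e}^{\beta_j\tilde{\zeta}_j u_j^2}-1)\to\int_{B_\rho}(\operatorname{e}^{32\pi^2\varrho(\theta_0)u^2}-1)$. On $B_\rho^c$ the uniform decay yields $\beta_j\tilde{\zeta}_j u_j^2\le C_1\rho^{-3}$, so from $t\le\operatorname{e}^t-1\le t\operatorname{e}^t$,
\begin{equation}\nonumber
\int_{B_\rho^c}(\operatorname{e}^{\beta_j\tilde{\zeta}_j u_j^2}-1)\,\mathrm{d}x=\bigl(1+o_\rho(1)\bigr)\,\beta_j\tilde{\zeta}_j\int_{B_\rho^c}u_j^2\,\mathrm{d}x ,
\end{equation}
with $0\le o_\rho(1)\le\operatorname{e}^{C_1\rho^{-3}}-1$ uniform in $j$. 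Since $\int_{B_\rho}u_j^2\to\int_{B_\rho}u^2$ and $\|u_j\|_2^2\to\theta_0$, letting $j\to\infty$ and then $\rho\to\infty$ yields
\begin{equation}\nonumber
L=\int_{\mathbb{R}^4}\bigl(\operatorname{e}^{32\pi^2\varrho(\theta_0)u^2}-1\bigr)\,\mathrm{d}x+32\pi^2\varrho(\theta_0)\bigl(\theta_0-\|u\|_2^2\bigr);
\end{equation}
this is the $\beta=\beta_0$ instance of Lemma~\ref{compacidade}-\eqref{resLemma2} (legitimate by Remark~\ref{compacidade-critica} when $u\neq0$, after the harmless replacement of $\beta_j$ by $\beta_0$, whose effect on the integrals is $O(32\pi^2-\beta_j)\to0$ in the bounded case). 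For later use, since $\varrho(s)\,s\le\varrho(1)$ on $[0,1]$, the same tail estimate gives, with no hypothesis on $\theta_0$, that $\lim_j\int_{B_\rho^c}(\operatorname{e}^{\beta_j\tilde{\zeta}_j u_j^2}-1)\le(1+o_\rho(1))\,32\pi^2\varrho(1)$.

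\emph{Step 2: the dichotomy.}\; If $u=0$, then $(u_j)$ is a NVS: for each fixed $\rho$, $\int_{B_\rho}(\operatorname{e}^{\beta_j\tilde{\zeta}_j u_j^2}-1)\le C\int_{B_\rho}u_j^2\to0$, so the double limit in Definition~\ref{NVSdef} vanishes; and combining this with the tail bound of Step 1 gives $L\le32\pi^2\varrho(1)$, i.e.\ alternative $(ii)$. If $u\neq0$, then $\theta_0\ge\|u\|_2^2>0$ and I would repeat the rescaling argument in the proof of Theorem~\ref{ThmAttain}: set $\tau\ge1$ by $\tau^{4}=\theta_0/\|u\|_2^2$ and $v(x)=u(x/\tau)$, so $\|v\|_2^2=\theta_0$, $\|\Delta v\|_2^2=\|\Delta u\|_2^2$, hence $\|\Delta v\|_2^2+\|v\|_2^2\le\theta_1+\theta_0=1$; a change of variables together with the mass--splitting identity gives
\begin{equation}\nonumber
\int_{\mathbb{R}^4}\bigl(\operatorname{e}^{32\pi^2\varrho(\|v\|_2^2)v^2}-1\bigr)\,\mathrm{d}x=L+(\tau^{4}-1)\int_{\mathbb{R}^4}\bigl(\operatorname{e}^{32\pi^2\varrho(\theta_0)u^2}-1-32\pi^2\varrho(\theta_0)u^2\bigr)\,\mathrm{d}x .
\end{equation}
The last integrand is $\ge0$ and, since $u\neq0$, strictly positive on a set of positive measure; as $v$ is admissible and the left-hand side is $\le L$, we must have $\tau=1$, i.e.\ $\theta_0=\|u\|_2^2$. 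Then the identity collapses to $L=\int_{\mathbb{R}^4}(\operatorname{e}^{32\pi^2\varrho(\|u\|_2^2)u^2}-1)\,\mathrm{d}x$, so $u\in W^{2,2}_{rad}(\mathbb{R}^4)$, being admissible ($\|\Delta u\|_2^2+\|u\|_2^2\le1$), attains $AD(4,2,32\pi^2,\alpha,\gamma)$ — alternative $(i)$. Since $(i)$ forces $u\neq0$ and $(ii)$ forces $u=0$, exactly one alternative holds.

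The main obstacle is Step 1, i.e.\ controlling $\int_{B_\rho^c}(\operatorname{e}^{\beta_j\tilde{\zeta}_j u_j^2}-1)$: one must use the radial decay to linearize the exponential on the tail, so that the only possible defect of compactness is a uniform escape of $L^2$-mass to infinity at vanishing amplitude, measured precisely by $\theta_0-\|u\|_2^2$; the rescaling in Step 2 then rules out such an escape unless $u\equiv0$.
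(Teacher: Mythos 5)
Your proof is correct and arrives at the same mass-splitting identity and the same rescaling dichotomy as the paper, but the interior convergence on $B_\rho$ is handled by a genuinely different, more elementary route. The paper feeds $\sup_j c_j<\infty$ together with the Euler--Lagrange equation \eqref{EulerLagrange} into elliptic regularity (Lemma~\ref{RegularityGazzola2}) to upgrade weak convergence to $u_j\to u$ in $C^3_{loc}(\mathbb{R}^4)$, then passes to the limit in $\int_{B_\rho}$ by uniform convergence. You bypass the PDE entirely: the uniform bound $u_j^2\le M^2$ yields the pointwise linearization $0\le \operatorname{e}^{\beta_j\tilde{\zeta}_j u_j^2}-1\le C\,u_j^2$, Rellich compactness gives $u_j\to u$ in $L^2_{loc}$, and generalized dominated convergence closes the interior limit. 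What the paper's route buys is quantitative $C^3_{loc}$ information (not needed for this lemma, but used heavily later in the blow-up analysis); what yours buys is a shorter, measure-theoretic argument that never touches the Euler--Lagrange equation, and in particular would survive if $(u_j)$ were merely an arbitrary maximizing sequence rather than a sequence of solutions. Your tail estimate on $B_\rho^c$ via the Radial Lemma and $t\le \operatorname{e}^t-1\le t\operatorname{e}^t$ is a mild variant of the paper's \eqref{exp-tail-tail} (which instead subtracts the linear term and uses \eqref{exp-partI}); both correctly isolate the deficit $32\pi^2\varrho(\theta_0)(\theta_0-\|u\|_2^2)$. Your Step 2 matches the paper's rescaling argument verbatim. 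Two very minor remarks that do not affect correctness: the cross-reference to Remark~\ref{compacidade-critica} is unnecessary since you establish the identity independently and it also covers $u=0$; and the interpolation in fact gives $\|\nabla u_j\|_2^2\le \|u_j\|_2\|\Delta u_j\|_2\le \tfrac12$, slightly sharper than the $\le 1$ you quote, though all you need is boundedness.
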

	\begin{proof}
		If $\sup_{j}c_{j} < +\infty$, then by standard elliptic estimates we conclude that $u_j \rightarrow u$ in  $C^{3}_{loc}(\mathbb{R}^{4})$, see Lemma~\ref{RegularityGazzola2}. Then, for any $\rho>1$, from Lemma~\ref{radiallemma}
		\begin{equation}\label{exp-tail-tail}
		\begin{aligned}
			\operatorname{e}^{\beta_j\varrho(\|u_j\|^{2}_{2})u^2_j}-1-\beta_j\varrho(\|u_j\|^{2}_{2})u^2_j&=\sum_{i=2}^{\infty}\frac{(\beta_j\varrho(\|u_j\|^{2}_{2})u^2_j)^{i}}{i!} \le \sum_{i=2}^{\infty}\frac{(\varrho(1)\beta_j)^{i}}{i!}u^{2i}_j\\
			& \le \sum_{i=2}^{\infty}\frac{(\varrho(1)\beta_j)^{i}}{i!}\left(\frac{C}{\rho^3}\right)^{i-1}u^{2}_j \le \frac{u^{2}_j}{C\rho}\sum_{i=2}^{\infty}\frac{(32\pi^2C\varrho(1))^{i}}{i!}\\
			&=C^{\prime}\frac{u^2_j}{\rho},
			\end{aligned}
		\end{equation}
on $\mathbb{R}^4\setminus B_{\rho}$,  where $C^{\prime}$ is  independent of $j$ and $\rho$. Thus, by \eqref{exp-tail-tail}
\begin{equation}\label{exp-partI}
\lim_{\rho\to\infty}\lim_{j\to\infty}\int_{\mathbb{R}^4\setminus B_{\rho}}\left[\operatorname{e}^{\beta_j\varrho(\|u_j\|^{2}_{2})u^2_j}-1-\beta_j\varrho(\|u_j\|^{2}_{2})u^2_j\right]\mathrm{d}x=0.
\end{equation}
Up to a subsequence,  we can assume that $\|u_j\|^{2}_{2}\to \theta$ with $\theta\in [0,1]$, as $j\to\infty$. Thus, the convergence in $C^{3}_{loc}(\mathbb{R}^{4})$ yield
\begin{equation}\label{exp-partII}
\lim_{\rho\to\infty}\lim_{j\to\infty}\int_{B_{\rho}}\left[\operatorname{e}^{\beta_j\varrho(\|u_j\|^{2}_{2})u^2_j}-1-\beta_j\varrho(\|u_j\|^{2}_{2})u^2_j\right]\mathrm{d}x=\int_{\mathbb{R}^4}\left[\operatorname{e}^{32\pi^2\varrho(\theta)u^2}-1-32\pi^2\varrho(\theta)u^2\right]\mathrm{d}x.
\end{equation}
Since 
$$
\lim_{j\to\infty}\int_{\mathbb{R}^4}\beta_j\varrho(\|u_j\|^{2}_{2})u^2_j\mathrm{d}x=32\pi^2\varrho(\theta)\theta
$$
it follows from \eqref{maxi-sub}, \eqref{exp-partI} and \eqref{exp-partII} that
\begin{equation}\label{ADlimit}
\begin{aligned}
\lim_{j\to\infty}AD(4,2, \beta_j,  \alpha, \gamma)&=\lim_{j\to\infty}\int_{\mathbb{R}^4}\left(\operatorname{e}^{\beta_{j}\varrho(\|u_j\|^{2}_2)u_{j}^{2}}-1\right)\mathrm{d}x\\
&=\int_{\mathbb{R}^4}\left(\operatorname{e}^{32\pi^2\varrho(\theta)u^2}-1\right)\mathrm{d}x+32\pi^2\varrho(\theta)(\theta-\|u\|^{2}_2).
\end{aligned}
\end{equation}
 From \eqref{ADlimit} and Lemma~\ref{ADlimit=AD}, we can write 
\begin{equation}\label{AD=}
\begin{aligned}
AD(4,2, 32\pi^2,  \alpha, \gamma)=\int_{\mathbb{R}^4}\left(\operatorname{e}^{32\pi^2\varrho(\theta)u^2}-1\right)\mathrm{d}x+32\pi^2\varrho(\theta)(\theta-\|u\|^{2}_2).
\end{aligned}
\end{equation}
If $u \neq 0$, as in \eqref{magia=1}  we define  $\tau\ge 1$ and $v$ given by
\begin{equation}\label{magia=2}
\tau^{4}=\frac{\theta}{\|u\|_{2}^2}\; \text{ and }\; v(x)=u\big(\frac{x}{\tau}\big).
\end{equation}	
Then, by \eqref{AD=}
	\begin{align*}
		 AD(4, 2, 32\pi^2, \alpha, \gamma) & \geq \int_{\mathbb{R}^{4}}\Phi\left(32\pi^2\varrho(\|v\|_{2}^{2})v^2\right)\mathrm{d} x = \tau^{4}\int_{\mathbb{R}^{4}}\left(\operatorname{e}^{32\pi^2\varrho(\theta)u^2}-1\right)\mathrm{d}x  \\
	& =\int_{\mathbb{R}^{4}}\left(\operatorname{e}^{32\pi^2\varrho(\theta)u^2}-1\right)\mathrm{d}x + (\tau^{4} -1)32\pi^2\varrho(\theta)\|u\|^{2}_{2} \\
	&+(\tau^{4}-1)\int_{\mathbb{R}^{4}} \left[\operatorname{e}^{32\pi^2\varrho(\theta)u^2}-1-32\pi^2\varrho(\theta)u^2\right] \mathrm{d}x \\
	& =AD(4,2, 32\pi^2,  \alpha, \gamma)+(\tau^{4}-1)\int_{\mathbb{R}^{4}} \left[\operatorname{e}^{32\pi^2\varrho(\theta)u^2}-1-32\pi^2\varrho(\theta)u^2\right] \mathrm{d}x .
	\end{align*}
This forces $\tau=1$.  Consequently, we conclude that  $u$ is a maximizer for $	AD(4,2,32\pi^2, \alpha, \gamma)$.

Now, suppose $u=0$.  From the convergence $u_j\to 0$ in $C^3_{loc}(\mathbb{R}^4)$ we get 
$$
\lim_{j\to\infty}\int_{B_{\rho}}(\operatorname{e}^{\beta_j\varrho(\|u_j\|^{2}_{2})u^2_j}-1)\mathrm{d}x=\int_{B_{\rho}}(\operatorname{e}^{32\pi^2\varrho(\theta)u^2}-1)\mathrm{d}x=0,\;\;\mbox{for any}\;\; \rho>0.
$$
Thus,  $(u_j)$ is a NVS according with the Definition~\ref{NVSdef}. Finally, directly from \eqref{AD=} with $u=0$, we obtain
		$$
		AD(4,2, 32\pi^2,  \alpha, \gamma)= 32\pi^2\varrho(\theta)\theta\le 32\pi^2\varrho(1).
		$$	
	\end{proof}\\	
Next, we perform the blow-up analysis to deal with the case when $c_{j}\to+\infty$  as  $j\to \infty$.
	\subsection{Blow-up Analysis}
Throughout this section we assume that $c_{j}\to+\infty$  as  $j\to \infty$. Our analysis  is based on the works \cite{luluzhu20}, \cite{NguyenVanHoang2019}. 

Firstly, we note that,  with the notation in \eqref{EulerLagrange},  the   condition $1-\mu_j\|u_j\|^{2}_{2}=0$ with $0<\|u_j\|^{2}_{2}<1$, which is undesirable in the definition of $\lambda_j$,  occurs if and only if
\begin{align*}
	1=\sqrt{\mu_j}\|u_j\|_{2} = \frac{\sqrt{\alpha(1+\gamma)}}{1-\gamma\alpha\|u_{j}\|_{2}^{2}}\|u_j\|_{2}
\end{align*}
or equivalently $\gamma\alpha\|u_j\|^{2}_2+ \sqrt{\alpha(1+\gamma)}\|u_j\|_2-1=0$ with $0<\|u_j\|_{2}<1$. Hence, the undesirable condition occurs if and only if
\begin{align}\label{raiz=norm}
\|u_j\|_2=\frac{-\sqrt{\alpha(1+\gamma)}+\sqrt{\alpha(\gamma+1)+4\gamma\alpha}}{2\gamma\alpha}.
\end{align}
 Although there exist values of  $ j$ , $\alpha $, and  $\gamma$ such that \eqref{raiz=norm} holds, this is not possible if $ j$ is sufficiently large. In fact, we will see below that  $\|u_j\|_2 \to 0$ as $j \to \infty$.
	\begin{lemma}\label{maxseqconc}  Let  $\delta_0$ be  the Dirac Measure supported at $0$. Then,  $|\Delta u_{j}|^{2}\mathrm{d}x \stackrel{\ast}{\rightharpoonup} \delta_{0}$ weakly in the sense of measure.  Further,  $u_j \to 0$ in $L^2(\mathbb{R}^2)$,   $ \tilde{\zeta}_{j} \rightarrow 1$ and $\mu_{j} \rightarrow \alpha(1+\gamma)$,  as  $j\to \infty$.
	\end{lemma}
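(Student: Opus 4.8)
The plan is to exclude the two ways in which the concentration measure could fail to be $\delta_0$ — loss of mass into $L^2$ (equivalently $\|u_j\|_2\not\to0$) and spreading of $|\Delta u_j|^2$ away from the origin or to infinity — by showing that either of these would force $\sup_j c_j<\infty$, contradicting the standing hypothesis $c_j\to\infty$.

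First I would pass to a subsequence along which $\|u_j\|_2^2\to\theta\in[0,1]$ and $|\Delta(u_j-u)|^2\,\mathrm{d}x\stackrel{\ast}{\rightharpoonup}\nu$ for a finite measure $\nu$; by weak lower semicontinuity $\nu(\mathbb{R}^4)\le\liminf_j\|\Delta(u_j-u)\|_2^2=1-\theta-\|\Delta u\|_2^2=:\rho^\ast$, and since $\|u\|_2^2\le\theta$, the case $\theta=0$ already forces $u\equiv0$. As each $u_j$ is radial, $\nu$ is radial, hence its atomic part sits at the origin: $\nu=\nu_0\delta_0+\nu_1$ with $\nu_1$ non-atomic, and $\nu_0\le\rho^\ast\le1-\theta$.

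The core of the argument is the implication: if $\varrho(\theta)\,\nu_0<1$, then $\sup_j c_j<\infty$. To prove it I would use $u_j^2\le(1+\epsilon)(u_j-u)^2+C_\epsilon u^2$ together with the localized Lions-type concentration–compactness principle for the Adams inequality (in the spirit of \cite{luluzhu20,DelaTorre}): since $\limsup_j\int_{B_r(x_0)}|\Delta(u_j-u)|^2\,\mathrm{d}x\le\nu(\overline{B_r(x_0)})\downarrow\nu(\{x_0\})$ as $r\to0$, while $\beta_j\tilde{\zeta}_j\to32\pi^2\varrho(\theta)$, whenever $\varrho(\theta)\,\nu(\{x_0\})<1$ one can choose $\epsilon$ small so that $\operatorname{e}^{(1+\epsilon)\beta_j\tilde{\zeta}_j(u_j-u)^2}$ is bounded in $L^{p_0}(B_r(x_0))$ for some $p_0>1$; combined with $\operatorname{e}^{C_\epsilon\beta_j\tilde{\zeta}_j u^2}\in L^r_{\mathrm{loc}}$ (a fixed $W^{2,2}$ function, hence locally exponentially integrable) and Hölder, this yields $\operatorname{e}^{\beta_j\tilde{\zeta}_j u_j^2}$ bounded in $L^p_{\mathrm{loc}}(\mathbb{R}^4)$, $p>1$ — the condition being vacuous at any $x_0\ne0$ (where $\nu(\{x_0\})=0$) and reducing at $x_0=0$ to $\varrho(\theta)\nu_0<1$; in the degenerate subcase $\theta=1$ one instead has $u\equiv0$ and $\|\Delta u_j\|_2\to0$, so $u_j\to0$ in $W^{2,2}_{\mathrm{loc}}$ and the same conclusion is immediate. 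Once $\operatorname{e}^{\beta_j\tilde{\zeta}_j u_j^2}$ is bounded in $L^p_{\mathrm{loc}}$, I would bootstrap the Euler–Lagrange equation \eqref{EulerLagrange}: $\mu_j$ is bounded, $u_j$ is bounded in every $L^q(\mathbb{R}^4)$ with $q<\infty$ (Sobolev), $1-\mu_j\|u_j\|_2^2$ stays bounded below for $\alpha$ small, and $\lambda_j$ is bounded below because
$$\int_{\mathbb{R}^4}u_j^2\operatorname{e}^{\beta_j\tilde{\zeta}_j u_j^2}\,\mathrm{d}x\ge\frac{1}{\beta_j\tilde{\zeta}_j}\int_{\mathbb{R}^4}\bigl(\operatorname{e}^{\beta_j\tilde{\zeta}_j u_j^2}-1\bigr)\,\mathrm{d}x\longrightarrow\frac{AD(4,2,32\pi^2,\alpha,\gamma)}{32\pi^2\varrho(\theta)}\ge\frac{AD(4,2,32\pi^2,\alpha,\gamma)}{32\pi^2\varrho(1)}>1$$
by Lemma~\ref{ADlimit=AD} and Proposition~\ref{vanishlevel}. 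Hence $\Delta^2u_j$ is bounded in $L^s_{\mathrm{loc}}$ for some $s>1$, so $u_j$ is bounded in $W^{4,s}_{\mathrm{loc}}\hookrightarrow C^0_{\mathrm{loc}}$ (Lemma~\ref{RegularityGazzola2}); together with the uniform decay $|u_j(x)|\le C|x|^{-3/2}$ from Lemma~\ref{radiallemma} this bounds $c_j=\max_{\mathbb{R}^4}u_j$, a contradiction.

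Consequently $\varrho(\theta)\nu_0\ge1$, whence $\varrho(\theta)(1-\theta)\ge1$; but an elementary computation gives $\varrho(\theta)(1-\theta)\le1$ for all $\theta\in[0,1]$, with equality only at $\theta=0$ — this is precisely where the hypothesis $\gamma<\frac{1}{\alpha}-1$, i.e. $\alpha(1+\gamma)<1$, is used. Thus $\theta=0$, so $u\equiv0$, $\|\Delta u_j\|_2^2\to1$, and plugging $\varrho(0)=1$ back in gives $\nu_0\ge1$; combined with $\nu_0\le\nu(\mathbb{R}^4)\le\rho^\ast=1$ this yields $\nu_0=\nu(\mathbb{R}^4)=1$, $\nu_1=0$, i.e. $\nu=\delta_0$. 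Since $u\equiv0$, $|\Delta u_j|^2\,\mathrm{d}x=|\Delta(u_j-u)|^2\,\mathrm{d}x\stackrel{\ast}{\rightharpoonup}\delta_0$, and as the limit is the same for every subsequence the full sequence converges. Finally $\|u_j\|_2^2=1-\|\Delta u_j\|_2^2\to0$, i.e. $u_j\to0$ in $L^2(\mathbb{R}^4)$, whence $\tilde{\zeta}_j=\varrho(\|u_j\|_2^2)\to\varrho(0)=1$ and $\mu_j=\alpha(1+\gamma)/(1-\gamma\alpha\|u_j\|_2^2)^2\to\alpha(1+\gamma)$. The step I expect to be the main obstacle is the third one: setting up the localized Adams concentration–compactness inequality \emph{with} the extra factor $\tilde{\zeta}_j\to\varrho(\theta)$ so that the critical threshold reads exactly $\varrho(\theta)\nu_0<1$, and securing the lower bound on $\lambda_j$ uniformly in $j$; the remaining ingredients are routine.
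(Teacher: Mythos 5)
Your proposal takes a genuinely different route from the paper.  You introduce the concentration‑compactness defect measure $\nu$ of $|\Delta(u_j-u)|^2$, reduce the dichotomy to the algebraic inequality $\varrho(\theta)(1-\theta)\le 1$ (with equality only at $\theta=0$, using $\gamma<\tfrac{1}{\alpha}-1$), and deduce both $\theta=0$ and $\nu=\delta_0$ simultaneously.  The paper instead argues directly with $u_j$: it fixes a ball $B_R$ where $\lim_j\int_{B_R}|\Delta u_j|^2=\mu<1$, forms the boundary‑shifted function $\hat u_{j,R}(r)=u_j(r)-u_j(R)\in W^{2,2}_{\mathcal N}(B_R)$, and rewrites $\tilde\zeta_j\|\Delta\hat u_{j,R}\|_2^2$ using the full‑norm constraint to obtain $\lim_j\beta_j(1+\delta)\tilde\zeta_j\|\Delta\hat u_{j,R}\|_2^2\le 32\pi^2(1+\delta)\tfrac{\mu+\alpha(1-\mu)\mu}{1-\gamma\alpha(1-\mu)}<32\pi^2$, which is exactly the threshold condition that makes Tarsi's Adams inequality on $B_R$ applicable.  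Your $\varrho(\theta)(1-\theta)\le1$ plays the same role as the paper's $\tfrac{\mu+\alpha(1-\mu)\mu}{1-\gamma\alpha(1-\mu)}<1$; both exploit the same hypothesis $\gamma<\tfrac{1}{\alpha}-1$.  Your version is arguably more structurally transparent in separating ``mass of $\|u_j\|_2$'' ($\theta$) from ``concentration mass'' ($\nu_0$), whereas the paper's version is self‑contained because $\hat u_{j,R}$ lands in $W^{2,2}_{\mathcal N}(B_R)$ with no cut‑off issues.

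There is one genuine gap.  You invoke a ``localized Lions‑type concentration‑compactness principle for the Adams inequality'' on $B_r(x_0)$, claiming that $\varrho(\theta)\nu(\{x_0\})<1$ forces $\exp\!\big((1+\epsilon)\beta_j\tilde\zeta_j(u_j-u)^2\big)$ to be bounded in $L^{p_0}(B_r(x_0))$ for some $p_0>1$ and some small $r,\epsilon$.  But $(u_j-u)|_{B_r(x_0)}$ is neither in $W^{2,2}_0(B_r(x_0))$ nor in $W^{2,2}_{\mathcal N}(B_r(x_0))$, so no version of the localized Adams/Lions inequality applies as stated; one must first remove the boundary values (as the paper does with $\hat u_{j,R}$) or introduce a cut‑off and control the commutator terms in $\Delta$.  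This is not a cosmetic issue: the sharp constant and the very form of the CC statement depend on the boundary condition, and the step you single out as ``the main obstacle'' is indeed where you are leaning on an unstated lemma.  A minor secondary issue: you assert that $1-\mu_j\|u_j\|_2^2$ stays bounded below ``for $\alpha$ small,'' but the lemma carries no smallness hypothesis on $\alpha$; the paper handles this by noting (just before the lemma) that the bad identity $\mu_j\|u_j\|_2^2=1$ can only occur for finitely many $j$ once $\|u_j\|_2\to0$ is in force, and in the contradiction argument one only needs the lower bound on $\lambda_j$ from Lemma~\ref{lambdainf}, which uses $(1-\mu_j\|u_j\|_2^2)^{-1}\ge 1$ — so you should drop the smallness caveat and argue as the paper does.
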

	\begin{proof}
	By contradiction,  suppose that $|\Delta u_{j}|^{2}\mathrm{d}x \not\stackrel{\ast}{\rightharpoonup}  \delta_{0}$.  Then, there are $R>0$ and $\mu < 1$ such that 
		\begin{equation}\label{non-concentrated}
		\lim\limits_{j \rightarrow \infty}\int_{B_R}|\Delta u_{j}|^{2}\mathrm{d}x=\mu<1.
	\end{equation}
	Set 
		$\hat{u}_{j, R}(r)= u_{j}(r)-u_{j}(R)$ for  $x\in B_R$ with $r=|x|$. Then 
	$\hat{u}_{j, R} \in W_{\mathcal{N}}^{2,2}(B_{R})$, and 
	\begin{equation}\label{RR4}
	\int_{B_{R}}|\Delta \hat{u}_{j, R}|^2\mathrm{d}x = \int_{B_{R}}|\Delta u_{j}|^{2}\mathrm{d}x.
	\end{equation}
	By Lemma \ref{radiallemma}, for any $\delta>0$ we derive 
		\begin{align*}
			u_{j}^{2} &\leq (1+\delta) \hat{u}^{2}_{j, R}+c_{\delta}u_{j}^{2}(R)
			\leq (1+\delta) \hat{u}^{2}_{j, R}+c_{\delta}\frac{C}{R^{3}},
		\end{align*}
	with $c_{\delta}=(1-(1+\delta)^{-1})^{-1}$.  Now, set $\hat{v}_{j,R} := \frac{\hat{u}_{j, R}}{\|\Delta \hat{u}_{j,R}\|_{2}}$. We have  
		\begin{align*}
			\exp(\beta_{j}\tilde{\zeta}_{j}u_{j}^{2}) 
			&\leq \exp\Big(\beta_{j}\tilde{\zeta}_{j}(1+\delta) \hat{u}^2_{j, R}+c_{\delta}\beta_{j}\tilde{\zeta}_{j}\frac{C}{R^{3}}\Big) \\
			&\le  \operatorname{e}^{c_{\delta}32\pi^2\varrho(1)\frac{C}{R^{3}}}  \exp\big(\beta_{j}\tilde{\zeta}_{j}(1+\delta) \hat{u}^2_{j, R}\big)\\
	& = C(R, \delta) \exp\big(\beta_{j}\tilde{\zeta}_{j}(1+\delta) \hat{u}^2_{j, R}\big)\\
	&= C(R,\delta )\exp\left(\beta_{j}(1+\delta) \tilde{\zeta}_{j}\|\Delta \hat{u}_{j, R}\|_{2}^{2}\hat{v}_{j, R}^{2}\right),
		\end{align*}
		on the ball $B_R$. Recalling $\|\Delta u_j\|^{2}_{2}+\|u_j\|^{2}_2=1$ and $\|\Delta\hat{u}_{j,R}\|_{2}=\|\Delta u_j\|_{L^2(B_R)}\le \|\Delta u_j\|_{2}$, from \eqref{non-concentrated}  and \eqref{RR4} we  can write
		\begin{align*}
			\tilde{\zeta}_{j}\|\Delta \hat{u}_{j, R}\|_{2}^{2} &= \frac{(1+\alpha\|u_{j}\|_{2}^{2})\|\Delta \hat{u}_{j, R}\|_{2}^{2}}{(1-\gamma\alpha\|u_{j}\|_{2}^{2})} \\ 
			&= \frac{\big(1+\alpha-\alpha\|\Delta u_{j}\|_{2}^{2}\big)\|\Delta \hat{u}_{j, R}\|_{2}^{2}}{(1-\gamma\alpha\|u_{j}\|_{2}^{2})}\\
			&\leq \frac{\big(1+\alpha-\alpha\|\Delta \hat{u}_{j, R}\|_{2}^{2}\big)\|\Delta \hat{u}_{j, R}\|_{2}^{2}}{(1-\gamma\alpha+\gamma\alpha\|\Delta \hat{u}_{j,R}\|_{2}^{2})}.
		\end{align*}
	Letting $j\to \infty$, we get
		\begin{equation}\label{cotaquase1}
		\lim\limits_{j \rightarrow \infty} \left(\beta_{j}(1+\delta) \tilde{\zeta}_{j}\|\Delta \hat{u}_{j, R}\|_{2}^{2}\right) \leq 32\pi^2(1+\delta)\frac{\mu+\alpha(1-\mu)\mu}{1-\gamma\alpha(1-\mu)}.
		\end{equation}
Since the positive numbers $\gamma,\alpha$ and $1-\mu $ are less than $1$, we obtain $1-\gamma\alpha(1-\mu)>0$, thus
\begin{align}\label{cadeia}
\frac{\mu+\alpha(1-\mu)\mu}{1-\gamma\alpha(1-\mu)}<1 & \; \text{ iff }\; \; \alpha(1-\mu)\mu<(1-\mu)-\gamma\alpha(1-\mu)\nonumber \\
& \; \text{ iff } \; \alpha\mu<1-\gamma\alpha \nonumber\\
& \; \text{ iff } \; \mu+\gamma<\frac{1}{\alpha}.
\end{align}
We are assuming the condition $0<\gamma<\frac{1}{\alpha}-1$. Thus,  for any $0<\mu<1$ we obtain $\mu+\gamma<(\mu-1)+\frac{1}{\alpha}<\frac{1}{\alpha}$. So, \eqref{cadeia}  holds and from \eqref{cotaquase1}, for $\delta>0$ small enough it follows that 
$$
		\lim\limits_{j \rightarrow \infty} \left(\beta_{j}(1+\delta) \tilde{\zeta}_{j}\|\Delta \hat{u}_{j, R}\|_{2}^{2}\right)<32\pi^2.
$$
Therefore, we can apply the Adams-Trudinger-Moser type inequality \eqref{TarsiThm} by  C.~Tarsi \cite[Theorem 4]{Tarsi2012} to conclude that 	
$\exp\left(\beta_{j}(1+\delta) \tilde{\zeta}_{j}\|\Delta \hat{u}_{j, R}\|_{2}^{2}\hat{v}_{j, R}^{2}\right)$ is bounded in $L^{p}(B_{R})$ for some $p>1$. Consequently, 		$\exp(\beta_{j}\tilde{\zeta}_{j}u_{j}^{2})$ is also bounded in $L^{p}(B_{R})$. Since, $(u_{j})$ is bounded in $L^{q}(B_{R})$ for any $q<\infty$, by Hölder inequality together with Lemma \ref{lambdainf} below, it holds that
		$$
		\frac{u_{j}}{\lambda_{j}}\exp\left(\beta_{j}\tilde{\zeta}_{j} u_{j}^2\right)
		$$
		is bounded in $L^{s}(B_{R})$ for any $s>1$. Hence,  by apply  standard elliptic estimates we have that $(u_{j})$ is uniformly bounded in $B_{R/2}$ which  contradicts the hypothesis that $c_{j} \rightarrow \infty$. Thus, we need to have $|\Delta u_{j}|^{2}\mathrm{d}x \stackrel{\ast}{\rightharpoonup} \delta_{0}$. As direct consequence, since $\|\Delta u_{j}\|_{2}^{2}+\|u_{j}\|_{2}^{2}=1$, we have $\|u_{j}\|_{2}^{2} \rightarrow 0$ as $j \rightarrow \infty$. Thus, $u_{j} \rightarrow 0$ and $u \equiv 0$ in $L^{2}(\mathbb{R}^{4})$, which also implies that $\tilde{\zeta}_{j} \rightarrow 1$, $\mu_{j} \rightarrow \alpha(1+\gamma)$ as $j \rightarrow \infty$.
	\end{proof}
	\\
	Next, we  show that $(\lambda_{j})$ is bounded away from zero.
	\begin{lemma}\label{lambdainf}
		There holds $\inf_{j \rightarrow \infty }\lambda_{j} > 0$.
	\end{lemma}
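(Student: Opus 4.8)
The plan is to derive a uniform positive lower bound for $\lambda_{j}$ directly from the maximality of $u_{j}$ and the vanishing level estimate in Proposition~\ref{vanishlevel}, so that no further blow-up rescaling is needed for this lemma. Recall from \eqref{EulerLagrange} that
\begin{equation*}
\lambda_{j}=\frac{\tilde{\zeta}_{j}}{1-\mu_{j}\|u_{j}\|_{2}^{2}}\int_{\mathbb{R}^{4}}u_{j}^{2}\operatorname{e}^{\beta_{j}\tilde{\zeta}_{j}u_{j}^{2}}\,\mathrm{d}x ,
\end{equation*}
and that, by Lemma~\ref{maxseqconc}, $\|u_{j}\|_{2}\to 0$, $\tilde{\zeta}_{j}\to 1$ and $\mu_{j}\to\alpha(1+\gamma)$; in particular $1-\mu_{j}\|u_{j}\|_{2}^{2}\to 1>0$, so for $j$ large the prefactor $\tilde{\zeta}_{j}/(1-\mu_{j}\|u_{j}\|_{2}^{2})$ is well defined --- this also excludes the degenerate situation \eqref{raiz=norm} --- and, since $\tilde{\zeta}_{j}=\varrho(\|u_{j}\|_{2}^{2})\ge 1$ while $1-\mu_{j}\|u_{j}\|_{2}^{2}\le 1$, it satisfies $\tilde{\zeta}_{j}/(1-\mu_{j}\|u_{j}\|_{2}^{2})\ge 1$.

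The first step is to bound the integral appearing in $\lambda_{j}$ from below by the value of the Adams--Adimurthi--Druet functional. Applying the elementary inequality $\operatorname{e}^{t}-1\le t\operatorname{e}^{t}$, valid for all $t\ge 0$, with $t=\beta_{j}\tilde{\zeta}_{j}u_{j}^{2}$, and integrating over $\mathbb{R}^{4}$, \eqref{maxi-sub} yields
\begin{equation*}
\int_{\mathbb{R}^{4}}u_{j}^{2}\operatorname{e}^{\beta_{j}\tilde{\zeta}_{j}u_{j}^{2}}\,\mathrm{d}x\ge\frac{1}{\beta_{j}\tilde{\zeta}_{j}}\int_{\mathbb{R}^{4}}\left(\operatorname{e}^{\beta_{j}\tilde{\zeta}_{j}u_{j}^{2}}-1\right)\mathrm{d}x=\frac{AD(4,2,\beta_{j},\alpha,\gamma)}{\beta_{j}\tilde{\zeta}_{j}} .
\end{equation*}

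The second step is to invoke the vanishing level estimate. Since $\beta_{j}$ lies in $\big(\frac{1+2\alpha-\gamma\alpha^{2}}{1+\alpha(1-\gamma)-\gamma\alpha^{2}}\frac{2}{\mathcal{B}_{GN}},\beta_{0}\big)$, and for $n=4$, $m=2$ the admissibility condition of Proposition~\ref{vanishlevel} in the case $\frac{n}{m}=2$ reads exactly $\gamma<1/(\alpha^{2}+2\alpha)$, which is assumed in Theorem~\ref{thm-attainn=4}, that proposition gives $AD(4,2,\beta_{j},\alpha,\gamma)>\beta_{j}\,\varrho(1)$, where $\varrho(1)=(1+\alpha)/(1-\gamma\alpha)$. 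Combining this with $\tilde{\zeta}_{j}=\varrho(\|u_{j}\|_{2}^{2})\le\varrho(1)$ we conclude
\begin{equation*}
\int_{\mathbb{R}^{4}}u_{j}^{2}\operatorname{e}^{\beta_{j}\tilde{\zeta}_{j}u_{j}^{2}}\,\mathrm{d}x>\frac{\beta_{j}\varrho(1)}{\beta_{j}\tilde{\zeta}_{j}}=\frac{\varrho(1)}{\tilde{\zeta}_{j}}\ge 1
\end{equation*}
for every $j$, whence $\lambda_{j}>\tilde{\zeta}_{j}/(1-\mu_{j}\|u_{j}\|_{2}^{2})\ge 1$ for all large $j$, and in particular $\inf_{j\to\infty}\lambda_{j}\ge 1>0$.

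I expect no serious obstacle here: the only point requiring care is the positivity and boundedness away from $0$ of the denominator $1-\mu_{j}\|u_{j}\|_{2}^{2}$, which is exactly what Lemma~\ref{maxseqconc} supplies via the concentration $|\Delta u_{j}|^{2}\mathrm{d}x\stackrel{\ast}{\rightharpoonup}\delta_{0}$ and the resulting $\|u_{j}\|_{2}\to 0$. Everything else reduces to the elementary inequality $\operatorname{e}^{t}-1\le t\operatorname{e}^{t}$ together with the strict lower bound for $AD(4,2,\beta_{j},\alpha,\gamma)$ at the vanishing level already established in Proposition~\ref{vanishlevel} and Lemma~\ref{ADlimit=AD}.
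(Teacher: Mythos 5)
Your argument takes a genuinely different route from the paper's. The paper argues by contradiction: assuming $\liminf_{j}\lambda_j=0$ and using the same two ingredients --- the bound $(1-\mu_j\|u_j\|_2^2)^{-1}\ge 1$ and the elementary inequality $\operatorname{e}^t-1\le t\operatorname{e}^t$ --- it arrives at $0\ge AD(4,2,32\pi^2,\alpha,\gamma)$, which is absurd simply because the supremum is strictly positive; Proposition~\ref{vanishlevel} is not invoked at all. You instead produce a direct quantitative lower bound, $\lambda_j>1$ for all large $j$, which is a stronger conclusion but requires the vanishing level estimate $AD(4,2,\beta_j,\alpha,\gamma)>\beta_j\varrho(1)$ from Proposition~\ref{vanishlevel}. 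Both versions reach the goal; yours buys an explicit constant at the cost of importing a heavier prior result, while the paper's contradiction argument needs only the trivial positivity of the supremum.

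There is one structural point you should fix. You cite Lemma~\ref{maxseqconc} for the facts $\|u_j\|_2\to 0$, $\tilde{\zeta}_j\to 1$, $\mu_j\to\alpha(1+\gamma)$ and hence $1-\mu_j\|u_j\|_2^2\to 1$. But the proof of Lemma~\ref{maxseqconc} in the paper explicitly invokes ``Lemma~\ref{lambdainf} below'', i.e.\ the statement you are now proving, so your citation is circular. The paper's own proof of Lemma~\ref{lambdainf} avoids this by simply taking $(1-\mu_j\|u_j\|_2^2)^{-1}\ge 1$ as a standing fact supported by the preparatory remark around \eqref{raiz=norm}, not by Lemma~\ref{maxseqconc}. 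Your proof only actually needs that the prefactor $\tilde{\zeta}_j/(1-\mu_j\|u_j\|_2^2)$ is well defined and $\ge 1$ for $j$ large; you should record this as following from that preparatory discussion (or leave it as an implicit standing hypothesis, as the paper does) rather than deducing it from a lemma whose own proof depends on the present one.
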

	\begin{proof}
		By contradiction, suppose that $\inf_{j \rightarrow \infty }\lambda_{j}=0$.  By applying the elementary inequality $(e^t-1) \leq te^t$ for $t \geq 0$  and noticing that $ (1-\mu_{j}\|u_j\|^{2}_{2})^{-1}\ge 1$, we obtain 
		\begin{align*}
			0&= \liminf\limits_{j\rightarrow \infty} \beta_j\lambda_{j} \\
			&= \liminf\limits_{j\rightarrow \infty}\frac{\varrho(\|u_j\|^{2}_2) }{1-\mu_j\|u_j\|^{2}_{2}} \int_{\mathbb{R}^4}\beta_ju^{2}_{j}\operatorname{e}^{\beta_j\varrho(\|u_j\|^{2}_2)u^2_j}\mathrm{d}x\\
			& \geq \liminf\limits_{j \rightarrow \infty}\int_{\mathbb{R}^4}\beta_j\varrho(\|u_j\|^{2}_2)u^{2}_j\operatorname{e}^{\beta_j\varrho(\|u_j\|^{2}_2)u^2_j}\mathrm{d}x \\
			& \ge  \liminf\limits_{j \rightarrow \infty}\int_{\mathbb{R}^4}\Big(\operatorname{e}^{\beta_j\varrho(\|u_j\|^{2}_2)u^2_j}-1\Big)\mathrm{d}x \\
			& = AD(4,2,32\pi^2, \alpha, \gamma)
		\end{align*}
	which is the desired contradiction.
	\end{proof}
	\subsection{Assymptotic behavior}
	Here, we shall  proceed exactly the same way as was done in \cite{luluzhu20}.
	Since $(u_j)$ is a bounded sequence in $W^{2,2}_{rad}(\mathbb{R}^4)$, we have
	\begin{align}\label{maximizingsequenceprop}
		\begin{cases}
			u_j \rightharpoonup u \text{ in } W^{2,2}_{rad}(\mathbb{R}^4); \\
			u_j \rightarrow u \text{ in }  L^{p}(\mathbb{R}^4), ~~\forall p > 2; \\
			\beta_j \nearrow 32\pi^2.
		\end{cases}
	\end{align}
In the case $c_{j} \rightarrow \infty$,  from Lemma~\ref{radiallemma} we can assume that the sequence $(x_j)\subset \mathbb{R}^4$ in \eqref{cj-point} satisfies
	$$
	x_{j} \rightarrow 0,\;\;\mbox{as}\;\; j\to \infty.
	$$ 
	With aim to study the assymptotic behavior of $(u_{j})$ near to the blow-up point, let us define
	\begin{equation}\label{rj-definition}
	r^{4}_{j} := \frac{\lambda_{j}}{c^{2}_{j}\operatorname{e}^{\beta_j\tilde{\zeta}_{j} c_{j}^{2}}}.
	\end{equation}
	\begin{lemma}\label{assymp-rj}
	For any $\xi<32\pi^2$, we have $\displaystyle\limsup_{j\to\infty}r^{4}_{j}c^{2}_{j}\operatorname{e}^{\xi\tilde{\zeta}_{j} c_{j}^{2}}=0$.	In particular, $\lim\limits_{j\rightarrow +\infty}r^{4}_{j} = 0$
	\end{lemma}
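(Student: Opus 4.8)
The plan is to reduce the statement to the definition \eqref{rj-definition} of $r_j$ together with a crude upper bound on $\lambda_j$. For $j$ large enough that $\beta_j>\xi$ (possible since $\beta_j\nearrow 32\pi^2>\xi$), a direct substitution gives
$$
r_j^4 c_j^2 e^{\xi\tilde{\zeta}_j c_j^2}=\lambda_j\,e^{-(\beta_j-\xi)\tilde{\zeta}_j c_j^2},
$$
so it suffices to show that $\lambda_j$ grows at most polynomially in $c_j$ while the exponential factor decays superpolynomially.

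First I would bound $\lambda_j$. By the very definition of $\lambda_j$ in \eqref{EulerLagrange} (equivalently, by testing the Euler--Lagrange equation with $u_j$ and using $\|\Delta u_j\|_2^2+\|u_j\|_2^2=1$), one has
$$
\lambda_j=\frac{\tilde{\zeta}_j}{1-\mu_j\|u_j\|_2^2}\left(\int_{\mathbb{R}^4}u_j^2\bigl(e^{\beta_j\tilde{\zeta}_j u_j^2}-1\bigr)\,\mathrm{d}x+\|u_j\|_2^2\right).
$$
Since $0\le u_j\le c_j$ and $e^{\beta_j\tilde{\zeta}_j u_j^2}-1\ge 0$ pointwise, the first integral is at most $c_j^2\int_{\mathbb{R}^4}(e^{\beta_j\tilde{\zeta}_j u_j^2}-1)\,\mathrm{d}x=c_j^2\,AD(4,2,\beta_j,\alpha,\gamma)$, which is bounded uniformly in $j$ by Lemma~\ref{ADlimit=AD} together with the finiteness of $AD(4,2,32\pi^2,\alpha,\gamma)$ guaranteed by Theorem~\ref{ThmAdiDruetUnbound}. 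Combining this with $\|u_j\|_2^2\le 1$ and with $\tilde{\zeta}_j/(1-\mu_j\|u_j\|_2^2)\to 1$ --- which follows from Lemma~\ref{maxseqconc}, since $\tilde{\zeta}_j\to 1$, $\mu_j\to\alpha(1+\gamma)$ and $\|u_j\|_2\to 0$ --- we obtain $\lambda_j\le C c_j^2$ for all large $j$, with $C$ independent of $j$.

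Plugging this into the displayed identity yields $r_j^4 c_j^2 e^{\xi\tilde{\zeta}_j c_j^2}\le C c_j^2 e^{-(\beta_j-\xi)\tilde{\zeta}_j c_j^2}$. Since $\beta_j-\xi\to 32\pi^2-\xi>0$, we have $\beta_j-\xi\ge\delta_\xi:=(32\pi^2-\xi)/2$ for all large $j$, and $\tilde{\zeta}_j\ge 1$ (its numerator $1+\alpha\|u_j\|_2^2$ is $\ge 1$ and its denominator $1-\gamma\alpha\|u_j\|_2^2$ lies in $(0,1]$ because $\gamma\alpha<1$); hence $(\beta_j-\xi)\tilde{\zeta}_j c_j^2\ge\delta_\xi c_j^2\to+\infty$ as $c_j\to\infty$. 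Therefore $c_j^2 e^{-(\beta_j-\xi)\tilde{\zeta}_j c_j^2}\to 0$, which proves $\limsup_{j\to\infty}r_j^4 c_j^2 e^{\xi\tilde{\zeta}_j c_j^2}=0$. The last assertion follows on taking $\xi=0$: then $r_j^4 c_j^2\to 0$, and since $c_j^2\ge 1$ for large $j$ we conclude $r_j^4\le r_j^4 c_j^2\to 0$.

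The step I expect to be the genuine obstacle is exactly the upper bound on $\lambda_j$. One should resist estimating $\int_{\mathbb{R}^4} u_j^2 e^{\beta_j\tilde{\zeta}_j u_j^2}\,\mathrm{d}x$ by elementary inequalities such as $s e^{s}\le e^{2s}$, since that would call for an Adams-Trudinger-Moser inequality with exponent $2\cdot 32\pi^2$, far beyond the critical threshold, where it fails. Subtracting the constant $1$ first converts the bad integral into the \emph{a priori} finite functional $AD(4,2,\beta_j,\alpha,\gamma)$ at the cost of only the harmless polynomial factor $c_j^2$; once that is done, the remainder is bookkeeping with the limits already recorded in Lemma~\ref{maxseqconc}.
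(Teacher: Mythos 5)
Your proof is correct, and it takes a genuinely different and in fact cleaner route than the paper's. The paper also starts from $r_j^4 c_j^2 e^{\xi\tilde\zeta_j c_j^2}=\lambda_j e^{(\xi-\beta_j)\tilde\zeta_j c_j^2}$, but instead of estimating $\lambda_j$ directly, it pushes the decaying factor \emph{inside} the integral using the pointwise bound $e^{(\xi-\beta_j)\tilde\zeta_j c_j^2}\le e^{(\xi-\beta_j)\tilde\zeta_j u_j^2}$ (valid since $\xi-\beta_j<0$ and $|u_j|\le c_j$), arriving at $\int u_j^2 e^{\xi\tilde\zeta_j u_j^2}\,\mathrm dx$; it then subtracts $1$, applies H\"older with exponents $p/2$ and $p/(p-2)$ to separate a factor $\|u_j\|_p^2$ from an Adams-type functional with the tilted exponent $\xi\tilde\zeta_j p/(p-2)$, and uses $\|u_j\|_p\to 0$ for $p>2$ together with $\|u_j\|_2\to 0$ from Lemma~\ref{maxseqconc} and \eqref{maximizingsequenceprop}. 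Your argument avoids the exponent bookkeeping entirely: you observe that, after subtracting $1$ inside the integral and pulling out the crude factor $c_j^2$, the bad term is controlled by $c_j^2\,AD(4,2,\beta_j,\alpha,\gamma)$, which is uniformly bounded by monotonicity of $AD(\cdot)$ in $\beta$ and Theorem~\ref{ThmAdiDruetUnbound} (Lemma~\ref{ADlimit=AD} serves equally well), giving $\lambda_j\le Cc_j^2$; the remaining factor $e^{-(\beta_j-\xi)\tilde\zeta_j c_j^2}$ then kills the polynomial $c_j^2$ outright because $\beta_j-\xi$ stays bounded away from $0$, $\tilde\zeta_j\ge 1$, and $c_j\to\infty$. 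Where the paper relies on the convergence $u_j\to 0$ in $L^p$ ($p\ge 2$) and a delicate choice of $p$ so that the tilted Adams exponent stays subcritical, your proof needs only the uniform bound on the Adams functional plus the blow-up $c_j\to\infty$, which is a lighter and more transparent set of inputs.
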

	\begin{proof}
		Let $\xi < 32\pi^2$, then
		\begin{align*}				
			r^{4}_{j}c^{2}_{j}\operatorname{e}^{\xi\tilde{\zeta}_{j} c_{j}^{2}} &=\operatorname{e}^{(\xi -\beta_j )\tilde{\zeta}_{j}c_{j}^{2}}\frac{\tilde{\zeta}_{j} }{1-\mu_j\|u_j\|^{2}_{2}} \int_{\mathbb{R}^4}u^{2}_{j}\operatorname{e}^{\beta_{j}\tilde{\zeta}_{j}u^2_{j}}\mathrm{d}x \\
			& \leq \frac{\tilde{\zeta}_{j} }{1-\mu_j\|u_j\|^{2}_{2}}\int_{\mathbb{R}^4}u^{2}_{j}\operatorname{e}^{\beta_{j}\tilde{\zeta}_{j} u^2_{j}}\operatorname{e}^{(\xi -\beta_j )\tilde{\zeta}_{j} u_{j}^{2}}\mathrm{d}x \\
			& \leq \frac{\tilde{\zeta}_{j} }{1-\mu_j\|u_j\|^{2}_{2}}\int_{\mathbb{R}^4}u^{2}_{j}\operatorname{e}^{\xi\tilde{\zeta}_{j} u^2_{j}}  \mathrm{d}x.
		\end{align*}
From Lemma~\ref{maxseqconc} we have  $\frac{\tilde{\zeta}_{j} }{1-\mu_j\|u_j\|^{2}_{2}}\rightarrow 1$, as $j\rightarrow+\infty$. In particular,   $\frac{\tilde{\zeta}_{j} }{1-\mu_j\|u_j\|^{2}_{2}}$ is bounded and we also have
		\begin{align*}
			r^{4}_{j}c^{2}_{j}\operatorname{e}^{\xi\tilde{\zeta}_{j} c_{j}^{2}} &\leq C\Big(\int_{\mathbb{R}^4}u^{2}_{j}(\operatorname{e}^{\xi\tilde{\zeta}_{j} u^2_{j}} -1)\mathrm{d}x +\int_{\mathbb{R}^4}u^{2}_{j}\mathrm{d}x\Big)\\
			& \leq C\left(\left(\int_{\mathbb{R}^{4}}|u_j|^p \mathrm{d}x\right)^{\frac{2}{p}}\left(\int_{\mathbb{R}^{4}}(\operatorname{e}^{\xi \tilde{\zeta}_{j} u_j^2}-1)^{\frac{p}{p-2}} \mathrm{d}x\right)^{\frac{p-2}{p}}+\int_{\mathbb{R}^4}u^{2}_{j}\mathrm{d}x\right)\\
			& \leq C( p)\left(\int_{\mathbb{R}^{4}}|u_j|^p  \mathrm{d}x\right)^{\frac{2}{p}}\left(\int_{\mathbb{R}^{4}}\left(\operatorname{e}^{\frac{\xi \tilde{\zeta}_{j}  p}{p-2}u_j^2}-1\right)^{\frac{p}{p-2}} \mathrm{d}x\right)^{\frac{p-2}{p}}+C(p)\int_{\mathbb{R}^4}u^{2}_{j}\mathrm{d}x,
		\end{align*}
		for some constants $C$ and $C(p)$, which are independent of $j$. Taking into account the  Adams-Adimurthi-Druet inequality in \eqref{supUnbounded},  Lemma~\ref{maxseqconc} and \eqref{maximizingsequenceprop} we obtain the result.
	\end{proof}
	
Now, we need to define some auxiliary sequences to understand the assymptotic behavior of $u_j$ near to the blow-up point. Namely, 
	\begin{align}\label{3functions}
		\begin{cases}
			w_{j}(x) = \dfrac{u_j(x_j+r_j x)}{c_{j}} \\
			z_{j}(x) = c_{j}(u_{j}(x_j+r_j x)-c_{j})\\
			v_{j}(x)=u_{j}(x_j+r_jx)-c_j
		\end{cases}
	\end{align}
	where the all sequences are defined on the sequence of set $\Omega_j = \{x \in \mathbb{R}^4\;:\;  x_j+r_j x \in B_{1}\}$.
	\begin{lemma}\label{lemmaseqwk}
		$w_{j}(x) \rightarrow 1$ in $C^{3}_{loc}(\mathbb{R}^4)$.
	\end{lemma}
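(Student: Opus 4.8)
The plan is to rescale the Euler--Lagrange equation \eqref{EulerLagrange} around the maximum point $x_j$ at the scale $r_j$ defined in \eqref{rj-definition}, show that the rescaled equation has a right-hand side that tends to $0$ uniformly on compact sets, pass to the limit via elliptic estimates, and finally identify the limit through a Liouville-type classification. Throughout, recall that in this subsection $c_j\to+\infty$, and by Lemma~\ref{maxseqconc} we have $\tilde\zeta_j\to1$, $\mu_j\to\alpha(1+\gamma)$, while $r_j\to0$ by Lemma~\ref{assymp-rj}, and $x_j\to0$.

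First I would substitute $u_j(x_j+r_jx)=c_jw_j(x)$, as in \eqref{3functions}, into \eqref{EulerLagrange}. Writing the equation as $\Delta^2u_j=(\mu_j-1)u_j+\lambda_j^{-1}u_j\tilde\zeta_j\operatorname{e}^{\beta_j\tilde\zeta_ju_j^2}$, using $\Delta^2_x\bigl[u_j(x_j+r_jx)\bigr]=r_j^4(\Delta^2u_j)(x_j+r_jx)$, dividing by $c_j$, and invoking the identity $r_j^4/\lambda_j=1/\bigl(c_j^2\operatorname{e}^{\beta_j\tilde\zeta_jc_j^2}\bigr)$ coming straight from \eqref{rj-definition}, one obtains on $\Omega_j$
\[
\Delta^2 w_j = r_j^4(\mu_j-1)\,w_j + \frac{\tilde\zeta_j}{c_j^2}\,w_j\,\operatorname{e}^{\beta_j\tilde\zeta_j c_j^2(w_j^2-1)}.
\]
Since $x_j$ is a maximum point of the nonnegative function $u_j$, we have $0\le w_j\le 1$ on $\Omega_j$ and $w_j(0)=1$; in particular $w_j^2-1\le 0$, so the exponential is $\le 1$. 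Combined with the boundedness of $\mu_j$ and $\tilde\zeta_j$ and with $r_j\to0$, this gives $\|\Delta^2 w_j\|_{L^\infty(\Omega_j)}\to 0$.

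Next I would observe that $\Omega_j$ exhausts $\mathbb{R}^4$: it is the ball of radius $1/r_j\to\infty$ centered at $-x_j/r_j$, and since $r_j\to0$ and $|x_j|\to0$, the condition $|x_j|+2Rr_j<1$ holds for $j$ large, so $B_{2R}\subset\Omega_j$ eventually, for every fixed $R$. On $B_{2R}$ the function $w_j$ is bounded by $1$ and solves $\Delta^2w_j=f_j$ with $\|f_j\|_{L^\infty(B_{2R})}\to0$; by interior $W^{4,p}$ estimates for the bi-Laplacian followed by Sobolev embedding, $(w_j)$ is bounded in $C^{3,\beta}(B_R)$ for every $R$ and some $\beta\in(0,1)$. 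A diagonal Arzel\`a--Ascoli argument yields a subsequence with $w_j\to w$ in $C^3_{loc}(\mathbb{R}^4)$, where $w$ is radial, $0\le w\le1$, $w(0)=1$, and $\Delta^2 w=0$ in $\mathbb{R}^4$.

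Finally I would classify $w$. Bootstrapping, $w$ is smooth, and the radial solutions of $\Delta^2w=0$ on $\mathbb{R}^4$ are spanned by $1$, $|x|^2$, $|x|^{-2}$, $\log|x|$; regularity at the origin excludes $|x|^{-2}$ and $\log|x|$, and boundedness excludes $|x|^2$, so $w$ is constant, hence $w\equiv w(0)=1$. As the limit is independent of the subsequence, the full sequence converges, which is the claim. The only genuinely delicate point is keeping the right-hand side of the rescaled equation uniformly small on compact sets — this is precisely where the calibrated choice of $r_j$ in \eqref{rj-definition} and the estimate of Lemma~\ref{assymp-rj} are used — together with checking that the domains $\Omega_j$ eventually contain every ball; the elliptic bootstrap and the Liouville step are routine.
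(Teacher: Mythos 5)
Your proposal follows the same route as the paper's own proof: rescale \eqref{EulerLagrange} by $r_j$ around $x_j$, show the right-hand side of the rescaled equation decays to zero uniformly on compact sets (because $w_j\le 1$, $r_j\to0$, $c_j\to\infty$, and the exponential factor $\operatorname{e}^{\beta_j\tilde{\zeta}_j c_j^2(w_j^2-1)}\le 1$), pass to a $C^3_{loc}$ limit $w$ via elliptic estimates and Arzel\`{a}--Ascoli, and classify $w$ by a Liouville-type argument. This is exactly the paper's mechanism, written out slightly more explicitly.

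The one step that does not hold up as stated is the claim that the limit $w$ is radial. Since $u_j$ is radial about the origin, $w_j(\cdot)=u_j(x_j+r_j\cdot)/c_j$ is radial about $-x_j/r_j$, and although $x_j\to0$ and $r_j\to0$, nothing at this stage controls the ratio $|x_j|/r_j$, so you cannot conclude that $w$ is radial about the origin (or any fixed point). Fortunately the classification does not need radiality: a biharmonic function on $\mathbb{R}^4$ taking values in $[0,1]$ is constant. One way to see this is via Pizzetti's formula (Lemma~\ref{PizzetiFormula}): with $\Delta^2 w=0$, the average of $w$ over $B_R(x_0)$ equals $w(x_0)+c_1R^2\Delta w(x_0)$, and boundedness then forces $\Delta w(x_0)=0$ for every $x_0$; thus $w$ is harmonic and the classical Liouville theorem gives $w\equiv w(0)=1$. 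Alternatively, Lemma~\ref{polynomial} shows $w$ is a polynomial of degree at most $2$, and boundedness eliminates the non-constant terms. (The paper's own appeal to ``Liouville's theorem for harmonic functions'' is equally terse here, since $w$ is biharmonic rather than harmonic; presumably one of the two arguments above is intended.) Dropping the radiality assertion and substituting the bounded-biharmonic Liouville argument closes the gap and recovers the paper's conclusion.
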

	\begin{proof}
		By the Euler-Lagrange equation \eqref{EulerLagrange}, the definition of $r_{j}$ and the fact of $w_j \leq 1$, we know that for any $R > 0$ and $x \in B_{R}(0)$, $w_{j}(x)$ satisfies
		\begin{align*}
			|\Delta^{2}(w_j(x))| &= \bigg|\frac{r_{j}^{4}}{c_{j}}(\Delta^2u_{j})(x_j +r_{j}x)\bigg| \\
			&= \bigg|\frac{r_{j}^{4}}{c_{j}}\left(\lambda_{j}^{-1}\tilde{\zeta}_{j}u_{j}(x_j +r_{j}x)\operatorname{e}^{\beta_j\tilde{\zeta}_{j}u_{j}^2(x_j +r_{j}x)}+(\mu_j-1)u_{j}(x_j +r_{j}x)\right) \bigg| \\
			&\leq \left|r_{j}^{4}\left(\lambda_{j}^{-1}\tilde{\zeta}_{j}w_{j}(x)\operatorname{e}^{\beta_{j}\tilde{\zeta}_{j}c^2_{j}}+(\mu_j-1)w_{j}(x)\right) \right| \\
			&=\left|\frac{\tilde{\zeta}_{j}w_{j}(x)}{c_{j}^{2}}+(\mu_j-1)\frac{w_{j}(x)\lambda_{j}}{c_{j}^{2}\operatorname{e}^{\beta_{j}\tilde{\zeta}_{j}c_{j}^{2}}} \right| \\
			&=\left|\frac{w_{j}(x)}{c_{j}^{2}}\left(\tilde{\zeta}_{j}+(\mu_j-1)\frac{\lambda_{j}}{\operatorname{e}^{\beta_{j}\tilde{\zeta}_{j}c_{j}^{2}}} \right)\right|\\
			&\le\frac{1}{c_{j}^{2}} \left|\left(\tilde{\zeta}_{j}+(\mu_j-1)\frac{\lambda_{j}}{\operatorname{e}^{\beta_{j}\tilde{\zeta}_{j}c_{j}^{2}}} \right)\right|   \rightarrow 0,  
		\end{align*}	
	where we have used that  $\mu_j \rightarrow \alpha(1+\gamma)$, $\tilde{\zeta}_{j}\to 1$ and 
		$$
		\frac{\lambda_{j}}{\operatorname{e}^{\beta_{j}\tilde{\zeta}_{j}c_{j}^{2}}} \leq \frac{\tilde{\zeta}_{j} }{1-\mu_j\|u_j\|^{2}_{2}} \int_{\mathbb{R}^4}u^{2}_{j}\operatorname{e}^{\beta_{j}\tilde{\zeta}_{j}(u^2_{j}-c^{2}_{j})}\mathrm{d}x \leq M\|u_{j}\|_{2}^{2} \leq C .
		$$
 Furthermore, $w_{j}(x)$ is bounded in $L^{1}_{loc}(\mathbb{R}^{4})$. By, the standard regularity theory, we got that for any $R>0$ and $0 < \kappa < 1$, the sequence of $\|w_j(x)\|_{C^{3,\kappa}(B_{R}(0))}$ is uniformly bounded for every $j$. Finally, by the Ascoli-Arzela Theorem, there exists a function $w \in C^{3}(\mathbb{R}^{4})$ such that the sequence $w_{j}(x)$ converges to $w$ in  $C^{3}(\mathbb{R}^{4})$ having the property of $\Delta^2 w(x) = 0$ for all $x \in \mathbb{R}^{4}$. Now, since $w_{j}(0)=1$,  by the Liouville's Theorem for harmonic functions we obtain $w$ is constant and equal to $1$ in $\mathbb{R}^{4}$.
	\end{proof}
	
	Next, we will summarize some results that can be found in \cite{MR2667016, Martinazzi2009, Pizzetti} and will be used throughout this work.
	\begin{lemma}[Pizzetti  \cite{Pizzetti}]\label{PizzetiFormula}
		Let $u \in C^{2m}(B_{R}(x_{0}))$ with $B_{R}(x_{0}) \subset \mathbb{R}^{n}$ and $m, n \in \mathbb{Z}_{+}$. Then  there exists constants $c_{i} = c_{i}(n)$ such that
		\begin{equation}\label{PizzetiEquation}
			\int_{B(x_{0})}u(x)\mathrm{d}x = \sum_{i=0}^{m-1}c_{i}R^{n+2i}\Delta^{i}u(x_{0}) + c_{m}R^{n+2m}\Delta^{m}u(\xi),
		\end{equation}
		for some $\xi \in B_{R}(x_{0}) $.
	\end{lemma}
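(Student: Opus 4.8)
The plan is to reduce the statement to the classical Pizzetti expansion of the spherical means of $u$ and then integrate in the radius. First I would translate so that $x_0=0$ and (harmlessly) assume $u$ is $C^{2m}$ on a neighborhood of $\overline{B_R}$, the general case following by applying the result on $B_{R'}$ with $R'<R$ and letting $R'\uparrow R$. For $0\le r\le R$ and $0\le k\le m$ put $M_k(r):=\frac{1}{\omega_{n-1}r^{n-1}}\int_{\partial B_r}\Delta^k u\,d\sigma$, the spherical average of $\Delta^k u$, so that $M_k(0)=\Delta^k u(0)$ and $\int_{B_R}u\,dx=\omega_{n-1}\int_0^R r^{n-1}M_0(r)\,dr$. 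The basic identity is the one coming from the divergence theorem: writing $M_k(r)=\frac{1}{\omega_{n-1}}\int_{\partial B_1}\Delta^k u(r\omega)\,d\omega$, differentiating in $r$, and using $\int_{\partial B_r}\partial_\nu(\Delta^k u)=\int_{B_r}\Delta^{k+1}u$, one gets $M_k'(r)=r^{1-n}\int_0^r s^{n-1}M_{k+1}(s)\,ds$; integrating in $r$ (the boundary term at $0$ vanishes since $n\ge 2$) yields $M_k(r)=\Delta^k u(0)+\mathcal K[M_{k+1}](r)$, where $\mathcal K[g](r):=\int_0^r t^{1-n}\int_0^t s^{n-1}g(s)\,ds\,dt$.

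Next I would iterate this relation $m$ times along $u,\Delta u,\dots,\Delta^{m-1}u$ to get $M_0(r)=\sum_{k=0}^{m-1}\Delta^k u(0)\,\mathcal K^k[1](r)+\mathcal K^m[M_m](r)$. A direct computation gives $\mathcal K[r^{2k}]=\frac{r^{2k+2}}{(2k+2)(n+2k)}$, hence $\mathcal K^k[1](r)=r^{2k}/\Lambda_k$ with $\Lambda_k:=\prod_{j=1}^k 2j(n+2j-2)$. Multiplying by $\omega_{n-1}r^{n-1}$ and integrating over $[0,R]$ then isolates the principal terms $\sum_{k=0}^{m-1}\frac{\omega_{n-1}}{\Lambda_k(n+2k)}R^{n+2k}\Delta^k u(0)$, so that $c_k=\frac{\omega_{n-1}}{\Lambda_k(n+2k)}$ depends only on $n$, together with the remainder $\mathcal R:=\omega_{n-1}\int_0^R r^{n-1}\mathcal K^m[M_m](r)\,dr$. (As a sanity check, $u\equiv 1$ gives $c_0=\omega_{n-1}/n$ and $u=|x|^2$ gives $c_1=\omega_{n-1}/(2n(n+2))$, consistent with computing the integrals directly.)

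It remains to put $\mathcal R$ in single-point form, and this is where the one real point of care lies. I would rewrite $\mathcal K$ by Fubini as $\mathcal K[g](r)=\int_0^r W(r,s)g(s)\,ds$ with kernel $W(r,s)=s^{n-1}\int_s^r t^{1-n}\,dt\ge 0$ on $0\le s\le r$; since a composition of nonnegative kernels is again nonnegative, both $\mathcal K^m$ and the subsequent $r$-integration are represented by a single nonnegative kernel, so $\mathcal R=\int_0^R \mathcal W(s)M_m(s)\,ds$ with $\mathcal W\ge 0$ and total mass $\int_0^R\mathcal W(s)\,ds=\omega_{n-1}\int_0^R r^{n-1}\mathcal K^m[1](r)\,dr=\frac{\omega_{n-1}}{\Lambda_m(n+2m)}R^{n+2m}$. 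Because $M_m$ is continuous on $[0,R]$ (as $\Delta^m u\in C^0$), the mean value theorem for integrals produces $\zeta\in[0,R]$ with $\mathcal R=M_m(\zeta)\cdot\frac{\omega_{n-1}}{\Lambda_m(n+2m)}R^{n+2m}$, and then $M_m(\zeta)$ is an average of the continuous function $\Delta^m u$ over the connected set $\overline{B_\zeta}$, hence equals $\Delta^m u(\xi)$ for some $\xi\in\overline{B_R(x_0)}$ by the intermediate value theorem; this gives $c_m=\frac{\omega_{n-1}}{\Lambda_m(n+2m)}$ and yields the asserted formula. The expected main obstacles are thus only the regularity bookkeeping for $M_0$ near $r=0$ (so that the iteration is legitimate) and the nonnegativity of the iterated kernels (so that the mean value theorem applies); everything else is routine bookkeeping of the constants $c_i=c_i(n)$.
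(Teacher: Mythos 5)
The paper does not prove this lemma: it is stated with a direct citation to Pizzetti's original paper, so there is no proof here to compare against. Your argument is the classical one for Pizzetti's formula and it is essentially correct. You introduce the spherical averages $M_k(r)$ of $\Delta^k u$, derive from the divergence theorem the recursion $M_k(r)=\Delta^k u(0)+\mathcal K[M_{k+1}](r)$ with $\mathcal K[g](r)=\int_0^r t^{1-n}\int_0^t s^{n-1}g(s)\,ds\,dt$, iterate to peel off the polynomial part $\sum_{k=0}^{m-1}\Delta^k u(0)\,\mathcal K^k[1](r)$, compute $\mathcal K^k[1](r)=r^{2k}/\Lambda_k$ with $\Lambda_k=\prod_{j=1}^k 2j(n+2j-2)$, and handle the remainder via the nonnegativity of the iterated kernel $\mathcal W$ and the mean value theorem for integrals. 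All of this is sound, and your sanity checks on $u\equiv 1$ and $u=|x|^2$ do confirm $c_0=\omega_{n-1}/n$ and $c_1=\omega_{n-1}/\bigl(2n(n+2)\bigr)$.

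Two minor points to tidy up. First, $M_m(\zeta)$ is an average of $\Delta^m u$ over the sphere $\partial B_\zeta$, not over the ball $\overline{B_\zeta}$ as you wrote; the intermediate value argument is unaffected since $\partial B_\zeta$ is connected for $n\ge 2$, but the set should be named correctly. Second, the reduction ``assume $u\in C^{2m}$ on a neighborhood of $\overline{B_R}$, then let $R'\uparrow R$'' only passes to the limit if $\Delta^m u(\xi(R'))$ converges, which $u\in C^{2m}(B_R)$ alone does not guarantee when $\xi(R')$ can approach $\partial B_R$; this is really a looseness in the lemma's stated hypotheses rather than a flaw of your argument, and in the paper's actual use (where the relevant function is $C^\infty$ on all of $\mathbb R^4$, Lemma~\ref{lemmaSeqvk}) it is moot. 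Finally, as written your argument produces $\xi\in\overline{B_R(x_0)}$; to place $\xi$ in the open ball as the statement claims one should note that the iterated kernel $\mathcal W$ is in fact strictly positive on $(0,R)$, so the weighted average of $M_m$ lies strictly between its extreme values unless $M_m$ is constant, in which case any interior point works.
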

	
	\begin{lemma}[Martinazzi \cite{Martinazzi2009}] \label{polynomial}
		Let $u$ satisfying the biharmonic equation $(-\Delta)^{2}u=0$ with $u \leqslant (1+|x|^{l})$ for some $l \geq 0$. Then $u$ is a polynomial degree at most $\max\{l,2\}$.
	\end{lemma}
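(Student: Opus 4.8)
The plan is to combine two classical facts about biharmonic functions with a spherical-harmonic projection argument tailored to the one-sided growth condition. First, since $u$ solves $\Delta^{2}u=0$ on all of $\mathbb{R}^{n}$, analytic hypoellipticity makes $u$ real-analytic, and by Almansi's representation its global expansion can be written $u=h_{0}+|x|^{2}h_{1}$ with $h_{0},h_{1}$ entire harmonic functions on $\mathbb{R}^{n}$; expanding each of these into homogeneous harmonic polynomials, $h_{i}=\sum_{k\ge 0}H_{k}^{(i)}$ with the series converging locally uniformly, and writing $x=r\omega$ with $\omega\in S^{n-1}$, one obtains
\begin{equation}\nonumber
u(r\omega)=\sum_{k\ge 0}r^{k}H_{k}^{(0)}(\omega)+\sum_{k\ge 0}r^{k+2}H_{k}^{(1)}(\omega),
\end{equation}
where each $H_{k}^{(i)}(\omega)$ is a spherical harmonic of degree $k$ on $S^{n-1}$. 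Second, Pizzetti's formula (Lemma~\ref{PizzetiFormula} with $m=2$, in the spherical-mean form obtained by differentiating in the radius) gives for a biharmonic $u$ the \emph{exact} identity
\begin{equation}\nonumber
\frac{1}{\omega_{n-1}r^{n-1}}\int_{\partial B_{r}(0)}u\,\mathrm{d}\sigma=u(0)+\frac{1}{2n}\,r^{2}\Delta u(0),\qquad r>0.
\end{equation}

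The heart of the matter is to turn the one-sided bound $u\le 1+|x|^{l}$ into genuine two-sided control of every nonradial mode of $u$ on spheres. Fix $k\ge 1$ and a spherical harmonic $Y$ of degree $k$ normalized by $\|Y\|_{L^{\infty}(S^{n-1})}\le 1$. Since $\int_{S^{n-1}}Y\,\mathrm{d}\omega=0$ and the majorant $g(r):=1+r^{l}$ does not depend on $\omega$, I would subtract $g(r)$ before estimating, so that
\begin{equation}\nonumber
\left|\int_{S^{n-1}}u(r\omega)\,Y(\omega)\,\mathrm{d}\omega\right|
=\left|\int_{S^{n-1}}\big(u(r\omega)-g(r)\big)Y(\omega)\,\mathrm{d}\omega\right|
\le\int_{S^{n-1}}\big(g(r)-u(r\omega)\big)\,\mathrm{d}\omega ,
\end{equation}
and, by the Pizzetti identity, the right-hand side equals $\omega_{n-1}\big(1+r^{l}-u(0)-\tfrac{1}{2n}r^{2}\Delta u(0)\big)$, hence is at most $C(1+r^{\max\{l,2\}})$ for all $r>0$ with $C$ independent of $r$. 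On the other hand, by orthogonality of spherical harmonics the left-hand side equals $\big|a\,r^{k}+b\,r^{k+2}\big|$, where $a:=\int_{S^{n-1}}H_{k}^{(0)}Y\,\mathrm{d}\omega$ and $b:=\int_{S^{n-1}}H_{k}^{(1)}Y\,\mathrm{d}\omega$. Letting $r\to\infty$ therefore forces $b=0$ whenever $k+2>\max\{l,2\}$ and $a=0$ whenever $k>\max\{l,2\}$.

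Because $Y$ ranges over all spherical harmonics of degree $k$, this shows $H_{k}^{(1)}=0$ for $k>\max\{l-2,0\}$ and $H_{k}^{(0)}=0$ for $k>\max\{l,2\}$, while the radial mode $k=0$ is already quadratic in $r$ by Pizzetti and needs no argument. Hence $h_{1}$ is a polynomial of degree at most $\max\{l-2,0\}$, so $|x|^{2}h_{1}$ has degree at most $\max\{l,2\}$, and $h_{0}$ has degree at most $\max\{l,2\}$; consequently $u=h_{0}+|x|^{2}h_{1}$ is a polynomial of degree at most $\max\{l,2\}$ (read as $\lfloor\max\{l,2\}\rfloor$ when $l\notin\mathbb{Z}$). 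The hard part is precisely the one-sidedness of the hypothesis: the bound $u\le 1+|x|^{l}$ carries no direct information on $|\Delta u|$, so the naive strategy---bound $\Delta u$, then iterate the Liouville theorem for harmonic functions---never gets off the ground; what rescues the argument is that the obstruction $g$ is radial, so subtracting it costs nothing against a nonconstant spherical harmonic, while its spherical mean is pinned down exactly by Pizzetti's quadratic formula. Apart from this, only standard facts are used---analytic hypoellipticity for $\Delta^{2}$, the Almansi decomposition, the convergent homogeneous-harmonic-polynomial expansion of entire harmonic functions, and the orthogonality of spherical harmonics---all available in \cite{Martinazzi2009} and the references therein.
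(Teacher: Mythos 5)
The paper gives no proof of this lemma: it is quoted directly from Martinazzi \cite{Martinazzi2009} and then used as a black box in Lemma~\ref{lemmaSeqvk}, so there is no in-paper argument to compare your proposal against. The only meaningful question is whether your self-contained argument is sound, and it is.

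Your identification of the genuine difficulty—that $u\le 1+|x|^{l}$ carries no direct information on $\Delta u$, so one cannot simply iterate Liouville for harmonic functions—is correct, and the mechanism you use to circumvent it is the right one. The Almansi decomposition $u=h_{0}+|x|^{2}h_{1}$ holds on $\mathbb{R}^{n}$ (it is star-shaped), and entire harmonic functions admit locally uniformly convergent expansions into homogeneous harmonic polynomials, so the representation $u(r\omega)=\sum_{k}r^{k}H^{(0)}_{k}(\omega)+\sum_{k}r^{k+2}H^{(1)}_{k}(\omega)$ and the termwise exchange of sum and sphere-integral are legitimate. Pizzetti's formula with $m=2$ and $\Delta^{2}u=0$, differentiated in $R$, gives exactly the quadratic spherical-mean identity you state. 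The key step—that $\int_{S^{n-1}}Y\,\mathrm{d}\omega=0$ for any degree-$k\ge 1$ spherical harmonic $Y$, so subtracting the radial majorant $g(r)=1+r^{l}$ is free, and the pointwise sign $g-u\ge 0$ converts the one-sided hypothesis into the two-sided bound $|ar^{k}+br^{k+2}|\le C(1+r^{\max\{l,2\}})$—is correct. Sending $r\to\infty$ then forces $b=0$ for $k+2>\max\{l,2\}$ and $a=0$ for $k>\max\{l,2\}$ (the first implies the second has $b=0$ already, so there is no cancellation issue), and the $k=0$ mode is exactly quadratic by Pizzetti. Since $Y$ is arbitrary of degree $k$, this kills $H^{(1)}_{k}$ and $H^{(0)}_{k}$ in the stated ranges, and the degree bound $\max\{l,2\}$ (interpreted as its floor for non-integer $l$) follows. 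This is the same Pizzetti/spherical-projection mechanism that underlies the proofs in the cited literature; your Almansi packaging is a cosmetic variant of working directly with the radial Fourier coefficients $a_{k}(r)=\int_{S^{n-1}}u(r\omega)Y_{k}(\omega)\,\mathrm{d}\omega$ of a biharmonic function.
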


	\begin{lemma}[Gazzola  \cite{MR2667016}]\label{RegularityGazzola1}
		Let $\Omega \subset \mathbb{R}^{n}$ be a bounded open set with smooth boundary, and take $k, m \in \mathbb{N}$, $k \geq 2m$ and $\kappa\in (0,1)$. If $u\in H^{m}(\Omega)$ is a weak solutions of the problem
		\begin{align*}
			\begin{cases}
				(-\Delta)^{m}u = f ~~ &\text{in } \Omega\\
				\partial_{v}^{i} u = h_{i}, ~~&\text{on } \partial\Omega, \;\;0 \leq i \leq m-1
			\end{cases}
		\end{align*} 
		with $f \in C^{k-2m,\kappa}(\Omega)$ and $h_i \in  C^{k-i,\kappa}(\partial \Omega)$ then $u \in  C^{k,\kappa}(\Omega)$, and there exists a constant $C = C(\Omega, k,\kappa)$ such that
		$$
		\|u\|_{C^{k,\kappa}(V)} \leq C\left(\|f\|_{C^{k-2m,\kappa}(\Omega)}+\sum_{i=0}^{m-1}\|h_{i}\|_{C^{k-i,\kappa}(\partial \Omega)}\right).
		$$
		Similarly, if $f \in C^{k-2m,\kappa}(\Omega)$ and $u $ is a weak solution of $(-\Delta)^{m}u=f$ in $\Omega$, then $u \in C_{loc}^{k,\kappa}(\Omega)$ and for any open set $V \subset\subset \Omega$, then there exists a constant $C = C(k,p, V ,\Omega)$ such that 
		$$
		\|u\|_{C^{k,\kappa}} \leq C(\|f\|_{C^{k-2m,\kappa}(\Omega)}+\|u\|_{L^{1}(\Omega)}).
		$$
	\end{lemma}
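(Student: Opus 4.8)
This is the classical Schauder regularity theory for the polyharmonic operator, and the plan is to derive it from the Agmon--Douglis--Nirenberg (ADN) theory of elliptic boundary value problems combined with a finite bootstrap. The first thing I would record is that $(-\Delta)^m$ is a properly elliptic operator of order $2m$ with constant coefficients --- its principal symbol is $|\xi|^{2m}$ --- and that on a smooth boundary the Dirichlet system $B_iu:=\partial_\nu^i u$, $0\le i\le m-1$, satisfies the Lopatinski--Shapiro (complementing) condition; this is a purely algebraic check on the half-space model, and it is the point at which the ADN machinery becomes available.

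For the interior assertion I would cover an open set $V\subset\subset\Omega$ by balls and, since the operator has constant coefficients, use the fundamental solution $E(x)\sim c_{m,n}|x|^{2m-n}$ (with the usual logarithmic correction in the resonant dimensions): writing $u=E*(\chi f)+P$, where $\chi$ is a cutoff and $P$ is polyharmonic on a smaller ball, and invoking the Korn--Schauder H\"older estimates for Riesz-type potentials on $f\in C^{0,\kappa}$, one obtains $u\in C^{2m,\kappa}_{\mathrm{loc}}$ together with $\|u\|_{C^{2m,\kappa}(V)}\le C(\|f\|_{C^{0,\kappa}(\Omega)}+\|u\|_{L^1(\Omega)})$; the $L^1$-term (or any weaker norm of $u$) is needed to control the polyharmonic part $P$, which the potential does not see. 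For the boundary assertion I would flatten $\partial\Omega$ locally by a diffeomorphism --- permissible since the boundary is smooth --- obtaining a variable-coefficient elliptic operator of order $2m$ with coefficients as regular as required and Dirichlet conditions that still complement it; freezing coefficients at a boundary point, applying the half-space ADN Schauder estimate, and patching with the interior estimate via a partition of unity yields $u\in C^{k,\kappa}(\Omega)$ with the stated bound in terms of $f$ and the $h_i$. The H\"older regularity needed to start this machine is supplied by $L^p$-theory: since $f\in C^{k-2m,\kappa}\subset L^\infty_{\mathrm{loc}}$ and $u\in H^m$, elliptic $L^p$-estimates give $u\in W^{2m,p}$ for every finite $p$, hence $u\in C^{2m-1,\kappa'}$ by Sobolev embedding.

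Finally, for $k>2m$ I would bootstrap by differentiation: if $u\in C^{l,\kappa}$ with $2m\le l<k$, then each $\partial_iu$ solves $(-\Delta)^m(\partial_iu)=\partial_if\in C^{l-2m,\kappa}$ with correspondingly differentiated boundary data, so the previous step upgrades $\partial_iu$ to $C^{l,\kappa}$, i.e. $u\in C^{l+1,\kappa}$; iterating finitely often reaches $C^{k,\kappa}$, and the constants compound into a single $C(\Omega,k,\kappa)$.

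The hard part will not be any single step but the ADN apparatus itself --- verifying the complementing condition for, and proving the half-space Schauder estimate of, a general elliptic system of order $2m$ --- which in a self-contained treatment is lengthy; in practice one simply invokes it (as is done here by citing Gazzola's monograph \cite{MR2667016}), so that the only genuine bookkeeping is tracking the dependence of the constants through the localization and the bootstrap.
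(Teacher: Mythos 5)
The paper does not prove this lemma: it is quoted verbatim as a known result from Gazzola's monograph \cite{MR2667016}, so there is no ``paper's own proof'' to compare against. Your sketch reconstructs the standard route that the literature (Agmon--Douglis--Nirenberg, as presented in \cite{MR2667016}) actually follows, and as an outline it is essentially correct: verify properness/ellipticity of $(-\Delta)^m$ and the complementing condition for the Dirichlet system $\{\partial_\nu^i\}_{i<m}$; prove the interior estimate via the fundamental solution of $(-\Delta)^m$ plus a polyharmonic remainder controlled by a weak norm of $u$; prove the boundary estimate by flattening, freezing coefficients, and invoking the half-space ADN Schauder estimate; obtain the initial H\"older regularity of $u$ from $L^p$-theory and Sobolev embedding.

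One caveat worth flagging: your bootstrap-by-differentiation from $C^{2m,\kappa}$ to $C^{k,\kappa}$ works cleanly only for the interior estimate. For the boundary estimate, differentiating a solution does not produce another solution of the same Dirichlet problem --- tangential derivatives of $u$ on $\partial\Omega$ are controlled by the $h_i$, but $\partial_i u$ does not in general satisfy a Dirichlet system with data expressible purely in terms of the $\partial_i h_j$, because normal and tangential directions mix under differentiation and flattening. In the ADN framework the higher-order Schauder estimate up to the boundary is established directly for every $k\ge 2m$ (by commuting tangential derivatives through the flattened problem and then recovering normal derivatives from the equation), rather than by iterating the $k=2m$ case. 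This is a technical rather than conceptual difference, and it is absorbed into the ADN black box you are already invoking, but the bootstrap as literally stated is not the correct mechanism at the boundary. The rest of the sketch --- including your remarks about the $L^1$-term controlling the polyharmonic complement, the logarithmic corrections in the fundamental solution when $n\le 2m$, and the fact that in practice one simply cites the monograph --- is accurate.
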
	
	\begin{lemma}[Gazzola  \cite{MR2667016}]\label{RegularityGazzola2}
		Let $\Omega \subset \mathbb{R}^{n}$ be a bounded open set with smooth boundary  and take $ m,k \in \mathbb{N}$, $k \geq 2m$ and $p>1$. If $f\in W^{k-2m, p}(\Omega)$ and $u \in H^{m}(\Omega)$ is a weak solution of  of $(-\Delta)^{m}u =f$ in $\Omega$, then $u \in  W_{loc}^{k, p}(\Omega)$, and for any open set $V \subset\subset \Omega $, then there exists a constant $C=C(k,p, V, \Omega)$ such that 
		$$
		\|u\|_{W^{k,p}(V)} \leq C(\|f\|_{W^{k-2m,p}(\Omega)}+\|u\|_{L^{1}(\Omega)}).
		$$
		Similarly, if $f \in C^{k-2m,\kappa}(\Omega)$ and $u $ is a weak solution of $(-\Delta)^{m}u=f$ in $\Omega$, then $u \in C_{loc}^{k,\kappa}(\Omega)$ and for any open set $V \subset \subset \Omega$, then there exists a constant $C = C(k,p, V ,\Omega)$ such that 
		$$
		\|u\|_{C^{k,\kappa}(V)} \leq C(\|f\|_{C^{k-2m,\kappa}(\Omega)}+\|u\|_{L^{1}(\Omega)}).
		$$	
	\end{lemma}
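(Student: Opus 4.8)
The statement is the classical interior regularity theory for the polyharmonic operator $(-\Delta)^m$, and the plan is to run the standard elliptic bootstrap; here is how I would organize it.

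The single analytic input is the \emph{interior $L^p$ estimate} for $(-\Delta)^m$: if $v$ is a distributional solution of $(-\Delta)^m v = g$ in $B_{2R}\subset\mathbb{R}^n$ with $g\in L^p(B_{2R})$ and $1<p<\infty$, then $v\in W^{2m,p}(B_R)$ and
\[
\|v\|_{W^{2m,p}(B_R)}\le C\bigl(\|g\|_{L^p(B_{2R})}+\|v\|_{L^1(B_{2R})}\bigr),\qquad C=C(n,m,p,R).
\]
This is the Agmon--Douglis--Nirenberg / Calder\'on--Zygmund estimate for the polyharmonic operator: it follows from the explicit Riesz-type fundamental solution of $(-\Delta)^m$ (with a logarithmic correction in the exceptional dimensions) together with the $L^p$-boundedness of the associated Calder\'on--Zygmund kernels, and the replacement of a $\|v\|_{L^p}$ term on the right by $\|v\|_{L^1}$ is the usual interpolation--absorption trick $\|v\|_{L^p(B_R)}\le \varepsilon\|v\|_{W^{2m,p}(B_R)}+C_\varepsilon\|v\|_{L^1(B_R)}$. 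The H\"older (Schauder) analogue is the same statement with $L^p$ replaced by $C^{0,\kappa}$ and $W^{2m,p}$ by $C^{2m,\kappa}$.

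With this in hand I would run a bootstrap on nested domains. Fix $V\subset\subset\Omega$ and choose a finite chain $V=V_0\subset\subset V_1\subset\subset\cdots\subset\subset V_N\subset\subset\Omega$; at each stage one multiplies $u$ by a smooth cutoff equal to $1$ on $V_i$ and supported in $V_{i+1}$, which transfers the equation onto a fixed ball at the cost of lower-order, already-controlled commutator terms. Starting from $u\in H^m(\Omega)=W^{m,2}$ and $f\in W^{k-2m,p}(\Omega)\subset L^p_{\mathrm{loc}}$, one application of the interior estimate gives $u\in W^{2m,p}_{\mathrm{loc}}$; when $p>2$ this already raises the integrability exponent, and otherwise Sobolev embedding raises it in finitely many further steps until $u\in L^p_{\mathrm{loc}}$ (in the Schauder setting one analogously reaches $u\in C^{0,\kappa}_{\mathrm{loc}}$, using that $f$ is continuous, hence locally bounded, since $k\ge 2m$). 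Finally, for every multi-index $\sigma$ with $|\sigma|\le k-2m$ one differentiates the equation to get $(-\Delta)^m(D^\sigma u)=D^\sigma f\in L^p_{\mathrm{loc}}$ and applies the interior estimate to $D^\sigma u$, obtaining $D^\sigma u\in W^{2m,p}_{\mathrm{loc}}$; since $|\sigma|+2m\le k$ this yields $u\in W^{k,p}_{\mathrm{loc}}(\Omega)$ with the asserted bound, and the $C^{k,\kappa}$ version follows identically from the interior Schauder estimate.

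The points I would write out carefully --- and the only real obstacle --- are the two standard but not-quite-trivial justifications hidden above: (i) that a mere weak $H^m$-solution may legitimately be inserted into the a priori $L^p$/Schauder estimates and that the equation may be differentiated, both handled uniformly by the translation/difference-quotient method (which simultaneously extends the a priori estimate to arbitrary weak solutions); and (ii) the bookkeeping of the cutoffs and commutator terms so that the finite chain of estimates closes with a constant depending only on $k,p,V,\Omega$, as stated. Everything else is a routine iteration, so for the precise statement one may simply quote Gazzola--Grunau--Sweers \cite{MR2667016}.
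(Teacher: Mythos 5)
The paper does not prove this lemma at all: it is quoted verbatim as a known result from Gazzola--Grunau--Sweers \cite{MR2667016}, and the authors supply no argument. Your sketch is therefore supplying what the paper deliberately omits, and it is the correct standard route: interior Calder\'on--Zygmund/ADN estimates for $(-\Delta)^m$ on balls, the interpolation--absorption trick to replace $\|u\|_{L^p}$ by $\|u\|_{L^1}$ on the right, a cutoff-and-commutator bootstrap on a nested chain of subdomains, differentiation of the equation to gain the extra $k-2m$ derivatives, and the difference-quotient method to justify inserting a mere weak $H^m$-solution into the a priori estimates. This is precisely the scheme carried out in the cited monograph (Chapter~2 there), so the proposal is consistent with the source the paper relies on; the only thing you might add is that the $L^1$ lower-order term (rather than, say, $L^2$) is the form in which Gazzola--Grunau--Sweers state the estimate, which is exactly what the absorption step buys you.
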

Now, we are in a position to prove the following:
	\begin{lemma}\label{lemmaSeqvk} It holds
		$v_{j}(x) = u_{j}(x_j+r_jx)-c_j\rightarrow 0$ in $C^{3}_{loc}(\mathbb{R}^4)$.
		Therefore, 
		$$
		|\nabla^{i}u_{j}(x)| = o\left(\frac{1}{r_{j}^{i}}\right) \text{ in } B_{Rr_j}, ~~ i=1,2,3,
		$$
		for any $R>0$.
	\end{lemma}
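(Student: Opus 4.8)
The plan is to work directly with the blow-up profile $v_j$, combining three facts: the Euler--Lagrange equation \eqref{EulerLagrange}, the one-sided bound $0<u_j\le c_j$, and the uniform energy bound $\|\Delta u_j\|_2^2\le 1$. First I would record the equation satisfied by $v_j$: since $u_j(x_j+r_jx)=c_jw_j(x)$ and $r_j^4=\lambda_j/(c_j^2\operatorname{e}^{\beta_j\tilde\zeta_jc_j^2})$ by \eqref{rj-definition}, rescaling \eqref{EulerLagrange} gives
\[
\Delta^2 v_j=f_j:=\frac{\tilde\zeta_j w_j}{c_j}\operatorname{e}^{\beta_j\tilde\zeta_jc_j^2(w_j^2-1)}+(\mu_j-1)\,w_j\,\frac{\lambda_j}{c_j\operatorname{e}^{\beta_j\tilde\zeta_jc_j^2}}\qquad\text{on }\Omega_j.
\]
Because $0<w_j\le 1$ one has $\operatorname{e}^{\beta_j\tilde\zeta_jc_j^2(w_j^2-1)}\le 1$, and together with $\tilde\zeta_j\to1$, $\mu_j\to\alpha(1+\gamma)$ from Lemma~\ref{maxseqconc}, $c_j\to\infty$, and $\lambda_j\operatorname{e}^{-\beta_j\tilde\zeta_jc_j^2}\le C$ (established in the proof of Lemma~\ref{lemmaseqwk}), this yields $\|f_j\|_{L^\infty(\Omega_j)}\to0$. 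A change of variables $y=x_j+r_jx$ also gives, for every $R>0$ and every $j$, $\int_{B_R}|\Delta v_j|^2\,\mathrm dx=\int_{B_{Rr_j}(x_j)}|\Delta u_j|^2\,\mathrm dy\le\|\Delta u_j\|_2^2\le 1$; and finally $v_j\le 0$ on $\Omega_j$ with $v_j(0)=u_j(x_j)-c_j=0$.

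Next I would bootstrap these facts to a uniform $C^{3,\kappa}_{\mathrm{loc}}$ bound for $(v_j)$. Fix $R>0$ and take $j$ large so that $B_{2R}\subset\Omega_j$. From $\Delta(\Delta v_j)=f_j$ bounded in $L^\infty(B_{2R})$ and $\Delta v_j$ bounded in $L^2(B_{2R})$, standard interior elliptic estimates (iterating the Sobolev embeddings available in dimension $4$) bound $\Delta v_j$ in $L^\infty(B_R)$. Solving $\Delta\phi_j=\Delta v_j$ in $B_R$ with $\phi_j=0$ on $\partial B_R$ produces $\phi_j$ bounded in $L^\infty(B_R)$, so $\psi_j:=v_j-\phi_j$ is harmonic in $B_R$, bounded above by $\|\phi_j\|_{L^\infty}$, with $\psi_j(0)=-\phi_j(0)$ bounded; applying the Harnack inequality to the nonnegative harmonic functions $\|\phi_j\|_{L^\infty}-\psi_j$ on $B_{R/2}$ bounds $\psi_j$, hence $v_j$, in $L^\infty(B_{R/2})$. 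Feeding $v_j\in L^\infty(B_{R/2})$ and $\Delta^2 v_j=f_j\in L^\infty(B_{R/2})$ into Lemma~\ref{RegularityGazzola2} bounds $v_j$ in $C^{3,\kappa}(B_{R/4})$; since $R$ was arbitrary, $(v_j)$ is bounded in $C^{3,\kappa}_{\mathrm{loc}}(\mathbb R^4)$.

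To identify the limit, take any subsequence with $v_j\to v_0$ in $C^3_{\mathrm{loc}}(\mathbb R^4)$. Then $\Delta^2 v_0=\lim_j f_j=0$, so $v_0$ is biharmonic; passing to the limit in the energy bound gives $\int_{B_R}|\Delta v_0|^2\le 1$ for all $R$, i.e. $\Delta v_0\in L^2(\mathbb R^4)$. Since $\Delta v_0$ is harmonic and square-integrable on $\mathbb R^4$, the mean-value property forces $\Delta v_0\equiv 0$, so $v_0$ is harmonic; as moreover $v_0\le 0=v_0(0)$, the mean-value property (equivalently, the strong maximum principle) gives $v_0\equiv 0$. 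One may also argue through Lemma~\ref{polynomial}: $v_0$ is a polynomial of degree at most $2$ which is bounded above, hence $v_0(x)=-x^{\top}Ax$ with $A$ positive semidefinite, and then $\Delta v_0=-2\operatorname{tr}A\in L^2(\mathbb R^4)$ forces $A=0$. Thus every subsequential $C^3_{\mathrm{loc}}$-limit of $(v_j)$ is $0$, so $v_j\to 0$ in $C^3_{\mathrm{loc}}(\mathbb R^4)$; the decay $|\nabla^i u_j|=o(r_j^{-i})$ on $B_{Rr_j}$ for $i=1,2,3$ follows immediately from $\nabla^i v_j(x)=r_j^{\,i}(\nabla^i u_j)(x_j+r_jx)$ (recalling $x_j\to0$).

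I expect the main obstacle to be the initial uniform $L^\infty_{\mathrm{loc}}$ bound on $v_j$: interior elliptic estimates for $\Delta^2$ always retain a lower-order term on the right-hand side, and neither the smallness of $f_j$ nor the $L^2$-bound on $\Delta v_j$ controls it on its own — it is the one-sided bound $v_j\le 0=v_j(0)$, exploited through Harnack on the harmonic part, that closes this step. A second delicate point is that biharmonicity together with $v_0\le 0=v_0(0)$ does \emph{not} by itself force $v_0\equiv 0$ (the quadratics $-x^{\top}Ax$ are admissible), so the energy bound $\Delta v_0\in L^2(\mathbb R^4)$ — precisely what eliminates these spurious profiles — is genuinely needed.
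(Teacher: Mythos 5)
Your proof is correct and its skeleton matches the paper's: rescale the Euler--Lagrange equation, show the right-hand side vanishes, exploit the energy bound $\int_{B_R}|\Delta v_j|^2\le 1$ and the one-sided facts $v_j\le 0=v_j(0)$ to get local compactness, then show the $C^3_{\mathrm{loc}}$ limit is a biharmonic function that must vanish. The difference is purely technical and occurs at two points. First, to extract the initial local bound on $v_j$, the paper first bounds $g_j=\Delta v_j$ in $C^{1,\kappa}_{\mathrm{loc}}$ (via Lemma~\ref{RegularityGazzola2} and Morrey) and then applies Pizzetti's formula (Lemma~\ref{PizzetiFormula}), which in a single stroke turns the pointwise information $v_j(0)=0$, $\Delta v_j(0)$ bounded, $\Delta^2 v_j$ bounded and $v_j\le0$ into an $L^1_{\mathrm{loc}}$ bound on $v_j$; you instead run an elliptic bootstrap on $g_j$ to get it into $L^\infty$, split $v_j$ into a Dirichlet piece $\phi_j$ and a harmonic piece $\psi_j$, and close with Harnack on $\|\phi_j\|_\infty-\psi_j$. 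Both approaches are sound and extract the same information; Pizzetti is more compact, while your harmonic decomposition is more elementary and makes explicit how the sign condition enters. Second, to identify the limit, the paper invokes Martinazzi's Liouville-type result (Lemma~\ref{polynomial}) to conclude $v$ is a polynomial bounded above and then uses the energy bound $\Delta v\in L^2(\mathbb{R}^4)$ to reduce to a constant, whereas you first use $\Delta v_0\in L^2(\mathbb{R}^4)$ harmonic to kill $\Delta v_0$, then apply the maximum principle to the harmonic $v_0\le 0=v_0(0)$. These are logically equivalent, and your variant (along with your explicit remark that $v_0\le0=v_0(0)$ alone does not suffice without the $L^2$ control on $\Delta v_0$) is, if anything, a little cleaner than the paper's slightly compressed step.
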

	\begin{proof}
We can notice that $v_{j}$ satisfies the equation
		\begin{align*}
			(-\Delta)^{2}v_{j} &=\frac{ r_{j}^4}{\lambda_{j}}\tilde{\zeta_{j}}u_{j}(x_j+r_jx)\operatorname{e}^{\beta_{j}\tilde{\zeta_{j}} u_{j}^2(x_j+r_jx)}  +r_{j}^4 (\mu_j-1)u_{j}(x_j+r_jx) \\
			&=\frac{\tilde{\zeta_{j}}u_{j}(x_j+r_jx)}{c_{j}^2}\operatorname{e}^{\beta_{j}\tilde{\zeta_{j}}(u^{2}_{j}(x_{j}+r_{j}x)-c^{2}_{j})} +r_{j}^4 (\mu_j-1)u_{j}(x_j+r_jx).
		\end{align*}
		By setting $\Delta v_{j} = g_{j}$ and then $\Delta g_{j} = f_j$, where
		$$
		f_{j} =\frac{\tilde{\zeta_{j}}u_{j}(x_j+r_jx)}{c_{j}^2}\operatorname{e}^{\beta_{j}\tilde{\zeta_{j}}(u^{2}_{j}(x_{j}+r_{j}x)-c^{2}_{j})} +r_{j}^4 (\mu_j-1)u_{j}(x_j+r_jx).
		$$
		Since $(u_j)$ is bounded in $H^{2}(\mathbb{R}^4)$, it is clear that $\int_{\mathbb{R}^{4}}|g_{j}|^2\mathrm{d}x = \int_{\mathbb{R}^{4}}|\Delta u_{j}|^2\mathrm{d}x < c$. By the fact of $(f_{j})$ is bounded in $L_{loc}^{p}(\mathbb{R}^4)$ for any $p \geq 1$, by Lemma \ref{RegularityGazzola2} joint with Morrey's inequality, we got that for some $0 < \kappa < 1$,
		\begin{equation}\label{gkEstimate}
			\|g_{j}\|_{C^{1,\kappa}(B_{R})} \leq c,
		\end{equation}
		for any $R>0$. Therefore by Pizzetti's formula \eqref{PizzetiEquation}, we obtain
		$$
		\int_{B_{R}}v_{j}(x)\mathrm{d}x = c_{0}R^{8}\Delta^2v_{j}(t)+c_{1}R^{6}\Delta v_{j}(0)+c_{2}R^{4}v_{j}(0),
		$$	
		for some $t\in B_R$, where $B_{R}$ is a ball centered at origin with radius $R$.

		Now, we note that  $v_{j} \leq 0$, $v_{j}(0)=0$ and by \eqref{gkEstimate}, one can conclude that $v_j(x)$ is bounded in $L_{loc}^{1}(\mathbb{R}^4)$. Thus, by Lemma \ref{RegularityGazzola2} again, there exists a $v \in C^{3}(\mathbb{R}^{4})$ to which the sequennce $v_{j}(x)$ converges in $C^3(\mathbb{R}^4)$, satisfying  $(-\Delta)^2v=0$. Finally, by Lemma \ref{polynomial} and knowing that $v \leq 0$, we guarantee that $v$ is a polynomial of degree at most 6. Therefore, by 
		$$
		\int_{\mathbb{R}^{4}}|\Delta v|^{2}\mathrm{d}x \leq \lim\limits_{j \rightarrow \infty} \int_{\mathbb{R}^{4}}|\Delta v_{j}|^{2}\mathrm{d}x  \leq C, 
		$$
		then $v$ must be a constant. By the above estimate together with the fact of $v(0)=0$, we conclude the result.
	\end{proof}\\
	In the next lemma we will establish a gradient estimates on $B_{R_{r_j}}$, and it will be of utmost importance for our aim in studying and determining the limit behavior of $z_{j}(x)$. The proof is basically the same procedure did in \cite{luluzhu20}.
	Let us state first a theorem which can be found in \cite{Martinazzi2009}, which involves uniform estimates  for $\nabla^{2m-l}u$,  with $ 1\leq l \leq 2m-1$ of a solution  $\Delta^{m}u = f$ in the Lorentz space $L^{(\frac{n}{n-l}, \frac{1}{\alpha})}(\Omega)$,  $0\le \alpha \leq 1$.
	\begin{theorem}[Martinazzi \cite{Martinazzi2009}]\label{Thm10Martinazzi} Let $\Omega\subset\mathbb{R}^n$ be  a smooth bounded domain and let  $u$ be a  solution of $\Delta^{m}u = f \in L(\ln L)^{\alpha}$with the Dirichlet boundary condition,  for some $0 \le \alpha \leq 1$ and $n \geq 2m$. Then $\nabla^{2m-l}u  \in L^{(\frac{n}{n-l}, \frac{1}{\alpha})}(\Omega),  1 \leq l \leq 2m-1 $  and 
		\begin{equation}\label{graduInterpolation}
			\|\nabla^{2m-l} u \|_{L^{(\frac{n}{n-l}, \frac{1}{\alpha})}} \leq C\|f\|_{L(\ln L)^{\alpha}},
		\end{equation}
		where $L (\ln L)^\alpha (\Omega)$ is the Zygmund space
		\[
L (\ln L)^\alpha (\Omega) := \left\{ f \in L^1 (\Omega) : \| f \|_{L (\ln L)^\alpha} := \int_{\Omega} |f| \ln^\alpha (2 + |f|) \mathrm{d}x < \infty \right\}.
\]
	\end{theorem}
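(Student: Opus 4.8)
The plan is to reduce \eqref{graduInterpolation} to a mapping property of Riesz potentials between rearrangement–invariant spaces. Since $\Omega$ is smooth and bounded and $u$ solves the Dirichlet problem $\Delta^m u=f$, one may represent $u(x)=\int_\Omega G_\Omega(x,y)f(y)\,\ud y$ through the polyharmonic Green's function, which decomposes as $G_\Omega(x,y)=c_{m,n}\,\Gamma(x-y)+H(x,y)$, with $\Gamma$ the fundamental solution of $\Delta^m$ (so $\Gamma(z)\asymp|z|^{2m-n}$ for $n>2m$ and $\Gamma(z)\asymp\log(1/|z|)$ for $n=2m$) and $H(\cdot,y)$ polyharmonic in $\Omega$, smooth up to $\partial\Omega$ with bounds uniform in $y$. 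Differentiating $2m-l$ times in $x$ for $1\le l\le 2m-1$, the singular part produces a kernel dominated pointwise by the Riesz kernel $I_l(z):=|z|^{l-n}$, whereas the derivatives of $H$ give a bounded kernel on $\Omega\times\Omega$. Hence, modulo a bounded integral operator, $|\nabla^{2m-l}u(x)|\le C\,(I_l*|f|)(x)$ on $\Omega$ (extending $f$ by zero), and it suffices to estimate the Riesz potential $I_l*f$.

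The second step is the Lorentz refinement of the Hardy--Littlewood--Sobolev inequality: since $I_l\in L^{(n/(n-l),\infty)}(\mathbb{R}^n)$, O'Neil's convolution inequality yields
\[
\|I_l*g\|_{L^{(n/(n-l),\,q)}}\le C\,\|g\|_{L^{(1,\,q)}}\qquad(0<q\le\infty),
\]
the exponent arithmetic being $\tfrac{n-l}{n}=\tfrac{n-l}{n}+1-1$ on the first index and $\tfrac1q=\tfrac1\infty+\tfrac1q$ on the second. Choosing $q=1/\alpha$, it remains to invoke the embedding $L(\ln L)^\alpha(\Omega)\hookrightarrow L^{(1,1/\alpha)}(\Omega)$, valid on a bounded domain for $0\le\alpha\le1$: passing to decreasing rearrangements one compares $\int_0^{|\Omega|}f^*(t)\log^\alpha(e|\Omega|/t)\,\ud t$ with the $L^{(1,1/\alpha)}$--quasinorm by a Hardy-type inequality with a logarithmic weight (in the Lorentz--Zygmund scale this is $L^{1,1}(\log L)^\alpha\hookrightarrow L^{1,1/\alpha}$). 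Combining the Green's function reduction with these two facts gives \eqref{graduInterpolation}, with $C=C(m,n,\alpha,\Omega)$.

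As a sanity check, $\alpha=0$ recovers the classical weak-type bound $L^1\to L^{(n/(n-l),\infty)}$ for Riesz potentials, and $\alpha=1$ recovers the Stein--Weiss/O'Neil improvement $L\log L\to L^{(n/(n-l),1)}$. The main obstacle is precisely the endpoint character of the estimate: at the exponent $p=1$ the plain Lebesgue Hardy--Littlewood--Sobolev inequality fails, so the argument depends on converting the logarithmic gain of the Zygmund norm into a finite second Lorentz index, which forces one to use the form of O'Neil's inequality valid down to $p=1$ --- e.g. by dyadically decomposing $I_l=\sum_{k\in\mathbb{Z}} I_l\,\chi_{\{2^{k}\le|z|<2^{k+1}\}}$ and summing the resulting weak-type contributions against the distribution function of $f$, or by working with $f^{**}$ in place of $f^*$. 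A secondary, routine point is making the control of the regular part $H$ of the Green's function --- and of the boundary data --- genuinely uniform, which is where the smoothness of $\partial\Omega$ is used.
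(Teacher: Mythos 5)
This is a result imported from \cite{Martinazzi2009}; the present paper states it without proof, so I can only assess your argument on its own terms. The Green's-function reduction to a Riesz-potential bound is standard and fine. The genuine gap is in your second step, where you assert that O'Neil's convolution theorem gives $\|I_l*g\|_{L^{(n/(n-l),\,q)}}\le C\|g\|_{L^{(1,\,q)}}$ for $q=1/\alpha$, so that the whole estimate factors through the Lorentz space $L^{(1,1/\alpha)}$. That mapping property is \emph{false}: $I_l$ does not send $L^{(1,q)}(\Omega)$ into $L^{(n/(n-l),\,q)}(\Omega)$ for any finite $q$; at the endpoint $p=1$ one only has the weak-type bound into $L^{(n/(n-l),\,\infty)}$. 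The obstruction sits in the first term $t\,f^{**}(t)\,I_l^{**}(t)$ of O'Neil's pointwise inequality: its contribution to the $L^{(n/(n-l),\,q)}$ quasinorm is $\int_0^{|\Omega|}\bigl(\int_0^t f^*\bigr)^q\,\ud t/t$, and the maximal-function Hardy inequality $\|f^{**}\|_{L^{(1,q)}}\lesssim\|f^*\|_{L^{(1,q)}}$ holds only for first index $>1$. Concretely, $f^*(t)=t^{-1}\log^{-\beta}(e|\Omega|/t)$ lies in $L^{(1,1/\alpha)}$ as soon as $\beta>\alpha$, while $I_l f\in L^{(n/(n-l),\,1/\alpha)}$ forces $\beta>1+\alpha$, which is precisely the $L(\ln L)^\alpha$ range. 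Your embedding $L(\ln L)^\alpha\hookrightarrow L^{(1,1/\alpha)}$ is therefore not where the work happens: it is the trivial chain $L(\ln L)^\alpha\subset L^1\subset L^{(1,1/\alpha)}$, and it discards exactly the quantitative decay $\int_0^t f^*\lesssim\|f\|_{L(\ln L)^\alpha}\,\log^{-\alpha}(e|\Omega|/t)$ on which the theorem actually turns.

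The repair is to carry the Zygmund norm all the way and never replace it by a plain Lorentz norm. From O'Neil's pointwise estimate $(I_l f)^{**}(t)\lesssim t^{l/n-1}\int_0^t f^*(s)\,\ud s+\int_t^{|\Omega|}s^{l/n-1}f^*(s)\,\ud s$, the dangerous term contributes $\Bigl(\int_0^{|\Omega|}\bigl(\int_0^t f^*\bigr)^{1/\alpha}\,\ud t/t\Bigr)^{\alpha}$ to the $L^{(n/(n-l),\,1/\alpha)}$ quasinorm. After the substitution $t=|\Omega|e^{-v}$, so that $\int_0^t f^*=\int_v^\infty g$ with $g(u)=|\Omega|e^{-u}f^*(|\Omega|e^{-u})$, Fubini and H\"older (exponents $1/\alpha$ and $1/(1-\alpha)$) give the log-weighted Hardy bound $\bigl(\int_0^\infty(\int_v^\infty g)^{1/\alpha}\,\ud v\bigr)^{\alpha}\le\int_0^\infty(1+u)^\alpha g(u)\,\ud u$, and the right-hand side is $\int_0^{|\Omega|}f^*(s)\log^\alpha(e|\Omega|/s)\,\ud s\simeq\|f\|_{L(\ln L)^\alpha}$; the second O'Neil term is handled by a dual weighted Hardy inequality of the same flavour. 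This log-weighted estimate cannot be retrieved from the $L^{(1,1/\alpha)}$ quasinorm, so the detour through $L^{(1,1/\alpha)}$ cannot be made to work. Your closing remark about dyadic decomposition and $f^{**}$ gestures in the right direction, but it contradicts the main line of the argument rather than completing it.
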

	\begin{lemma}\label{GradienteEstimates} It holds that for any $R>0$,
		$$
		c_{j}\int_{B_{Rr_j}} |\Delta u_{j}|\mathrm{d}x \leq c(Rr_j)^2.
		$$
	Furthermore,
		\begin{equation}\label{LaplacianZEst}
			\int_{B_{R}} |\Delta z_{j}|\mathrm{d}x = \frac{c_{j}}{r^2_{j}}\int_{B_{R_{r_j}}} |\Delta u_{j}|\mathrm{d}x \leq cR^{2},
		\end{equation}
		where $z_j$ is given by \eqref{3functions}.
	\end{lemma}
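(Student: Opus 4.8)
Here the plan is to recover $\Delta u_j$ from the Euler--Lagrange equation \eqref{EulerLagrange} by an integral representation and to show that the nonlinearity carries a small $L^{1}$-mass once multiplied by $c_j$. First I would absorb the linear term, rewriting \eqref{EulerLagrange} as $\mathcal{L}_j u_j:=\Delta^{2}u_j+(1-\mu_j)u_j=F_j$, where $F_j:=\frac{\tilde{\zeta}_j}{\lambda_j}u_j\operatorname{e}^{\beta_j\tilde{\zeta}_j u_j^{2}}\ge 0$. The standing hypothesis $\gamma<\frac1\alpha-1$ gives $\mu_j\to\alpha(1+\gamma)<1$ (Lemma~\ref{maxseqconc}), so for $j$ large $\mathcal{L}_j$ is a coercive fourth-order operator on $\mathbb{R}^{4}$ whose kernel $G_j$ satisfies, uniformly in $j$, the biharmonic-type bound $|G_j(x,y)|\le C\log\!\big(2+|x-y|^{-1}\big)$ near the diagonal with exponential decay away from it; hence $|\Delta_x G_j(x,y)|\le C|x-y|^{-2}+C\,\mathbf{1}_{\{|x-y|\ge1\}}$. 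From $u_j=G_j\ast F_j$ one gets $\Delta u_j(x)=\int_{\mathbb{R}^{4}}\Delta_xG_j(x,y)\,F_j(y)\,\mathrm{d}y$, and, using the elementary bound $\int_{B_\rho}|x-y|^{-2}\,\mathrm{d}x\le\frac{\omega_3}{2}\rho^{2}$ valid for \emph{every} $y$ (bathtub principle --- the pointwise counterpart of the Lorentz estimate of Theorem~\ref{Thm10Martinazzi}) together with $\int_{B_\rho}\mathbf{1}_{\{|x-y|\ge1\}}\,\mathrm{d}x\le|B_\rho|$, the first inequality of the lemma is reduced, with $\rho=Rr_j$ and $j$ large, to suitable bounds on $c_j F_j$.

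The crux is then the control of $c_j F_j$, which I would obtain by splitting $\mathbb{R}^{4}$ --- using that $u_j$ may be taken radially decreasing with $u_j(x_j)=c_j$, so every superlevel set is a ball centered at $x_j$ --- into $\{u_j\ge\delta c_j\}$ and $\{u_j<\delta c_j\}$ for a small fixed $\delta>0$. On the bulk $\{u_j\ge\delta c_j\}$ write $F_j=\frac1{u_j}\cdot\frac{\tilde{\zeta}_j}{\lambda_j}u_j^{2}\operatorname{e}^{\beta_j\tilde{\zeta}_j u_j^{2}}\le\frac1{\delta c_j}\cdot\frac{\tilde{\zeta}_j}{\lambda_j}u_j^{2}\operatorname{e}^{\beta_j\tilde{\zeta}_j u_j^{2}}$; since by the very definition of $\lambda_j$ one has $\frac{\tilde{\zeta}_j}{\lambda_j}\int_{\mathbb{R}^{4}}u_j^{2}\operatorname{e}^{\beta_j\tilde{\zeta}_j u_j^{2}}=1-\mu_j\|u_j\|_2^{2}\le1$, this set contributes at most $\frac{C}{\delta}(Rr_j)^{2}$ to $c_j\int_{B_{Rr_j}}|\Delta u_j|$, with the constant independent of $R$ and $j$ --- it is here \emph{essential} that only the first power of $u_j$ survives, a second power costing an uncontrollable $c_j$. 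On $\{u_j<\delta c_j\}$, which is the complement of a ball $B_{\rho_{j,\delta}}(x_j)$ with $\rho_{j,\delta}/r_j\to\infty$ (already from $w_j\to1$ in $C^{0}_{\mathrm{loc}}$, Lemma~\ref{lemmaseqwk}), one has $\operatorname{e}^{\beta_j\tilde{\zeta}_j u_j^{2}}\le\operatorname{e}^{\beta_0\delta^{2}c_j^{2}}$, while $\int_{\mathbb{R}^{4}}u_j\,\mathrm{d}x$ is bounded (from $\|u_j\|_2\to0$ on compacts, Lemma~\ref{maxseqconc}, together with uniform exponential decay of $u_j$ at infinity, granted because $\lambda_j$ is bounded below by Lemma~\ref{lambdainf} so that $1-\mu_j-\tilde{\zeta}_j/\lambda_j>0$ for $j$ large), and, $B_{Rr_j}$ being far from this set, $\int_{B_{Rr_j}}|\Delta_x G_j(x,y)|\,\mathrm{d}x\le C(Rr_j)^{4}\rho_{j,\delta}^{-2}$ there; using the rescaling identity $r_j^{4}c_j^{2}\operatorname{e}^{\beta_j\tilde{\zeta}_j c_j^{2}}=\lambda_j$ of \eqref{rj-definition}, Lemma~\ref{assymp-rj} ($c_jr_j^{2}\to0$) and the rate at which $\rho_{j,\delta}/r_j$ diverges, this second contribution is $o\big((Rr_j)^{2}\big)$ once $\delta$ is fixed small enough.

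Adding the two pieces gives the first inequality; the second follows by the change of variables $y=x_j+r_jx$, namely $\int_{B_R}|\Delta z_j|\,\mathrm{d}x=\frac{c_j}{r_j^{2}}\int_{B_{Rr_j}(x_j)}|\Delta u_j|\,\mathrm{d}y$, together with the first part (with $|x_j|=o(r_j)$ by radiality, so $B_{Rr_j}(x_j)\subset B_{2Rr_j}$). The part I expect to be the real obstacle is exactly the estimate of the second paragraph: one must match the factor $c_j$ \emph{precisely} --- which forces the single-power trick and the $\lambda_j$-identity on the bulk --- and one must quantify, through the asymptotics of Lemmas~\ref{lemmaseqwk}, \ref{assymp-rj} and \ref{lemmaSeqvk} (and, if needed, a preliminary form of the $z_j$-analysis), how fast the superlevel sets $\{u_j\ge\delta c_j\}$ recede from $B_{Rr_j}$, so that the exponential factor $\operatorname{e}^{\beta_0\delta^{2}c_j^{2}}$ is beaten. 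Alternatively, the sharp power $(Rr_j)^{2}$ can be produced directly by Martinazzi's estimate (Theorem~\ref{Thm10Martinazzi} with $l=2$, $\alpha=1$) applied to the rescaled function: since $w_j\le1$ makes the right-hand side $\Delta^{2}z_j=w_j\tilde{\zeta}_j\operatorname{e}^{\beta_j\tilde{\zeta}_j c_j^{2}(w_j^{2}-1)}+o(1)$ uniformly bounded in $L^{\infty}_{\mathrm{loc}}$, hence in $L(\ln L)$, one gets $\|\nabla^{2}z_j\|_{L^{(2,1)}(B_R)}\le C$, and then $\int_{B_R}|\Delta z_j|\le\|\nabla^{2}z_j\|_{L^{(2,1)}}\,\|\mathbf{1}_{B_R}\|_{L^{(2,\infty)}}\le CR^{2}$ by Lorentz--Hölder; this is the path followed in \cite{luluzhu20}.
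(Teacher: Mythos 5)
Your kernel route is a genuinely different way of proving this lemma. The paper does not represent $\Delta u_j$ through the Green function of $\Delta^2+(1-\mu_j)$; instead it subtracts a biharmonic completion $u_j^{R_0}$ matching $u_j$'s boundary data on $\partial B_{R_0}$ and works with the \emph{square} $(u_j-u_j^{R_0})^2$. The reason for the squaring is the same observation you exploit on the bulk set: in the identity for $\Delta^2(v^2)$, the leading term $2u_j\Delta^2 u_j=\frac{2\tilde{\zeta}_j}{\lambda_j}u_j^2\operatorname{e}^{\beta_j\tilde{\zeta}_j u_j^2}+\cdots$ is uniformly bounded in $L^1$ \emph{exactly} by the $\lambda_j$-normalization, while the lower-order cross-terms $|\nabla^i(u_j-u_j^{R_0})|\,|\nabla^{4-i}(u_j-u_j^{R_0})|$ are absorbed in $L^1$ via the Lorentz estimates of Theorem~\ref{Thm10Martinazzi}. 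A Lorentz--Calder\'on--Zygmund step ($L^1\to L^{(2,\infty)}$, then Lorentz--H\"older against $\mathbf{1}_{B_\rho}\in L^{(2,1)}$) yields $\int_{B_\rho}|\Delta(u_j-u_j^{R_0})^2|\le c\rho^2$, and one returns to $c_j|\Delta u_j|$ through the pointwise chain $c_j|\Delta u_j|\le cu_j|\Delta u_j|\le c\Delta(u_j^2)+c|\nabla u_j|^2$ on $B_{Rr_j}$, where $u_j/c_j=w_j\to1$ and $|\nabla u_j|=o(r_j^{-1})$ from Lemma~\ref{lemmaSeqvk}. So the underlying gain --- one free power of $u_j$ against the exponential traded for $\lambda_j$ --- is the same as in your bulk estimate, but packaged as a differential identity rather than a Green-kernel split, and the practical advantage is that it bypasses your entire tail analysis.

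That tail analysis is where your sketch has real holes, as you yourself anticipate. Two concrete points. (a) $\int_{\mathbb{R}^4}u_j\,\mathrm{d}x$ is \emph{not} bounded by the Radial Lemma, which only yields $u_j=O(|x|^{-3/2})\notin L^1(\mathbb{R}^4)$; and the uniform exponential decay you appeal to is nowhere established in the paper. What actually rescues the far-field is the exponential decay of $\Delta_xG_j$ in $|x-y|$ from \eqref{AbsEstimate3} paired with the uniform $L^2$ bound on $u_j$, not integrability of $u_j$ itself --- the argument works but not for the reason you give. (b) The quantitative rate at which $\rho_{j,\delta}/r_j$ diverges, which you need to beat $\operatorname{e}^{\beta_0\delta^2 c_j^2}$, does not follow from $w_j\to1$ in $C^0_{\mathrm{loc}}$; it is essentially the $z_j$-asymptotics of Lemma~\ref{doz}, which in the paper's ordering is proved \emph{after} (and by means of) the present lemma, so that route is circular unless you organize a bootstrap. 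The one-line alternative you append (Theorem~\ref{Thm10Martinazzi} with $\alpha=1$, $l=2$ applied to $z_j$, whose bi-Laplacian is uniformly $L^\infty_{\mathrm{loc}}$-bounded precisely because $w_j\le1$ and $c_j^2r_j^4\to0$) is indeed the Chen--Lu--Zhu route, is arguably cleaner than the paper's squaring device, and sidesteps both issues; the only point worth spelling out there is a biharmonic boundary correction on $B_R$, since Theorem~\ref{Thm10Martinazzi} as stated assumes Dirichlet data.
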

	\begin{proof}
		For any $R_0 > 0$, let $u_{j}^{R_{0}}$ be the  biharmonic functions that solve the problem
		$$
		\begin{cases}
			\Delta ^2 u^{R_{0}}_{j} = 0, ~~ &\text{ in }\overline{B}_{R_{0}}(x_{j}) \\
			\partial^{i}_{\nu}u^{R_{0}}_{j} = \partial^{i}_{\nu}u_{j}, & \text{ on }\partial \overline{B}_{R_{0}}(x_{j}); ~~ i = 0,1.
		\end{cases}
		$$
		By the radial lemma and the elliptic estimates (Lemma \ref{RegularityGazzola2}), we obtain
		\begin{equation}\label{BiharmonicEstimate}
			\|u_{j}^{R_{0}}\|_{C^{4}(B_{R_{0}})} < \frac{c}{R_{0}^{\tau}}, \text{ for some }\tau >0.
		\end{equation}
		Note that $u_{j} - u^{R_{0}}_{j}$ satisfies the equation
		\begin{align*}
			\begin{cases}
				\Delta^{2}(u_{j}-u^{R_{0}}_{j}) = \lambda_{j}^{-1}u_{j}\tilde{\zeta_{j}}\mathrm{e}^{\beta_{j}\tilde{\zeta_{j}}u^{2}_{j}} +(\mu_j-1) u_{j}, & \text{ in } \overline{B}_{R_{0}}(0)\\
				\partial_{v}^{i}(u_{j}-u^{R_{0}})=0, &\text{ on }\partial\overline{B}_{R_{0}}(0), i=0,1.
			\end{cases}			
		\end{align*}
		Set  $f_{j} := \lambda_{j}^{-1}u_{j}\tilde{\zeta_{j}}\exp(\beta_{j}\tilde{\zeta_{j}}u^{2}_{j}) +(\mu_j-1) u_{j}$. Then  $(f_{j})$ is bounded in $L(\ln L)^{\alpha}(B_{R_{0}})$. So, as consequence of the definitions above and joining the result in Theorem \ref{Thm10Martinazzi}, we got 
		\begin{align}\label{LorentzEstimates}
			\|\nabla^{i}(u_{j}-u^{R_{0}}_{j})\|_{L^{(\frac{4}{i},2)}} \leq C, ~~ i =1,2,3,
		\end{align}
		where  $\|\cdot\|_{L^{\left(\frac{4}{i},2\right)}}$ is the Lorentz norm. 
	
	We have the  estimate
		$$
		|\Delta^2((u_{j}-u^{R_{0}}_{j})^2)| \leq |2(u_j - u_{j}^{R_{0}})\Delta^{2}(u_j - u_{j}^{R_{0}})| + C\sum_{i=1}^{3}|\nabla^{i}(u_j - u_{j}^{R_{0}})||\nabla^{4-i}(u_j - u_{j}^{R_{0}})|.
		$$
		From  \eqref{LorentzEstimates}, the  Hölder type inequality and by O'Neil in \cite{OneilConvolution1963}, the term $\sum_{i=1}^{3}|\nabla^{i}(u_j - u_{j}^{R_{0}})||\nabla^{4-i}(u_j - u_{j}^{R_{0}})|$ is bounded in $L^{1}(B_{R_{0}})$. Now, we must prove that the term $|2(u_j - u_{j}^{R_{0}})\Delta^{2}(u_j - u_{j}^{R_{0}})|$ is also bounded in $L^{1}(B_{R_{0}})$. Indeed, we separate into two integrals as follows
		$$
		\int_{B_{R_{0}}} |2(u_j - u_{j}^{R_{0}})\Delta^{2}(u_j - u_{j}^{R_{0}})| \mathrm{d}x \leq 2\left(	\int_{B_{R_{0}}} |u_j\Delta^{2}u_j| \mathrm{d}x + \int_{B_{R_{0}}} |u_j^{R_{0}}\Delta^{2}u_j| \mathrm{d}x\right)= 2(I_{1} + I_{2}). 
		$$
		Firstly,  by the Euler-Lagrange equation \eqref{EulerLagrange} and applying integration by parts, we derive
		\begin{align*}
			I_{1}=\int_{B_{R_{0}}} |u_j\Delta^{2}u_j| \mathrm{d}x &\leq \Big|\int_{\mathbb{R}^{4}} \Big(\tilde{\zeta}_{j}\frac{u^2_{j}}{\lambda_{j}}\operatorname{e}^{\beta_{j}\tilde{\zeta}_{j}u^{2}_{j}} + \mu_j u_{j}^{2}\Big)\mathrm{d}x\Big|+ \int_{\mathbb{R}^{4}}|u_j|^{2}\mathrm{d}x  \\
			&= \Big|\int_{\mathbb{R}^{4}}u_{j}(\Delta^{2} u_{j}+u_{j}) \mathrm{d}x\Big| +\int_{\mathbb{R}^{4}}|u_{j}|^2 \mathrm{d}x \\
			&\le  \int_{\mathbb{R}^{4}}|\Delta u_{j}|^2  \mathrm{d}x +  2\int_{\mathbb{R}^{4}}|u_{j}|^2  \mathrm{d}x \leq c.
		\end{align*}	
		On the other hand,  we have
		\begin{align}\label{EstimatesLaplacianUk}
		    I_{2} = \int_{B_{R_{0}}  }|u^{R_{0}}_{j}\Delta^{2}u_{j}|\mathrm{d}x \leq c \int_{B_{R_{0}}} |u_{j}\Delta^2u_j|\mathrm{d}x + c \int_{B_{R_{0}} \cap \{|u_j| \leq 1\}} |\Delta^2u_j|\mathrm{d}x \leq c(R_{0}).
		\end{align}
		Hence, $\int_{B_{R_{0}}}|\Delta^2(u_{j}-u^{R_{0}}_{j})^2|\mathrm{d}x \leq c$. Now, we want to prove that for any $R > 0$, 
		\begin{equation}\label{SquareEst}
			\int_{B_{Rr_{j}}}\Delta((u_{j}-u^{R_{0}}_{j})^2)\mathrm{d}x \leq c(Rr_{j})^2.
		\end{equation}
        Indeed, we will proceed as \cite{Martinazzi2009}. Firstly, we claim that
         \begin{align}  \label{EstimatesLaplacianuk2}
            \|\Delta u_{j}^2\|_{L^{1}(B_{Rr_{j}})}   \leq C.
         \end{align}
        Notice that 
        \begin{align*}
            |\Delta u_{j}^2| \leq 2 |u_j\Delta u_{j}| + 2|\nabla u_{j}|^2.
        \end{align*}
   Firstly,  by the Hölder inequality and we get $$\int_{B_{Rr_{j}}}|u_j\Delta u_{j}|\mathrm{d}x \leq \|u_j\|_{L^{2}(B_{Rr_{j}})}\|\Delta u_{j}\|_{L^{2}(B_{Rr_{j}})} \leq c_{1}.$$ For the second term $2|\nabla u_{j}|^2$ it suffices to see that by Lemma \ref{lemmaSeqvk}, $|\nabla u_{j}| = o(r_{j}^{-1})$ for any $R$ and $j$ sufficienty large, and the claim is proved. Now, from  \eqref{EstimatesLaplacianUk} and  \eqref{EstimatesLaplacianuk2},  we have that \eqref{SquareEst}  holds. By \eqref{SquareEst}, \eqref{BiharmonicEstimate}, and  Lemma \ref{lemmaSeqvk}, 
		\begin{align}\label{intlaplacianuksquare}
			\int_{B_{Rr_{j}}}|\Delta(u_{j})^2|\mathrm{d}x \leq c \int_{B_{Rr_{j}}}(\Delta(u_{j}-u_{j}^{R_{0}})^2)\mathrm{d}x + o(r_{j}^{2}).
		\end{align}
		In other hand,
		\begin{align}\label{laplacianuksquare}
			c_{j}|\Delta u_{j}| \leq cu_{j}|\Delta u_{j}|\leq c(\Delta(u_{j}^2)+|\nabla u_{j}|^2) \leq c\Delta(u_{j}^2) + o\left(\frac{1}{r_{j}^2}\right).
		\end{align}
		Finally, by \eqref{intlaplacianuksquare} and \eqref{laplacianuksquare}, we conclude
		$$
		c_{j}\int_{B_{Rr_{j}}}|\Delta u_{j}|\mathrm{d}x \leq c(Rr_{j})^2.
		$$
		Thus, for any $R>0$, 
		$$
		\int_{B_{R}}|\Delta z_{j}|\mathrm{d}x = \frac{c_{j}}{r^2_{j}}\int_{B_{Rr_j}} |\Delta u_{j}|\mathrm{d}x \leq cR^2.
		$$
	\end{proof}
	
	Let us analyze the limit behavior of $z_{j}(x)$ in \eqref{3functions}.
	\begin{lemma}\label{doz}
		It holds  $z_{j}(x) \rightarrow z$ in $C^{3}_{loc}(\mathbb{R}^4)$ with $z$ satisfying the equation $(-\Delta)^2z = \exp(64\pi^2z)$. Moreover, 
		\begin{equation}\label{zform}
			z(x) =- \frac{1}{16\pi ^2}\ln\left(1+\frac{\pi}{\sqrt{6}}|x|^2\right)
		\end{equation}
		and
		$$
		\int_{\mathbb{R}^{4}}\operatorname{e}^{64\pi^2z(x)}\mathrm{d}x = 1.
		$$
	\end{lemma}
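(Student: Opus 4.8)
The plan is to derive the PDE satisfied by $z_j$, extract uniform local estimates yielding the $C^3_{loc}$ limit, control the total exponential mass of the limit, and finally invoke the classification of finite-mass solutions of the resulting fourth order Liouville equation. For the equation: from $\Delta^2 z_j(x)=c_jr_j^4(\Delta^2u_j)(x_j+r_jx)$, the Euler--Lagrange equation \eqref{EulerLagrange}, the identities $u_j(x_j+r_jx)=c_j+v_j(x)=c_jw_j(x)$ and $\beta_j\tilde\zeta_j u_j^2(x_j+r_jx)=\beta_j\tilde\zeta_j c_j^2+2\beta_j\tilde\zeta_j z_j(x)+\beta_j\tilde\zeta_j v_j^2(x)$, and the definition \eqref{rj-definition} of $r_j$ (which gives $c_j^2r_j^4e^{\beta_j\tilde\zeta_jc_j^2}=\lambda_j$), a direct computation gives
$$(-\Delta)^2z_j=w_j\,\tilde\zeta_j\,e^{2\beta_j\tilde\zeta_j z_j}\,e^{\beta_j\tilde\zeta_j v_j^2}+c_j^2r_j^4(\mu_j-1)\,w_j\qquad\text{on }\Omega_j.$$

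\emph{Convergence $z_j\to z$ in $C^3_{loc}$.} Since $z_j\le0$, since $w_j\to1$ and $v_j\to0$ in $C^3_{loc}$ (Lemmas~\ref{lemmaseqwk} and \ref{lemmaSeqvk}), $\tilde\zeta_j\to1$ (Lemma~\ref{maxseqconc}), $\mu_j$ is bounded, and $c_j^2r_j^4=\lambda_je^{-\beta_j\tilde\zeta_jc_j^2}\le M\|u_j\|_2^2\to0$, the right-hand side above is bounded in $L^\infty_{loc}(\mathbb R^4)$ uniformly in $j$. Combining this with $\int_{B_R}|\Delta z_j|\,\ud x\le cR^2$ (Lemma~\ref{GradienteEstimates}) and writing $\Delta z_j=g_j^0+h_j$ on a ball, where $\Delta g_j^0=(-\Delta)^2z_j$ with zero Dirichlet data (controlled in $C^{1,\kappa}_{loc}$ by Lemma~\ref{RegularityGazzola1}) and $h_j$ is harmonic (bounded in $L^1_{loc}$, hence smooth and locally bounded), we get $\Delta z_j$ bounded in $L^\infty_{loc}$. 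Using moreover $z_j\le0$, $z_j(0)=0$ and $\nabla z_j(0)=0$ (the last two because $x_j$ is an interior maximum of the positive function $u_j$), the analogous splitting of $z_j$ together with Harnack's inequality applied to the harmonic remainder — which is bounded above and has bounded value at the origin — yields $z_j$ bounded in $L^\infty_{loc}$. A bootstrap through Lemma~\ref{RegularityGazzola2} then gives uniform $C^{3,\kappa}_{loc}$ bounds, so up to a subsequence $z_j\to z$ in $C^3_{loc}(\mathbb R^4)$; letting $j\to\infty$ in the equation above (with $\beta_j\nearrow32\pi^2$) produces $(-\Delta)^2z=e^{64\pi^2z}$ with $z\le z(0)=0$ and $\nabla z(0)=0$.

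\emph{Finite mass and identification.} Changing variables $x=(y-x_j)/r_j$ and using $e^{\beta_j\tilde\zeta_j u_j^2(y)}=e^{\beta_j\tilde\zeta_jc_j^2}e^{2\beta_j\tilde\zeta_j z_j(x)}e^{\beta_j\tilde\zeta_j v_j^2(x)}$ on $B_{Rr_j}(x_j)$ together with \eqref{rj-definition} and the formula for $\lambda_j$ in \eqref{EulerLagrange},
$$\int_{B_R}e^{2\beta_j\tilde\zeta_j z_j}e^{\beta_j\tilde\zeta_j v_j^2}\,\ud x=\frac{c_j^2\,(1-\mu_j\|u_j\|_2^2)}{\tilde\zeta_j\int_{\mathbb R^4}u_j^2e^{\beta_j\tilde\zeta_j u_j^2}\,\ud y}\int_{B_{Rr_j}(x_j)}e^{\beta_j\tilde\zeta_j u_j^2}\,\ud y.$$
Since $v_j\to0$ gives $u_j^2=c_j^2(1+o(1))$ on $B_{Rr_j}(x_j)$ and $(1-\mu_j\|u_j\|_2^2)/\tilde\zeta_j\to1$, the right-hand side is $\le1+o(1)$; by the $C^3_{loc}$ convergence and Fatou, $\int_{\mathbb R^4}e^{64\pi^2z}\,\ud x\le1<\infty$. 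Because $|x_j|/r_j$ must remain bounded (otherwise the rescaled profile would be independent of three variables, contradicting $\int_{\mathbb R^4}e^{64\pi^2z}<\infty$ with $z(0)=0$), the radial symmetry of $u_j$ forces $z$ to be radial, and $z(0)=\max z=0$ pins its center at the origin. By the classification of radial finite-mass solutions of $(-\Delta)^2z=e^{64\pi^2z}$ on $\mathbb R^4$ (see \cite{Martinazzi2009} and the references therein) one has $z(x)=-\frac{1}{16\pi^2}\ln(1+\mu|x|^2)$; inserting this into the equation forces $96\mu^2=16\pi^2$, i.e.\ $\mu=\pi/\sqrt6$, which is \eqref{zform}, and finally $\int_{\mathbb R^4}(1+\tfrac{\pi}{\sqrt6}|x|^2)^{-4}\,\ud x=\tfrac{6}{\pi^2}\int_{\mathbb R^4}(1+|y|^2)^{-4}\,\ud y=\tfrac{6}{\pi^2}\cdot\tfrac{\pi^2}{6}=1$.

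\emph{Main obstacle.} The delicate step is obtaining the uniform $C^0_{loc}$ bound on $z_j$: it requires carefully combining the $L^1$-bound on $\Delta z_j$ (which itself rests on the Lorentz-space gradient estimates behind Lemma~\ref{GradienteEstimates}), the one-sided bound $z_j\le0$, and Harnack's inequality for the biharmonic decomposition. The identification step is also subtle, since it depends on the fine classification of solutions of the fourth order Liouville equation and on ruling out degenerate (lower-dimensional) blow-up profiles via the control of $|x_j|/r_j$.
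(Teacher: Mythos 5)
Your structure (derive the PDE, extract $C^3_{loc}$ convergence, bound the total mass, classify the limit, compute the integral) parallels the paper's, and your derivation of the equation for $z_j$ and the mass bound $\int_{\mathbb R^4}e^{64\pi^2 z}\,\ud x\le 1$ are correct. Your biharmonic-decomposition-plus-Harnack route to the local $C^3$ bound is a reasonable alternative to the paper's Pizzetti-formula argument (the paper's own write-up is terse here, essentially referring back to the machinery of Lemma~\ref{lemmaSeqvk}, which also relies on $z_j\le 0$, $z_j(0)=0$ and the $L^1$ control on $\Delta z_j$). The discussion of $|x_j|/r_j$ is unnecessary: the maximizers $u_j$ are taken radial, so radial symmetry of $z$ is automatic.

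The genuine gap is in the identification of $z$. You assert that "the classification of radial finite-mass solutions of $(-\Delta)^2 z = e^{64\pi^2 z}$" forces the spherical form. That is not true as stated: the Lin--Martinazzi dichotomy for finite-mass solutions of the fourth-order Liouville equation on $\mathbb R^4$ allows a second, "non-standard" family in which $\Delta z(x)\to N<0$ as $|x|\to\infty$ (a quadratic polynomial defect); these non-standard solutions can be radial, centered at the origin, decreasing, with $z(0)=0$, $\nabla z(0)=0$, and with finite total mass strictly less than the spherical value. Finite mass and radial monotonicity alone therefore do not single out \eqref{zform}. The paper closes this gap by passing the estimate $\int_{B_R}|\Delta z_j|\,\ud x\le cR^2$ of Lemma~\ref{GradienteEstimates} to the limit: if $z$ were non-standard, $\Delta z\to N<0$ at infinity would force $\int_{B_R}|\Delta z|\,\ud x\sim |N|\,\mathrm{vol}(B_1)R^4$, contradicting the $O(R^2)$ bound. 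Your argument never invokes this $L^1$ control of $\Delta z$ in the identification step, and without it the classification you cite does not apply. To repair the proof, bring the conclusion of Lemma~\ref{GradienteEstimates} into the identification step and rule out the quadratic-defect solutions along the paper's lines (or, equivalently, verify the growth hypothesis $z(x)=o(|x|^2)$ required in Lin's and Martinazzi's theorems to conclude sphericity).
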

	\begin{proof}
		By the Euler-Lagrange equation \eqref{EulerLagrange}, we can notice that $z$ satisfies
		\begin{align*}
			(-\Delta)^2z_{j}+c_j r_{j}^4 (1-\mu_j)u_{j}(x_j+r_jx) &=  \frac{c_jr_{j}^4}{\lambda_{j}}\tilde{\zeta_{j}}u_{j}(x_j+r_jx) \operatorname{e}^{\beta_{j}\tilde{\zeta_{j}} u_{j}^2(x_j+r_jx)}  \\
			& = \frac{\tilde{\zeta_{j}}u_{j}(x_j+r_jx)}{c_{j}}\operatorname{e}^{\beta_{j}\tilde{\zeta_{j}} (u_{j}^{2}(x_j+r_jx) -c^2_{j} )} \\
			&= \frac{\tilde{\zeta_{j}}u_{j}(x_j+r_jx)}{c_{j}}\operatorname{e}^{\beta_{j}\tilde{\zeta_{j}} c_{j}(u_{j}(x_j+r_jx) -c_{j} )\big(\frac{u_{j}(x_j+r_jx)}{c_j} +1\big)}.
		\end{align*}
		By Lemma \ref{GradienteEstimates}, we know that $\int_{B_{R}}|\Delta z_{j}|\mathrm{d}x \leq cR^2$. Hence, by 
		 elliptic estimates in \cite{MR2667016}, we obtain $\|\Delta z_j \|_{C^{1, \alpha}_{loc}} \leq c$. By Lemma \ref{lemmaSeqvk}, there exists $z \in C^{3}(\mathbb{R}^4)$ such that $z_{j} \rightarrow z$ in $C_{loc}^{3}(\mathbb{R}^4)$ with $z$ satisfying the equation
		$$(-\Delta)^2z = \operatorname{e}^{64\pi^2 z}.$$ 
		By Fatou's Lemma, we have
		$$
		\liminf\limits_{j \rightarrow \infty}\int_{\mathbb{R}^4} \operatorname{e}^{64\pi^2 z}\mathrm{d}x \leq \lambda_{j}^{-1}\int_{\mathbb{R}^{4}} \tilde{\zeta}_{j} u_{j}^2\operatorname{e}^{\beta_j \tilde{\zeta}_{j} u^{2}_{j}}\mathrm{d}x \leq\lambda_{j}^{-1}
		\frac{\tilde{\zeta}_{j} }{1-\mu_j\|u_j\|^{2}_{2}} \int_{\mathbb{R}^4}u^{2}_{j}\operatorname{e}^{\beta_{j}\tilde{\zeta}_{j}u^2_{j}}\mathrm{d}x\leq 1.
		$$	
		Now, let us suppose by contradiction that $z(x)$ is not of the form in \eqref{zform}. Then, by \cite{Lin1998}, there exist a negative number $N$ such that $\lim\limits_{|x| \rightarrow +\infty}(-\Delta)z(x) = N$. Consequently, we would have that
		$$
		\lim\limits_{j \rightarrow +\infty} \int_{B_R} |\Delta z_{j}(x)|\mathrm{d}x = |N| \mathrm{vol}(B_{1})R^4+o(R^4)
		$$
		as  $R \rightarrow \infty$, which contradicts \eqref{LaplacianZEst}. Thus, we have \eqref{zform}. By computations as done in \cite{luluzhu20}, ones can see that
		$$
		\int_{\mathbb{R}^{4}}\operatorname{e}^{64\pi^2 z(x)}\mathrm{d}x = \int_{\mathbb{R}^{4}}\left(\frac{1}{1+\frac{\pi}{\sqrt{6}}|x|^2}\right)^4 = 1.
		$$
	\end{proof}	
	\subsection{Bi-harmonic Truncations}
In this subsection, we will  use the bi-harmonic truncation proposed in \cite{luluzhu20} which are  inspired by \cite{DelaTorre}, which in turn are generalizations of the truncation argument introduced in \cite{AdimurthiDruet2004}. Basically, for any $A > 1$, we will introduce a new function $u_{j}^{A}$ valued close to $c_{j}/A$ in a small ball centered at $x_{j}$ and coinciding with $u_{j}$ outside this ball. The main objective of this section is to study the properties of $u_{j}^{A}$. 
	\begin{lemma}[DelaTorre, Lemma 4.20 \cite{DelaTorre}]\label{ThmTruncation}
		For any $A>1$ and $j \in \mathbb{N}$, ${\Omega \subset \mathbb{R}^4}$ smooth bounded domain, there exists a radius $0 < \rho_{j}^{A}< \mathrm{dist}(x_{j}, \partial\Omega)$ and a constant $C_A$ depending on $A$, such that
		\begin{enumerate}
			\item $u_{j} \geq \dfrac{c_{j}}{A}$ in $B_{\rho_{j}^{A}}(x_{j})$;
			\item $\big|u_{j}-\dfrac{c_{j}}{A}\big| \leq \dfrac{C_{A}}{c_{j}}$ on $\partial B_{\rho_{j}^{A}}(x_{j})$;
			\item $|\nabla^{l}u_{j}| \leq \dfrac{C_{A}}{c_{j}(\rho_{j}^{A})^{l}}$ on $\partial B_{\rho_{j}^{A}}(x_{j})$ for any $1 \leq l \leq 3$;
			\item $\displaystyle\lim_{j\to\infty}\rho^{A}_j=0$ and,  if $r_j$ is defined as in \eqref{rj-definition}, then $\lim\limits_{j \rightarrow \infty}\frac{\rho_{j}^{A}}{r_{j}} =\infty$.
					\end{enumerate}
	\end{lemma}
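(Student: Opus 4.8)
The plan is to transplant the bi-harmonic truncation of DelaTorre--Mancini \cite[Lemma~4.20]{DelaTorre} (see also \cite{luluzhu20}) to the Euler--Lagrange equation \eqref{EulerLagrange}; the extra terms $\mu_j u_j$ and the factor $\tilde\zeta_j$ appearing in \eqref{EulerLagrange} are inert because, by Lemma~\ref{maxseqconc}, $\mu_j\to\alpha(1+\gamma)$ and $\tilde\zeta_j\to1$. First I would fix the scales. By Lemma~\ref{doz} the rescaled bubbles $z_j$ from \eqref{3functions} converge in $C^{3}_{loc}(\mathbb{R}^4)$ to the strictly radially decreasing profile $z$ in \eqref{zform}; hence, for every $R>0$ and all $j$ large, the (radial, positive) function $u_j$ is strictly decreasing on $B_{Rr_j}(x_j)$ with $u_j(x_j)=c_j=\max_{\mathbb{R}^4}u_j$. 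I would then \emph{define}
$$
\rho_j^{A}:=\sup\{\rho>0:\ u_j\ge c_j/A \ \text{ on }\ B_{\rho}(x_j)\},
$$
which yields item $(1)$ immediately; since $u_j$ is radial and continuous this forces $u_j\equiv c_j/A$ on $\partial B_{\rho_j^{A}}(x_j)$, so item $(2)$ holds with any $C_A>0$.

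Next I would prove item $(4)$. If $\rho_j^{A}/r_j$ stayed bounded along a subsequence, say $\rho_j^{A}=r_j(L+o(1))$ with $L\in[0,\infty)$, then, evaluating $u_j(x)=c_j+c_j^{-1}z_j\big((x-x_j)/r_j\big)$ on $\partial B_{\rho_j^{A}}(x_j)$ and using the local convergence $z_j\to z$, I would get $c_j/A=c_j+c_j^{-1}z(L)+o(c_j^{-1})$, i.e. $c_j^{2}(1-A^{-1})\to -z(L)<\infty$, contradicting $c_j\to\infty$; hence $\rho_j^{A}/r_j\to\infty$. For $\rho_j^{A}\to0$ I would combine $u_j\ge c_j/A$ on $B_{\rho_j^{A}}(x_j)$ with $\|u_j\|_{2}^{2}\to0$ (Lemma~\ref{maxseqconc}): then $(c_j/A)^{2}\,|B_{\rho_j^{A}}(x_j)|\le\|u_j\|_{2}^{2}\to0$, whence $\rho_j^{A}=o(c_j^{-1/2})\to0$.

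The technical core is item $(3)$, since $\partial B_{\rho_j^{A}}(x_j)$ lies in the \emph{neck} region $r_j\ll|x-x_j|\ll1$, outside the window $B_{Rr_j}(x_j)$ where the derivative estimates of Lemma~\ref{lemmaSeqvk} apply. The plan here is to rescale at the scale $\rho_j^{A}$: on the fixed annulus $\{1/2\le|y|\le2\}$ set $U_j(y):=\frac{A}{c_j}\,u_j(x_j+\rho_j^{A}y)$, so that $0\le U_j\le A$, $U_j\equiv1$ on $|y|=1$, and, from \eqref{EulerLagrange},
$$
\Delta^{2}U_j(y)=\frac{A\,(\rho_j^{A})^{4}}{c_j}\left(\frac{\tilde\zeta_j}{\lambda_j}\,u_j\,\operatorname{e}^{\beta_j\tilde\zeta_j u_j^{2}}+(\mu_j-1)u_j\right)(x_j+\rho_j^{A}y).
$$
Using $\inf_j\lambda_j>0$ (Lemma~\ref{lambdainf}), the identity $r_j^{4}c_j^{2}\operatorname{e}^{\beta_j\tilde\zeta_j c_j^{2}}=\lambda_j$ coming from \eqref{rj-definition}, the growth $\rho_j^{A}/r_j\to\infty$ just obtained, and, crucially, an a priori bound $u_j\le c_j/(A-\varepsilon)$ valid on a full annular neighbourhood of $\partial B_{\rho_j^{A}}(x_j)$ for each small $\varepsilon>0$, I would check that the right-hand side above tends to $0$ in $L^{p}(\{1/2\le|y|\le2\})$; interior elliptic estimates (Lemma~\ref{RegularityGazzola2}) then give $\|U_j\|_{C^{3,\kappa}(\{3/4\le|y|\le5/4\})}\le C_A$, which unwinds to $|\nabla^{l}u_j|\le C_A\,c_j^{-1}(\rho_j^{A})^{-l}$ on $\partial B_{\rho_j^{A}}(x_j)$ for $1\le l\le3$. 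I expect that last a priori bound to be the main obstacle: it records the transition of $u_j$ from bubble scale to Green-function scale, and is obtained, as in \cite{DelaTorre,luluzhu20}, either from radial monotonicity of $u_j$ beyond $B_{Rr_j}(x_j)$ or from a comparison/capacity estimate in the neck.
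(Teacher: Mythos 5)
The paper gives no proof of this lemma: it is stated with the attribution ``[DelaTorre, Lemma 4.20]'' and then used directly in the construction of $v_j^A$, $u_j^A$ that follows. There is therefore no in-paper argument to compare your attempt against, and you are in effect reconstructing the DelaTorre--Mancini proof. Your handling of item~(4) is correct granted items~(1)--(2): the contradiction from $c_j^2(1/A-1)\to z(L)$ if $\rho_j^A/r_j$ stayed bounded, and the $L^2$ estimate $(c_j/A)^2(\rho_j^A)^4\lesssim\|u_j\|_2^2\to 0$ for $\rho_j^A\to 0$, both go through cleanly.

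However, two genuine gaps remain. First, the definition $\rho_j^A:=\sup\{\rho>0:\ u_j\ge c_j/A\ \text{on}\ B_\rho(x_j)\}$ yields $u_j\equiv c_j/A$ on $\partial B_{\rho_j^A}(x_j)$ only if $u_j$ is constant on spheres centered at $x_j$. In this paper $u_j$ is radialized about the origin via Fourier rearrangement, $x_j$ is the maximum point (which need not be $0$), and --- crucially --- the Fourier rearrangement $u^{\#}=\mathcal{F}^{-1}\{(\mathcal{F}u)^{*}\}$ is radial but \emph{not} radially decreasing, so the superlevel set $\{u_j\ge c_j/A\}$ need not be a ball and your sup may stop at a spurious dip; items (1)--(2) do not follow from this definition without further work. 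Second, and more fundamentally, item~(3) requires a two-sided a priori bound of the form $u_j=c_j/A+O(c_j^{-1})$ on a full annulus $\{\rho_j^A/2\le|x-x_j|\le 2\rho_j^A\}$ in order to control $\operatorname{e}^{\beta_j\tilde\zeta_j u_j^2}$ on \emph{both} sides of the truncation sphere. You flag this yourself as ``the main obstacle'' and propose to obtain it ``as in \cite{DelaTorre,luluzhu20}'' by radial monotonicity or a comparison/capacity estimate; but monotonicity is precisely what is unavailable here, and the capacity estimate in the neck is exactly the nontrivial content of DelaTorre--Mancini's Lemma 4.20. Invoking it to justify your step is circular if the purpose is to reprove that lemma. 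In short: the skeleton is right, but the one hard point --- why a good truncation radius with controlled boundary data can be found at all in the absence of a maximum principle --- is left unresolved.
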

	Let $\rho^{A}_{j}>0$ and $v^{{A}}_{j} \in C^{4}(\overline{B}_{\rho_{j}^{A}}(x_{j}))$ be the unique solution of the problem
	\begin{align*}
		\begin{cases}
			\Delta^2 v^{A}_{j} = 0, ~~ &\text{ in }B_{\rho_{j}^{A}}(x_{j}) \\
			\partial_{\nu}^{i}v^{A}_{j} = \partial_{\nu}^{i}u_{j}, ~~ &\text{ on }\partial B_{\rho_{j}^{A}}(x_{j}), ~~ i = 0,1.
		\end{cases}
	\end{align*}
	Let us consider the function
	\begin{align*} u_{j}^{A} = 
		\begin{cases}
			v^{A}_{j},  &\text{ in }B_{\rho_{j}^{A}}(x_{j}) \\
			u_{j}, ~~ &\text{ in } \mathbb{R}^{4} \backslash B_{\rho_{j}^{A}}(x_{j}).
		\end{cases}
	\end{align*}
	\begin{lemma}\label{uAA}
	For $A>1$, we have
	$$
	u^{A}_j=\frac{c_j}{A}+O\Big(\frac{1}{c_j}\Big)
	$$
	uniformly on $\overline{{B}}_{\rho^{A}_j}(x_j)$.
	\end{lemma}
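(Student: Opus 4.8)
The plan is to show that the biharmonic extension $v_j^A$ is essentially constant and equal to $c_j/A$ inside $B_{\rho_j^A}(x_j)$, by comparing it with the harmonic-type expansion dictated by its boundary data. First I would recall from Lemma~\ref{ThmTruncation} that on $\partial B_{\rho_j^A}(x_j)$ one has $|u_j - c_j/A| \le C_A/c_j$ and $|\nabla^l u_j| \le C_A/(c_j (\rho_j^A)^l)$ for $1 \le l \le 3$; since $v_j^A$ and its normal derivative up to first order coincide with $u_j$ on that sphere, the Dirichlet data of the shifted function $\tilde v_j^A := v_j^A - c_j/A$ satisfy $|\tilde v_j^A| \le C_A/c_j$ and $|\partial_\nu \tilde v_j^A| \le C_A/(c_j \rho_j^A)$ on $\partial B_{\rho_j^A}(x_j)$, while $\Delta^2 \tilde v_j^A = 0$ inside.

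Next I would rescale to the unit ball: set $\phi_j(y) := \tilde v_j^A(x_j + \rho_j^A y)$ for $y \in B_1(0)$. Then $\Delta^2 \phi_j = 0$ in $B_1$, and the chain rule together with the boundary estimates gives $\|\phi_j\|_{L^\infty(\partial B_1)} \le C_A/c_j$ and $\|\partial_\nu \phi_j\|_{L^\infty(\partial B_1)} = \rho_j^A \, \|\partial_\nu \tilde v_j^A\|_{L^\infty(\partial B_{\rho_j^A})} \le C_A/c_j$. By the elliptic estimates of Lemma~\ref{RegularityGazzola1} applied to the biharmonic Dirichlet problem on the fixed domain $B_1$ (with $f=0$), we obtain $\|\phi_j\|_{C^0(B_1)} \le C\big(\|\phi_j\|_{C^0(\partial B_1)} + \|\partial_\nu \phi_j\|_{C^1(\partial B_1)}\big)$. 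To control the $C^1$-norm of the Neumann datum one uses that $\partial_\nu\phi_j$ is, after one differentiation, still governed by the higher derivative bounds of Lemma~\ref{ThmTruncation}, item (3), which scale precisely so that $\|\partial_\nu \phi_j\|_{C^1(\partial B_1)} \le C_A/c_j$ as well. Hence $\|\phi_j\|_{C^0(B_1)} \le C_A/c_j$, which unscales to $|v_j^A - c_j/A| \le C_A/c_j$ uniformly on $\overline B_{\rho_j^A}(x_j)$, i.e. $u_j^A = c_j/A + O(1/c_j)$ there.

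The main obstacle I anticipate is bookkeeping the scaling of the boundary data under the dilation by $\rho_j^A$, in particular making sure that the $C^{k,\kappa}$-norms of the Dirichlet and Neumann data on the fixed unit sphere — which is what Lemma~\ref{RegularityGazzola1} requires — really are $O(1/c_j)$ and not merely the $C^0$-norms; this is exactly why Lemma~\ref{ThmTruncation} supplies derivative bounds up to order $3$, and one must check the exponents of $\rho_j^A$ cancel correctly after rescaling. A minor secondary point is that $\rho_j^A \to 0$ (Lemma~\ref{ThmTruncation}, item (4)), so no growth of $\rho_j^A$ spoils the estimate; everything is uniform because the rescaled domain $B_1$ is fixed and independent of $j$. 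Once these scalings are verified, the conclusion is immediate from the maximum-principle-type elliptic estimate on $B_1$.
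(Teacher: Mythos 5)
Your approach matches the paper's in every essential respect: shift by $c_j/A$, rescale to the unit ball, observe the rescaled function is biharmonic, and convert an elliptic estimate on $B_1$ plus the boundary bounds of Lemma~\ref{ThmTruncation} into the $O(1/c_j)$ conclusion. The only divergence is the elliptic estimate you invoke, and here there is a genuine imprecision. You cite Lemma~\ref{RegularityGazzola1} as giving
\[
\|\phi_j\|_{C^0(B_1)} \le C\big(\|\phi_j\|_{C^0(\partial B_1)}+\|\partial_\nu\phi_j\|_{C^1(\partial B_1)}\big),
\]
but that lemma requires $k\ge 2m=4$ and therefore produces a $C^{4,\kappa}$ interior bound in terms of $C^{4,\kappa}$ and $C^{3,\kappa}$ norms of the Dirichlet and Neumann data on $\partial B_1$ — not the $C^0$-by-$(C^0,C^1)$ inequality you wrote. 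Lemma~\ref{ThmTruncation}(3) only controls derivatives up to order $3$, so the hypotheses of the Gazzola lemma are not immediately satisfied as stated. What the paper actually uses is the $L^\infty$ estimate of Proposition~A.2 in DelaTorre--Mancini, a maximum-principle (Agmon/Boggio-type) bound for biharmonic functions on balls that needs only $\|\phi_j\|_{L^\infty(\partial B_1)}$ and $\|\nabla\phi_j\|_{L^\infty(\partial B_1)}$; this is precisely the inequality you wrote, but with the wrong attribution.

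Two ways to repair your version without importing the DelaTorre estimate. Either quote the correct max-principle bound for the biharmonic Dirichlet problem on a ball; or exploit the radiality of $u_j$, which you do not: since $u_j$ is radial, the traces $\phi_j|_{\partial B_1}$ and $\partial_\nu\phi_j|_{\partial B_1}$ are constants on the sphere, so \emph{every} Hölder norm of the boundary data equals its absolute value, and the $C^{4,\kappa}$/$C^{3,\kappa}$ hypotheses of Lemma~\ref{RegularityGazzola1} are met with constants $C_A/c_j$. With that observation your proof closes. The scaling bookkeeping you flag as the main obstacle is fine: each factor of $\rho_j^A$ picked up by differentiation after rescaling exactly cancels the $(\rho_j^A)^{-l}$ from Lemma~\ref{ThmTruncation}(3), giving $O(1/c_j)$ for all orders used.
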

	\begin{proof}
	Set $\tilde{v}_j(x)=v^{A}_{j}(x_j+\rho^{A}_jx)-\frac{c_j}{A}$ for $x\in B_1$. By elliptic estimates \cite[Proposition~A.2]{DelaTorre}, we have 
	\begin{equation}\nonumber
	\begin{aligned}
	\|v^{A}_j-\frac{c_j}{A}\|_{L^{\infty}(B_{\rho^{A}_j}(x_j))}=\|\tilde{v}_j\|_{L^{\infty}(B_1)}&\le C\big[\|\tilde{v}_j\|_{L^{\infty}(\partial B_1)}+\|\nabla \tilde{v}_j\|_{L^{\infty}(\partial B_1)}\big]\\
	&=C\big[\|{v}^{A}_j-\frac{c_j}{A}\|_{L^{\infty}(\partial B_{\rho^{A}_j}(x_j))}+\rho^{A}_j\|\nabla {v}^{A}_j\|_{L^{\infty}(\partial B_{\rho^{A}_j}(x_j))}\big]\\
	&=C\big[\|u_j-\frac{c_j}{A}\|_{L^{\infty}(\partial B_{\rho^{A}_j}(x_j))}+\rho^{A}_j\|\nabla {u}_j\|_{L^{\infty}(\partial B_{\rho^{A}_j}(x_j))}\big].
	\end{aligned}
	\end{equation}
	This together with Lemma~\ref{ThmTruncation} yields the result.
	\end{proof}
	\begin{lemma}\label{LemmaEstimateAinverse}
		For any $A > 1$, there holds
		$$
		\limsup_{j \rightarrow + \infty}\int_{\mathbb{R}^{4}}\left(|\Delta u^{A}_{j}|^2 + |u^{A}_{j}|^2\right)\mathrm{d}x \leq \frac{1}{A}.
		$$
	\end{lemma}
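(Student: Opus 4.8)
The plan is to split the energy of $u^{A}_{j}$ over the ball $B_{\rho^{A}_{j}}(x_{j})$ and its complement, reduce the estimate to a lower bound for the portion of Dirichlet energy of $u_{j}$ that concentrates inside $B_{\rho^{A}_{j}}(x_{j})$, and then extract that lower bound from the Euler--Lagrange equation \eqref{EulerLagrange} together with the blow-up profile. Since $u^{A}_{j}=v^{A}_{j}$ on $B_{\rho^{A}_{j}}(x_{j})$, $u^{A}_{j}=u_{j}$ outside, and $u^{A}_{j}$ matches $u_{j}$ up to first order on $\partial B_{\rho^{A}_{j}}(x_{j})$, one has $u^{A}_{j}\in W^{2,2}(\mathbb{R}^{4})$ and, using $\|\Delta u_{j}\|_{2}^{2}+\|u_{j}\|_{2}^{2}=1$,
\[
\int_{\mathbb{R}^{4}}\!\big(|\Delta u^{A}_{j}|^{2}+|u^{A}_{j}|^{2}\big)\,\mathrm{d}x=\int_{B_{\rho^{A}_{j}}(x_{j})}\!\big(|\Delta v^{A}_{j}|^{2}+|v^{A}_{j}|^{2}\big)\,\mathrm{d}x+1-\int_{B_{\rho^{A}_{j}}(x_{j})}\!\big(|\Delta u_{j}|^{2}+|u_{j}|^{2}\big)\,\mathrm{d}x .
\]
So it suffices to show that the first integral tends to $0$ and that $\liminf_{j\to\infty}\int_{B_{\rho^{A}_{j}}(x_{j})}|\Delta u_{j}|^{2}\,\mathrm{d}x\ge 1-\tfrac1A$.

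For the first integral: from Lemma~\ref{maxseqconc}, $\int_{B_{\rho^{A}_{j}}}|u_{j}|^{2}\le\|u_{j}\|_{2}^{2}\to 0$; since $u_{j}\ge c_{j}/A$ on $B_{\rho^{A}_{j}}(x_{j})$ by Lemma~\ref{ThmTruncation}(1), this forces $c_{j}^{2}(\rho^{A}_{j})^{4}\to 0$, and then Lemma~\ref{uAA} gives $\int_{B_{\rho^{A}_{j}}}|v^{A}_{j}|^{2}\le\big(c_{j}/A+O(1/c_{j})\big)^{2}|B_{\rho^{A}_{j}}(x_{j})|\to 0$. For $\int_{B_{\rho^{A}_{j}}}|\Delta v^{A}_{j}|^{2}$ I rescale $v^{A}_{j}$ to $B_{1}$, write $v^{A}_{j}=c_{j}/A+h_{j}$ with $h_{j}$ biharmonic of boundary data of size $O(1/c_{j})$ (in the relevant $C^{k}$-norms, controlled by items (2)--(3) of Lemma~\ref{ThmTruncation}), use boundary elliptic estimates for the biharmonic Dirichlet problem to get $\|\Delta h_{j}\|_{L^{\infty}(B_{\rho^{A}_{j}})}(\rho^{A}_{j})^{2}=O(1/c_{j})$, and conclude by the scale invariance of the $W^{2,2}$-seminorm in dimension four that $\int_{B_{\rho^{A}_{j}}}|\Delta v^{A}_{j}|^{2}=O(1/c_{j}^{2})\to 0$.

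For the lower bound, set $w_{j}:=u_{j}-v^{A}_{j}$ on $B_{\rho^{A}_{j}}(x_{j})$, extended by $0$; by construction $w_{j}\in W^{2,2}_{0}(B_{\rho^{A}_{j}}(x_{j}))$, and since $\int_{B_{\rho^{A}_{j}}}|\Delta v^{A}_{j}|^{2}\to 0$, Cauchy--Schwarz gives $\|\Delta w_{j}\|_{2}^{2}=\int_{B_{\rho^{A}_{j}}}|\Delta u_{j}|^{2}+o(1)$. Testing \eqref{EulerLagrange} against $w_{j}$ (all boundary terms vanish, and $\Delta^{2}v^{A}_{j}=0$), using $\mu_{j}\to\alpha(1+\gamma)$ and the bound $|w_{j}|\le 2u_{j}$ on $B_{\rho^{A}_{j}}(x_{j})$ (from $u_{j}\ge c_{j}/A$ and Lemma~\ref{uAA}) to absorb the lower-order terms into $o(1)$, and recalling $\lambda_{j}=\tilde{\zeta}_{j}(1-\mu_{j}\|u_{j}\|^{2}_{2})^{-1}\int_{\mathbb{R}^{4}}u_{j}^{2}\operatorname{e}^{\beta_{j}\tilde{\zeta}_{j}u_{j}^{2}}$ with $1-\mu_{j}\|u_{j}\|^{2}_{2}\to 1$, I obtain
\[
\|\Delta w_{j}\|_{2}^{2}=\big(1+o(1)\big)\,\frac{\displaystyle\int_{B_{\rho^{A}_{j}}(x_{j})}u_{j}w_{j}\operatorname{e}^{\beta_{j}\tilde{\zeta}_{j}u_{j}^{2}}\,\mathrm{d}x}{\displaystyle\int_{\mathbb{R}^{4}}u_{j}^{2}\operatorname{e}^{\beta_{j}\tilde{\zeta}_{j}u_{j}^{2}}\,\mathrm{d}x}+o(1).
\]
On $B_{Rr_{j}}(x_{j})$, which lies in $B_{\rho^{A}_{j}}(x_{j})$ for $j$ large by Lemma~\ref{ThmTruncation}(4), Lemma~\ref{lemmaSeqvk} gives $u_{j}=c_{j}+o(1)$ and Lemma~\ref{uAA} gives $v^{A}_{j}=c_{j}/A+O(1/c_{j})$, so $w_{j}/u_{j}=(1-\tfrac1A)+o(1)$ uniformly there; the change of variables $x=x_{j}+r_{j}y$, the convergence $z_{j}\to z$ in $C^{0}_{\mathrm{loc}}$ from Lemma~\ref{doz}, and the definition \eqref{rj-definition} of $r_{j}$ yield $\int_{B_{Rr_{j}}(x_{j})}u_{j}^{2}\operatorname{e}^{\beta_{j}\tilde{\zeta}_{j}u_{j}^{2}}=\lambda_{j}\big(\int_{B_{R}}\operatorname{e}^{64\pi^{2}z}+o(1)\big)$ and $\int_{\mathbb{R}^{4}}u_{j}^{2}\operatorname{e}^{\beta_{j}\tilde{\zeta}_{j}u_{j}^{2}}=\lambda_{j}(1+o(1))$. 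Finally, $w_{j}\ge -C_{A}/c_{j}$ and $u_{j}\ge c_{j}/A$ on $B_{\rho^{A}_{j}}(x_{j})$ show that the contribution of $B_{\rho^{A}_{j}}(x_{j})\setminus B_{Rr_{j}}(x_{j})$ to the numerator is at least $-\tfrac{C_{A}}{c_{j}}\int_{B_{\rho^{A}_{j}}}u_{j}\operatorname{e}^{\beta_{j}\tilde{\zeta}_{j}u_{j}^{2}}\ge -\tfrac{C_{A}A}{c_{j}^{2}}\int_{\mathbb{R}^{4}}u_{j}^{2}\operatorname{e}^{\beta_{j}\tilde{\zeta}_{j}u_{j}^{2}}=o(\lambda_{j})$. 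Combining these, $\|\Delta w_{j}\|_{2}^{2}\ge (1-\tfrac1A)\int_{B_{R}}\operatorname{e}^{64\pi^{2}z}+o(1)$; letting $j\to\infty$, then $R\to\infty$ and using $\int_{\mathbb{R}^{4}}\operatorname{e}^{64\pi^{2}z}=1$ (Lemma~\ref{doz}) gives $\liminf_{j}\|\Delta w_{j}\|_{2}^{2}\ge 1-\tfrac1A$, hence $\liminf_{j}\int_{B_{\rho^{A}_{j}}}|\Delta u_{j}|^{2}\ge 1-\tfrac1A$, which with the decomposition above proves the claim.

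The main obstacle is this last step: one must simultaneously control the blow-up profile (so that the local integral $\int_{B_{Rr_{j}}}u_{j}^{2}\operatorname{e}^{\beta_{j}\tilde{\zeta}_{j}u_{j}^{2}}$ is comparable to $\lambda_{j}\int_{B_{R}}\operatorname{e}^{64\pi^{2}z}$ and the whole-space integral is comparable to $\lambda_{j}$) and the uniform ratio estimate $w_{j}/u_{j}\to 1-\tfrac1A$ on $B_{Rr_{j}}$, while making sure the annular tail $B_{\rho^{A}_{j}}\setminus B_{Rr_{j}}$ — where $w_{j}$ need not be nonnegative — contributes only $o(\lambda_{j})$; the sign issue is resolved via the pointwise bounds $w_{j}\ge -C_{A}/c_{j}$ and $u_{j}\ge c_{j}/A$. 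A secondary technical point is the vanishing of $\int_{B_{\rho^{A}_{j}}}|\Delta v^{A}_{j}|^{2}$, which crucially uses the conformal (scale) invariance of the $W^{2,2}$-seminorm in $\mathbb{R}^{4}$ together with boundary elliptic estimates for the biharmonic operator applied to the rescaled data bounded in Lemma~\ref{ThmTruncation}.
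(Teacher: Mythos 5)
Your proposal is correct and reaches the same estimate, but the bookkeeping is genuinely different from the paper's. Both proofs hinge on the same core computation: test the Euler--Lagrange equation \eqref{EulerLagrange} against $u_j - u^A_j \in W^{2,2}_0(B_{\rho^A_j}(x_j))$, pass to the blow-up scale $B_{Rr_j}(x_j)$ where $w_j/u_j \to 1-\tfrac1A$, and use the profile $z_j\to z$ together with $\int e^{64\pi^2 z}=1$ to extract the lower bound $1-\tfrac1A$. Where you diverge is in how the energy identity is closed. The paper expands $\int(|\Delta u^A_j|^2+|u^A_j|^2)$ in terms of the bilinear form of $u^A_j$ and $u^A_j - u_j$, and then kills the extra cross terms \emph{without any quantitative control on $v^A_j$}: it uses $\int_{B_{\rho^A_j}}\Delta u^A_j\,\Delta(u^A_j-u_j)=0$ (integration by parts plus $\Delta^2 u^A_j=0$) and $\int_{B_{\rho^A_j}} u^A_j(u^A_j-u_j)\le 0$, the latter deduced from the maximum principle ($(-\Delta)^2u_j\ge 0$ on $B_{\rho^A_j}$ implies $u_j\ge v^A_j\ge 0$). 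You instead split off the full $W^{2,2}$-energy of $v^A_j$ and show it tends to zero, using the $C^1$ boundary data bounds of Lemma~\ref{ThmTruncation}, elliptic estimates up to the boundary, and the conformal invariance of $\int|\Delta\cdot|^2$ in dimension four; you also replace the sign argument from the maximum principle by the explicit pointwise bound $w_j\ge -C_A/c_j$ coming from Lemma~\ref{ThmTruncation}(1) and Lemma~\ref{uAA}. Your route is a bit longer and requires the extra elliptic step $\int_{B_{\rho^A_j}}|\Delta v^A_j|^2 = O(c_j^{-2})$, but it dispenses with the comparison principle and is in that sense more self-contained and more quantitative; the paper's route is shorter once the maximum principle is in hand. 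One small caution on your elliptic step: to get $\int_{B_1}|\Delta\tilde h_j|^2 = O(c_j^{-2})$ you should invoke the $H^2$-solvability estimate $\|\tilde h_j\|_{H^2(B_1)}\le C\big(\|\tilde h_j\|_{H^{3/2}(\partial B_1)}+\|\partial_\nu\tilde h_j\|_{H^{1/2}(\partial B_1)}\big)$ and then bound these trace norms by the $C^2$/$C^1$ boundary bounds; the $L^\infty$ estimate $\|\Delta h_j\|_{L^\infty}(\rho^A_j)^2=O(1/c_j)$ you state needs Schauder-type boundary control (the paper's \cite[Prop.~A.2]{DelaTorre} cited in Lemma~\ref{uAA} only gives the $L^\infty$ bound for $\tilde h_j$ itself), whereas the $H^2$ estimate is enough for the energy and is directly available.
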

	\begin{proof}
	By combining \eqref{EulerLagrange} with Lemma~\ref{ThmTruncation}, we obtain $(-\Delta)^2u_j\ge 0$ in $B_{\rho^{A}_j}(x_j) $ for $j$ large enough. So, the maximum principle yields $u_j\ge u^{A}_j$ in $B_{\rho^{A}_j}(x_j)$ and,  from Lemma~\ref{uAA}, $u^{A}_j\ge 0$ on  $B_{\rho^{A}_j}(x_j)$ for $j$ large enough.  In addition, since $u^{A}_j\equiv u_j$ in $\mathbb{R}^n\setminus B_{\rho^{A}_j}(x_j)$ and in view of Lemma~\ref{uAA},  by using  $u_{j}-u_{j}^{A}$ as test function  in  \eqref{EulerLagrange} and recalling that $\rho^{A}_j/r_j\to\infty$,   for any $R > 0$ and $j$ sufficiently large, we obtain
		\begin{align*}
			&\int_{B_{\rho_{j}^{A}}(x_{j})}\big[\Delta u_{j}\Delta(u_{j}-u_{j}^{A})+u_{j}(u_{j}-u_{j}^{A})\big]\mathrm{d}x = 
			\int_{B_{\rho_{j}^{A}}(x_{j})}\frac{u_{j}\tilde{\zeta}_j}{\lambda_{j}}\big[\operatorname{e}^{\beta_{j}\tilde{\zeta}_{j}u_{j}^{2}}+\mu_j u_j\big] (u_{j}-u_{j}^{A})\mathrm{d}x\\
			& \geq \frac{\tilde{\zeta}_j}{\lambda_j}\int_{B_{Rr_j}(x_{j})}u_j \operatorname{e}^{\beta_{j}\tilde{\zeta}_{j}u_{j}^{2}}(u_{j}-u_{j}^{A})\mathrm{d}x \\
			& = \frac{\tilde{\zeta}_j}{\lambda_j}r^{4}_j\int_{B_{R}(0)}\Big(c_j+\frac{z_j}{c_j}\Big) \operatorname{e}^{\beta_{j}\tilde{\zeta}_{j}(c^2_j+2z_j+\frac{z^2_j}{c^2_j})}\Big(c_j+\frac{z_j}{c_j}-\frac{c_j}{A}+O(c^{-1}_j)\Big)\mathrm{d}x \\
			& = \tilde{\zeta}_j\int_{B_{R}(0)}\Big(1+\frac{z_j}{c^2_j}\Big) \operatorname{e}^{\beta_{j}\tilde{\zeta}_{j}(2z_j+\frac{z^2_j}{c^2_j})}\Big(1-\frac{1}{A}+\frac{z_j}{c^2_j}+O(c^{-2}_j)\Big)\mathrm{d}x.
		\end{align*}
From Lemma~\ref{lemmaSeqvk} and  Lemma~\ref{lemmaseqwk}, we have $z_j/c_j=v_j\to 0$  and $z_j/c^2_j=w_j-1\to 0$ in  $C^{3}_{loc}(\mathbb{R}^2)$. So, by  applying Lemma~\ref{doz}, we can write 
 \begin{align*}
			\int_{B_{\rho_{j}^{A}}(x_{j})}\big[\Delta u_{j}\Delta(u_{j}-u_{j}^{A})+u_{j}(u_{j}-u_{j}^{A})\big]\mathrm{d}x
			\ge  \left(1-\frac{1}{A}\right)\int_{B_{R}}\operatorname{e}^{64\pi^2z}\mathrm{d}x + o_j(1),
		\end{align*}
and  letting $R \rightarrow +\infty$, we obtain
		\begin{equation}\label{EstimateLittleo}
			\int_{B_{\rho_{j}^{A}}(x_{j})}\big[\Delta u_{j}\Delta(u_{j}-u_{j}^{A})+u_{j}(u_{j}-u_{j}^{A})\big]\mathrm{d}x \geq 1 - \frac{1}{A} + o_{j}(1).
		\end{equation}			 
	Recalling $\|\Delta u_{j}\|^{2}_{2} +  \|u_{j}\|^{2}_{2} = 1$, we have 
		\begin{align*}
			 \int_{\mathbb{R}^{4}}\left(|\Delta u^{A}_{j}|^2 + |u^{A}_{j}|^2\right)\mathrm{d}x & = \int_{B_{\rho_{j}^{A}}(x_j)}\big(|\Delta u^{A}_{j}|^2 +|u^{A}_{j}|^2\big) \mathrm{d} x +  \int_{\mathbb{R}^{4} \backslash B_{\rho_{j}^{A}}(x_j)}\big(|\Delta u_{j}|^2+|u_{j}|^2\big)\mathrm{d}x \\
			& = \int_{B_{\rho_{j}^{A}}(x_j)}\big(|\Delta u^{A}_{j}|^2 +|u^{A}_{j}|^2\big) \mathrm{d} x+1- \int_{B_{\rho_{j}^{A}}(x_j)}\big(|\Delta u_{j}|^2+|u_{j}|^2\big)\mathrm{d}x  \\
			& = 1 - \int_{B_{\rho_{j}^{A}}(x_j)}\big[\Delta u_{j}\Delta(u_j - u_j^A)  +u_{j}(u_j - u_j^A)\big]\mathrm{d}x\\
			&+\int_{B_{\rho_{j}^{A}}(x_j)}\Delta u^{A}_j\Delta\big(u^{A}_j-u_j) \mathrm{d} x +  \int_{B_{\rho_{j}^{A}}(x_j)} u_j^A(u^{A}_j-u_j) \mathrm{d}x.
		\end{align*}
	From  \eqref{EstimateLittleo} and recalling $u^{A}_j(u^{A}_j-u_j)\le 0$ in $B_{\rho_{j}^{A}}(x_j)$, we derive
		\begin{align*}
			\int_{\mathbb{R}^{4}}\left(|\Delta u^{A}_{j}|^2 + |u^{A}_{j}|^2\right)\mathrm{d}x 
			& \leq \dfrac{1}{A} + \int_{B_{\rho_{j}^{A}}(x_j)}\Delta u^{A}_{j}\Delta(u^{A}_j - u_j) \mathrm{d}x + o_j(1)  \\
			& = \dfrac{1}{A} + o_{j}(1),
		\end{align*}
	where we used integration by parts and $\Delta^2 u^{A}_j=0$ in $B_{\rho_{j}^{A}}(x_j)$ to obtain the last identity.
	\end{proof}
	\begin{lemma}\label{LemmaDoubleLimit} We have
		$$AD(4,2, 32\pi^2,  \alpha, \gamma)=\lim\limits_{j \rightarrow \infty} \int_{\mathbb{R}^{4}}(\operatorname{e}^{\beta_{j}\tilde{\zeta}_{j}u_{j}^{2}} - 1)\mathrm{d}x = \lim\limits_{\hat{R}\rightarrow \infty}\lim\limits_{j \rightarrow \infty} \int_{B_{\hat{R} r_{j}}}(\operatorname{e}^{\beta_{j}\tilde{\zeta}_{j}u_{j}^{2}} - 1)\mathrm{d}x = \lim\limits_{j \rightarrow \infty} \frac{\lambda_{j}}{c^{2}_{j}}$$
	 and consequently $$\frac{\lambda_{j}}{c_{j}}\rightarrow \infty\;\;\mbox{and}\;\; \sup_{j}\frac{c_{j}^{2}}{\lambda_{j}} < \infty$$
	\end{lemma}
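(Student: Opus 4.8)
The plan is as follows. The first equality is precisely Lemma~\ref{ADlimit=AD} applied to our maximizing sequence $(u_j)$ (for which $\varrho(\|u_j\|_2^2)=\tilde\zeta_j$), so the real content is the concentration identity together with the value of $\lim_j\lambda_j/c_j^2$. I would start from the rescaling around the blow-up point: writing $x=x_j+r_jy$ and using the definition \eqref{rj-definition} of $r_j$ together with $u_j^2(x_j+r_jy)-c_j^2=2z_j(y)+z_j^2(y)/c_j^2$, one obtains, for every fixed $\hat R>0$,
\begin{equation}\label{rescale-id}
\int_{B_{\hat R r_j}(x_j)}\left(e^{\beta_j\tilde\zeta_ju_j^2}-1\right)\mathrm{d}x=\frac{\lambda_j}{c_j^2}\int_{B_{\hat R}}e^{\beta_j\tilde\zeta_j\left(2z_j+z_j^2/c_j^2\right)}\mathrm{d}y-r_j^4|B_{\hat R}|.
\end{equation}
By Lemma~\ref{doz} we have $z_j\to z$ in $C^3_{loc}(\mathbb{R}^4)$ with $\int_{\mathbb{R}^4}e^{64\pi^2z}\,\mathrm{d}y=1$, while $\tilde\zeta_j\to1$, $\beta_j\nearrow32\pi^2$ (Lemma~\ref{maxseqconc}) and $r_j\to0$ (Lemma~\ref{assymp-rj}). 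Since the left-hand side of \eqref{rescale-id} is bounded above by $\int_{\mathbb{R}^4}(e^{\beta_j\tilde\zeta_ju_j^2}-1)\,\mathrm{d}x\to AD(4,2,32\pi^2,\alpha,\gamma)<\infty$ and $\int_{B_{\hat R}}e^{64\pi^2z}\,\mathrm{d}y>0$, letting $j\to\infty$ and then $\hat R\to\infty$ gives $\limsup_j\lambda_j/c_j^2\le AD(4,2,32\pi^2,\alpha,\gamma)<\infty$; in particular $(\lambda_j/c_j^2)$ is bounded.

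Next I would prove the concentration statement
\begin{equation}\label{conc-neck}
\lim_{\hat R\to\infty}\lim_{j\to\infty}\int_{\mathbb{R}^4\setminus B_{\hat R r_j}(x_j)}\left(e^{\beta_j\tilde\zeta_ju_j^2}-1\right)\mathrm{d}x=0,
\end{equation}
by splitting the complement into a far part, a truncation part and a neck part. For the far part, fixing $\delta>0$ and using $x_j\to0$, Lemma~\ref{radiallemma} gives $|u_j|\le C_\delta$ on $\mathbb{R}^4\setminus B_\delta(x_j)$ (its $W^{1,2}$-norm being controlled by $\|\Delta u_j\|_2^2+\|u_j\|_2^2=1$), hence $e^{\beta_j\tilde\zeta_ju_j^2}-1\le C_\delta'u_j^2$ there and the integral tends to $0$ because $\|u_j\|_2\to0$ (Lemma~\ref{maxseqconc}). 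For the truncation part I would use the biharmonic truncation $u_j^A$ of Lemma~\ref{ThmTruncation}: since $u_j\equiv u_j^A$ on $\mathbb{R}^4\setminus B_{\rho_j^A}(x_j)$, $\|u_j^A\|_{W^{2,2}(\mathbb{R}^4)}^2\le\tfrac{1}{A}+o_j(1)$ by Lemma~\ref{LemmaEstimateAinverse}, and $\tilde\zeta_j\to1$, $\varrho\ge1$, $\beta_j<32\pi^2$, writing $\hat u_j^A:=u_j^A/\|u_j^A\|_{W^{2,2}}$ and using the elementary inequality $e^{\theta t}-1\le\theta(e^t-1)$ for $0\le\theta\le1,\ t\ge0$, one gets
\begin{align*}
\int_{\mathbb{R}^4\setminus B_{\rho_j^A}(x_j)}\left(e^{\beta_j\tilde\zeta_ju_j^2}-1\right)\mathrm{d}x&\le\left(\tfrac{1}{A}+o_j(1)\right)\int_{\mathbb{R}^4}\left(e^{32\pi^2\varrho(\|\hat u_j^A\|_2^2)(\hat u_j^A)^2}-1\right)\mathrm{d}x\\
&\le\left(\tfrac{1}{A}+o_j(1)\right)AD(4,2,32\pi^2,\alpha,\gamma),
\end{align*}
the last step being the Adams--Adimurthi--Druet inequality \eqref{supUnbounded}.

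For the remaining neck part $B_{\rho_j^A}(x_j)\setminus B_{\hat R r_j}(x_j)$ — which makes sense since $B_{\hat R r_j}(x_j)\subset B_{\rho_j^A}(x_j)$ for $j$ large, by Lemma~\ref{ThmTruncation}(4) — rescaling as in \eqref{rescale-id} converts the integral into $\frac{\lambda_j}{c_j^2}\int_{B_{\rho_j^A/r_j}\setminus B_{\hat R}}e^{\beta_j\tilde\zeta_j(2z_j+z_j^2/c_j^2)}\,\mathrm{d}y+o_j(1)$, and one shows, via the elliptic/Lorentz-space estimates of Theorem~\ref{Thm10Martinazzi}, Lemma~\ref{GradienteEstimates} and Lemma~\ref{lemmaSeqvk} together with the two-sided boundary information on $\partial B_{\rho_j^A}(x_j)$ coming from Lemmas~\ref{ThmTruncation} and \ref{uAA}, that $z_j$ is uniformly controlled on the expanding balls $B_{\rho_j^A/r_j}$, so that this term is at most $\frac{\lambda_j}{c_j^2}\int_{\mathbb{R}^4\setminus B_{\hat R}}e^{64\pi^2z}\,\mathrm{d}y+o_j(1)$, which (using the boundedness of $\lambda_j/c_j^2$) is small for large $\hat R$. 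Combining the far, truncation and neck estimates and letting $j\to\infty$, then $A\to\infty$, then $\hat R\to\infty$, yields \eqref{conc-neck}.

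Finally I would put everything together. From \eqref{conc-neck} and $\int_{\mathbb{R}^4}(e^{\beta_j\tilde\zeta_ju_j^2}-1)\,\mathrm{d}x\to AD(4,2,32\pi^2,\alpha,\gamma)$ one gets the second equality, $\lim_{\hat R}\lim_j\int_{B_{\hat R r_j}(x_j)}(e^{\beta_j\tilde\zeta_ju_j^2}-1)\,\mathrm{d}x=AD(4,2,32\pi^2,\alpha,\gamma)$. Feeding this into \eqref{rescale-id}: along any subsequence with $\lambda_j/c_j^2\to L\in[0,\infty)$ (such subsequences exist since the ratio is bounded), the right-hand side of \eqref{rescale-id} tends to $L\int_{B_{\hat R}}e^{64\pi^2z}\,\mathrm{d}y$ as $j\to\infty$, hence to $L$ after $\hat R\to\infty$, so $L=AD(4,2,32\pi^2,\alpha,\gamma)$; since every subsequential limit equals this value, $\lambda_j/c_j^2\to AD(4,2,32\pi^2,\alpha,\gamma)$, which is strictly positive because $AD(4,2,32\pi^2,\alpha,\gamma)\ge AD(4,2,\beta_j,\alpha,\gamma)>0$ by Proposition~\ref{vanishlevel}. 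Consequently $(c_j^2/\lambda_j)$ is convergent, hence bounded, $\sup_jc_j^2/\lambda_j<\infty$, and since $c_j\to\infty$ we obtain $\lambda_j/c_j=(\lambda_j/c_j^2)\,c_j\to\infty$. The main obstacle in this program is the neck estimate: because $\Delta^2$ has no maximum principle, one cannot compare the rescaled profile $z_j$ with its limit $z$ directly on the expanding annuli, and proving the required uniform control of $z_j$ — equivalently, the absence of residual exponential mass in the neck region — is the technically heaviest step, to be carried out following the argument of Chen--Lu--Zhu \cite{luluzhu20}.
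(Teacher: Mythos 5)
Your first equality (Lemma~\ref{ADlimit=AD}), the rescaling identity \eqref{rescale-id}, and the deduction $\limsup_j\lambda_j/c_j^2\le AD(4,2,32\pi^2,\alpha,\gamma)<\infty$ are all sound and match what the paper does with \eqref{foramedidaB}--\eqref{Eqrightside}. The genuine gap is the neck region: you reduce the middle equality to showing $\lim_{\hat R}\lim_j\int_{B_{\rho_j^A}(x_j)\setminus B_{\hat R r_j}(x_j)}(\cdots)=0$, and — as you yourself flag — this would require decay of $z_j$ that is \emph{uniform in $j$} on the expanding domains $B_{\rho_j^A/r_j}\setminus B_{\hat R}$. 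Lemma~\ref{lemmaSeqvk} and Lemma~\ref{GradienteEstimates} only give $C^3_{loc}$ convergence and $\int_{B_R}|\Delta z_j|\,\mathrm{d}x\le cR^2$ for \emph{fixed} $R$; Theorem~\ref{Thm10Martinazzi} does not by itself convert this into the required pointwise estimates at the expanding scale. Nothing in the paper supplies such a neck bound, and it is not "to be carried out" — the paper simply does not need it. There is also a smaller hole in your truncation-region estimate: the step from $\beta_j\tilde\zeta_j(u_j^A)^2$ to $32\pi^2\varrho(\|\hat u_j^A\|_2^2)(\hat u_j^A)^2$ via $e^{\theta t}-1\le\theta(e^t-1)$ requires $\beta_j\tilde\zeta_j\le32\pi^2\varrho(\|\hat u_j^A\|_2^2)$, which is not automatic since $\tilde\zeta_j>1$ strictly; both sides tend to $32\pi^2$ and the sign of the difference is not controlled.

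The paper avoids both problems by splitting only at the single, much larger scale $\rho_j^A$ and never at $\hat R r_j$. Since $u_j\equiv u_j^A$ on $\mathbb{R}^4\setminus B_{\rho_j^A}(x_j)$ and Lemma~\ref{LemmaEstimateAinverse} gives $\limsup_j\|u_j^A\|^2_{W^{2,2}}\le 1/A<1$, Tarsi's inequality \eqref{TarsiThm} makes $\exp(\beta_j\tilde\zeta_j\,q\,(u_j^A)^2)$ bounded in $L^1(B_{\hat R})$ for all $q<A^2$; together with $u_j\to0$ a.e.\ and the radial tail bound, Vitali's theorem forces $\int_{\mathbb{R}^4\setminus B_{\rho_j^A}(x_j)}(e^{\beta_j\tilde\zeta_j u_j^2}-1)\,\mathrm{d}x\to0$ — this is an \emph{exact} $o_j(1)$, not merely $O(1/A)$, which is exactly what your $e^{\theta t}$ argument was trying (and failing) to reach. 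On $B_{\rho_j^A}(x_j)$, Lemma~\ref{ThmTruncation}(1) gives $c_j/A\le u_j\le c_j$, which squeezes $\int_{B_{\rho_j^A}}e^{\beta_j\tilde\zeta_j u_j^2}\,\mathrm{d}x$ between $\frac{1}{A^2}$ and $1$ times $\frac{\lambda_j(1-\mu_j\|u_j\|^2_2)}{\tilde\zeta_jc_j^2}$; letting $A\to1^+$ yields $\lim_j\int_{\mathbb{R}^4}(\cdots)=\lim_j\lambda_j/c_j^2$ directly. The identity with the $\hat R r_j$ balls then follows "for free" from your \eqref{rescale-id} and $\int_{\mathbb{R}^4}e^{64\pi^2 z}\,\mathrm{d}x=1$, because both sides have been independently identified with $\lim_j\lambda_j/c_j^2$. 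Your final step — subsequential limits, positivity of $AD$ via Proposition~\ref{vanishlevel}, and $\lambda_j/c_j\to\infty$ since $c_j\to\infty$ — is fine once the preceding equalities are in hand.
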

	\begin{proof}
	The first identity has already been proved in Lemma~\ref{ADlimit=AD}. Now,  let us write
		\begin{equation}\label{SpliT}
		\begin{aligned}
		\int_{\mathbb{R}^{4}}(\operatorname{e}^{\beta_{j}\tilde{\zeta}_{j}u_{j}^{2}} - 1)\mathrm{d}x &=  \int_{B_{\rho_{j}^{A}}(x_{j})}(\operatorname{e}^{\beta_{j}\tilde{\zeta}_{j}u_{j}^{2}} - 1) \mathrm{d}x+\int_{\mathbb{R}^{4} \backslash B_{\rho_{j}^{A}}(x_{j})}(\operatorname{e}^{\beta_{j}\tilde{\zeta}_{j}u_{j}^{2}} - 1)\mathrm{d}x\\
			& \leq \int_{B_{\rho_{j}^{A}}(x_{j})}(\operatorname{e}^{\beta_{j}\tilde{\zeta}_{j}u_{j}^{2}} - 1)\mathrm{d}x + \int_{\mathbb{R}^{4}}(\operatorname{e}^{\beta_{j}\tilde{\zeta}_{j}(u^{A}_{j})^{2}} - 1)\mathrm{d}x.
		\end{aligned}
		\end{equation}
		By Radial Lemma~\ref{radiallemma}, for some radius $\hat{R}$ such that $u_{j} \leq 1$ on $\mathbb{R}^4 \backslash B_{\hat{R}}$, we get
		\begin{align}\label{foraBR}
			 \int_{\mathbb{R}^{4} \backslash B_{\hat{R}}}(\operatorname{e}^{\beta_{j}\tilde{\zeta}_{j}u_{j}^{2}} - 1)\mathrm{d}x  \leq  C\int_{\mathbb{R}^{4}}u_{j}^{2}\mathrm{d}x\to 0,\;\;\mbox{as}\;\; j\to \infty.
		\end{align}
		Note that $\beta_{j}\tilde{\zeta_{j}} \rightarrow 32\pi^2$, which together with  Lemma \ref{LemmaEstimateAinverse} and by  Tarsi's Adams inequality \eqref{TarsiThm}, implies
		$$
		\sup_{j\rightarrow \infty} \int_{B_{\hat{R}}}(\operatorname{e}^{\beta_{j}\tilde{\zeta_{j}}q'(u_{j}^{A}-u_{j}(\hat{R}))^{2}}-1)\mathrm{d}x < \infty
		$$
		for any $q' < A^2$ and $j$ sufficiently large.  In addition, we have
		$$
		q(u_{j}^{A})^2 \leq q' (u_{j}^{A}- u_{j}(\hat{R}))^2+c(q,q'), \text{ for } q<q',
		$$
		and so 
		$$
		\limsup_{j \rightarrow \infty} \int_{B_{\hat{R}}}(\operatorname{e}^{\beta_{j}\tilde{\zeta_{j}}q(u_{j}^{A})^{2}}-1)\mathrm{d}x < \infty
		$$
		for any $q<A^2$. Now, recalling $A>1$, then $\exp(\beta_{j} \tilde{\zeta_{j}} (u_{j}^{A})^2)$ is uniformly integrable and $u_j \rightarrow u=0$ a.e. Then, the Vitali's Convergence Theorem provides
		\begin{equation}\label{Vital}
		\lim_{j\rightarrow \infty} \int_{B_{\hat{R}}}(\operatorname{e}^{\beta_{j} \tilde{\zeta_{j}} (u_{j}^{A})^2}-1)\mathrm{d}x = 0.
		\end{equation}
		Hence,  from \eqref{SpliT}, \eqref{foraBR} and recalling that  Lemma~\ref{ThmTruncation}-(4) yields  $\rho^{A}_j\to 0$, we obtain 
		\begin{equation}\label{Similar}
		\begin{aligned}
			\int_{\mathbb{R}^{4}}(\operatorname{e}^{\beta_{j}\tilde{\zeta}_{j}u_{j}^{2}} - 1)\mathrm{d}x  & = \int_{B_{\rho_{j}^{A}}(x_{j})}(\operatorname{e}^{\beta_{j}\tilde{\zeta}_{j}u_{j}^{2}} - 1) \mathrm{d}x+o_j(1)\\
			& = \int_{B_{\rho_{j}^{A}}(x_{j})}\operatorname{e}^{\beta_{j}\tilde{\zeta}_{j}u_{j}^{2}} \mathrm{d}x+o_j(1)\\
		\end{aligned}
		\end{equation}
		and from Lemma~\ref{ThmTruncation}-(1) 
		\begin{equation}\label{Eqleftside}
		\begin{aligned}
		\lim_{j\to\infty}\int_{\mathbb{R}^{4}}(\operatorname{e}^{\beta_{j}\tilde{\zeta}_{j}u_{j}^{2}} - 1)\mathrm{d}x & =\lim_{j\to\infty}\int_{B_{\rho_{j}^{A}}(x_{j})}\operatorname{e}^{\beta_{j}\tilde{\zeta}_{j}u_{j}^{2}} \mathrm{d}x\\
			& \le \lim_{j\to\infty}\frac{A^2}{c_{j}^2} \int_{B_{\rho_{j}^{A}}(x_{j})}u^2_j\operatorname{e}^{\beta_{j}\tilde{\zeta}_{j}u_{j}^{2}} \mathrm{d}x\\
			&\leq  \lim_{j\rightarrow \infty} \frac{A^2}{c_{j}^2} \int_{\mathbb{R}^4}u_{j}^{2}\operatorname{e}^{\beta_{j}\tilde{\zeta}_{j} u_{j}^2}\mathrm{d}x \\ 
			&= A^2 \lim_{j\rightarrow \infty} \frac{\lambda_{j}}{c_{j}^2}\frac{1-\mu_j\|u_j\|^2_2}{\tilde{\zeta_{j}}}\\
			&= A^2 \lim_{j\rightarrow \infty} \frac{\lambda_{j}}{c_{j}^{2}}.
		\end{aligned}
		\end{equation}
By taking $A \rightarrow 1^+$ in \eqref{Eqleftside}, we obtain 
\begin{equation}\nonumber
		\begin{aligned}
		\lim_{j\to\infty}\int_{\mathbb{R}^{4}}(\operatorname{e}^{\beta_{j}\tilde{\zeta}_{j}u_{j}^{2}} - 1)\mathrm{d}x \le  \lim_{j\rightarrow \infty} \frac{\lambda_{j}}{c_{j}^{2}}.
		\end{aligned}
		\end{equation}
To reverse inequality, we note that 
\begin{equation}\nonumber
\begin{aligned}
\lambda_{j} & =\frac{\tilde{\zeta}_{j} }{1-\mu_j\|u_j\|^{2}_{2}}\Big[ \int_{\mathbb{R}^4}u^{2}_{j}\Big(\operatorname{e}^{\beta_{j}\tilde{\zeta}_{j}u^2_{j}}-1\Big)\mathrm{d}x+\int_{\mathbb{R}^2}u^2_j\mathrm{d}x\Big]\\
& \le \frac{\tilde{\zeta}_{j}}{1-\mu_j\|u_j\|^{2}_{2}}\Big[ \int_{\mathbb{R}^4}c^2_j\Big(\operatorname{e}^{\beta_{j}\tilde{\zeta}_{j}u^2_{j}}-1\Big)\mathrm{d}x+o_j(1)\Big] 
\end{aligned}
\end{equation}
and it follows that 
\begin{equation}\nonumber
\begin{aligned}
\lim_{j\to \infty}\frac{\lambda_{j}}{c^2_j} 
& \le \lim_{j\to \infty}\int_{\mathbb{R}^4}\Big(\operatorname{e}^{\beta_{j}\tilde{\zeta}_{j}u^2_{j}}-1\Big)\mathrm{d}x.
\end{aligned}
\end{equation}
Now, note that 
\begin{equation}
\label{foramedidaB}
\begin{aligned}
\lim_{j\to\infty} \int_{B_{\hat{R}r_{j}}(x_j)}(\operatorname{e}^{\beta_{j}\tilde{\zeta}_{j}u_{j}^{2}} - 1)\mathrm{d}x & = \lim_{j\to\infty}\Big[\int_{B_{\hat{R}r_{j}}(x_j)}\operatorname{e}^{\beta_{j}\tilde{\zeta}_{j}u_{j}^{2}}\mathrm{d}x+|B_{\hat{R}r_{j}}(x_j)|\Big]\\
&=\lim_{j\to\infty}\int_{B_{\hat{R}r_{j}}(x_j)}\operatorname{e}^{\beta_{j}\tilde{\zeta}_{j}u_{j}^{2}}\mathrm{d}x
\end{aligned}
\end{equation}
for any $\hat{R}>0$. By  using the definition of $r_j$  and $u_{j}^{2}(x_{j}+r_{j}x)-c_j^2=z_j(w_j+1)$ on $B_{\hat{R}r_{j}}(x_j)$, we can write
\begin{equation}
 \label{Eqrightside}
\begin{aligned}
	 \int_{B_{\hat{R}r_{j}}(x_j)}\operatorname{e}^{\beta_{j}\tilde{\zeta}_{j}u_{j}^{2}} \mathrm{d}x    & = \frac{\lambda_j}{c_{j}^{2}}\int_{B_{\hat{R} }(0)}\operatorname{e}^{\beta_{j}\tilde{\zeta}_{j}[u_{j}^{2}(x_{j}+r_{j}x)-c_j^2]}\mathrm{d}x \
	 &=  \frac{\lambda_j}{c_{j}^{2}}\int_{B_{\hat{R} }(0)}\operatorname{e}^{\beta_{j}\tilde{\zeta}_{j}z_j(w_j+1)}\mathrm{d}x.		
		\end{aligned}
		\end{equation}
Taking into account \eqref{foramedidaB}, \eqref{Eqrightside}, Lemma~\ref{lemmaseqwk} 
and Lemma~\ref{doz}, we get 
\begin{equation}\nonumber
\begin{aligned}
\lim_{j\to\infty} \int_{B_{\hat{R}r_{j}}(x_j)}(\operatorname{e}^{\beta_{j}\tilde{\zeta}_{j}u_{j}^{2}} - 1)\mathrm{d}x & =\lim\limits_{j \rightarrow \infty} \frac{\lambda_j}{c_{j}^{2}}\Big(\int_{B_{\hat{R}}(0)}\operatorname{e}^{64\pi^2 z}\mathrm{d}x \Big).
\end{aligned}
\end{equation}
Then, from   Lemma~\ref{doz} again
\begin{align*}
			\lim\limits_{\hat{R} \rightarrow \infty} \lim\limits_{j \rightarrow \infty}  \int_{B_{\hat{R}r_{j}}(x_j)}(\operatorname{e}^{\beta_{j}\tilde{\zeta}_{j}u_{j}^{2}} - 1)\mathrm{d}x  &= \lim\limits_{j \rightarrow \infty} \frac{\lambda_j}{c_{j}^{2}}.	
		\end{align*}
	\end{proof}
	
Let us define
	\begin{align*}
		&\xi_{R,j}=  \frac{\lambda_j}{\int_{B_{R}(x_{j})}|u_j|\operatorname{e}^{\beta_{j}\tilde{\zeta}_{j} u_{j}^2}\mathrm{d}x}, \\
		&\tau = \lim\limits_{j \rightarrow \infty} \frac{\xi_{j}}{c_{j}}, \;\;\mbox{with}\;\; \xi_{j}=\xi_{\rho_{j}^{A},j},\\
		&\varphi =  \lim\limits_{R \rightarrow \infty}\lim\limits_{j \rightarrow \infty} \frac{\int_{B_{R}(x_{j})}u_{j}\operatorname{e}^{\beta_{j}\tilde{\zeta}_{j} u_{j}^2} \mathrm{d}x}{\int_{B_{R}(x_{j})}|u_{j}|\operatorname{e}^{\beta_{j}\tilde{\zeta}_{j} u_{j}^2}\mathrm{d}x}.
	\end{align*}
	\begin{lemma}\label{LemmaPhi} We have
		$\varphi = 1$.
	\end{lemma}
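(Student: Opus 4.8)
The plan is to exploit the elementary bound $u_j\le |u_j|$: with
\[
N_{R,j}:=\int_{B_R(x_j)}u_j\operatorname{e}^{\beta_j\tilde{\zeta}_j u_j^2}\,\mathrm{d}x,\qquad D_{R,j}:=\int_{B_R(x_j)}|u_j|\operatorname{e}^{\beta_j\tilde{\zeta}_j u_j^2}\,\mathrm{d}x ,
\]
the quotient defining $\varphi$ never exceeds $1$ (and $D_{R,j}>0$ since $u_j(x_j)=c_j>0$ with $x_j\in B_R(x_j)$), so it suffices to show that for each fixed $R$ one has $D_{R,j}\to+\infty$ and
\[
D_{R,j}-N_{R,j}=2\int_{B_R(x_j)\cap\{u_j<0\}}|u_j|\operatorname{e}^{\beta_j\tilde{\zeta}_j u_j^2}\,\mathrm{d}x\longrightarrow 0\qquad (j\to\infty).
\]
Then $N_{R,j}/D_{R,j}=1-(D_{R,j}-N_{R,j})/D_{R,j}\to1$ for every $R$, whence $\varphi=1$.

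For the lower bound on $D_{R,j}$ I would reuse the blow-up profile computation, exactly as in \eqref{Eqrightside}. On $B_{\hat{R}r_j}(x_j)$ one has $u_j(x_j+r_jx)=c_jw_j(x)>0$ for $j$ large (Lemma~\ref{lemmaSeqvk}) and $u_j^2(x_j+r_jx)-c_j^2=z_j(w_j+1)$, so by the definition of $r_j$,
\[
\int_{B_{\hat{R}r_j}(x_j)}u_j\operatorname{e}^{\beta_j\tilde{\zeta}_j u_j^2}\,\mathrm{d}x=\frac{\lambda_j}{c_j}\int_{B_{\hat{R}}}w_j\operatorname{e}^{\beta_j\tilde{\zeta}_j z_j(w_j+1)}\,\mathrm{d}x .
\]
By Lemma~\ref{lemmaseqwk}, Lemma~\ref{doz} and Lemma~\ref{maxseqconc} the last integral tends to $\int_{B_{\hat{R}}}\operatorname{e}^{64\pi^2 z}\,\mathrm{d}x$, which is positive and, since $\int_{\mathbb{R}^4}\operatorname{e}^{64\pi^2 z}=1$, exceeds $1/2$ once $\hat{R}$ is large. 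Fixing such an $\hat{R}$ gives $D_{R,j}\ge\int_{B_{\hat{R}r_j}(x_j)}u_j\operatorname{e}^{\beta_j\tilde{\zeta}_j u_j^2}\,\mathrm{d}x\ge\lambda_j/(2c_j)$ for $j$ large, and $\lambda_j/c_j\to+\infty$ by Lemma~\ref{LemmaDoubleLimit}, so $D_{R,j}\to+\infty$.

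For $D_{R,j}-N_{R,j}\to0$ I would bring in the bi-harmonic truncation with a fixed $A>1$. By Lemma~\ref{ThmTruncation}(1) one has $u_j\ge c_j/A>0$ on $B_{\rho_j^A}(x_j)$ for $j$ large, hence $\{u_j<0\}\cap B_R(x_j)\subset B_R(x_j)\setminus B_{\rho_j^A}(x_j)$, where $u_j=u_j^A$. Taking $\hat{R}>R+1$ (so $B_R(x_j)\subset B_{\hat{R}}$) and using nonnegativity of the integrand, Hölder's inequality yields, for any $q\in(1,A^2)$ with $q'=q/(q-1)$,
\[
D_{R,j}-N_{R,j}\le 2\int_{B_{\hat{R}}}|u_j^A|\operatorname{e}^{\beta_j\tilde{\zeta}_j (u_j^A)^2}\,\mathrm{d}x\le 2\,\|u_j^A\|_{L^{q'}(B_{\hat{R}})}\Big(\int_{B_{\hat{R}}}\operatorname{e}^{q\beta_j\tilde{\zeta}_j (u_j^A)^2}\,\mathrm{d}x\Big)^{1/q}.
\]
The second factor stays bounded: this is precisely the estimate $\limsup_{j}\int_{B_{\hat{R}}}(\operatorname{e}^{q\beta_j\tilde{\zeta}_j(u_j^A)^2}-1)<\infty$ for $q<A^2$ obtained, via Lemma~\ref{LemmaEstimateAinverse} and Tarsi's inequality \eqref{TarsiThm}, inside the proof of Lemma~\ref{LemmaDoubleLimit}. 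For the first factor, $u_j^A$ is bounded in $W^{2,2}_{rad}(\mathbb{R}^4)$ (Lemma~\ref{LemmaEstimateAinverse}) and coincides with $u_j$ outside $B_{\rho_j^A}(x_j)$ with $\rho_j^A\to0$ (Lemma~\ref{ThmTruncation}(4)) and $u_j\rightharpoonup 0$, so the weak $W^{2,2}$-limit of $u_j^A$ agrees with $0$ off the origin and is therefore $0$; by Rellich's theorem $u_j^A\to0$ in $L^{q'}(B_{\hat{R}})$. Hence $D_{R,j}-N_{R,j}\to0$, and combined with the previous step this gives $N_{R,j}/D_{R,j}\to1$ for every $R$, i.e. $\varphi=1$.

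The step I expect to be the crux is the uniform $L^{q}$-bound on $\operatorname{e}^{q\beta_j\tilde{\zeta}_j(u_j^A)^2}$ with $q>1$: it rests on the truncated function $u_j^A$ carrying charge at most $1/A$, so that $q\beta_j\tilde{\zeta}_j\|\Delta u_j^A\|_2^2\to 32q\pi^2/A<32\pi^2=\beta_0$ whenever $q<A^2$, which turns an apparently supercritical exponential integral into a subcritical one to which \eqref{TarsiThm} applies. Fortunately this is already carried out in the proof of Lemma~\ref{LemmaDoubleLimit} and need only be quoted; the remaining ingredients — the scale‑$r_j$ identity for $N_{R,j}$, the identification $u_j^A\rightharpoonup0$, and the inclusion $\{u_j<0\}\cap B_R(x_j)\subset B_R(x_j)\setminus B_{\rho_j^A}(x_j)$ from Lemma~\ref{ThmTruncation} — are routine.
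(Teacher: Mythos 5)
Your proof is correct and follows essentially the same strategy as the paper: decompose the domain using the biharmonic truncation ball $B_{\rho_j^A}(x_j)$ (where $u_j>0$ so $|u_j|=u_j$), use the Tarsi bound from the proof of Lemma~\ref{LemmaDoubleLimit} together with $u_j^A\to 0$ in $L^{q'}$ to kill the contribution from the complement, and note that the denominator diverges. Reformulating this as ``$D_{R,j}-N_{R,j}\to 0$ while $D_{R,j}\to\infty$'' instead of the paper's ``$N_{R,j}$ and $D_{R,j}$ both equal $\int_{B_{\rho_j^A}(x_j)}|u_j|\operatorname{e}^{\beta_j\tilde\zeta_ju_j^2}+o_j(1)$'' is only a cosmetic repackaging.
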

	\begin{proof}
		For all $A>1$ and $R>0$, it holds
		\begin{align*}
			\int_{B_{R}(x_{j})}u_{j}\operatorname{e}^{\beta_{j}\tilde{\zeta}_{j} u_{j}^2} \mathrm{d}x&= \int_{B_{\rho_{j}^{A}}(x_j)}u_{j}\operatorname{e}^{\beta_{j}\tilde{\zeta}_{j} u_{j}^2} \mathrm{d}x +  \int_{B_{R}(x_{j}) \backslash B_{\rho_{j}^{A}}(x_j)}u^{A}_{j}\operatorname{e}^{\beta_{j}\tilde{\zeta}_{j} {(u^{A}_{j}})^2} \mathrm{d}x.
		\end{align*}
From Lemma~\ref{LemmaEstimateAinverse}, we can conclude that $\exp(\beta_{j}\tilde{\zeta_{j}}(u^{A}_{j})^2)$  is bounded in $L^{p}(B_{R}(x_{j}) \backslash B_{\rho_{j}^{A}}(x_j))$ for some $p>1$.  Thus,
		\begin{equation}\label{rabovai0}
			  \int_{B_{R}(x_{j}) \backslash B_{\rho_{j}^{A}}(x_j)}u^{A}_{j}\operatorname{e}^{\beta_{j}\tilde{\zeta}_{j} {(u^{A}_{j}})^2} \mathrm{d}x=o_j(1).
		\end{equation}
	which implies 
		\begin{equation}\label{Est1EqualityInBalls}
			\int_{B_{R}(x_{j})}u_{j}\operatorname{e}^{\beta_{j}\tilde{\zeta}_{j} u_{j}^2} \mathrm{d}x=\int_{B_{\rho_{j}^{A}}(x_j)}|u_{j}|\operatorname{e}^{\beta_{j}\tilde{\zeta}_{j} u_{j}^2} \mathrm{d}x + o_{j}(1),
		\end{equation}
		where we have used that $u_j$ is positive  in $B_{\rho_{j}^{A}}(x_{j})$. Analogously, 
		\begin{equation}\label{Est2EqualityInBallsModulus}
			\int_{B_{R}(x_{j})}|u_{j}|\operatorname{e}^{\beta_{j}\tilde{\zeta}_{j} u_{j}^2} \mathrm{d}x =\int_{B_{\rho_{j}^{A}}(x_j)}|u_{j}|\operatorname{e}^{\beta_{j}\tilde{\zeta}_{j} u_{j}^2}\mathrm{d}x  + o_{j}(1).
		\end{equation}
Note that 
 \begin{equation}\nonumber
 \begin{aligned}
  \int_{B_{\rho_{j}^{A}}(x_j)}\operatorname{e}^{\beta_{j}\tilde{\zeta}_{j} u_{j}^2} \mathrm{d}x&=\int_{B_{\rho_{j}^{A}}(x_j)}\big(\operatorname{e}^{\beta_{j}\tilde{\zeta}_{j} u_{j}^2} -1\big)+1\big) \mathrm{d}x\\
  &=\int_{B_{\rho_{j}^{A}}(x_j)}\big(\operatorname{e}^{\beta_{j}\tilde{\zeta}_{j} u_{j}^2} -1\big)\mathrm{d}x +o_j(1)
 \end{aligned}
 \end{equation}
Hence,  from Lemma~\ref{ThmTruncation}-(4) and Lemma~\ref{LemmaDoubleLimit}, we have 
\begin{equation}\label{bolina>D}
 \begin{aligned}
 \lim_{j\to\infty} \int_{B_{\rho_{j}^{A}}(x_j)}\operatorname{e}^{\beta_{j}\tilde{\zeta}_{j} u_{j}^2} \mathrm{d}x
  &= \lim_{j\to\infty} \int_{B_{\rho_{j}^{A}}(x_j)}\big(\operatorname{e}^{\beta_{j}\tilde{\zeta}_{j} u_{j}^2} -1\big)\mathrm{d}x\\
  & \ge \lim\limits_{\hat{R}\rightarrow \infty}\lim\limits_{j \rightarrow \infty} \int_{B_{\hat{R} r_{j}}}(\operatorname{e}^{\beta_{j}\tilde{\zeta}_{j}u_{j}^{2}} - 1)\mathrm{d}x\\
  &=AD(4,2, 32\pi^2,  \alpha, \gamma)>0.
 \end{aligned}
 \end{equation}
On the other hand, from  Lemma~\ref{ThmTruncation}-(1)  it is easy to see that 
\begin{equation}\label{>bolina>}
\begin{aligned}
c_j\int_{B_{\rho_{j}^{A}}(x_j)} \operatorname{e}^{\beta_{j}\tilde{\zeta}_{j} u_{j}^2}\mathrm{d}x & \ge \int_{B_{\rho_{j}^{A}}(x_j)}|u_{j}|\operatorname{e}^{\beta_{j}\tilde{\zeta}_{j} u_{j}^2} \mathrm{d}x \\
&\ge \frac{c_j}{A}\int_{B_{\rho_{j}^{A}}(x_j)}\operatorname{e}^{\beta_{j}\tilde{\zeta}_{j} u_{j}^2}\mathrm{d}x .
\end{aligned}
\end{equation}
Combining \eqref{Est1EqualityInBalls}, \eqref{Est2EqualityInBallsModulus}, \eqref{bolina>D} and \eqref{>bolina>} we get 
		\begin{align*}
			\frac{1}{A} + o_{j}(1) \leq \frac{\int_{B_{R}(x_{j})}u_{j}\operatorname{e}^{\beta_{j}\tilde{\zeta}_{j} u_{j}^2} \mathrm{d}x}{\int_{B_{R}(x_{j})}|u_{j}|\operatorname{e}^{\beta_{j}\tilde{\zeta}_{j} u_{j}^2} \mathrm{d}x}\leq 1.
		\end{align*}
	Letting  $j\to\infty$, $R \rightarrow \infty$ and $A\rightarrow 1$, we obtain the result.
	\end{proof}
	\begin{lemma}\label{LemmaTau}
		$\tau= 1$.
	\end{lemma}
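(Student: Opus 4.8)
The plan is to start from the identity
$$
\frac{\xi_{j}}{c_{j}}=\frac{\lambda_{j}/c_{j}^{2}}{\dfrac{1}{c_{j}}\displaystyle\int_{B_{\rho_{j}^{A}}(x_{j})}u_{j}\operatorname{e}^{\beta_{j}\tilde{\zeta}_{j}u_{j}^{2}}\mathrm{d}x},
$$
which is legitimate because Lemma~\ref{ThmTruncation}-(1) gives $u_{j}\ge c_{j}/A>0$ on $B_{\rho_{j}^{A}}(x_{j})$, so that $|u_{j}|=u_{j}$ there. Since $\lambda_{j}/c_{j}^{2}\to AD(4,2,32\pi^{2},\alpha,\gamma)$ by Lemma~\ref{LemmaDoubleLimit}, the whole matter reduces to showing that the denominator also converges to $AD(4,2,32\pi^{2},\alpha,\gamma)$.

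To identify that limit I would split $B_{\rho_{j}^{A}}(x_{j})$ into the blow-up core $B_{\hat R r_{j}}(x_{j})$ and the annulus $B_{\rho_{j}^{A}}(x_{j})\setminus B_{\hat R r_{j}}(x_{j})$ for a fixed $\hat R>0$. On the core, rescaling $x=x_{j}+r_{j}y$ and using the identities $u_{j}(x_{j}+r_{j}y)=c_{j}w_{j}(y)$ and $u_{j}^{2}(x_{j}+r_{j}y)-c_{j}^{2}=z_{j}(y)(w_{j}(y)+1)$ from \eqref{3functions} together with $r_{j}^{4}\operatorname{e}^{\beta_{j}\tilde{\zeta}_{j}c_{j}^{2}}=\lambda_{j}/c_{j}^{2}$ from \eqref{rj-definition}, I get
$$
\frac{1}{c_{j}}\int_{B_{\hat R r_{j}}(x_{j})}u_{j}\operatorname{e}^{\beta_{j}\tilde{\zeta}_{j}u_{j}^{2}}\mathrm{d}x=\frac{\lambda_{j}}{c_{j}^{2}}\int_{B_{\hat R}(0)}w_{j}\operatorname{e}^{\beta_{j}\tilde{\zeta}_{j}z_{j}(w_{j}+1)}\mathrm{d}y\longrightarrow AD(4,2,32\pi^{2},\alpha,\gamma)\int_{B_{\hat R}(0)}\operatorname{e}^{64\pi^{2}z}\mathrm{d}y,
$$
by Lemma~\ref{lemmaseqwk} ($w_{j}\to1$), Lemma~\ref{doz} ($z_{j}\to z$) and $\beta_{j}\tilde{\zeta}_{j}\to32\pi^{2}$, all uniformly on $\overline{B}_{\hat R}(0)$. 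On the annulus I use only $0<u_{j}\le c_{j}$ to bound
$$
\frac{1}{c_{j}}\int_{B_{\rho_{j}^{A}}(x_{j})\setminus B_{\hat R r_{j}}(x_{j})}u_{j}\operatorname{e}^{\beta_{j}\tilde{\zeta}_{j}u_{j}^{2}}\mathrm{d}x\le\int_{B_{\rho_{j}^{A}}(x_{j})}\operatorname{e}^{\beta_{j}\tilde{\zeta}_{j}u_{j}^{2}}\mathrm{d}x-\int_{B_{\hat R r_{j}}(x_{j})}\operatorname{e}^{\beta_{j}\tilde{\zeta}_{j}u_{j}^{2}}\mathrm{d}x,
$$
where, as $j\to\infty$, the first term tends to $AD(4,2,32\pi^{2},\alpha,\gamma)$ by \eqref{Similar} and Lemma~\ref{ADlimit=AD} (recall $\rho_{j}^{A}\to0$ from Lemma~\ref{ThmTruncation}-(4)), and the second tends to $AD(4,2,32\pi^{2},\alpha,\gamma)\int_{B_{\hat R}(0)}\operatorname{e}^{64\pi^{2}z}\mathrm{d}y$ by \eqref{Eqrightside} together with Lemmas~\ref{lemmaseqwk} and \ref{doz}.

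Adding the two contributions, passing to $\liminf_{j}$ and $\limsup_{j}$, and then letting $\hat R\to\infty$ so that $\int_{B_{\hat R}(0)}\operatorname{e}^{64\pi^{2}z}\mathrm{d}y\to\int_{\mathbb{R}^{4}}\operatorname{e}^{64\pi^{2}z}\mathrm{d}y=1$ (Lemma~\ref{doz}), the two limits are squeezed to $AD(4,2,32\pi^{2},\alpha,\gamma)$; hence $\frac{1}{c_{j}}\int_{B_{\rho_{j}^{A}}(x_{j})}u_{j}\operatorname{e}^{\beta_{j}\tilde{\zeta}_{j}u_{j}^{2}}\mathrm{d}x\to AD(4,2,32\pi^{2},\alpha,\gamma)$ and therefore $\tau=1$. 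The main obstacle is making the annular estimate rigorous across the iterated limit $j\to\infty$ then $\hat R\to\infty$: one must know that the total exponential mass on $B_{\rho_{j}^{A}}(x_{j})$ is, in the limit, exactly $AD(4,2,32\pi^{2},\alpha,\gamma)$ and is entirely recovered by the rescaled integral over $B_{\hat R}(0)$ as $\hat R\to\infty$, which is precisely the content of the earlier lemmas and is carried out as in \cite{luluzhu20}.
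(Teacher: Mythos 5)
Your proof is correct, and it takes a genuinely different route from the paper's. The paper establishes $\tau=1$ via two separate inequalities: for $\tau\le 1$ it compares $|u_j|$ against $u_j^2/c_j$ and uses the lower bound $\int_{B_{\rho_j^A}}u_j^2\operatorname{e}^{\cdots}\ge\lambda_j\int_{B_R}w_j^2\operatorname{e}^{\beta_j\tilde\zeta_j z_j(w_j+1)}$; for $\tau\ge 1$ it invokes Lemma~\ref{ThmTruncation}-(1) to bound $u_j^2\ge c_j^2/A^2$ on $B_{\rho_j^A}(x_j)$, obtains $\tau\ge 1/A^2+o_j(1)$, and lets $A\to1^+$. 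You instead write $\xi_j/c_j$ as a ratio of $\lambda_j/c_j^2$ (whose limit $AD(4,2,32\pi^2,\alpha,\gamma)$ is already known from Lemma~\ref{LemmaDoubleLimit}) to $\frac{1}{c_j}\int_{B_{\rho_j^A}}u_j\operatorname{e}^{\cdots}$, and then identify the limit of the denominator exactly as $AD(4,2,32\pi^2,\alpha,\gamma)$ by a core/annulus squeeze, so the quotient converges to $1$ in one shot. This sidesteps the $A\to1^+$ limiting step entirely; the truncation parameter $A$ appears only to guarantee $|u_j|=u_j$ on $B_{\rho_j^A}(x_j)$ and to fix the domain of integration, but plays no quantitative role in your final estimate. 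The ingredients are the same (the blow-up identities in \eqref{rj-definition}--\eqref{3functions}, the convergences $w_j\to1$, $z_j\to z$, $\beta_j\tilde\zeta_j\to32\pi^2$, the mass identity \eqref{Similar} together with Lemma~\ref{ADlimit=AD}, and $\int_{\mathbb{R}^4}\operatorname{e}^{64\pi^2 z}=1$ from Lemma~\ref{doz}), but the organization is cleaner. One minor remark: your annulus decomposition is actually superfluous for the upper bound, since $u_j/c_j\le 1$ already gives $\frac{1}{c_j}\int_{B_{\rho_j^A}}u_j\operatorname{e}^{\cdots}\le\int_{B_{\rho_j^A}}\operatorname{e}^{\cdots}\to AD(4,2,32\pi^2,\alpha,\gamma)$ directly; only the core estimate is genuinely needed (for $\liminf$), so the squeeze could be stated more economically.
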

	\begin{proof} 
Fix $R>0$.  From  Lemma~\ref{ThmTruncation}-(4),  for $j$ large enough,  we have 
\begin{equation}\label{T<1}
 \begin{aligned}
 \int_{B_{\rho_{j}^{A}}(x_j)}u^2_j\operatorname{e}^{\beta_{j}\tilde{\zeta}_{j} u_{j}^2} \mathrm{d}x & \ge  \int_{B_{Rr_j}(x_j)}u^2_j\operatorname{e}^{\beta_{j}\tilde{\zeta}_{j}u_{j}^{2}}\mathrm{d}x\\
 & =\lambda_j\int_{B_{R}(0)}w^2_j\operatorname{e}^{\beta_{j}\tilde{\zeta}_{j}z_j(w_j+1)}\mathrm{d}x.
 \end{aligned}
 \end{equation}
From \eqref{T<1}, we get
\begin{align*}
\lim_{j\to\infty}\frac{\xi_j}{c_j}& = \lim_{j\to\infty}\frac{1}{c_j}\frac{\lambda_j}{\int_{B_{\rho_{j}^{A}}(x_j)}|u_{j}|\operatorname{e}^{\beta_{j}\tilde{\zeta}_{j} u_{j}^2}\mathrm{d}x } \le  \lim_{j\to\infty}\frac{\lambda_j}{\int_{B_{\rho_{j}^{A}}(x_j)}u^2_{j}\operatorname{e}^{\beta_{j}\tilde{\zeta}_{j} u_{j}^2}\mathrm{d}x }\\
& \le  \lim_{j\to\infty}\frac{1}{\int_{B_{R}(0)}w^2_{j} \operatorname{e}^{\beta_{j}\tilde{\zeta}_{j} z_j(w_j+1)}\mathrm{d}x }=\frac{1}{\int_{B_{R}(0)} \operatorname{e}^{64\pi^{2} z}\mathrm{d}x}.
\end{align*}
Letting $R\to\infty$ and using Lemma~\ref{doz}, we obtain  $\lim_{j\to\infty}{\xi_j}/{c_j} \leq  1$, which yields $\tau\le 1$.  Now, analogous to \eqref{foraBR}  and \eqref{Est1EqualityInBalls} we can write 
	\begin{equation}\label{Ra>}
	\begin{aligned}
			\int_{\mathbb{R}^4}u^{2}_{j}\operatorname{e}^{\beta_{j}\tilde{\zeta}_{j} u_{j}^2}\mathrm{d}x & \ge \int_{B_{\rho_{j}^{A}}(x_j)}u^2_{j} \operatorname{e}^{\beta_{j}\tilde{\zeta}_{j} u_{j}^2} \mathrm{d}x + o_{j}(1)\\
			& \ge \frac{c^2_j}{A^2} \int_{B_{\rho_{j}^{A}}(x_j)}\operatorname{e}^{\beta_{j}\tilde{\zeta}_{j} u_{j}^2} \mathrm{d}x + o_{j}(1)\\
			& = \frac{c^2_j}{A^2}\Big[ \int_{B_{\rho_{j}^{A}}(x_j)}\operatorname{e}^{\beta_{j}\tilde{\zeta}_{j} u_{j}^2} \mathrm{d}x + o_{j}(1)\Big].
	\end{aligned}
		\end{equation}	
In addition, 
\begin{equation}\label{Ra<}
\int_{B_{\rho_{j}^{A}}(x_j)}|u_{j}|\operatorname{e}^{\beta_{j}\tilde{\zeta}_{j} u_{j}^2}\mathrm{d}x \le c_j\int_{B_{\rho_{j}^{A}}(x_j)}\operatorname{e}^{\beta_{j}\tilde{\zeta}_{j} u_{j}^2} \mathrm{d}x.
\end{equation}
By combining \eqref{Ra<} and \eqref{Ra>}, we can write 
\begin{align*}
\frac{\xi_j}{c_j}&=\frac{1}{c_j}\frac{\frac{\tilde{\zeta}_{j} }{1-\mu_j\|u_j\|^{2}_{2}} \int_{\mathbb{R}^4}u^{2}_{j}\operatorname{e}^{\beta_{j}\tilde{\zeta}_{j}u^2_{j}} \mathrm{d}x}{\int_{B_{\rho_{j}^{A}}(x_j)}|u_{j}|\operatorname{e}^{\beta_{j}\tilde{\zeta}_{j} u_{j}^2}\mathrm{d}x }\\
& \ge \frac{1}{A^2}\frac{\frac{\tilde{\zeta}_{j} }{1-\mu_j\|u_j\|^{2}_{2}} \Big[\int_{B_{\rho_{j}^{A}}(x_j)}\operatorname{e}^{\beta_{j}\tilde{\zeta}_{j} u_{j}^2} \mathrm{d}x + o_{j}(1)\Big]}{\int_{B_{\rho_{j}^{A}}(x_j)}\operatorname{e}^{\beta_{j}\tilde{\zeta}_{j} u_{j}^2} \mathrm{d}x}.
\end{align*}
Thus, 
$$
\tau=\lim_{j\to \infty}\frac{\xi_j}{c_j}\ge \frac{1}{A^2}.		
$$
Letting $A \rightarrow 1$ we get $\tau \ge 1$ which yields the desired result.
	\end{proof}
	\subsection{Assymptotic behavior of \texorpdfstring{$u_j$}{uj} away from the blow-up point}
	We recall the fundamental solution of the biharmonic operator $(\Delta^2 + \kappa^2)$, $\kappa>0$ in $\mathbb{R}^{4}$ whose properties below can be found on \cite{DengLi2007}. The fundamental solution $\Phi_{\kappa}(x, y)$ is the solution of the equation 
	$$
	(\Delta^2 + \kappa^2)\Phi_{\kappa}(x,y) = \delta_{x}(y), ~~ \text{in } \mathbb{R}^4.
	$$
We recall (cf. \cite[Theorem 2.4]{DengLi2007}) that for every solution $u \in H^{2}(\mathbb{R}^{4}) \cap C^{4}(\mathbb{R}^{4})$ of the equation $(\Delta^2 + \kappa^2)u = f$ we can write 
	\begin{equation}\label{SolutionsRep}
		u(x) = \int_{\mathbb{R}^{4}}\Phi_{\kappa}(x,y)f(y)\mathrm{d}y,\;\; x\in\mathbb{R}^4.
	\end{equation}
Let us now provide a few estimates for $\Phi_{\kappa}$, which will play a key role in what follows.
	\begin{flalign}
		&|\Phi_{\kappa}(x,y)| \leq c\ln\left(1+|x-y|^{-1}\right), \label{AbsEstimate1} \\
		&|\nabla^{i}\Phi_{\kappa}(x,y)| \leq c\left(|x-y|^{-1}\right), \text{ for } i\geq 1, ~~\forall x,y \in \mathbb{R}^4, x\neq y \text{ with } |x-y| \rightarrow 0,\label{AbsEstimate2}\\
		&|\nabla^{i}\Phi_{\kappa}(x,y)| = o\left(\exp\left(-\frac{\sqrt{\kappa}}{\sqrt{2}}|x-y|\right)\right), \text{ for } i= 0,1,2, ~~ \forall x,y \in \mathbb{R}^4, x\neq y \text{ with } |x-y| \rightarrow \infty. \label{AbsEstimate3}
	\end{flalign}
	\begin{lemma}\label{BoundnessLemma}
		For any $1<p<2$, we have $(c_j u_j)$ is bounded in $W^{2,p}(\mathbb{R}^4)$. 
	\end{lemma}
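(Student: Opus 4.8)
The plan is to combine the Euler--Lagrange equation \eqref{EulerLagrange} with the representation formula \eqref{SolutionsRep} for the operator $\Delta^{2}+1$ and a convolution estimate. Multiplying \eqref{EulerLagrange} by $c_{j}$ one can write $(\Delta^{2}+1)(c_{j}u_{j})=F_{j}$ in $\mathbb{R}^{4}$, with $F_{j}:=\dfrac{c_{j}u_{j}\tilde{\zeta}_{j}}{\lambda_{j}}\operatorname{e}^{\beta_{j}\tilde{\zeta}_{j}u_{j}^{2}}+\mu_{j}c_{j}u_{j}\ge 0$. Since $u_{j}\in H^{2}(\mathbb{R}^{4})\cap C^{\infty}(\mathbb{R}^{4})$ by Lemma~\ref{RegularityGazzola2}, formula \eqref{SolutionsRep} with $\kappa=1$ gives $c_{j}u_{j}=\Phi_{1}\ast F_{j}$, hence $\nabla^{i}(c_{j}u_{j})=(\nabla^{i}\Phi_{1})\ast F_{j}$ for $i=0,1,2$. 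By Young's convolution inequality $\|v\ast w\|_{p}\le\|v\|_{p}\|w\|_{1}$, the lemma reduces to: (a) $\nabla^{i}\Phi_{1}\in L^{p}(\mathbb{R}^{4})$ for $i=0,1,2$ and every $1<p<2$; and (b) $\sup_{j}\|F_{j}\|_{L^{1}(\mathbb{R}^{4})}<\infty$.

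Step (a) is precisely where the restriction $p<2$ enters, and it is immediate from the pointwise estimates on $\Phi_{\kappa}$. Near the diagonal, \eqref{AbsEstimate1}--\eqref{AbsEstimate2} give $|\Phi_{1}(z)|\le c\ln(1+|z|^{-1})\in L^{p}(B_{1})$ for all $p<\infty$ and $|\nabla^{i}\Phi_{1}(z)|\le c\,|z|^{-i}$ for $i=1,2$; in $\mathbb{R}^{4}$ one has $|z|^{-i}\in L^{p}(B_{1})$ exactly when $ip<4$, which for $i=2$ both requires and permits $1<p<2$. At infinity all these kernels decay exponentially by \eqref{AbsEstimate3}, so $\nabla^{i}\Phi_{1}\in L^{p}(\mathbb{R}^{4})$, $i=0,1,2$, for every $1<p<2$.

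For step (b), since $u_{j}>0$ and $\mu_{j}>0$ we have $\|F_{j}\|_{L^{1}}=\int_{\mathbb{R}^{4}}F_{j}=\dfrac{c_{j}\tilde{\zeta}_{j}}{\lambda_{j}}\int_{\mathbb{R}^{4}}u_{j}\operatorname{e}^{\beta_{j}\tilde{\zeta}_{j}u_{j}^{2}}\,\mathrm{d}x+\mu_{j}c_{j}\int_{\mathbb{R}^{4}}u_{j}\,\mathrm{d}x$. Fix $A>1$ and use the bi-harmonic truncation $u_{j}^{A}$ and radius $\rho_{j}^{A}$ of Lemma~\ref{ThmTruncation}. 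By Lemma~\ref{LemmaTau}, $\dfrac{c_{j}\tilde{\zeta}_{j}}{\lambda_{j}}\int_{B_{\rho_{j}^{A}}(x_{j})}u_{j}\operatorname{e}^{\beta_{j}\tilde{\zeta}_{j}u_{j}^{2}}\to 1$. Outside $B_{\rho_{j}^{A}}(x_{j})$, where $u_{j}\equiv u_{j}^{A}$, split into a fixed large ball $B_{\hat R}$ and its exterior: on $B_{\hat R}$, Lemma~\ref{LemmaEstimateAinverse} gives $\|\Delta u_{j}^{A}\|_{2}^{2}+\|u_{j}^{A}\|_{2}^{2}\le 1/A+o_{j}(1)$, so (subtracting the boundary value and invoking Tarsi's inequality \eqref{TarsiThm} exactly as in Lemma~\ref{LemmaDoubleLimit}) $\operatorname{e}^{\beta_{j}\tilde{\zeta}_{j}(u_{j}^{A})^{2}}$ is bounded in $L^{p_{0}}(B_{\hat R})$ for some $p_{0}>1$, and Hölder gives an $O(1)$ contribution; on $\mathbb{R}^{4}\setminus B_{\hat R}$ the radial Lemma~\ref{radiallemma} yields $u_{j}\le 1$, hence $\operatorname{e}^{\beta_{j}\tilde{\zeta}_{j}u_{j}^{2}}\le C$ and the contribution is $\le C\|u_{j}\|_{L^{1}(\mathbb{R}^{4}\setminus B_{\hat R})}$, which is finite and uniformly bounded because the positive $H^{2}$-solution $u_{j}$ of $(\Delta^{2}+\kappa_{j}^{2})u_{j}=h_{j}\ge 0$, with $\kappa_{j}^{2}=1-\mu_{j}$ bounded away from $0$ and $0\le h_{j}\le C\lambda_{j}^{-1}u_{j}$ in the far zone, decays exponentially at infinity, uniformly in $j$ (by \eqref{AbsEstimate3}; cf.\ \cite{DengLi2007}). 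Since $\lambda_{j}/c_{j}\to\infty$ (Lemma~\ref{LemmaDoubleLimit}) and $\tilde{\zeta}_{j}\to 1$ (Lemma~\ref{maxseqconc}), these last two pieces, when multiplied by $c_{j}\tilde{\zeta}_{j}/\lambda_{j}$, tend to $0$; hence $\dfrac{c_{j}\tilde{\zeta}_{j}}{\lambda_{j}}\int_{\mathbb{R}^{4}}u_{j}\operatorname{e}^{\beta_{j}\tilde{\zeta}_{j}u_{j}^{2}}\to 1$. Finally, the exponential decay permits integrating $(\Delta^{2}+1)(c_{j}u_{j})=F_{j}$ over $\mathbb{R}^{4}$ with $\int_{\mathbb{R}^{4}}\Delta^{2}(c_{j}u_{j})=0$, so $(1-\mu_{j})\,c_{j}\int_{\mathbb{R}^{4}}u_{j}=\dfrac{c_{j}\tilde{\zeta}_{j}}{\lambda_{j}}\int_{\mathbb{R}^{4}}u_{j}\operatorname{e}^{\beta_{j}\tilde{\zeta}_{j}u_{j}^{2}}$; because $\mu_{j}\to\alpha(1+\gamma)<1$ (Lemma~\ref{maxseqconc} and the standing hypothesis $\gamma<1/\alpha-1$), this gives $c_{j}\int u_{j}\le C$ and therefore $\|F_{j}\|_{L^{1}}\le C$. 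Combining (a) and (b), $\|\nabla^{i}(c_{j}u_{j})\|_{L^{p}(\mathbb{R}^{4})}\le\|\nabla^{i}\Phi_{1}\|_{L^{p}}\,\|F_{j}\|_{L^{1}}\le C$ for $i=0,1,2$, i.e.\ $(c_{j}u_{j})$ is bounded in $W^{2,p}(\mathbb{R}^{4})$.

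The main obstacle is the uniform $L^{1}$ bound (b): the part of $F_{j}$ concentrated at the blow-up point is controlled essentially for free by Lemmas~\ref{ThmTruncation}, \ref{LemmaDoubleLimit} and \ref{LemmaTau}, whereas the exterior contribution forces one to establish the uniform decay of $u_{j}$ at infinity, which rests on the coercivity of $\Delta^{2}+\kappa_{j}^{2}$ (with $\kappa_{j}^{2}=1-\mu_{j}$ bounded below, exactly by the assumption $\gamma<1/\alpha-1$) and the exponential estimate \eqref{AbsEstimate3}.
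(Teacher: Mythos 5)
Your proposal and the paper's proof share the same skeleton: a Green's representation for a constant-coefficient fourth-order operator with positive mass, combined with a Young-type convolution bound and the $L^p$-integrability of $\nabla^i\Phi_\kappa$ for $1<p<2$. The decisive difference is the choice of operator. You write $(\Delta^2+1)(c_ju_j)=F_j$, so $F_j$ carries the extra zero-order piece $\mu_jc_ju_j$, and Young's inequality $\|\Phi_1*F_j\|_p\le\|\Phi_1\|_p\|F_j\|_1$ then forces you to prove a uniform bound on $c_j\|u_j\|_{L^1(\mathbb{R}^4)}$. You obtain it by integrating the equation over $\mathbb{R}^4$, which is sound once one knows that $\int_{\mathbb{R}^4}\Delta^2(c_ju_j)=0$; but that identity, and the finiteness of $c_j\int u_j$ that makes the manipulation legitimate, themselves rest on a uniform far-field decay of $u_j$ that you only assert (the radial lemma gives $u_j\lesssim|x|^{-3/2}$, which is \emph{not} $L^1$ in $\mathbb{R}^4$, so the exponential decay you invoke is genuinely needed and not free). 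The paper sidesteps all of this by defining $v_j$ as the solution of $\Delta^2 v_j+\kappa_j^2v_j=\tfrac{\xi_j}{\lambda_j}\tilde\zeta_ju_j\mathrm{e}^{\beta_j\tilde\zeta_ju_j^2}$ with $\kappa_j^2=1-\mu_j$: the $\mu_j$ term is absorbed into the operator (possible precisely because $\gamma<1/\alpha-1$ keeps $\kappa_j$ bounded away from $0$), so the source is the single nonnegative term $g_j=|u_j|\mathrm{e}^{\beta_j\tilde\zeta_ju_j^2}$, normalized by $\int g_j$. The subsequent Hölder-plus-Fubini computation is exactly your Young estimate, phrased with $g_j/\!\int g_j$ as a probability density, and the conclusion follows from $v_j=\xi_ju_j$ and $\xi_j/c_j\to 1$ (Lemma~\ref{LemmaTau}). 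In short: your route is the same modulo the splitting of the operator, but the paper's choice of $\kappa_j^2=1-\mu_j$ eliminates the auxiliary $L^1$-bound on $c_ju_j$ (and with it the uniform-decay issue) that your version must supply separately; if you keep $\kappa=1$, you should make the exponential decay of $u_j$ at infinity, uniformly in $j$, into a genuine step rather than an aside.
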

	\begin{proof}
		Let $v_{j}$ be the solution for the equation 
		\begin{equation}\label{E-auxiliar}
			\Delta ^2 v_{j} +\kappa^{2}_j v_{j}=\frac{\xi_j}{\lambda_j}u_{j}\tilde{\zeta}_{j}\operatorname{e}^{\beta_{j}\tilde{\zeta}_{j} u_{j}^2}, \;\;\mbox{in}\;\;  \mathbb{R}^4,
		\end{equation}
		where $\kappa_j=(1-\mu_j)^{1/2}\to (1-\alpha(1+\gamma))^{1/2}>0$ because we are assuming $\gamma<\frac{1}{\alpha}-1$. Hence, for $j$ large enough, the  representation formula  \eqref{SolutionsRep} yields
		$$
		v_{j}(x) = \frac{\xi_j}{\lambda_j}\int_{\mathbb{R}^{4}}\Phi_{\kappa_j}(x,y)u_{j}(y)\tilde{\zeta}_j\operatorname{e}^{\beta_{j}\tilde{\zeta}_{j} u_{j}^2(y)}\mathrm{d}y, \;\; x \in \mathbb{R}^{4}.
		$$
		Computing the $i$-th gradient 
		$$
		|\nabla^{i}v_{j}| = \left|\frac{\xi_j}{\lambda_{j}}\int_{\mathbb{R}^{4}}\nabla^{i}\Phi_{\kappa_j}(x,y) u_{j}(y)\tilde{\zeta}_{j}\operatorname{e}^{\beta_{j}\tilde{\zeta}_{j} u_{j}^2(y)}\mathrm{d}y\right|. 
		$$
		By the definition of $\xi_{j}/\lambda_j$, we have 
		\begin{align*}
			|\nabla^{i}v_{j}| = \left|\int_{\mathbb{R}^{4}}\nabla^{i}\Phi_{\kappa_j}(x,y)\frac{u_{j}(y)\operatorname{e}^{\beta_{j}\tilde{\zeta}_{j} u_{j}^2(y)}}{\int_{B_{\rho^{A}_j}(x_{j})}|u_j(z)|\operatorname{e}^{\beta_{j}\tilde{\zeta}_{j} u_{j}^2(z)}\mathrm{d}z}\mathrm{d}y\right|.		 
		\end{align*}
		Letting $R\to\infty$ in \eqref{Est2EqualityInBallsModulus}, we have 
		\begin{equation}\nonumber
			\int_{\mathbb{R}^4}|u_{j}(z)|\operatorname{e}^{\beta_{j}\tilde{\zeta}_{j} u_{j}^2(z)}\mathrm{d}z =\int_{B_{\rho_{j}^{A}}(x_j)}|u_{j}(z)|\operatorname{e}^{\beta_{j}\tilde{\zeta}_{j} u_{j}^2(z)}\mathrm{d}z  + o^{+}_{j}(1).
		\end{equation}
Then
\begin{align*}
			|\nabla^{i}v_{j}| \le\frac{1}{\int_{\mathbb{R}^4}g_j(z)\mathrm{d}z} \int_{\mathbb{R}^{4}}\left|\nabla^{i}\Phi_{\kappa_j}(x,y)\right|g_j(y)\mathrm{d}y,	 
		\end{align*} 
		where 
		$g_{j} =|u_{j}|\operatorname{e}^{\beta_{j}\tilde{\zeta}_{j} u_{j}^2}.$ 
  So, by H{\"o}lder's inequality for $1<p<2$, we get
  \begin{align*}
	 \int_{\mathbb{R}^{4}}\left|\nabla^{i}\Phi_{\kappa_j}(x,y)\right|g_j(y)\mathrm{d}y & = \int_{\mathbb{R}^{4}}\left|\nabla^{i}\Phi_{\kappa_j}(x,y)\right||g_j(y)|^{\frac{1}{p}}|g_j(y)|^{\frac{1}{p^{\prime}}}\mathrm{d}y\\
			& \le \left(\int_{\mathbb{R}^{4}}\left|\nabla^{i}\Phi_{\kappa_j}(x,y)\right|^{p}|g_j(y)|\mathrm{d}y\right)^{\frac{1}{p}}\left(\int_{\mathbb{R}^{4}}|g_j(y)|\mathrm{d}y\right)^{\frac{1}{p^{\prime}}}. 
		\end{align*} 
It follows that 
\begin{align*}
			|\nabla^{i}v_{j}|^{p} \le  \int_{\mathbb{R}^{4}}\left|\nabla^{i}\Phi_{\kappa_j}(x,y)\right|^{p}\frac{|g_j(y)|}{\int_{\mathbb{R}^{4}}|g_j(z)|\mathrm{d}z}\mathrm{d}y,
		\end{align*} 
for  $i = 0,1,2$. Applying Fubini’s theorem and using \eqref{AbsEstimate1},  \eqref{AbsEstimate2} and \eqref{AbsEstimate3}
		\begin{align*}
			\int_{\mathbb{R}^{4}}|\nabla^{i}v_j(x)|^{p}\mathrm{d}x & \leq \int_{\mathbb{R}^{4}} \left(\int_{\mathbb{R}^{4}}\left|\nabla^{i}\Phi_{\kappa_j}(x,y)\right|^{p}\frac{|g_j(y)|}{\int_{\mathbb{R}^{4}}|g_j(z)|\mathrm{d}z}\mathrm{d}y\right)\mathrm{d}x\\
			&\le c,\, \mbox{for}\;\; i=0,1,2.
		\end{align*}
		Therefore, $\|v_{j}\|_{W^{2, p}(\mathbb{R}^{4})}\le c$.
By noticing that  $v_{j} = \xi_{j}u_j$ satisfies \eqref{E-auxiliar}, we have 
\begin{equation}\label{SolEst}
			\|\xi_ju_j\|_{W^{2, p}(\mathbb{R}^{4})}\le c.
		\end{equation} 
	From Lemma~\ref{LemmaPhi}, we have $\xi_j/c_j\to 1$. Then the proof follows from \eqref{SolEst}.
	\end{proof}
	
The following result will be important to demonstrate the convergence of $c_{j}u_{j}$ to a Green function.
\begin{lemma}\label{TestFunctionPhi}
		Let $\phi \in C_{0}^{1}(\mathbb{R}^4)$, then we have
		\begin{equation}\label{LimitPhiFunction}			
			\lim\limits_{j\rightarrow \infty}\frac{c_j}{\lambda_j}\int_{\mathbb{R}^{4}}\phi u_j\operatorname{e}^{\beta_{j}\tilde{\zeta}_{j} u_{j}^2}\mathrm{d}x= \phi(0).
		\end{equation}
	\end{lemma}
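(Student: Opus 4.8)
The plan is to establish the Dirac-type concentration identity \eqref{LimitPhiFunction} by splitting the integral over three regions: a small ball $B_{Rr_j}(x_j)$ around the blow-up point (where the concentration occurs), an intermediate annulus $B_{\rho_j^A}(x_j)\setminus B_{Rr_j}(x_j)$, and the complement $\mathbb{R}^4\setminus B_{\rho_j^A}(x_j)$ (where the truncation $u_j^A$ lives and the energy is negligible). First I would write
$$
\frac{c_j}{\lambda_j}\int_{\mathbb{R}^4}\phi u_j\operatorname{e}^{\beta_j\tilde\zeta_j u_j^2}\mathrm dx = \frac{c_j}{\lambda_j}\int_{B_{Rr_j}(x_j)} + \frac{c_j}{\lambda_j}\int_{B_{\rho_j^A}(x_j)\setminus B_{Rr_j}(x_j)} + \frac{c_j}{\lambda_j}\int_{\mathbb{R}^4\setminus B_{\rho_j^A}(x_j)}.
$$
For the last term, on $\mathbb{R}^4\setminus B_{\rho_j^A}(x_j)$ one has $u_j = u_j^A$ and by Lemma~\ref{LemmaEstimateAinverse} combined with Tarsi's inequality \eqref{TarsiThm}, $\operatorname{e}^{\beta_j\tilde\zeta_j(u_j^A)^2}$ is bounded in some $L^p$, $p>1$; since $c_j u_j$ is bounded in $W^{2,p'}$ for $1<p'<2$ by Lemma~\ref{BoundnessLemma} and $\phi$ has compact support, Hölder's inequality forces this piece to vanish (up to the normalization, and using $c_j/\lambda_j\to 0$ which follows from Lemma~\ref{LemmaDoubleLimit} since $\lambda_j/c_j\to\infty$). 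The intermediate annulus is handled the same way using the $L^p$-boundedness of $\operatorname{e}^{\beta_j\tilde\zeta_j(u_j^A)^2}$ on $B_R(x_j)\setminus B_{\rho_j^A}(x_j)$, exactly as in \eqref{rabovai0}, so it is also $o_j(1)$ after dividing by $\lambda_j$ and multiplying by $c_j$.

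The main term is the ball $B_{Rr_j}(x_j)$. Here I would change variables $x = x_j + r_j y$, use $u_j^2(x_j+r_jy) - c_j^2 = z_j(w_j+1)$ and the definition \eqref{rj-definition} of $r_j$ to get
$$
\frac{c_j}{\lambda_j}\int_{B_{Rr_j}(x_j)}\phi u_j\operatorname{e}^{\beta_j\tilde\zeta_j u_j^2}\mathrm dx = \frac{c_j r_j^4}{\lambda_j}\int_{B_R(0)}\phi(x_j+r_jy)\, u_j(x_j+r_jy)\operatorname{e}^{\beta_j\tilde\zeta_j(c_j^2 + z_j(w_j+1))}\mathrm dy,
$$
and since $c_j r_j^4/\lambda_j = c_j^{-1}\operatorname{e}^{-\beta_j\tilde\zeta_j c_j^2}\cdot c_j^2 \cdot c_j^{-1}\cdot\ldots$ — more precisely $r_j^4 = \lambda_j/(c_j^2\operatorname{e}^{\beta_j\tilde\zeta_j c_j^2})$ gives $c_j r_j^4 \operatorname{e}^{\beta_j\tilde\zeta_j c_j^2}/\lambda_j = 1/c_j$ — and $u_j(x_j+r_jy)/c_j = w_j(y)\to 1$, $z_j\to z$, $\beta_j\tilde\zeta_j\to 32\pi^2$, and $\phi(x_j+r_jy)\to\phi(0)$ (because $x_j\to 0$, $r_j\to 0$), dominated convergence gives this term $\to \phi(0)\int_{B_R(0)}\operatorname{e}^{64\pi^2 z}\mathrm dy$. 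Letting $R\to\infty$ and invoking $\int_{\mathbb{R}^4}\operatorname{e}^{64\pi^2 z}\mathrm dx = 1$ from Lemma~\ref{doz} yields exactly $\phi(0)$.

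The main obstacle I anticipate is justifying the interchange of limit and integral in the main-term computation: one needs a uniform (in $j$) integrable dominating function for $w_j(y)\operatorname{e}^{\beta_j\tilde\zeta_j z_j(w_j+1)}$ on $B_R(0)$, which requires the uniform $C^3_{loc}$ control from Lemmas~\ref{lemmaseqwk}, \ref{lemmaSeqvk}, and \ref{doz}, together with the explicit decay $\operatorname{e}^{64\pi^2 z(y)} = (1+\frac{\pi}{\sqrt6}|y|^2)^{-4}$ to control the tail as $R\to\infty$; care is also needed because $z_j(w_j+1)$ is a product of two sequences and one must ensure $w_j+1$ stays bounded (which it does, since $w_j\le 1$ and $w_j\to 1$) so that no exponential blow-up occurs. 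A secondary technical point is making the error estimates on the two outer regions genuinely uniform, which hinges on Lemma~\ref{ThmTruncation}-(4) ($\rho_j^A/r_j\to\infty$ and $\rho_j^A\to 0$) so that $B_{Rr_j}(x_j)\subset B_{\rho_j^A}(x_j)$ for $j$ large and the compact support of $\phi$ eventually contains all the balls in play.
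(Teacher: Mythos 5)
Your decomposition, your handling of the outermost region, and your rescaling argument on the inner ball $B_{Rr_j}(x_j)$ all match the paper. The gap is in the intermediate annulus.

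On $B_{\rho_j^A}(x_j)\setminus B_{Rr_j}(x_j)$ the integrand involves $u_j$, \emph{not} the truncation $u_j^A$: by construction $u_j^A=u_j$ only \emph{outside} $B_{\rho_j^A}(x_j)$, whereas inside it equals the biharmonic extension $v_j^A$. So the $L^p$-boundedness of $\operatorname{e}^{\beta_j\tilde\zeta_j(u_j^A)^2}$ from Lemma~\ref{LemmaEstimateAinverse}/Tarsi says nothing about $\operatorname{e}^{\beta_j\tilde\zeta_j u_j^2}$ on this annulus, and indeed $\operatorname{e}^{\beta_j\tilde\zeta_j u_j^2}\geq \operatorname{e}^{\beta_j\tilde\zeta_j c_j^2/A^2}\to\infty$ there by Lemma~\ref{ThmTruncation}-(1). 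You even wrote the $L^p$-bound on the wrong set, namely $B_R(x_j)\setminus B_{\rho_j^A}(x_j)$, which is the outer region you already handled. A pointwise/H\"older argument cannot work here: the annulus is precisely the transition zone where blow-up is happening, and the smallness of its contribution is a consequence of the exact balance encoded in $\lambda_j$, not of any $L^p$-integrability.

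What is needed instead is an energy-balance argument, which is what the paper does. Using $u_j\geq c_j/A$ on $B_{\rho_j^A}(x_j)$, bound $\frac{c_j}{\lambda_j}|\phi u_j|$ by $\frac{A}{\lambda_j}\|\phi\|_{C^0}\,u_j^2$ and write the annulus integral of $u_j^2\operatorname{e}^{\beta_j\tilde\zeta_j u_j^2}$ as the full-space integral minus the $B_{\hat R r_j}(x_j)$-piece minus the $\mathbb{R}^4\setminus B_{\rho_j^A}(x_j)$-piece. Dividing by $\lambda_j$, the full-space term equals $(1-\mu_j\|u_j\|_2^2)/\tilde\zeta_j\to 1$ by the very definition of $\lambda_j$; the inner-ball term, after the same change of variables you used for $I^2_j$, converges to $\int_{B_{\hat R}}\operatorname{e}^{64\pi^2 z}\,\mathrm dx$ which tends to $1$ as $\hat R\to\infty$ by Lemma~\ref{doz}; and the outer term is $o_j(1)$ by the $L^p$-estimate you already invoked. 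The three cancel in the iterated limit $j\to\infty$, $\hat R\to\infty$, which is what kills the annulus.
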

	\begin{proof}
		Let $\phi \in C^{\infty}_{0}(\mathbb{R}^4)$ with $\supp( \phi) \subset B_{\rho}$, for some $\rho > 0$. We can separate the integral as follows
		\begin{align}
			\frac{c_j}{\lambda_j}\int_{\mathbb{R}^{4}}\phi u_j\operatorname{e}^{\beta_{j}\tilde{\zeta}_{j} u_{j}^2}\mathrm{d}x= I^{1}_{j}+I^{2}_{j} +I^{3}_{j}.
		\end{align}
		where
		\begin{align*}
			I^{1}_{j}&= \frac{c_j}{\lambda_j}\int_{B_{\rho_{j}^{A}}(x_j) \backslash B_{\hat{R}r_{j}}(x_j)}\phi u_j\operatorname{e}^{\beta_{j}\tilde{\zeta}_{j} u_{j}^2}\mathrm{d}x\\
			I^{2}_j &= \frac{c_j}{\lambda_j}\int_{B_{\hat{R}r_{j}}(x_j)}\phi u_j\operatorname{e}^{\beta_{j}\tilde{\zeta}_{j} u_{j}^2}\mathrm{d}x\\
			I^{3}_j &=\frac{c_j}{\lambda_j}\int_{B_{\rho }\backslash B_{\rho_{j}^{A}}(x_j)}\phi u_j\operatorname{e}^{\beta_{j}\tilde{\zeta}_{j} u_{j}^2}\mathrm{d}x
		\end{align*}
	We will show that $I^i_j\to 0$ for $i=1,3$ and $I^{2}_j\to \phi(0)$, as $j\to \infty$.
		From Lemma~\ref{lambdainf} and Lemma~\ref{ThmTruncation}-(1) 
		\begin{align*}
			|I^{1}_j|&\leq A\|\phi\|_{C^{0}}\frac{1}{\lambda_j}\int_{B_{\rho_{j}^{A}}(x_j) \backslash B_{\hat{R}r_{j}}}u^2_j\operatorname{e}^{\beta_{j}\tilde{\zeta}_{j} u_{j}^2}\mathrm{d}x \\
			& =A\|\phi\|_{C^{0}}\frac{1}{\lambda_j}\Big(\int_{\mathbb{R}^4}-\int_{B_{\hat{R}r_j}(x_j)}-\int_{\mathbb{R}^4\backslash B_{\rho^A_j}(x_j)}\Big)u^2_j\operatorname{e}^{\beta_{j}\tilde{\zeta}_{j} u_{j}^2}\mathrm{d}x\\
			& =A\|\phi\|_{C^{0}}\Big[\frac{1-\mu_j\|u_j\|^{2}_2}{\tilde{\zeta}_{j}}-\frac{1}{\lambda_j}\Big(\int_{B_{\hat{R}r_j}(x_j)}+\int_{\mathbb{R}^4\backslash B_{\rho^A_j}(x_j)}\Big)u^2_j\operatorname{e}^{\beta_{j}\tilde{\zeta}_{j} u_{j}^2}\mathrm{d}x\Big].
		\end{align*}
		Hence, 
		\begin{align}\label{I1-quase}
			|I^{1}_j|&\leq A\|\phi\|_{C^{0}}\Big[1-\frac{1}{\lambda_j}\int_{B_{\hat{R}r_j}(x_j)}u^2_j\operatorname{e}^{\beta_{j}\tilde{\zeta}_{j} u_{j}^2}\mathrm{d}x+o_j(1)\Big]
		\end{align}
As in \eqref{Eqrightside}, we have 
\begin{equation}\label{indo-1}
\begin{aligned}
	 \int_{B_{\hat{R}r_{j}}(x_j)}u_j^2\operatorname{e}^{\beta_{j}\tilde{\zeta}_{j}u_{j}^{2}} \mathrm{d}x    & = \lambda_j\int_{B_{\hat{R} }(0)}w^{2}_j\operatorname{e}^{\beta_{j}\tilde{\zeta}_{j}z_j(w_j+1)}\mathrm{d}x.		
		\end{aligned}
		\end{equation}
	Taking into account \eqref{indo-1}, letting 	$j \rightarrow \infty$ and $\hat{R} \rightarrow \infty$ in \eqref{I1-quase}, by Lemma~\ref{lemmaseqwk} and Lemma \ref{doz}, we conclude that $I^{1}_j\to 0$. In addition, 
		\begin{align*}
			I^{2}_j= \int_{B_{\hat{R}}(0)}\phi(x_{j}+r_{j}x)w_j\operatorname{e}^{\beta_{j}\tilde{\zeta}_{j}z_j(w_j+1)}\mathrm{d}x
		\end{align*}
Thus, by Lemmas~\ref{lemmaseqwk} and~\ref{doz}, letting $j \to \infty $ and $ \hat{R} \to \infty $, we obtain $ I_j^{2} \to \phi(0)$. Finally, note that $\exp(\beta_{j}\tilde{\zeta}_j|u_{j}^{A}|^2)$ is  bounded in $L^{q}(B_{\rho})$ for some $q>1$, and so  by choosing $q$ close to $1$ and applying  H{\"o}lder's inequality
		\begin{align*}
			|I^{3}_j| \leq \frac{c_j}{\lambda_j}\|\phi\|_{C^{0}}\left(\int_{B_{\rho}}|u_{j}|^{q^{\prime}}\mathrm{d}x\right)^{\frac{1}{q^{\prime}}}\left(\int_{B_{\rho}}\operatorname{e}^{\beta_{j}\tilde{\zeta}_jq|u^{A}_{j}|^2}\mathrm{d}x\right)^{\frac{1}{q}}.
		\end{align*}
From Lemma~\ref{LemmaDoubleLimit}, we have $c_j/\lambda_j\to 0$.	Then,  $I^{3}_j\to 0$ as $j\to\infty$. 
	\end{proof}
	
Before presenting the next result, we recall the following auxiliary proposition.
	\begin{proposition}\label{PropFracIntByParts}
		Let $\Omega \subset \mathbb{R}^4$ be a bounded open domain with Lipschitz boundary. Then for any $u \in H^{2}(\Omega)$, $\omega \in H^{4}(\Omega)$, we have
		$$
			\int_{\Omega}\Delta u  \Delta \omega \mathrm{d}x = \int_{\Omega} u \Delta^2 \omega \mathrm{d}x -  \int_{\partial \Omega} \nu \cdot u \nabla\Delta \omega \mathrm{d}S + \int_{\partial\Omega}\nu \cdot \nabla u \Delta \omega \mathrm{d}S
		$$
		where $\nu$ denotes the outer normal to $\partial\Omega$.
	\end{proposition}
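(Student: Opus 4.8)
The plan is to reduce the identity to two successive applications of the divergence theorem on $\Omega$, carried out first for smooth functions and then extended by density. First I would prove the formula when $u,\omega\in C^{\infty}(\overline{\Omega})$. Writing $\Delta u=\dive(\nabla u)$ and applying the divergence theorem to the vector field $\Delta\omega\,\nabla u$ gives
\begin{equation}\nonumber
\int_{\Omega}\Delta u\,\Delta\omega\,\mathrm{d}x=-\int_{\Omega}\nabla u\cdot\nabla(\Delta\omega)\,\mathrm{d}x+\int_{\partial\Omega}(\nu\cdot\nabla u)\,\Delta\omega\,\mathrm{d}S,
\end{equation}
which already produces the last boundary term. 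Then I would apply the divergence theorem a second time to the vector field $u\,\nabla(\Delta\omega)$, using $\dive\big(u\,\nabla\Delta\omega\big)=\nabla u\cdot\nabla\Delta\omega+u\,\Delta^{2}\omega$, to obtain
\begin{equation}\nonumber
-\int_{\Omega}\nabla u\cdot\nabla(\Delta\omega)\,\mathrm{d}x=\int_{\Omega}u\,\Delta^{2}\omega\,\mathrm{d}x-\int_{\partial\Omega}u\,(\nu\cdot\nabla\Delta\omega)\,\mathrm{d}S.
\end{equation}
Adding the two displays yields exactly the claimed formula for smooth $u,\omega$, after identifying $\nu\cdot u\nabla\Delta\omega$ with $u\,(\nu\cdot\nabla\Delta\omega)$.

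Next I would pass to the general case $u\in H^{2}(\Omega)$, $\omega\in H^{4}(\Omega)$ by approximation. Since $\Omega$ is a bounded Lipschitz domain, $C^{\infty}(\overline{\Omega})$ is dense in $H^{k}(\Omega)$ for every $k$, so I choose $u_{l}\to u$ in $H^{2}(\Omega)$ and $\omega_{l}\to\omega$ in $H^{4}(\Omega)$ with $u_{l},\omega_{l}\in C^{\infty}(\overline{\Omega})$. The two volume integrals are continuous in these norms by Cauchy--Schwarz, so $\int_{\Omega}\Delta u_{l}\,\Delta\omega_{l}\to\int_{\Omega}\Delta u\,\Delta\omega$ and $\int_{\Omega}u_{l}\,\Delta^{2}\omega_{l}\to\int_{\Omega}u\,\Delta^{2}\omega$. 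For the boundary terms I invoke the continuity of the trace operator on Lipschitz domains: $u_{l}\to u$ in $L^{2}(\partial\Omega)$ and $\nabla u_{l}\to\nabla u$ in $L^{2}(\partial\Omega;\mathbb{R}^{4})$ (the former from $H^{2}(\Omega)\hookrightarrow L^{2}(\partial\Omega)$, the latter since $\nabla u\in H^{1}(\Omega;\mathbb{R}^{4})$ has an $L^{2}$ trace), while $\Delta\omega_{l}\to\Delta\omega$ and $\nabla\Delta\omega_{l}\to\nabla\Delta\omega$ in $L^{2}(\partial\Omega)$ (since $\Delta\omega\in H^{2}(\Omega)$ and $\nabla\Delta\omega\in H^{1}(\Omega;\mathbb{R}^{4})$). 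Hence each boundary integral converges to its counterpart, and the identity established for smooth functions passes to the limit, giving the proposition in full generality.

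The argument is essentially routine; the main obstacle will be the bookkeeping of trace regularity, namely verifying that every factor appearing in a surface integral lies in a Sobolev space admitting an $L^{2}(\partial\Omega)$ trace. This is precisely where the hypotheses $u\in H^{2}(\Omega)$, $\omega\in H^{4}(\Omega)$ and the Lipschitz regularity of $\partial\Omega$ are used in full strength, since one derivative of $u$ and three derivatives of $\omega$ must still possess boundary traces. Once this point is checked, closing the density argument is immediate.
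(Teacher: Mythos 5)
Your proof is correct and complete. The paper does not actually supply a proof of this proposition; it merely \emph{recalls} it as a standard auxiliary fact before Lemma~\ref{GreenFunctionLemma}, so there is no argument in the paper to compare yours against. Your approach is the canonical one: two applications of the divergence theorem to the vector fields $\Delta\omega\,\nabla u$ and $u\,\nabla\Delta\omega$ yield the identity for $u,\omega\in C^{\infty}(\overline{\Omega})$, and the passage to $u\in H^{2}(\Omega)$, $\omega\in H^{4}(\Omega)$ follows by density of $C^{\infty}(\overline{\Omega})$ in each $H^{k}(\Omega)$ on a bounded Lipschitz domain, using Cauchy--Schwarz for the volume integrals and continuity of the trace operator $H^{1}(\Omega)\to L^{2}(\partial\Omega)$ for each boundary factor (applied to $u$, $\nabla u$, $\Delta\omega$, and $\nabla\Delta\omega$ respectively, all of which lie in $H^{1}(\Omega)$ under the stated hypotheses). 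Your identification of the notation $\nu\cdot u\nabla\Delta\omega$ as $u\,(\nu\cdot\nabla\Delta\omega)$ is also the correct reading. The only facts you lean on without citation — density of smooth functions up to the boundary and the $L^{2}(\partial\Omega)$ trace estimate for Lipschitz domains — are standard and appropriate to leave as such.
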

	\begin{lemma}\label{GreenFunctionLemma}
		For any $1<p<2$, we have $c_{j}u_{j}  \rightharpoonup G \in C^{3}(\mathbb{R}^4\backslash\{0\})$ weakly in $W^{2,p}(\mathbb{R}^4)$,  where $G$ is a Green function satisfying   $\Delta^2 G+\kappa_0G = \delta_0$ in $\mathbb{R}^4$, where $\kappa_0=1-\alpha(\gamma+1)$.
		Moreover 
		$$
		G =	-\frac{1}{8\pi^2} \ln|x|+ K_{0} + h
		$$
		where $K_{0}$ is a constant depending on $0$, $h \in C^{3}(\mathbb{R}^4)$ with $h(0) = 0$. Further, 
		$$
		\lim\limits_{j \rightarrow \infty}\int_{\mathbb{R}^4 \backslash B_{\epsilon}}\big(|\Delta(c_{j}u_{j})|^2 +|c_{j}u_{j}|^2\big)\mathrm{d}x = -\frac{1}{8\pi^2} \ln\epsilon -\frac{1}{16\pi^2}+K_{0} +\alpha(\gamma+1)\|G\|^{2}_{2} + O(\epsilon), 
		$$
	as $\epsilon\to 0$,  where $B_{\epsilon}=B_{\epsilon}(0)$,	for  $\epsilon>0$.
	\end{lemma}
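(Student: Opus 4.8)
The plan is to extract the weak limit of $(c_ju_j)$, identify it with the Green function of $\Delta^2+\kappa_0$, expand it near the origin, upgrade the convergence to $C^3_{loc}(\mathbb{R}^4\setminus\{0\})$ with uniform tails, and then run an energy identity on $\mathbb{R}^4\setminus B_\epsilon$. First, from Lemma~\ref{BoundnessLemma} the family $(c_ju_j)$ is bounded in $W^{2,p}(\mathbb{R}^4)$ for every $1<p<2$, so up to a subsequence $c_ju_j\rightharpoonup G$ weakly in each such space, the limit $G$ being independent of $p$. Multiplying \eqref{EulerLagrange} by $c_j$ gives $\Delta^2(c_ju_j)+(1-\mu_j)(c_ju_j)=\frac{c_j}{\lambda_j}\tilde{\zeta}_ju_j\operatorname{e}^{\beta_j\tilde{\zeta}_ju_j^2}$; testing against $\phi\in C_0^\infty(\mathbb{R}^4)$ and invoking Lemma~\ref{TestFunctionPhi} (which forces the right-hand side to converge to $\delta_0$), together with $\mu_j\to\alpha(1+\gamma)$ and $\tilde{\zeta}_j\to1$ from Lemma~\ref{maxseqconc}, yields $\Delta^2G+\kappa_0G=\delta_0$ in the distributional sense, with $\kappa_0=1-\alpha(1+\gamma)>0$ since $\gamma<\frac1\alpha-1$. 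By uniqueness of the decaying solution (and the representation \eqref{SolutionsRep}) one has $G=\Phi_{\kappa_0}(\cdot,0)$, so \eqref{AbsEstimate1}--\eqref{AbsEstimate3} and elliptic regularity give $G\in C^3(\mathbb{R}^4\setminus\{0\})\cap L^2(\mathbb{R}^4)$, with $G$ and $\Delta G$ decaying exponentially at infinity.

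Next I would establish the expansion near the origin. Since $\Delta\ln|x|=2|x|^{-2}$ and $-\Delta\big(\tfrac1{4\pi^2}|x|^{-2}\big)=\delta_0$ in $\mathbb{R}^4$, one has $\Delta^2\big(-\tfrac1{8\pi^2}\ln|x|\big)=\delta_0$, hence $w:=G+\tfrac1{8\pi^2}\ln|x|$ solves $\Delta^2w=-\kappa_0G$ in a neighbourhood of $0$. Because $G$ carries only a logarithmic singularity, $-\kappa_0G\in L^q_{\mathrm{loc}}$ for every $q<\infty$, so Lemma~\ref{RegularityGazzola2} gives $w\in W^{4,q}_{\mathrm{loc}}\subset C^{3,\kappa}_{\mathrm{loc}}$. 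Setting $K_0:=w(0)$ and $h:=w-K_0$ produces the claimed decomposition $G=-\tfrac1{8\pi^2}\ln|x|+K_0+h$ with $h\in C^3$ and $h(0)=0$.

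To justify passing to the limit in the energy away from the blow-up point, I would prove $c_ju_j\to G$ in $C^3_{loc}(\mathbb{R}^4\setminus\{0\})$ with uniform exponential control of the tails. From the proof of Lemma~\ref{BoundnessLemma}, $\xi_ju_j(x)=\frac{\xi_j}{\lambda_j}\int_{\mathbb{R}^4}\Phi_{\kappa_j}(x,y)\,\tilde{\zeta}_ju_j(y)\operatorname{e}^{\beta_j\tilde{\zeta}_ju_j^2(y)}\,\mathrm{d}y$; here $\xi_j/c_j\to1$ by Lemma~\ref{LemmaTau}, $\kappa_j\to\kappa_0$, and the measures $\frac{\xi_j}{\lambda_j}\tilde{\zeta}_ju_j\operatorname{e}^{\beta_j\tilde{\zeta}_ju_j^2}\,\mathrm{d}y$ converge weakly to $\delta_0$ (using $\varphi=1$ from Lemma~\ref{LemmaPhi} together with $\tilde{\zeta}_j\to1$). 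The estimates \eqref{AbsEstimate1}--\eqref{AbsEstimate3} then give $c_ju_j\to G$ uniformly on $\mathbb{R}^4\setminus B_\epsilon$, and bootstrapping with interior elliptic estimates for $\Delta^2+(1-\mu_j)$ on compact subsets of $\mathbb{R}^4\setminus\{0\}$ upgrades this to $C^3_{loc}$, with exponentially decaying tails inherited from \eqref{AbsEstimate3}. This is the step I expect to be the main obstacle, precisely because the right-hand side of the equation converges only as a measure to $\delta_0$, so one cannot argue by elementary elliptic estimates and must go through the Green representation and the previous lemmas.

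Finally, for fixed $\epsilon>0$, multiply $c_j\times$\eqref{EulerLagrange} by $c_ju_j$, integrate over $B_R\setminus B_\epsilon$, apply Proposition~\ref{PropFracIntByParts}, and let $R\to\infty$: the $\partial B_R$ terms vanish by exponential decay, while the source contribution is $\frac{\tilde{\zeta}_j}{\lambda_j}\int_{\mathbb{R}^4\setminus B_\epsilon}c_j^2u_j^2\operatorname{e}^{\beta_j\tilde{\zeta}_ju_j^2}\,\mathrm{d}x=\frac1{\lambda_j}\int_{\mathbb{R}^4\setminus B_\epsilon}(c_ju_j)^2\operatorname{e}^{\beta_j\tilde{\zeta}_ju_j^2}\,\mathrm{d}x\to0$, since $\lambda_j\to\infty$ (Lemma~\ref{LemmaDoubleLimit}) and $\int_{\mathbb{R}^4\setminus B_\epsilon}(c_ju_j)^2\,\mathrm{d}x\to\int_{\mathbb{R}^4\setminus B_\epsilon}G^2\,\mathrm{d}x$ while $u_j\to0$ uniformly off the origin (so $\operatorname{e}^{\beta_j\tilde{\zeta}_ju_j^2}\to1$ there). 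Using the $C^3_{loc}$-convergence on $\partial B_\epsilon$ and $\mu_j\to\alpha(1+\gamma)$, this yields
\[
\lim_{j\to\infty}\int_{\mathbb{R}^4\setminus B_\epsilon}\big(|\Delta(c_ju_j)|^2+|c_ju_j|^2\big)\,\mathrm{d}x=\alpha(1+\gamma)\int_{\mathbb{R}^4\setminus B_\epsilon}G^2\,\mathrm{d}x+\int_{\partial B_\epsilon}\big(G\,\partial_r\Delta G-\partial_rG\,\Delta G\big)\,\mathrm{d}S.
\]
Substituting $G=-\tfrac1{8\pi^2}\ln|x|+K_0+h$, with $\Delta G=-\tfrac1{4\pi^2}|x|^{-2}+O(1)$, $\partial_r\Delta G=\tfrac1{2\pi^2}|x|^{-3}+O(1)$, $\partial_rG=-\tfrac1{8\pi^2}|x|^{-1}+O(1)$ near $0$ (from $h\in C^3$), and $|\partial B_\epsilon|=2\pi^2\epsilon^3$, the boundary integral equals $-\tfrac1{8\pi^2}\ln\epsilon+K_0-\tfrac1{16\pi^2}+O(\epsilon)$; moreover $\int_{\mathbb{R}^4\setminus B_\epsilon}G^2=\|G\|_2^2+O(\epsilon)$, the discarded piece being $\lesssim\int_{B_\epsilon}(\ln|x|)^2\,\mathrm{d}x=O(\epsilon^4(\ln\epsilon)^2)$. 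Combining these gives exactly
\[
\lim_{j\to\infty}\int_{\mathbb{R}^4\setminus B_\epsilon}\big(|\Delta(c_ju_j)|^2+|c_ju_j|^2\big)\,\mathrm{d}x=-\frac1{8\pi^2}\ln\epsilon-\frac1{16\pi^2}+K_0+\alpha(\gamma+1)\|G\|_2^2+O(\epsilon),
\]
which is the assertion.
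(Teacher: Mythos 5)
Your proposal follows the paper's strategy closely: boundedness in $W^{2,p}$ from Lemma~\ref{BoundnessLemma}, identification of the limit equation via Lemma~\ref{TestFunctionPhi}, local decomposition of $G$ by subtracting the $\Delta^2$-fundamental solution, and the energy identity on $B_R\setminus B_\epsilon$ via Proposition~\ref{PropFracIntByParts} followed by the boundary computations from \eqref{LaplacianIdentities}. The boundary integral arithmetic at the end is right.

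There is, however, a gap in the way you justify $\int_{\mathbb{R}^4\setminus B_\epsilon}(c_ju_j)^2\,\mathrm{d}x\to\int_{\mathbb{R}^4\setminus B_\epsilon}G^2\,\mathrm{d}x$ (equivalently, strong $L^2(\mathbb{R}^4)$ convergence of $c_ju_j$ to $G$). You assert it from ``exponentially decaying tails inherited from \eqref{AbsEstimate3}'' in the Green representation $\xi_ju_j(x)=\frac{\xi_j}{\lambda_j}\int\Phi_{\kappa_j}(x,y)\,\tilde{\zeta}_ju_j\operatorname{e}^{\beta_j\tilde{\zeta}_ju_j^2}\,\mathrm{d}y$, but the measures here are signed, $\kappa_j$ depends on $j$, and turning weak-$*$ convergence to $\delta_0$ into uniform convergence on $\mathbb{R}^4\setminus B_\epsilon$ including uniformly exponentially small tails requires tightness of the total variation of these measures, which you have not established. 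Note also that boundedness of $(c_ju_j)$ in $W^{2,p}(\mathbb{R}^4)$ only for $p<2$ does not by itself give an $L^2$ bound. The paper instead proves this after the energy identity \eqref{EqLimInt} is obtained: Fatou gives $G\in W^{2,2}(\mathbb{R}^4\setminus B_\epsilon)$, re-passing to the limit shows the boundary term for $G$ vanishes as $\epsilon\to\infty$, and this together with \eqref{EqLimInt} gives $\lim_{\epsilon\to\infty}\lim_j\int_{\mathbb{R}^4\setminus B_\epsilon}|c_ju_j|^2=0$, whence $c_ju_j\to G$ in $L^2(\mathbb{R}^4)$. Your ordering is thus close to circular unless you fill in the tightness claim.

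A secondary remark: you identify the ``main obstacle'' as passing to the limit in the PDE since the right-hand side converges only as a measure, and you go through the Green representation to get $C^3_{loc}(\mathbb{R}^4\setminus\{0\})$ convergence. This is heavier than needed. Since $\|\Delta u_j\|_2^2\,\mathrm{d}x\stackrel{\ast}{\rightharpoonup}\delta_0$, the factor $\operatorname{e}^{\beta_j\tilde{\zeta}_ju_j^2}$ is bounded in $L^p$ on annuli $B_R\setminus B_S$; with $c_j/\lambda_j\to 0$ and $c_ju_j$ bounded in $L^q_{\mathrm{loc}}$ for all $q$, the right-hand side of the $c_ju_j$-equation is controlled in $L^s$ on compacts of $\mathbb{R}^4\setminus\{0\}$, and ordinary interior elliptic estimates (Lemma~\ref{RegularityGazzola2}) give $c_ju_j\to G$ in $C^3_{loc}(\mathbb{R}^4\setminus\{0\})$ directly. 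The genuine difficulty is not there but in the $L^2$ control at infinity discussed above. Finally, your source-term estimate is organized as $\frac1{\lambda_j}\int(c_ju_j)^2\operatorname{e}^{\cdots}$, which again needs $L^2$ bounds on $c_ju_j$; the paper's grouping $\frac{c_j^2}{\lambda_j}\int u_j^2\operatorname{e}^{\cdots}$ avoids this, since $\sup_j c_j^2/\lambda_j<\infty$ (Lemma~\ref{LemmaDoubleLimit}) and $\int_{\mathbb{R}^4\setminus B_\epsilon}u_j^2\operatorname{e}^{\beta_j\tilde{\zeta}_j u_j^2}\lesssim\|u_j\|_2^2\to 0$ using the radial lemma and Lemma~\ref{maxseqconc}.
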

	\begin{proof}
By Lemma \ref{BoundnessLemma}, we obtain some $G \in W^{2,p}(\mathbb{R}^4)$ such that $c_{j}u_{j} \rightharpoonup G$ weakly in $W^{2,p}(\mathbb{R}^4)$, for any $1<p<2$. Since $\|\Delta u_{j}\|^2\mathrm{d}x \stackrel{\ast}{\rightharpoonup} \delta_0$ in the sense of measure, we have that  $\exp(\beta_{j}\tilde{\zeta_{j}}u_{j}^2)$ is bounded in $L^{p}(B_{R}\backslash B_{S})$, for any radius $0 < S < R$. Notice that $c_{j}u_{j}$ satisfies the Euler-Lagrange equation
		\begin{equation}\label{EulerLagrangeCkUk}
			\Delta^2(c_j u_j) + c_j u_j= \frac{c_j u_j}{\lambda_j}\tilde{\zeta}_j\operatorname{e}^{\beta_{j} \tilde{\zeta_{j}}u_{j}^{2}}+\mu_jc_ju_j \;\text{ in }\; \mathbb{R}^4.
		\end{equation}
		Therefore, by the standard regularity theory, we got $c_j u_j \rightarrow G$ in $C^{3}_{loc}(\mathbb{R}^4\backslash\{0\})$. By Lemma \ref{TestFunctionPhi}, for any $\phi\in C^{3}_{0}(\mathbb{R}^4)$, we can write
		$$
	 \lim\limits_{j\rightarrow \infty}\int_{\mathbb{R}^{4}}\phi\left(\frac{c_j u_j}{\lambda_j}\tilde{\zeta}_j\operatorname{e}^{\beta_{j} \tilde{\zeta_{j}}u_{j}^{2}}+(\mu_j -1)c_ju_j\right)\mathrm{d}x=\phi(0) - \kappa_0\int_{\mathbb{R}^{4}} \phi G\mathrm{d}x .
		$$
		This implies that
		$$
		\Delta^2 G+\kappa_0 G  = \delta_0~~ \text{in }\mathbb{R}^4.
		$$
		Fix $\rho>0$. Let $\psi \in C_{0}^{\infty}(B_{2\rho}(0))$ be a cutoff function  such that $\psi = 1$ in  $B_{\rho}(0)$ and
		$$
		g = G + \frac{1}{8\pi^2}\psi\ln|x|.
		$$
By computing directly the biharmonic of $g$ we have
		$$
		\Delta^2 g  = f ~~ \text{in }\mathbb{R}^4, 
		$$
		where the function $f$ is given by
		\begin{align*}
		f &= -\frac{1}{8\pi^2}\left(\Delta^2 \psi \ln|x| + 2\nabla \Delta \psi \cdot \nabla\ln|x| + 2 \Delta(\nabla \psi \cdot \nabla \ln|x|)+2\nabla \psi \cdot\nabla \Delta \ln|x|\right)\\
		&-\frac{2}{8\pi^2}\Delta\psi\Delta \ln |x| -\kappa_0 G
		\end{align*}
		where we have used that  $\psi\Delta^2\ln|x|=8\pi^2\delta_0$ in $\mathbb{R}^4$.
		Since $G \in W^{2,p}(\mathbb{R}^4)$ for any $1<p<2$, we have $f \in L^{q}_{loc}(\mathbb{R}^4)$ for any $q > 2$. Again, by the standard regularity theory, we get $g \in C^{3}_{loc}(\mathbb{R}^4)$. Let $K_{0} = g(0)$ and 
		$$
		h = g - g(0) + \frac{1}{8\pi^2}(1-\psi)\ln|x|.
		$$
		Then, we have
		\begin{align}
			G = -\frac{1}{8\pi^2}\ln|x| + K_{0} + h.
		\end{align}
	 Finally, set $ 	U_j = c_j u_j.$  From \eqref{EulerLagrangeCkUk}, we get 
	 \begin{equation}\label{eq.Uj}
			\Delta^2U_j +(1-\mu_j) U_j= \frac{\tilde{\zeta}_j}{\lambda_j}U_j\operatorname{e}^{\beta_{j} \tilde{\zeta_{j}}u_{j}^{2}} \;\text{ in }\; \mathbb{R}^4.
		\end{equation}
Let $0<\epsilon<R$. For any $\varphi\in C^{\infty}_{c}(B_{R})$, by applying Proposition~\ref{PropFracIntByParts} on $B_R\setminus B_{\epsilon}$, the equation \eqref{eq.Uj} yields
\begin{align*}
\int_{B_R\setminus B_{\epsilon}}\Delta U_j\Delta \varphi \mathrm{d}x &= \int_{B_R\setminus B_{\epsilon}} \varphi \Delta^2 U_j \mathrm{d}x -  \int_{\partial B_{\epsilon}} \eta\cdot \varphi \nabla\Delta U_j \mathrm{d}S + \int_{\partial B_{\epsilon}}\eta \cdot \nabla \varphi \Delta U_j \mathrm{d}S\\
&= \int_{B_R\setminus B_{\epsilon}} (\mu_j-1) U_j\varphi\mathrm{d}x+ \frac{\tilde{\zeta}_j}{\lambda_j}\int_{B_R\setminus B_{\epsilon}}U_j\operatorname{e}^{\beta_{j} \tilde{\zeta_{j}}u_{j}^{2}}\mathrm{d}x\\
&+\int_{\partial B_{\epsilon}} \nu \Big(\varphi \nabla\Delta U_j- \nabla \varphi \Delta U_j\Big) \mathrm{d}S
\end{align*}
where $\nu=-\eta$  is the outer normal vector of $\partial B_{\epsilon}$. By density, we can choose $\varphi=U_j$ to obtain
\begin{equation}\label{Eq.TR}
\begin{aligned}
\int_{B_R\setminus B_{\epsilon}}\big(|\Delta U_j|^2 +(1-\mu_j)|U_j|^2\big) \mathrm{d}x &= \frac{\tilde{\zeta}_jc^{2}_j}{\lambda_j}\int_{B_R\setminus B_{\epsilon}}u^2_j\operatorname{e}^{\beta_{j} \tilde{\zeta_{j}}u_{j}^{2}}\mathrm{d}x\\
&+\int_{\partial B_{\epsilon}} \nu \Big(U_j \nabla\Delta U_j- \nabla U_j \Delta U_j\Big) \mathrm{d}S.
\end{aligned}
\end{equation}
Letting $R\to \infty$, we have 
\begin{equation}\label{Eq.T}
\begin{aligned}
\int_{\mathbb{R}^{4}\setminus B_{\epsilon}}\big(|\Delta U_j|^2 +(1-\mu_j)|U_j|^2\big) \mathrm{d}x &= \frac{\tilde{\zeta}_jc^{2}_j}{\lambda_j}\int_{\mathbb{R}^{4}\setminus B_{\epsilon}}u^2_j\operatorname{e}^{\beta_{j} \tilde{\zeta_{j}}u_{j}^{2}}\mathrm{d}x\\
&+\int_{\partial B_{\epsilon}} \nu \Big(U_j \nabla\Delta U_j- \nabla U_j \Delta U_j\Big) \mathrm{d}S.
\end{aligned}
\end{equation}
 By taking into account the  Lemmas  \ref{radiallemma}, \ref{maxseqconc} and \ref{LemmaDoubleLimit}, we can write 
		\begin{align}\label{EqLimInt}
			\lim_{j \rightarrow \infty} \int_{\mathbb{R}^4\backslash B_{\epsilon}}\big(|\Delta U_j|^2 +(1-\mu_j)|U_j|^2  \big)\mathrm{d}x & = \int_{\partial B_{\epsilon}} \nu \big(G\nabla \Delta G- \nabla G \Delta G\big) \mathrm{d}S.
		\end{align}
	From \eqref{EqLimInt}, by employing  Fatou's Lemma we also can write
		\begin{align*}
		\int_{\mathbb{R}^4\backslash B_{\epsilon}}\big(|\Delta G|^2+\kappa_0 |G|^2  \big)\mathrm{d}x &\le \int_{\partial B_{\epsilon}} \nu (G\nabla \Delta G- \nabla G \Delta G) \mathrm{d}S.	
		\end{align*}
Since $\kappa_0>0$, we conclude that $G\in W^{2,2}(\mathbb{R}^4\setminus B_{\epsilon}(0))$ for any $\epsilon>0$. With this information in hand, returning to \eqref{Eq.TR} and letting  $ j \to \infty $, and then $R \to \infty$ we obtain 
\begin{align*}
		\int_{\mathbb{R}^4\backslash B_{\epsilon}}\big(|\Delta G|^2+\kappa_0 |G|^2  \big)\mathrm{d}x &=\int_{\partial B_{\epsilon}} \nu (G\nabla \Delta G- \nabla G \Delta G) \mathrm{d}S.	
		\end{align*}
In particular, 
\begin{align*}
		\lim_{\epsilon\to\infty}\int_{\partial B_{\epsilon}} \nu (G\nabla \Delta G- \nabla G \Delta G) \mathrm{d}S=0.	
		\end{align*}
So, from \eqref{EqLimInt}, 
\begin{align}\nonumber
			\lim_{\epsilon\to \infty}\lim_{j \rightarrow \infty}\int_{\mathbb{R}^4\backslash B_{\epsilon}}|U_j|^2\mathrm{d}x =0.
		\end{align}
		From this, since we already know that $U_j \rightarrow G$ in  $C^{3}_{\text{loc}}(\mathbb{R}^4 \setminus \{0\}),$
we obtain $ U_j \rightarrow G $ in  $L^2(\mathbb{R}^4).$ Now, from  \eqref{EqLimInt} we can write 
		\begin{align}\label{EqLimIntF}
			\lim_{j \rightarrow \infty} \int_{\mathbb{R}^4\backslash B_{\epsilon}}\big(|\Delta U_j|^2 +|U_j|^2  \big)\mathrm{d}x & = \int_{\partial B_{\epsilon}} \nu \big(G\nabla \Delta G- \nabla G \Delta G\big) \mathrm{d}S+\alpha(\gamma+1)\|G\|^{2}_{2}.
		\end{align}
It is well known that the fundamental solution of $\Delta^2 $ in $\mathbb{R}^4$ is $-\frac{1}{8\pi^2}\ln|x|$, and it satisfies
		\begin{align}\label{LaplacianIdentities}
			\nabla \ln|x| = \frac{x}{|x|^2}, ~~ \Delta(\ln|x|) = \frac{2}{|x|^2}, ~~ \nabla \Delta (\ln|x|) = -\frac{4x}{|x|^4}.
		\end{align}
So, we have
		\begin{equation}\label{ComputationsLaplacian}
		\begin{aligned}
			\nu G(\epsilon)(\nabla\Delta G)(\epsilon) & = \left(-\frac{1}{8\pi^2} \ln \epsilon+K_{0}+O(\epsilon)\right)\left(\frac{1}{2\pi^2}\frac{1}{\epsilon^3}+O(1)\right) \\
			&= \frac{1}{2\pi^2}\frac{1}{\epsilon^3}\left(-\frac{1}{8\pi^2} \ln\epsilon+K_{0}+O(\epsilon)\right)
		\end{aligned}
		\end{equation}
		and
		\begin{align}\label{ComputationsLaplacian2}
			-\nu (\nabla G)(\epsilon)(\Delta G)(\epsilon) = -\left(-\frac{1}{8\pi^2}\frac{1}{\epsilon}+O(1)\right)\left(-\frac{1}{8\pi^2}\frac{2}{\epsilon^2}+O(1)\right) = -\frac{1}{32\pi^4}\frac{1}{\epsilon^3}\left(1+O(\epsilon)\right).
		\end{align}
		By combining  \eqref{EqLimIntF},  \eqref{ComputationsLaplacian} and \eqref{ComputationsLaplacian2}, we obtain the desired  result.
	\end{proof}
	\subsection{The upper bound for the Adimurthi-Druet-Adams functional acting on concentrating sequences}
	To obtain an upper bound for the Adimurthi-Druet-Adams functional acting on concentrating sequences, we follow an approach similar to that in \cite{RufLi2008}. We start by establishing an upper bound for any blow up function sequences in $H_{0}^{2}(B_{R})$.
	\begin{lemma}\label{lemmaEstimatesFunctionalimsup}
		 Let $(u_{j})$ be a bounded sequence in $H_{0}^{2}(B_{R})$ such that $\|\Delta u_{j}\|_{2}^{2} = 1 $, where $B_{R} \subset \mathbb{R}^4$. If $u_{j} \rightharpoonup 0$ in $H_{0}^{2}(B_{R})$, then 
		$$
		\limsup\limits_{j \rightarrow \infty} \int_{B_{R}}\left(\operatorname{e}^{\beta_{j}\tilde{\zeta}_{j}u_{j}^{2}}-1\right)\mathrm{d}x \leq \frac{|B_{R}| \operatorname{e}^{-\frac{1}{3}}}{3}
		$$
	\end{lemma}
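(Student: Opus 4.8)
The plan is to reduce the estimate, following Li--Ruf \cite{RufLi2008}, to a sharp one--dimensional Carleson--Chang type inequality. Since $u_j\rightharpoonup 0$ in $H_0^2(B_R)$, Rellich's theorem gives $u_j\to 0$ in $L^2(B_R)$ and a.e., so $\|u_j\|_2\to 0$ and hence $\tilde\zeta_j=\varrho(\|u_j\|_2^2)\to\varrho(0)=1$, so that $\beta_j\tilde\zeta_j\to 32\pi^2=\beta_0(2,4)$. The first step is the reduction to the radial case, which is the one delicate point of the reduction: since $\|\Delta\cdot\|_2$ is not decreased by Schwarz symmetrization, one cannot simply rearrange $u_j$, and one must argue as in the bi-harmonic setting of \cite{luluzhu20,DelaTorre}. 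If $(u_j)$ does not concentrate (i.e. $|\Delta u_j|^2\,\mathrm{d}x$ has no atom of mass $1$), then $\operatorname{e}^{\beta_j\tilde\zeta_j u_j^2}$ is bounded in $L^p(B_R)$ for some $p>1$ by a Lions-type refinement of \eqref{TarsiThm}, and since $u_j\to 0$ a.e. the functional tends to $0$ by Vitali's theorem; if $(u_j)$ concentrates at a point, its blow-up limit is radial, so replacing $u_j$ by a radial profile with the same limiting behaviour yields the same concentration level. We may therefore assume $u_j=u_j(r)$, with $u_j(R)=u_j'(R)=0$.

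Next I would pass to the half--line. Putting $v_j(t)=u_j(R\operatorname{e}^{-t})$ for $t\in(0,\infty)$, one has $v_j(0)=v_j'(0)=0$, and from the radial identity $(R\operatorname{e}^{-t})^2\Delta u_j=v_j''-2v_j'$ together with $\omega_3=2\pi^2$ one obtains the normalization $2\pi^2\int_0^\infty(v_j''-2v_j')^2\,\mathrm{d}t=1$ and
\[
\int_{B_R}\bigl(\operatorname{e}^{\beta_j\tilde\zeta_j u_j^2}-1\bigr)\,\mathrm{d}x=2\pi^2R^4\int_0^\infty\bigl(\operatorname{e}^{\beta_j\tilde\zeta_j v_j^2-4t}-\operatorname{e}^{-4t}\bigr)\,\mathrm{d}t.
\]
Introducing $\psi_j:=v_j'-2v_j$, so that $\psi_j(0)=0$, $2\pi^2\int_0^\infty(\psi_j')^2\,\mathrm{d}t=1$ and $v_j(t)=-\int_0^\infty\operatorname{e}^{-2\sigma}\psi_j(t+\sigma)\,\mathrm{d}\sigma$, two applications of Cauchy--Schwarz give the crucial pointwise bound $\beta_j\tilde\zeta_j v_j(t)^2\le 4t+2+o_j(1)$. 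Moreover $v_j(t)\to 0$ for each fixed $t$, so the whole Dirichlet mass of $\psi_j$ escapes to $t=+\infty$ (the only non-trivial regime).

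The conclusion then rests on a sharp one--dimensional Carleson--Chang type estimate: for every sequence $(\psi_j)$ with $\psi_j(0)=0$ and $2\pi^2\int_0^\infty(\psi_j')^2=1$, setting $v_j=-\int_0^\infty\operatorname{e}^{-2\sigma}\psi_j(\cdot+\sigma)\,\mathrm{d}\sigma$, there holds $\limsup_{j\to\infty}\int_0^\infty(\operatorname{e}^{32\pi^2 v_j^2-4t}-\operatorname{e}^{-4t})\,\mathrm{d}t\le\tfrac{1}{12}\operatorname{e}^{-1/3}$; multiplying by $2\pi^2R^4$ gives exactly $\tfrac{|B_R|}{3}\operatorname{e}^{-1/3}$. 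I expect this one--dimensional inequality to be the main obstacle. Its proof follows Carleson--Chang's original scheme: one shows that the extremal competitors concentrate essentially all of their budget on a window $[T_j-o(T_j),\,T_j]$ and are constant beyond it, and one then optimises explicitly over the transition profile; here the boundary conditions $v_j(0)=v_j'(0)=0$ (equivalently $\psi_j(0)=0$) force a strict loss in the Cauchy--Schwarz step, and carrying this through --- in a way naturally related to the identity $\int_{\mathbb{R}^4}\operatorname{e}^{64\pi^2 z}\,\mathrm{d}x=1$ for the Adams bubble of Lemma~\ref{doz} --- produces the constant $\tfrac{1}{12}\operatorname{e}^{-1/3}$ rather than the naive value suggested by the exponent bound $\beta_j\tilde\zeta_j v_j^2\le 4t+2$. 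The secondary difficulty is to make the reduction to radial symmetry fully rigorous for genuinely non-radial competitors; in all the situations where this lemma is applied the $u_j$ are radial, so that point does not arise there.
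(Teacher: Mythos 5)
The paper's own proof is a citation argument: it quotes Lu--Yang~\cite{LuYangEP}, Ineq.~(5.23), which already gives
$\limsup_j\int_{B_R}(\operatorname{e}^{\beta_j\tilde{\zeta}_{j}u_j^2}-1)\,\ud x \le \tfrac{\pi^2}{6}\operatorname{e}^{5/3+32\pi^2A_0}$
where $A_0$ is the value at $0$ of the regular part of the Green function of $\Delta^2$ on $B_R$, and then simply evaluates $A_0=\tfrac{1}{8\pi^2}\ln R-\tfrac{1}{16\pi^2}$ for the ball and substitutes. Your proposal, by contrast, attempts to re-derive a bound of that type from scratch by the Carleson--Chang method. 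The reduction to the half-line, the identity relating $\int_{B_R}(\operatorname{e}^{\beta_j\tilde{\zeta}_{j}u_j^2}-1)\,\ud x$ to $2\pi^2R^4\int_0^\infty(\operatorname{e}^{\beta_j\tilde{\zeta}_{j}v_j^2-4t}-\operatorname{e}^{-4t})\,\ud t$, the substitution $\psi_j=v_j'-2v_j$, and the closing arithmetic $2\pi^2R^4\cdot\tfrac{1}{12}\operatorname{e}^{-1/3}=\tfrac{|B_R|}{3}\operatorname{e}^{-1/3}$ are all correct, so you have identified the right target one-dimensional inequality.

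However, the two load-bearing steps are both left unproved, and neither is a detail. First, the radial reduction: unlike the $m=1$ case, $\|\Delta\cdot\|_2$ is not decreased by Schwarz symmetrization, and the dichotomy you sketch (no concentration $\Rightarrow$ Vitali; concentration $\Rightarrow$ radial blow-up profile) does not establish that a non-radial concentrating sequence can be replaced by a radial one without increasing the $\limsup$; this is precisely the obstruction that makes the $m=2$ Carleson--Chang program nontrivial. Second, the one-dimensional sharp estimate: the pointwise bound $\beta_j\tilde{\zeta}_{j}v_j^2\le 4t+2+o_j(1)$ that you derive is not, by itself, sufficient to bound the integral at the critical exponent (the resulting integrand fails to be integrable over $(0,\infty)$ if that bound is saturated for all $t$), and extracting the sharp constant $\tfrac{1}{12}\operatorname{e}^{-1/3}$ requires the full Carleson--Chang optimisation over the concentration profile, which you describe verbally but do not carry out. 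That optimisation is essentially the content of the Lu--Yang inequality the paper cites, so as written your proposal is a roadmap for reproving it rather than a self-contained proof of the lemma.
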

	\begin{proof}
		By \cite[Ineq. (5.23)]{LuYangEP} we have the estimate
		$$
		\limsup\limits_{j \rightarrow \infty} \int_{B_{R}}\left(\operatorname{e}^{\beta_{j}\tilde{\zeta}_{j}u_{j}^{2}}-1\right)\mathrm{d}x \leq \frac{\pi^{2}}{6}\operatorname{e}^{\frac{5}{3}+32\pi^{2}A_{0}}
		$$
		where $A_{0}$ is the value at $0$ of the trace of the part regular of the Green Function $G_{B_R}$ for the bi-Laplacian operator $\Delta^{2}$ on the ball $B_R$. It is well known that  $$G_{B_R} = \frac{1}{8\pi^{2}}\ln |x|+\frac{1}{16\pi^{2}}\frac{|x|^{2}}{R^{2}}+ \frac{1}{8\pi^{2}}\ln R-\frac{1}{16\pi^2}$$	and the term $\frac{1}{8\pi^{2}}\ln R-\frac{1}{16\pi^2}$ is the value at $0$ of the trace of the regular part of $G_{B_{R}}$. So, by simple computation, we obtain the desired estimate. 
	\end{proof}
	
	Having established this upper bound, we proceed to derive a corresponding upper bound for the Adimurthi-Druet-Adams inequality in the entire space 
 $\mathbb{R}^{4}$.
	\begin{lemma}\label{cAD<e}
		If $AD(4,2, 32\pi^2,  \alpha, \gamma)$ is not attained, then 
		$$AD(4,2, 32\pi^2,  \alpha, \gamma) \leq \ \frac{\pi^{2}}{6}\operatorname{e}^{\frac{5}{3}+32\pi^{2}K_{0}},$$ where $K_{0}$ is the value at $0$ of the regular part of the Green function $G$ for the operator $\Delta^{2}+\kappa_0$.
	\end{lemma}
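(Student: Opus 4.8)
The plan is to run the standard blow-up/capacity argument in the spirit of Li--Ruf \cite{RufLi2008} and Lu--Yang \cite{LuYangEP}. Assume $AD(4,2,32\pi^2,\alpha,\gamma)$ is not attained and let $(u_j)$ be the maximizing sequence from \eqref{maxi-sub}. First I would observe that the case $\sup_j c_j<\infty$ is impossible: by the bounded-maxima dichotomy proved just before the blow-up section, in that case $(u_j)$ is a NVS and $AD(4,2,32\pi^2,\alpha,\gamma)\le 32\pi^2\varrho(1)$, whereas Proposition~\ref{vanishlevel} — applicable since, under the hypotheses of Theorem~\ref{thm-attainn=4}, $\beta_0=32\pi^2$ lies in the admissible $\beta$-range for $n/m=2$ — gives $AD(4,2,32\pi^2,\alpha,\gamma)>32\pi^2\varrho(1)$. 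Hence $c_j=u_j(x_j)\to\infty$ and the whole blow-up analysis of Lemmas~\ref{maxseqconc}--\ref{GreenFunctionLemma} is available: in particular $u_j\to 0$ in $L^2$, $\tilde\zeta_j\to1$, $c_ju_j\to G$ in $C^3_{loc}(\mathbb{R}^4\setminus\{0\})$ with $G=-\frac{1}{8\pi^2}\ln|x|+K_0+h$, $h(0)=0$, and $AD(4,2,32\pi^2,\alpha,\gamma)=\lim_j\lambda_j/c_j^2=\lim_{\hat R\to\infty}\lim_j\int_{B_{\hat R r_j}}(e^{\beta_j\tilde\zeta_ju_j^2}-1)\,\mathrm{d}x$.

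Next I would localize. Fix a small $\delta>0$ and write $B_\delta=B_\delta(0)$. Since $c_ju_j\to G$ uniformly on compact subsets of $\mathbb{R}^4\setminus\{0\}$ and Lemma~\ref{radiallemma} controls $u_j$ far out, for $j$ large $u_j<1$ on $\mathbb{R}^4\setminus B_\delta$, so $e^{\beta_j\tilde\zeta_ju_j^2}-1\le C u_j^2$ there and $\int_{\mathbb{R}^4\setminus B_\delta}(e^{\beta_j\tilde\zeta_ju_j^2}-1)\,\mathrm{d}x\le C\|u_j\|_2^2\to0$; hence
\[
AD(4,2,32\pi^2,\alpha,\gamma)=\lim_{j\to\infty}\int_{B_\delta}\big(e^{\beta_j\tilde\zeta_ju_j^2}-1\big)\,\mathrm{d}x\qquad\text{for every }\delta>0 .
\]
By radial symmetry the boundary data $(u_j,\partial_\nu u_j)|_{\partial B_\delta}$ are constant, and the biharmonic function on $B_\delta$ with those data is the radial polynomial $b_j(x)=\alpha_j+\beta_j|x|^2$. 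I would set $W_j:=u_j-b_j\in H^2_0(B_\delta)$, so $\Delta^2 W_j=\Delta^2u_j$ on $B_\delta$; using $c_ju_j\to G$ near $\partial B_\delta$ one gets $\|b_j\|_{C^3(\overline{B_\delta})}=O(1/c_j)$, hence $b_j\to0$ and $W_j\rightharpoonup0$ in $H^2_0(B_\delta)$. A bookkeeping with $\|\Delta u_j\|_2^2+\|u_j\|_2^2=1$, the convergence $c_ju_j\to G$ in $L^2(\mathbb{R}^4)$, and the boundary–energy identity of Lemma~\ref{GreenFunctionLemma} yields $\|\Delta W_j\|_{L^2(B_\delta)}^2=1-\frac{A(\delta)+o_j(1)}{c_j^2}$ for an explicit constant $A(\delta)$; in particular $\|\Delta W_j\|_{L^2(B_\delta)}\to1$, so $\widehat W_j:=W_j/\|\Delta W_j\|_{L^2(B_\delta)}\in H^2_0(B_\delta)$ has $\|\Delta\widehat W_j\|_{L^2(B_\delta)}^2=1$ and $\widehat W_j\rightharpoonup0$. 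The Carleson--Chang-type estimate of Lemma~\ref{lemmaEstimatesFunctionalimsup} on $B_\delta$ (whose $\Delta^2$-Green-function regular part at the centre equals $\frac{1}{8\pi^2}\ln\delta-\frac{1}{16\pi^2}$) then gives
\[
\limsup_{j\to\infty}\int_{B_\delta}\big(e^{\beta_j\tilde\zeta_j\widehat W_j^2}-1\big)\,\mathrm{d}x\le \frac{\pi^2}{6}\,e^{\frac53+32\pi^2(\frac{1}{8\pi^2}\ln\delta-\frac{1}{16\pi^2})}=\frac{\pi^2}{6}\,e^{-\frac13}\delta^4 .
\]

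Finally I would transfer this back to $u_j$. By Lemma~\ref{LemmaDoubleLimit} the mass of $\int_{B_\delta}(e^{\beta_j\tilde\zeta_ju_j^2}-1)$ concentrates, as $\hat R\to\infty$, in $B_{\hat R r_j}$, where $u_j=c_j+z_j/c_j$ with $z_j\to z$ (Lemma~\ref{doz}), $b_j=b_j(0)+o(1/c_j)$ with $c_jb_j(0)\to\ell(\delta):=G(\delta)-\tfrac\delta2\partial_\nu G(\delta)$; combining with the normalization of $\widehat W_j$ one obtains, uniformly on $B_{\hat R r_j}$,
\[
\beta_j\tilde\zeta_ju_j^2=\beta_j\tilde\zeta_j\widehat W_j^2+32\pi^2\big(2\ell(\delta)-A(\delta)\big)+o(1) .
\]
Integrating, using $|B_{\hat R r_j}|\to0$ and $\int_{B_{\hat R r_j}}(e^{\beta_j\tilde\zeta_j\widehat W_j^2}-1)\le\int_{B_\delta}(e^{\beta_j\tilde\zeta_j\widehat W_j^2}-1)$, and letting $j\to\infty$ then $\hat R\to\infty$, I get
\[
AD(4,2,32\pi^2,\alpha,\gamma)\le \frac{\pi^2}{6}\,e^{\frac53+32\pi^2\left(2\ell(\delta)-A(\delta)+\frac{1}{8\pi^2}\ln\delta-\frac{1}{16\pi^2}\right)},
\]
valid for all small $\delta$. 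It then remains to let $\delta\to0$ and check that $2\ell(\delta)-A(\delta)+\frac{1}{8\pi^2}\ln\delta-\frac{1}{16\pi^2}\to K_0$, using $G(r)=-\frac{1}{8\pi^2}\ln r+K_0+h(r)$ with $h(0)=0$ to expand $\ell(\delta)$ together with the explicit form of $A(\delta)$ from Lemma~\ref{GreenFunctionLemma}; this yields the claimed bound.

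\textbf{Main obstacle.} The delicate part is precisely the constant bookkeeping in the last two displays: one must (i) extract $A(\delta)$ exactly from the energy identity of Lemma~\ref{GreenFunctionLemma}, keeping the $O(1/c_j^2)$ contributions of $b_j$ (which are of the same order as the leading correction, so cannot be dropped), and (ii) verify that the three correction exponents — from the normalization $\widehat W_j$, from the truncation $u_j=W_j+b_j$, and from $\beta_j\tilde\zeta_j\to 32\pi^2$ — combine so that all $\delta$-dependent pieces (in particular the logarithmically divergent ones, and the terms carrying $\|G\|_2^2$ coming from the $\kappa_0 u_j$ part of the Euler--Lagrange equation) cancel exactly and the residual constant is $K_0$, i.e. the $\Delta^2$-Green-function data on $B_\delta$ match the regular part at $0$ of the $(\Delta^2+\kappa_0)$-Green function. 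Everything else is a routine adaptation of \cite{RufLi2008,LuYangEP}.
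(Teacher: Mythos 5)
Your proposal follows essentially the same route as the paper's proof: the radial biharmonic boundary correction $b_j(x)=\alpha_j+\beta_j|x|^2$ you introduce is exactly the paper's $u_j^\epsilon(r)=u_j(\epsilon)+\tfrac{u_j'(\epsilon)r^2}{2\epsilon}-\tfrac{u_j'(\epsilon)\epsilon}{2}$, the normalized function $\widehat W_j$ is the paper's $\tilde u_j$, the constant $A(\delta)$ you describe is the bracket in equation \eqref{normDiferenceLaplacian}, and the Carleson--Chang step via Lemma~\ref{lemmaEstimatesFunctionalimsup} and the final $\delta\to0$ bookkeeping coincide with the paper's $I_1,I_2,I_3$ analysis and the identity $\beta_j u_j^2=\beta_j\tilde u_j^2+\beta_jG(\epsilon)+\tfrac{\beta_j}{16\pi^2}+O_j(\epsilon)$. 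The one small difference is that you make explicit why $c_j\to\infty$ (by invoking Proposition~\ref{vanishlevel} to rule out the NVS alternative), whereas the paper simply asserts it; your reasoning is the correct justification for that step.
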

	\begin{proof}
By assumption, we can assume $c_{j}\to \infty$ as $j\to\infty$.
Let
		$$
		\tilde{u}_{j}= \frac{u_{j}-u_{j}^{\epsilon}}{\|\Delta(u_{j}-u_{j}^{\epsilon})\|_{L^{2}(B_{\epsilon}(x_j))}},
		$$
		where $$u_{j}^{\epsilon}(r)=u_{j}(\epsilon)+\frac{u'_{j}(\epsilon)r^2}{2\epsilon}-\frac{u'_{j}(\epsilon)\epsilon}{2},\;\;\mbox{with}\;\; r=|x|.$$ 	
		Since $c_ju_j(\epsilon)\to G(\epsilon)$ and $c_ju^{\prime}_j(\epsilon)\to G^{\prime}(\epsilon)$, as $j\to\infty$,  we have 
		\begin{equation}\label{ueulimits}
		\left\{\begin{aligned}
		& \lim_{j\to\infty}\Big[c_ju_{j}(\epsilon)-\frac{c_ju'_{j}(\epsilon)\epsilon}{2}\Big]=G(\epsilon)-\frac{\epsilon G^{\prime}(\epsilon)}{2}=-\frac{1}{8\pi^2}\ln\epsilon + K_{0} +\frac{1}{16\pi^2} +o_{\epsilon}(1)\\
		&
		\lim_{j\to\infty}\frac{c_ju^{\prime}_j(\epsilon)}{2\epsilon}=\frac{G^{\prime}(\epsilon)}{2\epsilon}=-\frac{1}{16\pi^2\epsilon^2}+\frac{h^{\prime}(\epsilon)}{2\epsilon}.
		\end{aligned}\right.
		\end{equation}
	Let us compute $\|\Delta u_{j}-\Delta u_{j}^{\epsilon})\|_{L^{2}(B_{\epsilon})}^{2}$. We have 
	\begin{align}\nonumber
		\|\Delta(u_{j}-u_{j}^{\epsilon})\|_{L^{2}(B_{\epsilon})}^{2} &= \int_{B_{\epsilon}}|\Delta u_{j}|^{2}\mathrm{d}x -2\int_{B_{\epsilon}}\Delta u_{j}\Delta u_{j}^{\epsilon}\mathrm{d}x+\int_{B_{\epsilon}}|\Delta u_{j}^{\epsilon}|^{2}\mathrm{d}x = I_1-2I_2+I_3.
	\end{align}
	Firstly, 
	\begin{align*}
		I_{1} = \int_{B_{\epsilon}}|\Delta u_{j}|^{2}\mathrm{d}x & = 1 - \int_{\mathbb{R}^{4}\backslash B_{\epsilon}}\big(|\Delta u_{j}|^{2}+|u_{j}|^2\big)\mathrm{d}x - \int_{B_{\epsilon}}|u_{j}|^2\mathrm{d}x\\
		& = 1-\frac{1}{c^{2}_j}\Big(\int_{\mathbb{R}^{4}\backslash B_{\epsilon}}\big(|\Delta U_{j}|^{2}+|U_{j}|^2\big)\mathrm{d}x-\int_{B_{\epsilon}}|U_{j}|^2\mathrm{d}x\Big)
	\end{align*}	
	Then, from Lemma~\ref{GreenFunctionLemma} we get
	\begin{align}\label{int1}
		I_1 =1- \frac{1}{c_{j}^2}\Big[ -\frac{1}{8\pi^2} \ln\epsilon -\frac{1}{16\pi^2}+K_{0} +\alpha(\gamma+1)\|G\|^{2}_{2} + o_j(\epsilon)+o_{\epsilon}(1)\Big],
	\end{align}
	where $o_j(\epsilon)$ means that $\lim _{j\to\infty}o_j(\epsilon)=0$, if $\epsilon$ is fixed. 
	By Proposition~\ref{PropFracIntByParts}, we can decompose $I_{2}$ as follows
	\begin{align*}
	I_{2}&= \int_{B_{\epsilon}}u_{j}^{\epsilon}\Delta^{2}u_{j}\mathrm{d}x-\int_{\partial B_{\epsilon}}\nu\cdot u_{j}^{\epsilon}\nabla\Delta u_{j}\mathrm{d}S + \int_{\partial B_{\epsilon}}\nu\cdot \nabla u_{j}^{\epsilon}\Delta u_{j}\mathrm{d}S = I^{1}_2-I^{2}_2+I^{3}_{2}.
	\end{align*}
	By \eqref{EulerLagrangeCkUk},  we get
	\begin{equation}\label{esplitado}
	\begin{aligned}
		I^{1}_2&= \int_{B_{\epsilon}}u_{j}^{\epsilon}\Delta^{2}u_{j} \mathrm{d}x = \frac{1}{c_j}\int_{B_{\epsilon}}u_{j}^{\epsilon}\left(\frac{c_ju_{j}}{\lambda_{j}}\tilde{\zeta}_{j}\operatorname{e}^{\beta_{j}\tilde{\zeta}_{j} u_{j}^2} +(\mu_{j}-1)c_{j}u_j \right) \mathrm{d}x \\
		&=  \frac{1}{c^2_j}\Big[\Big(c_ju_{j}(\epsilon)-\frac{c_ju'_{j}(\epsilon)\epsilon}{2}\Big)\frac{c_j\tilde{\zeta}_{j}}{\lambda_{j}}\int_{B_{\epsilon}}u_j\operatorname{e}^{\beta_{j}\tilde{\zeta}_{j} u_{j}^2}\mathrm{d}x+\frac{c_ju^{\prime}_j(\epsilon)}{2\epsilon}\frac{c_j\tilde{\zeta}_{j}}{\lambda_{j}}\int_{B_{\epsilon}}|x|^2u_j\operatorname{e}^{\beta_{j}\tilde{\zeta}_{j} u_{j}^2}\mathrm{d}x\Big]\\
		&+\frac{1}{c^{2}_j}\Big[\Big(c_ju_{j}(\epsilon)-\frac{c_ju'_{j}(\epsilon)\epsilon}{2}\Big)(\mu_j-1)\int_{B_{\epsilon}} c_ju_j  \mathrm{d}x+\frac{c_ju^{\prime}_j(\epsilon)}{2\epsilon}(\mu_j-1)\int_{B_{\epsilon}}|x|^2c_ju_j  \mathrm{d}x\Big].
	\end{aligned}
	\end{equation}
By H\"{o}lder  inequality and using that $c_ju_j\to G$ in $L^{2}(\mathbb{R}^4)$, we have 	
\begin{equation}\label{uealimits2}
\left\{\begin{aligned}
& \Big|\int_{B_{\epsilon}} c_ju_j  \mathrm{d}x\Big|\le |B_{\epsilon}|^{\frac{1}{2}}\|c_ju_j\|_{L^2(\mathbb{R}^4)} =O(\epsilon^2)\\
& \Big|\int_{B_{\epsilon}}|x|^2 c_ju_j  \mathrm{d}x\Big|\le \epsilon^2|B_{\epsilon}|^{\frac{1}{2}}\|c_ju_j\|_{L^2(\mathbb{R}^4)}=O(\epsilon^{4})\\
& \lim_{j\to\infty}\Big[c_ju_{j}(\epsilon)-\frac{c_ju'_{j}(\epsilon)\epsilon}{2}\Big]= G(\epsilon)-\frac{\epsilon G^{\prime}(\epsilon)}{2}=O(\ln \epsilon)\\
&\lim_{j\to\infty}\frac{c_ju^{\prime}_j(\epsilon)}{2\epsilon}=\frac{G^{\prime}(\epsilon)}{2\epsilon}=O\Big(\frac{1}{\epsilon^2}\Big).
\end{aligned}\right.
\end{equation}
Taking account the	Lemma~\ref{TestFunctionPhi},  and using \eqref{ueulimits}, \eqref{esplitado} and \eqref{uealimits2}, we also can write 
\begin{align*}
		I^{1}_2 &= \frac{1}{c_{j}^{2}}\Big[-\frac{1}{8\pi^{2}}\ln \epsilon+K_0+\frac{1}{16\pi^2}+o_{j}(\epsilon)+o_{\epsilon}(1)\Big].
	\end{align*}
	Now, note that $u^{\epsilon}_j(x)=u_j(\epsilon)$ on $\partial B_{\epsilon}$, then 
	\begin{align*}
		I^{2}_{2} = \int_{\partial B_{\epsilon}}\nu\cdot u_{j}^{\epsilon}\nabla\Delta u_{j}\mathrm{d}S & = \frac{u_j(\epsilon)}{c_{j}}\int_{\partial B_{\epsilon}}\nu\cdot\nabla\Delta (c_ju_j) \mathrm{d}S\\
		&= \frac{u_j(\epsilon)}{c_{j}}\left[\int_{\partial B_{\epsilon}}\nu\cdot\nabla\Delta G\mathrm{d}S + o_{j}(\epsilon)\right]\\
		&= \frac{1}{c^2_{j}}\left[c_ju_j(\epsilon)\int_{\partial B_{\epsilon}}\nu\cdot \nabla\Delta G\mathrm{d}S + o_{j}(\epsilon)\right]
	\end{align*}
	and by  the identities in \eqref{LaplacianIdentities}
	\begin{align*}
		\int_{\partial B_{\epsilon}}\nu\cdot\nabla\Delta G\mathrm{d}S&=\frac{1}{2\pi^2}\int_{\partial B_{\epsilon}}\frac{1}{|x|^3}\mathrm{d}S+\int_{\partial B_{\epsilon}}\nu\cdot\nabla\Delta h\mathrm{d}S= 1+O(\epsilon^3).
	\end{align*}
	It follows that 
	\begin{align*}
		I^{2}_{2} = \frac{1}{c^2_{j}}\left[ G(\epsilon)+O(\epsilon^3G(\epsilon)) + o_{j}(\epsilon)\right]= \frac{1}{c^2_{j}}\left[ G(\epsilon) + o_{j}(\epsilon)+o_{\epsilon}(1)\right].
	\end{align*}
Analogous, from \eqref{LaplacianIdentities}  we can write 	
\begin{align*}
		\int_{\partial B_{\epsilon}}\Delta G\mathrm{d}S&=-\frac{1}{4\pi^2}\int_{\partial B_{\epsilon}}\frac{1}{|x|^2}\mathrm{d}S+\int_{\partial B_{\epsilon}}\Delta h\mathrm{d}S= -\frac{\epsilon}{2}+O(\epsilon^3)
	\end{align*}
and then we get
	\begin{align*}
		I^{3}_2 &=\int_{\partial B_{\epsilon}}\nu\cdot \nabla u_{j}^{\epsilon}\Delta u_{j}\mathrm{d}S= \frac{u^{\prime}_j(\epsilon)}{c_{j}}\left[\int_{\partial B_{\epsilon}}\Delta G\mathrm{d}S + o_{j}(\epsilon)\right] \\
		&= \frac{1}{c^{2}_{j}}\left[c_ju^{\prime}_j(\epsilon)\int_{\partial B_{\epsilon}}\Delta G\mathrm{d}S + o_{j}(\epsilon)\right] \\
			&= \frac{1}{c^{2}_{j}}\left[G^{\prime}(\epsilon)\int_{\partial B_{\epsilon}}\Delta G\mathrm{d}S + o_{j}(\epsilon)\right] \\
			&= \frac{1}{c^{2}_{j}}\left[-\frac{1}{8\pi^2\epsilon}\big(1+o_{\epsilon}(1)\big)\int_{\partial B_{\epsilon}}\Delta G\mathrm{d}S + o_{j}(\epsilon)\right] \\
		&= \frac{1}{c_{j}^{2}}\left[\frac{1}{16\pi^2}+o_{j}(\epsilon)+o_{\epsilon}(1)\right].
	\end{align*}
	Hence,
	\begin{equation}\label{IntLaplacianProd}
	\begin{aligned}
		I_2 &= I^{1}_2-I^{2}_2+I^{3}_{2}\\
		& =\frac{1}{c^2_j}\Big[-\frac{1}{8\pi^{2}}\ln \epsilon+K_0+\frac{1}{16\pi^2}+o_{j}(\epsilon)+o_{\epsilon}(1)\Big]\\
		&+ \frac{1}{c_{j}^{2}}\left[\frac{1}{16\pi^2}- G(\epsilon)+o_{j}(\epsilon)+o_{\epsilon}(1)\right]\\
		& = \frac{1}{c_{j}^{2}}\left[\frac{1}{8\pi^2}+o_{j}(\epsilon)+o_{\epsilon}(1)\right].
	\end{aligned}
	\end{equation}
Finally, by definition of $u_{j}^{\epsilon}$, 
\begin{equation}\label{I3}
\begin{aligned}
		I_3 &= \int_{B_{\epsilon}}|\Delta u_{j}^{\epsilon}|^{2} \mathrm{d}x = \int_{B_{\epsilon}}\left(\frac{4u_{j}'(\epsilon)}{\epsilon}\right)^{2} \mathrm{d}x \\
		& = 8\pi^2(u_{j}'(\epsilon))^2 \epsilon^2=\frac{1}{c^2_j}\Big[8\pi^2(\epsilon G^{\prime}(\epsilon))^2+o_j(\epsilon)\Big]\\
		&= \frac{1}{c^2_j}\Big[\frac{1}{8\pi ^2}+o_j(\epsilon)+o_{\epsilon}(1)\Big].
	\end{aligned}
\end{equation}
	By \eqref{int1}, \eqref{IntLaplacianProd} and \eqref{I3}, we have
	\begin{equation}\label{normDiferenceLaplacian}
	\begin{aligned}
		\|\Delta(u_{j}-u_{j}^{\epsilon})\|_{L^{2}(B_{\epsilon})}^{2}=1 - \frac{1}{c_{j}^2}\Big[-\frac{1}{8\pi^2}\ln\epsilon+\frac{1}{16\pi^{2}}+K_0+\alpha(\gamma+1)\|G\|^{2}_{2} +o_{j}(\epsilon)+o_{\epsilon}(1)\Big].
	\end{aligned}
	\end{equation}
	Thus, 
	\begin{align*} 
		\tilde{u}_j^{2}&=\frac{(u_{j}-u_{j}^{\epsilon})^{2}}{ 1- \frac{1}{c_{j}^2}\Big[-\frac{1}{8\pi^2}\ln\epsilon+\frac{1}{16\pi^{2}}+K_0+\alpha(\gamma+1)\|G\|^{2}_{2} +o_{j}(\epsilon)+o_{\epsilon}(1)\Big] } \\
		&=u_j^2\Big(1+ \frac{1}{c_{j}^2}\Big[-\frac{1}{8\pi^2}\ln\epsilon+\frac{1}{16\pi^{2}}+K_0+\alpha(\gamma+1)\|G\|^{2}_{2} +o_{j}(\epsilon)+o_{\epsilon}(1)\Big] \Big)\\
		&-\big(2u_ju^{\epsilon}_j+(u^{\epsilon}_j)^2\big)\big(1+o_j(\epsilon)\big)\\
		&=u_j^{2} -c \ln \epsilon^4+o_j(\epsilon).
	\end{align*}
	By Lemma \ref{LemmaDoubleLimit},
	$$
		\lim\limits_{\hat{R} \rightarrow \infty}\lim\limits_{j \rightarrow \infty}\int_{B_{\rho} \backslash B_{\hat{R}r_{j}}(x_{j})}\operatorname{e}^{\beta_{j} \tilde{\zeta_j}u_{j}^{2}}\mathrm{d}x = |B_{\rho}|, ~~ \text{ for any } \rho<\epsilon.
	$$
	Therefore, 
	\begin{align*}
		\lim\limits_{\hat{R} \rightarrow \infty}\lim\limits_{j\rightarrow \infty}\int_{B_{\rho} \backslash B_{\hat{R}r_{j}}(x_{j})}\operatorname{e}^{\beta_{j} \tilde{\zeta_j}\tilde{u}_{j}^{2}}\mathrm{d}x & \leq O\left(\frac{1}{\epsilon^{4c}}\right)\lim\limits_{\hat{R} \rightarrow \infty}\lim\limits_{j \rightarrow \infty}\int_{B_{\rho} \backslash B_{\hat{R}r_{j}}(x_{j})}\operatorname{e}^{\beta_{j} \tilde{\zeta_j}u_{j}^{2}}\mathrm{d}x\\
		 & = O\left(\frac{|B_{\rho}|}{\epsilon^{4c}}\right)\rightarrow 0 \text{ as } \rho \rightarrow 0.
	\end{align*}
		So, by Lemma \ref{maxseqconc}, we have
		$$
			\lim_{j\rightarrow \infty}\int_{B_{\epsilon} \backslash B_{\rho}}\left(\operatorname{e}^{\beta_{j}\tilde{\zeta_{j}}\tilde{u}_j^{2}(x)}-1\right) \mathrm{d}x = 0.
		$$
		Thus, with Lemma \ref{lemmaEstimatesFunctionalimsup}
		$$
		\lim\limits_{\hat{R}\rightarrow \infty}\lim_{j\rightarrow \infty}\int_{B_{\hat{R}r_{j}}}\left(\operatorname{e}^{\beta_{j}\tilde{\zeta_{j}}\tilde{u}_j^{2}(x)}-1\right) \mathrm{d}x	= 	\lim_{j\rightarrow \infty}\int_{B_{\epsilon}}\left(\operatorname{e}^{\beta_{j}\tilde{\zeta_{j}}\tilde{u}_j^{2}(x)}\right)\mathrm{d}x \leq \frac{1}{3}|B_{\epsilon}|\operatorname{e}^{-\frac{1}{3}}.
		$$
		Fix any $\hat{R} >0$ and any $x \in B_{\hat{R}r_{j}}(x_j)$,
		\begin{align*}
			\beta_{j}u_{j}^{2}(x) &= \beta_{j}\left(\frac{u_j(x)}{\|\Delta(u_{j}-u_{j}^{\epsilon})\|_{L^{2}(B_{\epsilon})}}\right)^2\int_{B_{\epsilon}}|\Delta(u_{j}-u_{j}^{\epsilon})|^2\mathrm{d}x \\
			&= \beta_{j}\left(\tilde{u}_{j}+\frac{u_j^{\epsilon}(x)}{\|\Delta(u_{j}-u_{j}^{\epsilon})\|_{L^{2}(B_{\epsilon})}}\right)^2\int_{B_{\epsilon}}|\Delta(u_{j}-u_{j}^{\epsilon})|^2\mathrm{d}x.
		\end{align*}
		By the estimate \eqref{normDiferenceLaplacian}, we have
		\begin{align*}
			\beta_{j}u_{j}^{2} &= \beta_{j}\left(\tilde{u}_{j}+u_{j}^{\epsilon}+O(c_{j}^{-2})\right)^2\\
			&\times \left(1 - \frac{1}{c_{j}^2}\Big[-\frac{1}{8\pi^2}\ln\epsilon+\frac{1}{16\pi^{2}}+K_0+\alpha(\gamma+1)\|G\|^{2}_{2} +o_{j}(\epsilon)+o_{\epsilon}(1)\Big]\right) \\
			&= \beta_{j}\tilde{u}_{j}^{2}\left(1 + \frac{u_{j}^{\epsilon}}{c_j}+O(c_{j}^{-3})\right)^{2}\\
			& \times \left(1 - \frac{1}{c_{j}^2}\Big[G(\epsilon)+\frac{1}{16\pi^{2}}+\alpha(\gamma+1)\|G\|^{2}_{2} +o_{j}(\epsilon)+o_{\epsilon}(1)\Big]\right). 
		\end{align*}
		Notice that 
		$$
			\lim\limits_{j \rightarrow \infty}\frac{\tilde{u}_{j}(x_j+r_j x)}{c_{j}} = 1,
		$$
		and since
		$$
			\tilde{u}_{j}(x_j+r_j x)u_{j}(\epsilon) \rightarrow G(\epsilon)
		$$
		we get
	\begin{align*}
			\beta_ju_j^2 & =\beta_j\tilde{u}_j^2\left(1+\frac{1}{c_j^2}\left(G(\epsilon)+\frac{1}{16 \pi^2}\right)+O\left(\frac{1}{c_j^3}\right)\right)^2 \\
			& \times \left(1 - \frac{1}{c_{j}^2}\Big[G(\epsilon)+\frac{1}{16\pi^{2}}+\alpha(\gamma+1)\|G\|^{2}_{2} +o_{j}(\epsilon)+o_{\epsilon}(1)\Big]\right) \\
			& =\beta_j \tilde{u}_j^2\left(1+\frac{2 G(\epsilon)}{c_j^2}+\frac{1}{8 \pi^2 c_j^2}-\frac{G(\epsilon)+\frac{1}{16 \pi^2}+O_j(\epsilon)}{c_j^2}\right) \\
			& =\beta_j \tilde{u}_j^2+\beta_j G(\epsilon)+\frac{\beta_j}{16 \pi^2}+O_j(\epsilon).
		\end{align*}
		Hence, 
	\begin{align*}
			& \lim _{\hat{R} \rightarrow \infty} \lim _{j \rightarrow \infty} \int_{B_{\hat{R} r_j}\left(x_j\right)}\left(\operatorname{e}^ {\beta_j \tilde{\zeta_{j}}u_j^2}-1\right) \mathrm{d} x \\
			& \leq \lim _{\hat{R} \rightarrow \infty} \lim _{j \rightarrow \infty} \int_{B_{\hat{R} r_j}\left(x_j\right)}\operatorname{e}^ {\beta_j \tilde{\zeta_{j}}u_j^2}\mathrm{d} x	\\
			& \leq \lim _{\hat{R} \rightarrow \infty} \lim _{j \rightarrow \infty} \operatorname{e}^{32 \pi^2 G(\epsilon)+2+o_\epsilon(1)} \int_{B_{\tilde{R} r_j}} \operatorname{e}^{\beta_j \tilde{\zeta_{j}}\tilde{u}_j^2} \mathrm{d} x \\
		& = \operatorname{e}^{32 \pi^2 G(\epsilon)+2+o_\epsilon(1)} \left|B_\epsilon\right| \frac{1}{3}\operatorname{e}^{-\frac{1}{3}}\\
		& =\operatorname{e}^{-4\ln \epsilon+32 \pi^2 K_0+32\pi^2h(\epsilon)+2+o_\epsilon(1)} \left|B_\epsilon\right| \frac{1}{3}\operatorname{e}^{-\frac{1}{3}}\\
		& =\frac{\pi^2}{6} \operatorname{e}^{\frac{5}{3}+32 \pi^2 K_0}+o_{\epsilon}(1).
	\end{align*}
	Letting $\epsilon \rightarrow 0$, we have
	$$
	AD(4,2, 32\pi^2,  \alpha, \gamma) \leq \frac{\pi^2}{6} \operatorname{e}^{\frac{5}{3}+32 \pi^2 K_0}.
	$$
\end{proof}	
	\section{The test function computation}
	In this section, we complete the proof of Theorem~\ref{thm-attainn=4} by showing that 
	\begin{equation}\label{AD>e}
	AD(4,2, 32\pi^2,  \alpha, \gamma) >\frac{\pi^2}{6} \operatorname{e}^{\frac{5}{3}+32 \pi^2 K_0},
	\end{equation}
for $\alpha>0 $ small enough and $\gamma\le \gamma_0$, which together with  Lemma~\ref{cAD<e} completes the proof. In order to get \eqref{AD>e}, following some ideas in  \cite{luluzhu20,NguyenVanHoang2019}, we use some test function computations.  Let us define
	$$
	\phi_{\varepsilon}=
        \begin{cases}
    		C+\frac{1}{C}\left[a-\frac{1}{16 \pi^2}\psi\Big(\frac{|x|}{\epsilon}\Big)+K_{0}+h(x)+b|x|^2\right],\;         &\text { if }|x| \leq L \epsilon, \\
    		\frac{G(x)}{C}, \quad &\text { if }|x| \geq L \epsilon
	\end{cases}
	$$	
where $\psi(s)=\ln(1+\frac{\pi}{\sqrt{6}}s^2)$ and 	 $L, C, a, b$ are functions of $\varepsilon$ such that
 \begin{enumerate}[label=(\roman*)]
     \item  $\epsilon=\exp (-L), \frac{1}{C^2}=O\left(\frac{1}{L}\right)$ as $\epsilon \rightarrow 0$ 
     \item $a=-\frac{1}{8 \pi^2} \ln (L \epsilon)-C^2+\frac{1}{16 \pi^2}\psi(L)-b  L^2 \epsilon^2$; 
     \item $b=-\frac{1}{16 \pi^2 L^2 \epsilon^2\left(1+\frac{\pi}{\sqrt{6}} L^2\right)}$.
 \end{enumerate}
 Note that, by $(i)$
  \begin{equation}\label{psi-4ln}
  \begin{aligned}
   \psi(L)-2\ln (L \epsilon) &=\ln \left(\frac{\frac{\pi}{\sqrt{6}}}{\epsilon^2}\right)+ \ln \left(1+\frac{\sqrt{6}}{\pi} \frac{1}{L^2}\right)\\
   &= \ln\frac{\pi}{\sqrt{6}\epsilon^2}+O\left(\frac{1}{\ln^2\epsilon}\right).
  \end{aligned}
  \end{equation}
 Hence, from $(i)$-$(ii)$
 \begin{equation}\label{CA}
 a=-C^2+\frac{1}{16\pi^2} \ln\frac{\pi}{\sqrt{6}\epsilon^2}+O\left(\frac{1}{\ln^2\epsilon}\right).
 \end{equation}
It was shown in  \cite{luluzhu20}  that  $\phi_{\epsilon} \in H^2\left(\mathbb{R}^4\right)$.  By Lemma \ref{GreenFunctionLemma}, we have	
	\begin{equation}\label{phi-fullnorm}
	\begin{aligned}
		&\int_{\mathbb{R}^4 \backslash B_{L \epsilon}(0)}\left(\left|\Delta \phi_{\epsilon}\right|^2 +\left|\phi_{\epsilon}\right|^2\right) \mathrm{d} x  =\frac{1}{C^2} \int_{\mathbb{R}^4 \backslash B_{L \epsilon}(0)}\left(|\Delta G|^2+|G|^2\right) \mathrm{d}  x \\
		&=\frac{1}{C^2}\left[-\frac{1}{8\pi^2} \ln L\epsilon -\frac{1}{16\pi^2}+K_{0} +\alpha(\gamma+1)\|G\|^{2}_{2} + O(L\epsilon)\right]
	\end{aligned}
	\end{equation}
	and by \cite{luluzhu20,LuYangEP}, it follows
		$$
	\int_{B_{L \epsilon}(0)}\left|\Delta \phi_{\epsilon}\right|^2  \mathrm{d}  x=\frac{1}{96 \pi^2 C^2}\left[6\psi(L)+1+O\left(\frac{1}{\ln ^2 \epsilon}\right)\right].
	$$
	Notice that
	\begin{equation}
	\label{normL2phi}
	 \begin{aligned}
   & 	\int_{B_{L \epsilon}(0)}\left|\phi_{\epsilon}\right|^2  \mathrm{d} x=\frac{1}{C^2}O\left((L \epsilon)^4 C^4\right)\\
   &\int_{\mathbb{R}^{4}\setminus B_{L \epsilon}(0)}\left|\phi_{\epsilon}\right|^2  \mathrm{d} x =\frac{1}{C^2}\int_{\mathbb{R}^{4}\setminus B_{L \epsilon}(0)}|G|^2\mathrm{d}x=\frac{1}{C^2}\left[\|G\|^2_{2}+O\Big((-L\epsilon\ln(L\epsilon))^{4}\Big)\right]
        \end{aligned}
	\end{equation}
	where we have used in the last identity the expression of $G$ given in Lemma~\ref{GreenFunctionLemma}.
 Therefore        
	\begin{align*}
		& \int_{\mathbb{R}^4}\left(\left|\Delta \phi_{\epsilon}\right|^2+\left|\phi_{\epsilon}\right|^2\right) \mathrm{d} x 
		 =\frac{1}{32 \pi^2 C^2}\left[2\psi(L)-4 \ln (L \epsilon)+O\left(\frac{1}{\ln ^2 \epsilon}\right)\right] \\
		&+ \frac{1}{32 \pi^2 C^2}\left[-\frac{5}{3}+32 \pi^2 K_{0}+32\pi^2\alpha(\gamma+1)\|G\|^{2}_{2}\right] \\
		& =\frac{1}{32 \pi^2 C^2}\left[ 2\ln\frac{\pi}{\sqrt{6}\epsilon^2}-\frac{5}{3}+32 \pi^2 K_{0}+32\pi^2\alpha(\gamma+1)\|G\|^{2}_{2}+O\left(\frac{1}{\ln^2\epsilon}\right)\right].
	\end{align*}
By setting  $\int_{\mathbb{R}^4}\left(\left|\Delta \phi_{\varepsilon}\right|^2+\left|\phi_{\varepsilon}\right|^2\right)\mathrm{d} x=1$ we get
	\begin{align}\label{C2=}
	32 \pi^2 C^2=2 \ln\frac{\pi}{\sqrt{6}\epsilon^2}-\frac{5}{3}+32 \pi^2 K_{0}+32\pi^2\alpha(\gamma+1)\|G\|^{2}_{2}+O\left(\frac{1}{\ln^2\epsilon}\right)
	\end{align}
 and so
	\begin{equation}\label{C2aproxim}
	      \begin{aligned}
	 C^2 =\alpha(\gamma+1)\|G\|^{2}_{2}+\frac{1}{16\pi^2}\ln\frac{\pi}{\sqrt{6}\epsilon^2}-\frac{5}{96\pi^2}+ K_{0}+O\left(\frac{1}{\ln ^2 \epsilon}\right)= O(-\ln \epsilon).
 \end{aligned}
	\end{equation}
In addition, since $\frac{1+\alpha t^2}{1-\alpha\gamma t^2}=1+\alpha(\gamma+1)t^2+O(t^4)$, as $t\to 0$ and, from \eqref{normL2phi} we have $\|\phi_{\epsilon}\|^{2}_{2}=O\Big(\frac{1}{C^2}\Big)\to 0$ as $\epsilon\to 0$. So, by using \eqref{normL2phi} again we can write 
\begin{equation}\label{Druet}
\begin{aligned}
\frac{1+\alpha\|\phi_{\epsilon}\|^{2}_{2}}{1-\alpha\gamma\|\phi_{\epsilon}\|^{2}_2}&=1+\alpha(\gamma+1)\|\phi_{\epsilon}\|^{2}_{2}+O(\|\phi_{\epsilon}\|^{4}_{2})\\
&= 1+\frac{\alpha(\gamma+1)\|G\|^2_{2}}{C^2}+O\Big(C^{-2}(-L\epsilon\ln(L\epsilon))^{4}\Big)+O\Big(C^{-4}\Big).
\end{aligned}
\end{equation}
 Firstly, since $\operatorname{e}^{t}-1\ge t$, for $t\ge 0$, from \eqref{normL2phi} and \eqref{Druet}, we can write 
 \begin{equation}\label{fora-BL}
 \begin{aligned}
  \int_{\mathbb{R}^{4}\setminus B_{L\epsilon}}\left(\operatorname{e}^{32\pi^2\Big(\frac{1+\alpha\|\phi_{\epsilon}\|^{2}_{2}}{1-\alpha\gamma\|\phi_{\epsilon}\|^{2}_2}\big)\phi^{2}_{\epsilon}}-1\right)\mathrm{d}x & \ge 32\pi^2\left(\frac{1+\alpha\|\phi_{\epsilon}\|^{2}_{2}}{1-\alpha\gamma\|\phi_{\epsilon}\|^{2}_2}\right) \int_{\mathbb{R}^{4}\setminus B_{L\epsilon}}\phi^{2}_{\epsilon}\mathrm{d}x\\
 &=\frac{32\pi^2}{C^2}\left(\frac{1+\alpha\|\phi_{\epsilon}\|^{2}_{2}}{1-\alpha\gamma\|\phi_{\epsilon}\|^{2}_2}\right)\left[\|G\|^2_{2}+O\Big((-L\epsilon\ln(L\epsilon))^{4}\Big)\right]\\
 & =\frac{32\pi^2\|G\|^2_{2}}{C^2}+O\Big(C^{-4}\Big).
 \end{aligned}
 \end{equation}
Now,  from \eqref{Druet}, we also  have  
\begin{equation}\label{druet-phi}
 \begin{aligned}
 \left(\frac{1+\alpha\|\phi_{\epsilon}\|^{2}_{2}}{1-\alpha\gamma\|\phi_{\epsilon}\|^{2}_2}\right)\phi^{2}_{\epsilon}& = \Big(1+\frac{\alpha(\gamma+1)\|G\|^2_{2}}{C^2}\Big)\phi_{\epsilon}^2+\left[O\Big((-L\epsilon\ln(L\epsilon))^{4}\Big)+O\Big(C^{-2}\Big)\right]\frac{\phi_{\epsilon}^2}{C^2}.
 \end{aligned}
 \end{equation}
   Next, we shall estimate each term on the right hand side of \eqref{druet-phi} on $B_{L \varepsilon}$.  Firstly, 
by $(ii)$, \eqref{CA} and \eqref{C2aproxim}, for any $x \in B_{L \varepsilon}$, 
\begin{equation}\label{phi>}
\begin{aligned}
		& \phi_{\epsilon}^2  \geq C^2+2\left(a-\frac{1}{16 \pi^2}\psi\Big(\frac{|x|}{\epsilon}\Big) +K_{0}+h(x)+b|x|^2\right)\\
		&= C^2+2a -\frac{1}{8\pi^2}\psi\Big(\frac{|x|}{\epsilon}\Big)+2(K_0+h(x)+b|x|^2)\\
		& =-\alpha(\gamma+1)\|G\|^{2}_{2}+\frac{1}{16\pi^2}\ln\frac{\pi}{\sqrt{6}\epsilon^2}-\frac{5}{96\pi^2}+K_{0}+O\left(\frac{1}{\ln ^2 \varepsilon}\right)\\
		&+ \frac{10}{96\pi^2}-\frac{1}{8\pi^2}\psi\Big(\frac{|x|}{\epsilon}\Big)+2(h(x)+b|x|^2)\\
		&= C^2-2\alpha(\gamma+1)\|G\|^{2}_{2}+\frac{10}{96\pi^2}+2z\Big(\frac{x}{\epsilon}\Big)+O\left(\frac{1}{\ln ^2 \epsilon}\right).
  \end{aligned} 
  \end{equation}
  where $z(x)$ is given by \eqref{zform} and  we have used in the last identity \eqref{phi>}  the fact that $h\in C^3$ with $h(0)=0$, which implies that $2(h(x)+b|x|^2)=O(L\epsilon)=O\Big(\frac{1}{\ln^2\epsilon}\Big)$ for 
  $x \in B_{L \varepsilon}$.  From \eqref{phi>}, we obtain 
 \begin{equation}\nonumber
 \begin{aligned}
  \Big(1+\frac{\alpha(\gamma+1)\|G\|^2_{2}}{C^2}\Big)\phi_{\epsilon}^2 &\ge \Big(1+\frac{\alpha(\gamma+1)\|G\|^2_{2}}{C^2}\Big)2z\Big(\frac{x}{\epsilon}\Big)+C^2-\alpha(\gamma+1)\|G\|^{2}_{2}+\frac{10}{96\pi^2}\\
 &-\frac{2\alpha^{2}(\gamma+1)^2\|G\|^4_{2}+\frac{10}{96\pi^2}}{C^2}+O\Big(\frac{1}{\ln ^2 \epsilon}\Big)
 \end{aligned}
 \end{equation}
 and from \eqref{C2aproxim}
 \begin{equation}\label{druet-term1}
 \begin{aligned}
    \Big(1+\frac{\alpha(\gamma+1)\|G\|^2_{2}}{C^2}\Big)\phi_{\epsilon}^2&  \ge 
  \Big(1+\frac{\alpha(\gamma+1)\|G\|^2_{2}}{C^2}\Big)2z\Big(\frac{x}{\epsilon}\Big)\\
  &+\frac{5}{96\pi^2}+K_0+\frac{1}{16\pi^2}\ln\frac{\pi}{\sqrt{6}\epsilon^2}+O\Big(\frac{1}{\ln^2 \epsilon}\Big)
 \end{aligned}
 \end{equation}
 Now, for $x\in B_{L\epsilon}$
 \begin{align*}
 \Big|\psi\Big(\frac{|x|}{\epsilon}\Big)\Big|\le \ln\Big(1+\frac{\pi}{\sqrt{6}}L^2\Big)=2\ln L+ \ln\Big(\frac{\pi}{\sqrt{6}}+\frac{1}{L^2}\Big)=O\Big(\ln (-\ln\epsilon)\Big).
 \end{align*}
 So, by definition of $\phi_{\epsilon}$, $(iii)$,  \eqref{CA} and \eqref{C2aproxim},  on $B_{L\epsilon}$ we get
 \begin{equation}\nonumber
 \begin{aligned}
\Big|\frac{\phi_{\epsilon}}{C}\Big|&=\Big|1+\frac{1}{C^2}\left[a-\frac{1}{16 \pi^2}\psi\Big(\frac{|x|}{\epsilon}\Big)+K_{0}+h(x)+b|x|^2\right]\Big|\\
& \le O\Big(C^{-2}\ln (-\ln\epsilon)\Big)+O\Big(C^{-2}\Big)\to 0,\;\;\mbox{as}\;\; \epsilon\to 0.
 \end{aligned}
 \end{equation}
It follows that 
 $$
 \left[O\Big((-L\epsilon\ln(L\epsilon))^{4}\Big)+O\Big(C^{-2}\Big)\right]\frac{\phi_{\varepsilon}^2}{C^2}=O\Big(\frac{1}{\ln ^2\epsilon}\Big).
 $$
 So, by combining \eqref{druet-phi} with \eqref{druet-term1}, for $x\in B_{L\epsilon}$ we can write
 \begin{equation}\label{druet-final}
 \begin{aligned}
    32\pi^2\left(\frac{1+\alpha\|\phi_{\epsilon}\|^{2}_{2}}{1-\alpha\gamma\|\phi_{\epsilon}\|^{2}_2}\right)\phi^{2}_{\epsilon} & \ge \frac{5}{3}+32\pi^2K_0+\ln\frac{\pi^2}{6}+ \ln\frac{1}{\epsilon^4}\\
   &+\Big(1+\frac{\alpha(\gamma+1)\|G\|^2_{2}}{C^2}\Big)64\pi^2z\Big(\frac{x}{\epsilon}\Big)+ O\Big(\frac{1}{\ln^2 \epsilon}\Big).
 \end{aligned}
 \end{equation}
 Hence, 
  \begin{equation}\label{quase-dentro-BL}
 \begin{aligned}
 &  \int_{B_{L\epsilon}}\left(\operatorname{e}^{32\pi^2\big(\frac{1+\alpha\|\phi_{\epsilon}\|^{2}_{2}}{1-\alpha\gamma\|\phi_{\epsilon}\|^{2}_2}\big)\phi^{2}_{\epsilon}}-1\right)\mathrm{d}x 
  = \int_{B_{L\epsilon}}\operatorname{e}^{32\pi^2\big(\frac{1+\alpha\|\phi_{\epsilon}\|^{2}_{2}}{1-\alpha\gamma\|\phi_{\epsilon}\|^{2}_2}\big)\phi^{2}_{\epsilon}}\mathrm{d}x +O((\epsilon L)^4)\\
 & \ge \frac{\pi^2}{6\epsilon^4}\operatorname{e}^{\frac{5}{3}+32\pi^2K_0} \int_{B_{L\epsilon}}\Big(1+\frac{\pi}{\sqrt{6}}\frac{|x|^2}{\epsilon^2}\Big)^{-4\big(1+\frac{\alpha(\gamma+1)\|G\|^2_{2}}{C^2}\big)}\mathrm{d}x+ O\Big(\frac{1}{\ln^{2}\epsilon}\Big)\\
 &=\frac{\pi^2}{6}\operatorname{e}^{\frac{5}{3}+32\pi^2K_0} \int_{B_{L}}\Big(1+\frac{\pi}{\sqrt{6}}|x|^2\Big)^{-4\big(1+\frac{\alpha(\gamma+1)\|G\|^2_{2}}{C^2}\big)}\mathrm{d}x+ O\Big(\frac{1}{\ln^{2}\epsilon}\Big).
 \end{aligned}
 \end{equation}
 By Taylor expansion (cf. \cite[Lemma 19]{JJ2023}) we have 
 \begin{equation}
 \frac{6\Gamma(2) \Gamma(2+y)}{\Gamma(4+y)}=1-\frac{5y}{6}+O(y^2), \;\; \mbox{as}\;\; y\to 0.
 \end{equation}
 Thus,  by setting  $y=y_{\epsilon}:=\frac{4\alpha(\gamma+1)\|G\|^2_{2}}{C^2}$ we obtain
 \begin{equation}
 \begin{aligned}
  \int_{B_{L}}\Big(1+\frac{\pi}{\sqrt{6}}|x|^2\Big)^{-4\big(1+\frac{\alpha(\gamma+1)\|G\|^2_{2}}{C^2}\big)}\mathrm{d}x& = \int_{B_{L}}\frac{1}{(1+\frac{\pi}{\sqrt{6}}|x|^2)^{4+y_{\epsilon}}}\mathrm{d}x\\
  &=6\int_{0}^{\frac{\pi}{\sqrt{6}}L^2}\frac{s}{(1+s)^{4+y_{\epsilon}}}\mathrm{d}s\\
  &=6\int_{0}^{\infty}\frac{s}{(1+s)^{4+y_{\epsilon}}}\mathrm{d}s+O(L^{-2})\\
  &=\frac{6\Gamma(2) \Gamma(2+y_{\epsilon})}{\Gamma(4+y_{\epsilon})}+O(L^{-2})\\
  & = 1-\frac{10}{3}\frac{\alpha(\gamma+1)\|G\|^2_{2}}{C^2}+O(L^{-2})+O(C^{-4})
 \end{aligned}
 \end{equation}
This together with \eqref{quase-dentro-BL} yields
  \begin{equation}\label{dentro-BL}
 \begin{aligned}
 &  \int_{B_{L\epsilon}}\left(\operatorname{e}^{32\pi^2\big(\frac{1+\alpha\|\phi_{\epsilon}\|^{2}_{2}}{1-\alpha\gamma\|\phi_{\epsilon}\|^{2}_2}\big)\phi^{2}_{\epsilon}}-1\right)\mathrm{d}x 
 \ge \frac{\pi^2}{6}\operatorname{e}^{\frac{5}{3}+32\pi^2K_0}\\
 & +\frac{\pi^2}{6}\operatorname{e}^{\frac{5}{3}+32\pi^2K_0}\frac{1}{C^2}\left[-\frac{10}{3}\alpha(\gamma+1)\|G\|^2_{2}+O(C^2L^{-2})+O(C^{-2})+O\Big(\frac{C^2}{\ln^{2}\epsilon}\Big)\right].
 \end{aligned}
 \end{equation}
 By combining \eqref{dentro-BL} with \eqref{fora-BL}, it follows that 
 \begin{equation}\label{fullR4Teste}
 \begin{aligned}
 &  \int_{\mathbb{R}^4}\left(\operatorname{e}^{32\pi^2\big(\frac{1+\alpha\|\phi_{\epsilon}\|^{2}_{2}}{1-\alpha\gamma\|\phi_{\epsilon}\|^{2}_2}\big)\phi^{2}_{\epsilon}}-1\right)\mathrm{d}x 
 \ge \frac{\pi^2}{6}\operatorname{e}^{\frac{5}{3}+32\pi^2K_0}\\
 & +\frac{\pi^2}{6}\operatorname{e}^{\frac{5}{3}+32\pi^2K_0}\frac{1}{C^2}\left[-\frac{10}{3}\alpha(\gamma+1)\|G\|^2_{2}+O(C^2L^{-2})+O(C^{-2})+O\Big(\frac{C^2}{\ln^{2}\epsilon}\Big)\right]\\
 &+\frac{32\pi^2\|G\|^2_{2}}{C^2}+O\Big(C^{-4}\Big)\\
 &= \frac{\pi^2}{6}\operatorname{e}^{\frac{5}{3}+32\pi^2K_0}\\
 & +\frac{\pi^2}{6C^2}\operatorname{e}^{\frac{5}{3}+32\pi^2K_0}\left[\Big(192\operatorname{e}^{-\frac{5}{3}-32\pi^2K_0}-\frac{10}{3}\alpha(\gamma+1)\Big)\|G\|^2_{2}+O(C^2L^{-2})+O(C^{-2})+O\Big(\frac{C^2}{\ln^{2}\epsilon}\Big)\right]\\
 \end{aligned}
 \end{equation}
 Since $$O(C^2L^{-2})+O(C^{-2})+O\Big(\frac{C^2}{\ln^{2}\epsilon}\Big)\to 0 \;\;\mbox{as} \;\;\epsilon\to 0$$
and $\gamma\le \gamma_0$ is bounded,  by choosing $\alpha_0>0$ small such that  $$192\operatorname{e}^{-\frac{5}{3}-32\pi^2K_0}-\frac{10}{3}\alpha_0(\gamma+1)>0$$ we conclude that \eqref{AD>e} holds for all $0\le \alpha<\alpha_0$.
\bibliographystyle{siam}
 \bibliography{Ref}

\begin{thebibliography}{10}

\bibitem{adachitanaka2000}
{\sc S.~Adachi and K.~Tanaka}, {\em Trudinger type inequalities in
  {$\mathbf{R}^N$} and their best exponents}, Proc. Amer. Math. Soc., 128
  (2000), pp.~2051--2057.

\bibitem{Adams1988}
{\sc D.~R. Adams}, {\em A sharp inequality of {J}. {M}oser for higher order
  derivatives}, Ann. of Math. (2), 128 (1988), pp.~385--398.

\bibitem{AdimurthiDruet2004}
{\sc Adimurthi and O.~Druet}, {\em Blow-up analysis in dimension 2 and a sharp
  form of {T}rudinger-{M}oser inequality}, Comm. Partial Differential
  Equations, 29 (2004), pp.~295--322.

\bibitem{Baird1}
{\sc P.~Baird, A.~Fardoun, and R.~Regbaoui}, {\em {$Q$}-curvature flow on
  4-manifolds}, Calc. Var. Partial Differential Equations, 27 (2006),
  pp.~75--104.

\bibitem{Baird2}
\leavevmode\vrule height 2pt depth -1.6pt width 23pt, {\em Prescribed
  {$Q$}-curvature on manifolds of even dimension}, J. Geom. Phys., 59 (2009),
  pp.~221--233.

\bibitem{BN}
{\sc H.~Br\'{e}zis and L.~Nirenberg}, {\em Positive solutions of nonlinear
  elliptic equations involving critical {S}obolev exponents}, Comm. Pure Appl.
  Math., 36 (1983), pp.~437--477.

\bibitem{Cao}
{\sc D.~M. Cao}, {\em Nontrivial solution of semilinear elliptic equations with
  critical exponent in r}, Communications in Partial Differential Equations, 17
  (1992), pp.~407--435.

\bibitem{Carleson-Chang}
{\sc L.~Carleson and S.-Y.~A. Chang}, {\em On the existence of an extremal
  function for an inequality of {J}. {M}oser}, Bull. Sci. Math. (2), 110
  (1986), pp.~113--127.

\bibitem{CST}
{\sc D.~Cassani, F.~Sani, and C.~Tarsi}, {\em Equivalent {M}oser type
  inequalities in {$\mathbb{R}^2$} and the zero mass case}, J. Funct. Anal.,
  267 (2014), pp.~4236--4263.

\bibitem{luluzhu20}
{\sc L.~Chen, G.~Lu, and M.~Zhu}, {\em Existence and nonexistence of extremals
  for critical {A}dams inequalities in {$\mathbb{R}^4$} and {T}rudinger-{M}oser
  inequalities in {$\mathbb{R}^2$}}, Adv. Math., 368 (2020), pp.~107143, 61.

\bibitem{JJ2015}
{\sc J.~F. de~Oliveira and J.~a.~M. do~\'O}, {\em Concentration-compactness
  principle and extremal functions for a sharp {T}rudinger-{M}oser inequality},
  Calc. Var. Partial Differential Equations, 52 (2015), pp.~125--163.

\bibitem{JJ2023}
{\sc J.~F. de~Oliveira and J.~M. do~{\'O}}, {\em On a sharp inequality of
  {Adimurthi}-{Druet} type and extremal functions}, Calc. Var. Partial Differ.
  Equ., 62 (2023), p.~40.
\newblock Id/No 162.

\bibitem{deOliveiraMacedo2022}
{\sc J.~F.~A. de~Oliveira and A.~C. Macedo}, {\em Estimate for concentration
  level of the {A}dams functional and extremals for {A}dams-type inequality},
  J. Funct. Anal., 283 (2022), pp.~Paper No. 109633, 32.

\bibitem{DoOMana1}
{\sc M.~de~Souza and J.~a.~M. do~\'O}, {\em A sharp {T}rudinger-{M}oser type
  inequality in {$\mathbb{R}^2$}}, Trans. Amer. Math. Soc., 366 (2014),
  pp.~4513--4549.

\bibitem{DelaTorre}
{\sc A.~DelaTorre and G.~Mancini}, {\em Improved {A}dams-type inequalities and
  their extremals in dimension $2m$}, Commun. Contemp. Math.,  (2020).

\bibitem{DengLi2007}
{\sc Y.~Deng and Y.~Li}, {\em Exponential decay of the solutions for nonlinear
  biharmonic equations}, Commun. Contemp. Math., 9 (2007), pp.~753--768.

\bibitem{DoOMana2}
{\sc J.~a.~M. do~\'O and M.~de~Souza}, {\em A sharp inequality of
  {T}rudinger-{M}oser type and extremal functions in
  {$H^{1,n}(\mathbb{R}^n)$}}, J. Differential Equations, 258 (2015),
  pp.~4062--4101.

\bibitem{DoO}
{\sc J.~M.~B. do~{\'O}}, {\em {$N$-Laplacian equations in $\mathbb{R}^N$ with
  critical growth}}, Abstract and Applied Analysis, 2 (1997), pp.~301 -- 315.

\bibitem{Flucher1992}
{\sc M.~Flucher}, {\em Extremal functions for the {T}rudinger-{M}oser
  inequality in {$2$} dimensions}, Comment. Math. Helv., 67 (1992),
  pp.~471--497.

\bibitem{FontanaLuigi2015SAaM}
{\sc L.~Fontana and C.~Morpurgo}, {\em Sharp {A}dams and {M}oser-{T}rudinger
  inequalities on {$\mathbb{R}^n$} and other spaces of infinite measure},
  arXiv.org,  (2015).

\bibitem{FontanaMorpurgo2018}
\leavevmode\vrule height 2pt depth -1.6pt width 23pt, {\em Sharp exponential
  integrability for critical {R}iesz potentials and fractional {L}aplacians on
  {$\mathbb{R}^n$}}, Nonlinear Anal., 167 (2018), pp.~85--122.

\bibitem{MR2667016}
{\sc F.~Gazzola, H.-C. Grunau, and G.~Sweers}, {\em Polyharmonic boundary value
  problems}, vol.~1991 of Lecture Notes in Mathematics, Springer-Verlag,
  Berlin, 2010.
\newblock Positivity preserving and nonlinear higher order elliptic equations
  in bounded domains.

\bibitem{HMT}
{\sc J.~A. Hempel, G.~R. Morris, and N.~S. Trudinger}, {\em On the sharpness of
  a limiting case of the {S}obolev imbedding theorem}, Bull. Austral. Math.
  Soc., 3 (1970), pp.~369--373.

\bibitem{Ibra}
{\sc S.~Ibrahim, N.~Masmoudi, and K.~Nakanishi}, {\em Trudinger-{M}oser
  inequality on the whole plane with the exact growth condition}, J. Eur. Math.
  Soc. (JEMS), 17 (2015), pp.~819--835.

\bibitem{Ishiwata2011}
{\sc M.~Ishiwata}, {\em Existence and nonexistence of maximizers for
  variational problems associated with {T}rudinger-{M}oser type inequalities in
  {$\mathbb{R}^N$}}, Math. Ann., 351 (2011), pp.~781--804.

\bibitem{YUDO61}
{\sc V.~I. Judovi\v{c}}, {\em Some estimates connected with integral operators
  and with solutions of elliptic equations}, Dokl. Akad. Nauk SSSR, 138 (1961),
  pp.~805--808.

\bibitem{kavian}
{\sc O.~Kavian}, {\em Introduction \`a la th\'{e}orie des points critiques et
  applications aux probl\`emes elliptiques}, vol.~13 of Math\'{e}matiques \&
  Applications (Berlin) [Mathematics \& Applications], Springer-Verlag, Paris,
  1993.

\bibitem{lamlu2012}
{\sc N.~Lam and G.~Lu}, {\em Sharp {A}dams type inequalities in {S}obolev
  spaces {$W^{m,\frac{n}{m}} (\mathbb{R}^n)$} for arbitrary integer {$m$}}, J.
  Differential Equations, 253 (2012), pp.~1143--1171.

\bibitem{LamLuMAA2012}
\leavevmode\vrule height 2pt depth -1.6pt width 23pt, {\em Sharp singular
  {A}dams inequalities in high order {S}obolev spaces}, Methods Appl. Anal., 19
  (2012), pp.~243--266.

\bibitem{lamlufree}
\leavevmode\vrule height 2pt depth -1.6pt width 23pt, {\em A new approach to
  sharp {M}oser-{T}rudinger and {A}dams type inequalities: a rearrangement-free
  argument}, J. Differential Equations, 255 (2013), pp.~298--325.

\bibitem{LamLuChapter}
\leavevmode\vrule height 2pt depth -1.6pt width 23pt, {\em Sharp singular
  {T}rudinger-{M}oser-{A}dams type inequalities with exact growth}, in
  Geometric methods in {PDE}'s, vol.~13 of Springer INdAM Ser., Springer, Cham,
  2015, pp.~43--80.

\bibitem{LLZ}
{\sc N.~Lam, G.~Lu, and L.~Zhang}, {\em Equivalence of critical and subcritical
  sharp {T}rudinger-{M}oser-{A}dams inequalities}, Rev. Mat. Iberoam., 33
  (2017), pp.~1219--1246.

\bibitem{LLZ2}
{\sc N.~Lam, G.~Lu, and L.~Zhang}, {\em Existence and nonexistence of extremal
  functions for sharp trudinger-moser inequalities}, Advances in Mathematics,
  352 (2019), pp.~1253--1298.

\bibitem{Lenzmann}
{\sc E.~Lenzmann and J.~Sok}, {\em {A Sharp Rearrangement Principle in Fourier
  Space and Symmetry Results for PDEs with Arbitrary Order}}, International
  Mathematics Research Notices, 2021 (2020), pp.~15040--15081.

\bibitem{LiLuZhu2018}
{\sc J.~Li, G.~Lu, and M.~Zhu}, {\em Concentration-compactness principle for
  {T}rudinger-{M}oser inequalities on {H}eisenberg groups and existence of
  ground state solutions}, Calc. Var. Partial Differential Equations, 57
  (2018), pp.~Paper No. 84, 26.

\bibitem{RufLi2008}
{\sc Y.~Li and B.~Ruf}, {\em A sharp {T}rudinger-{M}oser type inequality for
  unbounded domains in {$\mathbb{R}^n$}}, Indiana Univ. Math. J., 57 (2008),
  pp.~451--480.

\bibitem{Lin1998}
{\sc C.-S. Lin}, {\em A classification of solutions of a conformally invariant
  fourth order equation in {${\bf R}^n$}}, Comment. Math. Helv., 73 (1998),
  pp.~206--231.

\bibitem{Lin1996}
{\sc K.-C. Lin}, {\em Extremal functions for {M}oser's inequality}, Trans.
  Amer. Math. Soc., 348 (1996), pp.~2663--2671.

\bibitem{Lions1984I}
{\sc P.-L. Lions}, {\em The concentration-compactness principle in the calculus
  of variations. {T}he limit case. {I}}, Rev. Mat. Iberoamericana, 1 (1985),
  pp.~145--201.

\bibitem{Lions1984II}
\leavevmode\vrule height 2pt depth -1.6pt width 23pt, {\em The
  concentration-compactness principle in the calculus of variations. {T}he
  limit case. {II}}, Rev. Mat. Iberoamericana, 1 (1985), pp.~45--121.

\bibitem{LuTang}
{\sc G.~Lu and H.~Tang}, {\em Sharp {M}oser-{T}rudinger inequalities on
  hyperbolic spaces with exact growth condition}, J. Geom. Anal., 26 (2016),
  pp.~837--857.

\bibitem{LuYangEP}
{\sc G.~Lu and Y.~Yang}, {\em Adams' inequalities for bi-{L}aplacian and
  extremal functions in dimension four}, Adv. Math., 220 (2009),
  pp.~1135--1170.

\bibitem{LuYang}
\leavevmode\vrule height 2pt depth -1.6pt width 23pt, {\em Sharp constant and
  extremal function for the improved {M}oser-{T}rudinger inequality involving
  {$L^p$} norm in two dimension}, Discrete Contin. Dyn. Syst., 25 (2009),
  pp.~963--979.

\bibitem{Martinazzi2009}
{\sc L.~Martinazzi}, {\em A threshold phenomenon for embeddings of {$H^m_0$}
  into {O}rlicz spaces}, Calc. Var. Partial Differential Equations, 36 (2009),
  pp.~493--506.

\bibitem{MasmoudiSani2014}
{\sc N.~Masmoudi and F.~Sani}, {\em Adams' inequality with the exact growth
  condition in {$\mathbb{R}^4$}}, Comm. Pure Appl. Math., 67 (2014),
  pp.~1307--1335.

\bibitem{MasmoudiSani2015}
\leavevmode\vrule height 2pt depth -1.6pt width 23pt, {\em Trudinger-{M}oser
  inequalities with the exact growth condition in {$\mathbb{R}^N$} and
  applications}, Comm. Partial Differential Equations, 40 (2015),
  pp.~1408--1440.

\bibitem{Moser1970/71}
{\sc J.~Moser}, {\em A sharp form of an inequality by {N}. {T}rudinger},
  Indiana Univ. Math. J., 20 (1970/71), pp.~1077--1092.

\bibitem{NguyenVanHoang2019}
{\sc V.~H. Nguyen}, {\em Extremal functions for the {M}oser-{T}rudinger
  inequality of {A}dimurthi-{D}ruet type in {$W^{1,N}(\mathbb{R}^N)$}}, Commun.
  Contemp. Math., 21 (2019), pp.~1850023, 37.

\bibitem{OneilConvolution1963}
{\sc R.~O'Neil}, {\em Convolution operators and {$L(p,\,q)$} spaces}, Duke
  Math. J., 30 (1963), pp.~129--142.

\bibitem{Ozawa1995}
{\sc T.~Ozawa}, {\em On critical cases of sobolev's inequalities}, FREEDOM, 127
  (1995), p.~259.

\bibitem{Panda}
{\sc R.~Panda}, {\em On semilinear {N}eumann problems with critical growth for
  the {$n$}-{L}aplacian}, Nonlinear Anal., 26 (1996), pp.~1347--1366.

\bibitem{Peetre}
{\sc J.~Peetre}, {\em Espaces d'interpolation et th\'eor\`eme de {S}oboleff},
  Ann. Inst. Fourier (Grenoble), 16 (1966), pp.~279--317.

\bibitem{Pizzetti}
{\sc P.~Pizzetti}, {\em Sulla media dei valori che una funzione dei punti dello
  spazio assume sulla superficie della sfera}, Rend. Mat. Acc. Lincei, 18
  (1909), pp.~182--185.

\bibitem{P0H065}
{\sc S.~I. Pohozaev}, {\em The sobolev embedding in the case $pl = n$},
  Proceedings of the Technical Scientific Conference on Advances of Scientific
  Research, 138 (1965), pp.~158--170.

\bibitem{Ruf2005}
{\sc B.~Ruf}, {\em A sharp {T}rudinger-{M}oser type inequality for unbounded
  domains in {$\mathbb{R}^2$}}, J. Funct. Anal., 219 (2005), pp.~340--367.

\bibitem{RufSani2013}
{\sc B.~Ruf and F.~Sani}, {\em Sharp {A}dams-type inequalities in
  {$\mathbb{R}^n$}}, Trans. Amer. Math. Soc., 365 (2013), pp.~645--670.

\bibitem{Struwe1988}
{\sc M.~Struwe}, {\em Critical points of embeddings of {$H^{1,n}_0$} into
  {O}rlicz spaces}, Ann. Inst. H. Poincar\'{e} Anal. Non Lin\'{e}aire, 5
  (1988), pp.~425--464.

\bibitem{Tarsi2012}
{\sc C.~Tarsi}, {\em Adams' inequality and limiting {S}obolev embeddings into
  {Z}ygmund spaces}, Potential Anal., 37 (2012), pp.~353--385.

\bibitem{Tinta}
{\sc C.~Tintarev}, {\em Trudinger-{M}oser inequality with remainder terms}, J.
  Funct. Anal., 266 (2014), pp.~55--66.

\bibitem{Trudinger67}
{\sc N.~S. Trudinger}, {\em On imbeddings into {O}rlicz spaces and some
  applications}, J. Math. Mech., 17 (1967), pp.~473--483.

\bibitem{CernyCianchiHencl13}
{\sc R.~\v{C}ern\'{y}, A.~Cianchi, and S.~Hencl}, {\em
  Concentration-compactness principles for {M}oser-{T}rudinger inequalities:
  new results and proofs}, Ann. Mat. Pura Appl. (4), 192 (2013), pp.~225--243.

\bibitem{Yang2006}
{\sc Y.~Yang}, {\em Extremal functions for a sharp {M}oser-{T}rudinger
  inequality}, Internat. J. Math., 17 (2006), pp.~331--338.

\bibitem{Yang}
\leavevmode\vrule height 2pt depth -1.6pt width 23pt, {\em A sharp form of
  {M}oser-{T}rudinger inequality in high dimension}, J. Funct. Anal., 239
  (2006), pp.~100--126.

\bibitem{YangS}
\leavevmode\vrule height 2pt depth -1.6pt width 23pt, {\em Extremal functions
  for {T}rudinger-{M}oser inequalities of {A}dimurthi-{D}ruet type in dimension
  two}, J. Differential Equations, 258 (2015), pp.~3161--3193.

\bibitem{YangZhu}
{\sc Y.~Yang and X.~Zhu}, {\em Blow-up analysis concerning singular
  {T}rudinger-{M}oser inequalities in dimension two}, J. Funct. Anal., 272
  (2017), pp.~3347--3374.

\end{thebibliography}

\end{document}